\documentclass[11pt,twoside=semi,a4paper]{article}

\usepackage{layout_thesis}

\RequirePackage{bm}
\RequirePackage{bbm}
\RequirePackage{tabto}
\RequirePackage{authblk}
\RequirePackage{mathtools}
\RequirePackage{enumerate}
\RequirePackage{enumitem}

\newcommand{\N}{\mathbb{N}}
\newcommand{\Z}{\mathbb{Z}}

\newcommand{\R}{\mathbb{R}}
\newcommand{\C}{\mathbb{C}}
\newcommand{\E}{\mathbb{E}}

\newcommand{\bD}{\mathbb D}

\newcommand{\bP}{\mathbb P}

\newcommand{\cB}{\mathcal B}

\newcommand{\cE}{\mathcal E}

\newcommand{\ol}{\overline}
\newcommand{\ul}{\underline}
\newcommand{\tr}{\operatorname{tr}}

\renewcommand{\Re}{\operatorname{Re}}
\renewcommand{\Im}{\operatorname{Im}}

\newcommand{\cs}{\bm{s}}

\newcommand{\tvarepsilon}{\tilde{\varepsilon}}

\newcommand{\diag}{\operatorname{diag}}
\newcommand{\dist}{\operatorname{dist}}
\newcommand{\supp}{\operatorname{supp}}

\newcommand{\Id}{\operatorname{Id}}

\makeatletter
\newcommand*\bigcdot{\mathpalette\bigcdot@{.5}}
\newcommand*\bigcdot@[2]{\mathbin{\vcenter{\hbox{\scalebox{#2}{$\m@th#1\bullet$}}}}}
\makeatother

\swapnumbers
\theoremstyle{plain}
\newtheorem{theorem}{Theorem}[section]

\newtheorem*{theorem*}{Theorem}
\newtheorem{lemma}[theorem]{Lemma}
\newtheorem{proposition}[theorem]{Proposition}
\newtheorem{corollary}[theorem]{Corollary}
\theoremstyle{definition}
\newtheorem{definition}[theorem]{Definition}

\newtheorem{assumption}[theorem]{Assumption}

\theoremstyle{remark}
\newtheorem{remark}[theorem]{Remark}

\swapnumbers
\theoremstyle{indented}

\title{Spectral approximation for the separable covariance mixture model}
\date{}
\author{Ben Deitmar}
\affil{\small \textit{Department of Mathematical Stochastics, University of Freiburg \protect\\ Ernst-Zermelo-Str. 1, 79104 Freiburg, Germany \protect\\ E-mail: ben.deitmar@stochastik.uni-freiburg.de}}

\begin{document}
\thispagestyle{empty}
\maketitle
	
\begin{abstract}
	\noindent
	This paper introduces the \textit{separable covariance mixture model}, which assumes a data-matrix $\bm{Y}$ to be of the form
	\begin{align*}
		& \sum\limits_{r=1}^R A_r \bm{X} B_r
	\end{align*}
	for one random $(d \times n)$-matrix $\bm{X}$ with independent centered variance-one entries, and for two families of deterministic matrices $A_1,\dots,A_R \in \C^{d \times d}$ and $B_1,\dots,B_R \in \C^{n \times n}$. Under certain assumptions (see Assumption \ref{Assumption_Basic}), it is shown that the resolvents $(\frac{1}{n} \bm{Y} \bm{Y}^* - z \Id_d)^{-1}$ and $(\frac{1}{n} \bm{Y}^* \bm{Y} - z \Id_n)^{-1}$ respectively approximate the deterministic matrices 
	\begin{align*}
		& -\frac{1}{z}\Big( \Id_d + \sum\limits_{r,s=1}^R \delta^{(B)}_{r,s}(z) A_{r} A_{s}^* \Big)^{-1} \ \ \text{ and } \ \ -\frac{1}{z}\Big( \Id_n + \sum\limits_{r,s=1}^R \delta^{(A)}_{r,s}(z) B_{s}^*B_{r} \Big)^{-1}
	\end{align*}
	(compare Theorem \ref{Thm_SepCovMP_NonAsymp}), where $\delta^{(A)}, \delta^{(B)} \in \C^{R \times R}$ are uniquely defined solutions to a certain dual system of equations (see Theorem \ref{Thm_DetEquiv}). The results are non-asymptotic and do not require simultaneous diagonalizability of the families $(A_r)_{r \leq R}$ or $(B_r)_{r \leq R}$, as was required in previous works such as \cite{SepCovMixture_Paul} or \cite{StrongMP}. An asymptotic application, which describes the limiting spectral distribution of the sample covariance matrix analogues $\frac{1}{n} \bm{Y} \bm{Y}^*$ or $\frac{1}{n} \bm{Y}^* \bm{Y}$, is included (see Corollary \ref{Cor_SepCovMP_Asymp}).
\end{abstract}

\section{Introduction}\label{Section_SepCov_Introduction}
In the analysis of iid high-dimensional (centered) data $Y_1,\dots,Y_n \in \C^d$, the assumption that the $(d \times n)$-data-matrix $\bm{Y} \coloneq [Y_1,\dots,Y_n]$ is of the form
\begin{align}\label{Eq_Intro_iidData}
	& \bm{Y} = \Sigma^{\frac{1}{2}} \bm{X} \ ,
\end{align}
for some deterministic $(d \times d)$-covariance-matrix $\Sigma$ and a random $(d \times n)$-matrix $\bm{X}$ with independent centered entries each with variance one, has been used to great effect for the analysis of the limiting eigenvalue distribution of $\bm{S}=\frac{1}{n} \bm{Y} \bm{Y}^*$ (cf. \cite{KnowlesAnisotropicLocalLaws, MP_original}), for CLTs of linear spectral statistics (cf. \cite{BaiCLT,NajimYao}) or for eigen inference (cf. \cite{MPIKaroui,LedoitWolf2}). Similar models are commonly employed to model data with more intricate dependence structure. For example, separable covariance models assume the data-matrix $\bm{Y}$ to be of the form
\begin{align}\label{Eq_Intro_SepCovData}
	& \bm{Y} = \Sigma_1^{\frac{1}{2}} \bm{X} \Sigma_2^{\frac{1}{2}}
\end{align}
for the same $(d \times n)$-matrix $\bm{X}$ and two separate covariance matrices $\Sigma_1 \in \C^{d \times d}$ and $\Sigma_2 \in \C^{n \times n}$ (cf. \cite{SepCovCLT,LixinSALDRM}). Data from stationary time series $(Y_t)_{t \in \Z}$ is often modeled by linear processes (cf. \cite{TimeMP4,Aue}), which are of the form
\begin{align}\label{Eq_Intro_LinProcData}
	& Y_t = \sum\limits_{k=0}^\infty \Psi_k \xi_{t-k}
\end{align}
for some deterministic $(d \times d)$-matrices $(\Psi_k)_{k \in \N_0}$ and iid isotropic $d$-dimensional innovation vectors $(\xi_t)_{t \in \Z}$. If $\bm{Y} = [Y_1,\dots,Y_n]$ consists of a finite sample from such a linear process, a finite-$K$ approximation might be
\begin{align}\label{Eq_Intro_LinProcDataApprox}
	& \bm{Y} \approx \sum\limits_{k=0}^K \Psi_k \bm{X} T_{k} \ ,
\end{align}
where $\bm{X} = [\xi_1,\dots,\xi_n]$ and $T_k$ is the $(n \times n)$-matrix with ones on the upper $k$-th diagonal and zeroes elsewhere. The paper \cite{SepCovMixture_Paul} studies the model
\begin{align}\label{Eq_Intro_PaulModel}
	& \bm{Y} = \sum\limits_{r=1}^R \Sigma_{1,r}^{\frac{1}{2}} \bm{X}_r \Sigma_{2,r}^{\frac{1}{2}}
\end{align}
for independent copies $\bm{X}_1,\dots,\bm{X}_R$ of $\bm{X}$ as in (\ref{Eq_Intro_SepCovData}) and families $(\Sigma_{1,r})_{r \leq R}$ and $(\Sigma_{2,r})_{r \leq R}$ of covariance matrices such that each family is simultaneously diagonalizable.
\\[0.5em]
While the limiting eigenvalue distributions of $\bm{S} = \frac{1}{n} \bm{Y} \bm{Y}^*$ and $\tilde{\bm{S}} = \frac{1}{n} \bm{Y}^* \bm{Y}$ are well-understood in the cases (\ref{Eq_Intro_iidData}) and (\ref{Eq_Intro_SepCovData}), available results for the cases (\ref{Eq_Intro_LinProcData}) and (\ref{Eq_Intro_PaulModel}) require restrictive assumptions such as the matrices $(\Psi_k)_{k \in \N_0}$ or $(\Sigma_{1,r})_{r \leq R}$ and $(\Sigma_{2,r})_{r \leq R}$ being simultaneously diagonalizable (cf. \cite{TimeMP4,Aue,StrongMP} for (\ref{Eq_Intro_LinProcData}) and \cite{SepCovMixture_Paul,StrongMP} for (\ref{Eq_Intro_PaulModel})).
\\[0.5em]
This paper introduces the \textit{separable covariance mixture model}
\begin{align}\label{Eq_Intro_SepCovMixture}
	& \bm{Y} = \sum\limits_{r=1}^R A_r \bm{X} B_r
\end{align}
for deterministic matrices $A_1,\dots,A_R \in \C^{d \times d}$ and $B_1,\dots,B_R \in \C^{n \times n}$, where $\bm{X}$ is again a random $(d \times n)$-matrix with independent centered entries with variance one.
\\[0.5em]
Theorem~\ref{Thm_SepCovMP_NonAsymp} will, under suitable conditions that do not include simultaneous diagonalizability (see Assumption~\ref{Assumption_Basic}), show that for all $z \in \C^+$ the resolvents
\begin{align*}
	& \bm{R}(z) \coloneq \big( \bm{S} - z\Id_d \big)^{-1} \ \ \text{ and } \ \ \tilde{\bm{R}}(z) \coloneq \big( \tilde{\bm{S}} - z\Id_n \big)^{-1}
\end{align*}
approximate the deterministic matrices
\begin{align*}
	& -\frac{1}{z}\Big( \Id_d + \sum\limits_{r',s'=1}^R \delta^{(B)}_{r',s'}(z) A_{r'} A_{s'}^* \Big)^{-1} \ \ \text{ and } \ \ -\frac{1}{z}\Big( \Id_n + \sum\limits_{r',s'=1}^R \delta^{(A)}_{r',s'}(z) B_{s'}^*B_{r'} \Big)^{-1}
\end{align*}
respectively, where $\delta^{(A)}, \delta^{(B)} \in \C^{R \times R}$ are uniquely defined solutions to a dual system of equations (see (\ref{Eq_DualSystem})). The approximation is in the sense that for any deterministic test-matrices $M \in \C^{d \times d}$ and $\tilde{M} \in \C^{n \times n}$, the differences
\begin{align*}
	& \Big| \frac{1}{n}\tr\big( M \bm{R}(z) \big) - \frac{1}{n}\tr\Big( M \, \frac{-1}{z} \Big( \Id_d + \sum\limits_{r,s=1}^R \delta^{(B)}_{r,s}(z) A_{r}A_{s}^* \Big)^{-1} \Big) \Big| \nonumber\\
	& \Big| \frac{1}{n}\tr\big( \tilde{M} \tilde{\bm{R}}(z) \big) - \frac{1}{n}\tr\Big( \tilde{M} \, \frac{-1}{z} \Big( \Id_n + \sum\limits_{r,s=1}^R \delta^{(A)}_{r,s}(z) B_{s}^*B_{r} \Big)^{-1} \Big) \Big|
\end{align*}
will be of order $\mathcal{O}(n^{\varepsilon-\frac{1}{2}})$ with high probability (cf. (\ref{Eq_MainRes_Result_M}) in Theorem~\ref{Thm_SepCovMP_NonAsymp}).
\\[0.5em]
The main theoretical tools are listed in Section~\ref{Section_SupportingTheorems} and consist of:
\begin{itemize}
	\item a quantitative uniqueness argument for near-solutions of the dual system of equations (see Theorem~\ref{Thm_DualApprox}),
	
	\item a specialized usage of Stein's lemma to prove approximations in expectations for the Gaussian case (see Theorem~\ref{Thm_EmpiricalDualSystem} and Section~\ref{Section_Stein}), after which the expectations may be removed with concentration bounds (see Section~\ref{Section_GaussianCase}),
	
	\item a standard universality argument to lift approximations in the Gaussian case to hold for more general models (see Theorem~\ref{Thm_Universality}).
\end{itemize}

\noindent
The consequences of Theorem~\ref{Thm_SepCovMP_NonAsymp} to the limiting eigenvalue distribution of $\bm{S}$ (or in this case $\tilde{\bm{S}}$) are presented in Corollary~\ref{Cor_SepCovMP_Asymp}.

\subsection{Model and notation}\label{Subsection_Model}
Let $\bm{X}$ be a random $(d \times n)$-matrix with independent centered entries, each with variance one. Let $A_1,\dots,A_R$ be deterministic $(d \times d)$-matrices and let $B_1,\dots,B_R$ be deterministic $(n \times n)$-matrices which satisfy the non-degeneracy condition that
\begin{align}\label{Eq_Assumption_NonDegeneracy}
	& \sum\limits_{r=1}^R A_rA_r^* \in \C^{d \times d} \ \ \text{ and } \ \ \sum\limits_{r=1}^R B_r^*B_r \in \C^{n \times n} \ \ \text{ are non-singular}
\end{align}
and also satisfy the identifiability condition that the $(R \times R)$-matrices
\begin{align}\label{Eq_Assumption_Identifiability}
	& \Big( \frac{1}{n} \tr\big(A_rA_s^*\big) \Big)_{r,s \leq R} \ \ \text{ and } \ \ \Big( \frac{1}{n} \tr\big(B_s^*B_r\big) \Big)_{r,s \leq R} \ \ \text{ are non-singular.}
\end{align}
For a discussion of these assumptions, see the paragraph to Assumption~\ref{ItemTempAssumption_NonDegeneracy} in Remark~\ref{Remark_AssumptionDiscussion}.
\\[0.5em]
Define
\begin{align}\label{Eq_DefY}
	& \bm{Y} \coloneq \sum\limits_{r=1}^R A_r \bm{X} B_r
\end{align}
and
\begin{align}\label{Eq_DefS}
	& \bm{S} \coloneq \frac{1}{n} \bm{Y}\bm{Y}^* \ \ ; \ \ \tilde{\bm{S}} \coloneq \frac{1}{n} \bm{Y}^*\bm{Y}
\end{align}
as well as the resolvents
\begin{align}\label{Eq_Def_RResolvent}
	& \bm{R}(z) \coloneq \big( \bm{S} - z\Id_d \big)^{-1} \ \ ; \ \ \tilde{\bm{R}}(z) \coloneq \big( \tilde{\bm{S}} - z\Id_n \big)^{-1}
\end{align}
for any $z \in \C^+$.
\\[0.5em]
Let $u_1,\dots,u_d$ denote the eigenvectors of $\bm{S}$ corresponding to the eigenvalues $\lambda_1(\bm{S}),\dots,\lambda_d(\bm{S})$ and let $\tilde{u}_1,\dots,\tilde{u}_n$ denote the eigenvectors of $\tilde{\bm{S}}$ corresponding to the eigenvalues $\lambda_1(\tilde{\bm{S}}),\dots,\lambda_n(\tilde{\bm{S}})$.
Define the empirical matrix-valued measures
\begin{align*}
	& \hat{\rho}^{(A)} , \hat{\rho}^{(B)} : \cB(\R) \rightarrow \{ M \in \C^{R \times R} \text{ positive semi-definite} \}
\end{align*}
by
\begin{align}\label{Eq_Def_hatRhoA}
	& \hat{\rho}^{(A)}(S) \coloneq \frac{1}{n} \sum\limits_{\substack{j=1 \\ \lambda_j(\bm{S}) \in S}}^d (u_j^* A_r A_s^* u_j)_{r,s \leq R}
\end{align}
and
\begin{align}\label{Eq_Def_hatRhoB}
	& \hat{\rho}^{(B)}(S) \coloneq \frac{1}{n} \sum\limits_{\substack{j=1 \\ \lambda_j(\tilde{\bm{S}}) \in S}}^n (\tilde{u}_j^* B_s^*B_r \tilde{u}_j)_{r,s \leq R}
\end{align}
for any set $S \in \cB(\R)$.
Also define the empirical matrix-valued Stieltjes transforms
\begin{align}\label{Eq_hatDeltaIntro}
	& \hat{\delta}^{(A)}, \hat{\delta}^{(B)} : \C^+ \coloneq \{z \in \C \mid \Im(z) > 0\} \rightarrow \C^{R \times R}
\end{align}
by
\begin{align}\label{Eq_Def_hatDelta_A}
	& \hat{\delta}^{(A)}_{r,s}(z) \coloneq \frac{1}{n} \tr\big( A_r A_s^* \bm{R}(z) \big) = \int_0^\infty \frac{1}{\lambda - z} \, d\hat{\rho}^{(A)}_{r,s}(\lambda)
\end{align}
and
\begin{align}\label{Eq_Def_hatDelta_B}
	& \hat{\delta}^{(B)}_{r,s}(z) \coloneq \frac{1}{n} \tr\big( B_s^* B_r \tilde{\bm{R}}(z) \big) = \int_0^\infty \frac{1}{\lambda - z} \, d\hat{\rho}^{(B)}_{r,s}(\lambda) \ .
\end{align}
To a square matrix $A$, define its real and imaginary parts as
\begin{align}\label{Eq_DefImaginaryMatrix}
	& \Re(A) \coloneq \frac{1}{2} \big( A + A^* \big) \ \ , \ \ \Im(A) \coloneq \frac{1}{2i} \big( A - A^* \big) \ .
\end{align}
For any two square matrices $A$ and $B$, write $A \prec B$ or $A \preceq B$, if the difference $B-A$ is positive definite or semi-definite respectively.
\\[0.5em]
The standard unit vectors in $\C^d$ and $\C^n$ will be denoted by
\begin{align}\label{Eq_Def_UnitVec}
	& e_{j,d} \coloneq (\mathbbm{1}_{i=j})_{i \leq d} \in \C^d \ \ \text{ and } \ \ e_{k,n} \coloneq (\mathbbm{1}_{l=k})_{l \leq n} \in \C^n
\end{align}
respectively.
Denote the eigenvalues of Hermitian matrices $M \in \C^{d \times d}$ in the ordered scheme
\begin{align}\label{Eq_EigenvalueOrdering}
	& \lambda_1(M) \geq \dots \geq \lambda_d(M) \ .
\end{align}
Let $||\cdot||$ denote the spectral norm, which is known to be sub-multiplicative, i.e.
\begin{align}\label{Eq_SubmultiplicativitySpectralNorm}
	& ||M_1 \, M_2|| \leq ||M_1|| \, ||M_2|| \ ,
\end{align}
and coincides with the largest eigenvalue for positive semi-definite matrices. Let $||\cdot||_F$ denote the Frobenius norm, which is known to satisfy
\begin{align}\label{Eq_FrobeniusBounds}
	& ||M|| \leq ||M||_F \leq \sqrt{\operatorname{rank}(M)} ||M|| \ \ \text{ and } \ \ ||M_1 \, M_2||_F \leq ||M_1|| \, ||M_2||_F \ .
\end{align}

\subsection{Deterministic equivalents}
The following theorem will be shown in Section~\ref{Proof_Thm_DetEquiv}. For $R=1$, it reduces to known properties of the deterministic equivalents for separable sample covariance matrices (compare Propositions 1.1 and 1.2 in \cite{CouilletHachem}).

\begin{theorem}[Deterministic equivalents]\label{Thm_DetEquiv}\
	\\
	Let $A_1,\dots,A_R$ be deterministic $(d \times d)$-matrices and let $B_1,\dots,B_R$ be deterministic $(n \times n)$-matrices such that conditions (\ref{Eq_Assumption_NonDegeneracy}) and (\ref{Eq_Assumption_Identifiability}) are true.\\
	The following properties hold.
	\begin{itemize}
		\item[a)]
		For each $z \in \C^+$, there exist $(R \times R)$-matrices $\delta^{(A)}(z)$ and $\delta^{(B)}(z)$ uniquely defined by the dual system of equations
		\begin{align}\label{Eq_DualSystem}
			& \forall r,s \in \{1,\dots,R\} : \nonumber\\
			& \delta^{(A)}_{r,s}(z) = -\frac{1}{z} \frac{1}{n} \tr\bigg( A_rA_s^* \Big( \Id_d + \sum\limits_{r',s'=1}^R \delta^{(B)}_{r',s'}(z) A_{r'} A_{s'}^* \Big)^{-1} \bigg) \nonumber\\
			& \delta^{(B)}_{r,s}(z) = -\frac{1}{z} \frac{1}{n} \tr\bigg( B_s^*B_r \Big( \Id_n + \sum\limits_{r',s'=1}^R \delta^{(A)}_{r',s'}(z) B_{s'}^*B_{r'} \Big)^{-1} \bigg)
		\end{align}
		and the properties
		\begin{align}\label{Eq_Uniqueness_posDefCondition}
			& 0 \prec \Im\big( \delta^{(A)}(z) \big) \ \ , \ \ 0 \prec \Im\big( \delta^{(B)}(z) \big) \ .
		\end{align}
		
		\item[b)]
		There exist matrix-valued measures
		\begin{align*}
			& \rho^{(A)} , \rho^{(B)} : \cB(\R) \rightarrow \{ M \in \C^{R \times R} \text{ positive semi-definite} \}
		\end{align*}
		which are uniquely defined by the fact that $\delta^{(A)}(z)$ and $\delta^{(B)}(z)$ from (a) are their matrix-valued Stieltjes transforms, i.e.
		\begin{align}\label{Eq_Uniqueness_deltaStil}
			& \forall z \in \C^+ : \ \int_\R \frac{1}{\lambda - z} \, d\rho^{(A)}(\lambda) = \delta^{(A)}(z) \ \ \text{ and } \ \ \int_\R \frac{1}{\lambda - z} \, d\rho^{(B)}(\lambda) = \delta^{(B)}(z) \ .
		\end{align}
		
		\item[c)]
		The matrix-valued measures satisfy
		\begin{align}\label{Eq_DetEquiv_RhoProp2}
			& \rho^{(A)}(\R) = \frac{1}{n} \tr\big( A_r A_s^* \big)_{r,s \leq R} \ \ \text{ and } \ \ \rho^{(B)}(\R) = \frac{1}{n} \tr\big( B_s^*B_r \big)_{r,s \leq R}
		\end{align}
		as well as
		\begin{align}\label{Eq_DetEquiv_RhoProp1}
			& \supp\big( \rho^{(A)} \big) \subset [0,8\sigma^4(1+\sqrt{c_*})^2] \ \ \text{ and } \ \ \supp\big( \rho^{(B)} \big) \subset [0,8\sigma^4(1+\sqrt{c_*})^2] \ ,
		\end{align}
		for any $c_* \geq \max(d/n, 1)$ and $\sigma^2 \geq \max\big( \sum\limits_{r=1}^R ||A_r||^2 , \sum\limits_{r=1}^R ||B_r||^2 \big)$.
		
		\item[d)]
		There exists a probability measure $\ul{\nu}$ on $\R$, which is uniquely defined by the property
		\begin{align}\label{Eq_DetEquiv_NuStilProp}
			& \forall z \in \C^+ : \ \cs_{\ul{\nu}}(z) \coloneq \int_\R \frac{1}{\lambda - z} \, d\ul{\nu}(\lambda) = -\frac{1}{z} - \sum\limits_{r,s=1}^R \delta^{(A)}_{r,s}(z) \delta^{(B)}_{r,s}(z) \ .
		\end{align}
		The support of $\ul{\nu}$ is contained in the interval $[0,8\sigma^4(1+\sqrt{c_*})^2]$ for $c_*$ and $\sigma^2$ as in (c).
	\end{itemize}
\end{theorem}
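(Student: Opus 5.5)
I would prove the theorem by splitting it into a uniqueness part and an existence part. Existence comes from random matrices, not from a fixed-point argument in $\C^{R\times R}$. The guiding observation is the convention already used for the empirical objects: the matrix $Q_B(z):=\Id_d+\sum_{r',s'}\delta^{(B)}_{r',s'}A_{r'}A_{s'}^*$ can be written as $\Id_d+\mathcal A(\delta^{(B)}\otimes \Id_d)\mathcal A^*$, where $\mathcal A=[A_1,\dots,A_R]\in\C^{d\times Rd}$. The analogous statement holds for $B$, with $\mathcal B=[B_1^*,\dots,B_R^*]^*$. From this, whenever $\Im\delta^{(B)}\succ0$, $\Im Q_B\succeq0$, and by (\ref{Eq_Assumption_NonDegeneracy}) $Q_B$ is in fact invertible. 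The map $\delta^{(B)}\mapsto -\frac1z(\frac1n\tr(A_rA_s^*Q_B^{-1}))_{r,s}$ is then a matrix Nevanlinna-type map. A direct computation of $\Im$ of the right-hand side (using $\Im(-\frac1z Q^{-1})=\frac{1}{|z|^2}Q^{-1}(\Im(\bar z Q^*))Q^{-*}$, and $\Im(\bar z Q_B^*)=\Im\bar z\,\Id+\mathcal A(\Im(\bar z\,\overline{\delta^{(B)}})\otimes\Id)\mathcal A^*$ up to orientation conventions) shows that this map preserves the cone $\{\Im\succ0\}$. Here identifiability (\ref{Eq_Assumption_Identifiability}) is what turns the semidefinite inequalities into strict ones.

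For uniqueness in (a), I take two solutions $(\delta^{(A)},\delta^{(B)})$ and $(\eta^{(A)},\eta^{(B)})$ and subtract. The resolvent identity $Q_1^{-1}-Q_2^{-1}=Q_1^{-1}(Q_2-Q_1)Q_2^{-1}$ yields a linear relation $\Delta^{(A)}=\mathcal T_1(\Delta^{(B)})$, $\Delta^{(B)}=\mathcal T_2(\Delta^{(A)})$. I would then bound the composed operator $\mathcal T_1\circ\mathcal T_2$ by a Cauchy--Schwarz argument in the style of Couillet--Hachem, bounding each factor by the ratio ``$\Im z\,\cdot$ (trace term)/$\Im\delta$'' that comes out of the imaginary-part identity above. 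This should give a contraction factor strictly less than $1$, and it is exactly the $R=1$ argument of Prop.~1.1 in \cite{CouilletHachem} with scalar products replaced by $\tr(\,\cdot\,\mathcal A(\cdot\otimes\Id)\mathcal A^*)$. I expect this step to be the main obstacle: the matrices $\delta$ are not Hermitian, and the strict inequality $<1$ requires $\Im z>0$ together with (\ref{Eq_Assumption_NonDegeneracy}).

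For existence, together with (b) and (d), I would invoke the results the paper establishes later, taking Gaussian $\bm X$ and letting the dimensions grow by tensoring. Concretely, I replace $A_r$ by $A_r\otimes\Id_m$ and $B_r$ by $B_r\otimes\Id_m$; this leaves all normalized traces, and hence the system (\ref{Eq_DualSystem}), unchanged. The empirical $\hat\delta^{(A)},\hat\delta^{(B)}$ are Stieltjes transforms of positive matrix measures $\hat\rho$ with total masses as in (\ref{Eq_DetEquiv_RhoProp2}) and support bounded by $\|\bm S\|$. Since $\|\bm Y\|\le \sum_r\|A_r\|\|B_r\|\|\bm X\|\le\sigma^2\|\bm X\|$ and $\|\bm X\|^2/n\to(1+\sqrt{c})^2$, this gives support in $[0,8\sigma^4(1+\sqrt{c_*})^2]$ with room to spare. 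By Theorem~\ref{Thm_EmpiricalDualSystem} they approximately solve the system, so along a subsequence the measures converge weakly (tightness is automatic from the bounded support and fixed mass). The limits satisfy (\ref{Eq_DualSystem}) exactly and have $\Im\succeq0$; by the cone argument they in fact have $\Im\succ0$. Uniqueness then identifies the limit independently of the subsequence, and it also gives the uniqueness of the measures in (b) by Stieltjes inversion.

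Finally, the same procedure yields (c) directly from the properties of $\hat\rho$, which pass to the limit. For (d), I note that $-\frac1z-\sum_{r,s}\hat\delta^{(A)}_{r,s}\hat\delta^{(B)}_{r,s}$ should be identified with the empirical Stieltjes transform $\frac1n\tr\tilde{\bm R}(z)$. One route is the identity $\tilde{\bm R}=-\frac1z(\Id+\sum\delta^{(A)}B^*_sB_r)^{-1}$ traced against $\Id_n$, combined with $\Id-Q^{-1}=(Q-\Id)Q^{-1}$; this gives exactly $-\frac1z-\sum_{r,s}\delta^{(A)}_{r,s}\delta^{(B)}_{r,s}$ after using the second equation of (\ref{Eq_DualSystem}). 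Hence the right-hand side of (\ref{Eq_DetEquiv_NuStilProp}) is a limit of Stieltjes transforms of probability measures supported in the same compact interval. By the standard characterization it is therefore the Stieltjes transform of a probability measure $\ul\nu$, unique by inversion, with the stated support.
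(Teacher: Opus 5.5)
Your existence argument and parts (b)--(d) follow the paper's route. That route is: tensorizing to $A_r\otimes\Id_m$, $B_r\otimes\Id_m$ with a Gaussian matrix; Theorem~\ref{Thm_EmpiricalDualSystem} with Borel--Cantelli over $m$; matrix-valued Prokhorov; Stieltjes inversion plus polarization; and $\tilde M=\Id$ with $\Id-Q^{-1}=(Q-\Id)Q^{-1}$ for (d). The formula for $\tilde{\bm R}$ that you call an identity is only the approximate statement of that theorem, but this suffices as $m\to\infty$.

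\textbf{The gap: uniqueness in (a).} You need uniqueness also to identify the subsequential limits. You announce a contraction for $\mathcal T_1\circ\mathcal T_2$, but you never say in which norm. The Couillet--Hachem mechanism you invoke does not transfer. For $R>1$ the unknowns are non-Hermitian $R\times R$ matrices, so a quotient like ``$\Im z\cdot(\text{trace term})/\Im\delta$'' has no meaning.

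\textbf{The missing idea.} It is carried out in Theorem~\ref{Thm_DualApprox}, which you could simply cite with $q^{(A)}=q^{(B)}=0$ as the paper does. The idea is to lift the difference to $\C^{n\times n}$.
\begin{itemize}
\item Set $H_\delta^2=\Im(zV^{(B,\delta)})$. For two exact solutions, $H_\delta^{-1}(V^{(B,\delta)}-V^{(B,\tilde\delta)})H_{\tilde\delta}^{-1}$ is a fixed point of the operator $T_{\delta,\tilde\delta}$.
\item Write $S^{(A)},S^{(B)}$ in Kraus form via $U^{(a,k)}=\frac{1}{\sqrt n}\sum_rA_re_{a,d}e_{k,n}^\top B_r$.
\item Lemma~\ref{Lemma_OperatorBound} then bounds $\|T_{\delta,\tilde\delta}\|_{\textrm{Op}}$ by $|z|^{-2}$ times the square roots of two norms.
\item Each of those norms is strictly below $|z|^2$ by the identity (\ref{Eq_DualApprox_HSG_Identity_delta}), obtained by combining real and imaginary parts of the system. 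Its first summand is positive definite because $\Im z>0$ and by Lemma~\ref{Lemma_NonDegeneracyNew}, which uses both (\ref{Eq_Assumption_NonDegeneracy}) and (\ref{Eq_Assumption_Identifiability}).
\end{itemize}
The relevant weight is $\Im(z\delta^{(A)})$, which the equations force to be $\succ0$ once $\Im\delta^{(B)}\succ0$. It is not $\Im\delta^{(A)}$.

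\textbf{The cone claim is false as stated.} Take $d=n=R=1$, $A_1=B_1=1$, $z=-10+\bm{i}$ and $\delta^{(B)}=-1+\bm{i}$. The map gives $\delta^{(A)}=-\frac{1}{z(1+\delta^{(B)})}=\frac{1-10\bm{i}}{101}$, whose imaginary part is negative. What is preserved is the class $\{\Im\delta\succeq0,\ \Im(z\delta)\succeq0\}$, i.e.\ Stieltjes transforms of measures on $[0,\infty)$. Strictness then comes from the term $\Im(z)\Id$ in $\Im(zQ_B)$, not from identifiability alone.

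For your limit points this is easy to repair, since they are Stieltjes transforms of measures on $[0,\infty)$. The paper does it even more directly: it gets $\Im\delta_\omega(z)\succ0$ from $\mu_\omega(\R)=\frac1n\tr(A_rA_s^*)_{r,s}\succ0$ together with compact support. Still, the $\{\Im\succ0\}$ cone is not the structure that drives the uniqueness argument; the $\Im(z\,\cdot)$-weighted operator estimate above is.
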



\subsection{The matrix Dyson equation}
The matrix Dyson equation generalizes well-known limiting laws such as the Marchenko--Pastur law for sample covariance matrices and the Semicircle law for Wigner matrices. For a random Hermitian $(N \times N)$-matrix $\bm{H}$, under certain conditions (see Section 2.2 of \cite{MDE1}) on the structure of $\bm{H}$, the resolvent
\begin{align*}
	& \bm{G} : \C^+ \rightarrow \C^{N \times N} \ \ ; \ \ \bm{G}(v) \coloneq \big( \bm{H} -v\Id_N \big)^{-1}
\end{align*}
is well-approximated by a deterministic equivalent matrix $G(v)$ defined as the unique solution to the matrix Dyson equation
\begin{align}\label{Eq_MatrxDysonEquation}
	& 0 = \Id_{N} + \big( v\Id_N - A + \mathcal{S}[G] \big) G
\end{align}
for which the matrix $\Im(G)$ is also positive definite. Here, the matrix $A$ and the operator $\mathcal{S}$ are given by
\begin{align*}
	& A \coloneq \E[\bm{H}] \ \ \text{ and } \ \ \mathcal{S}[M] = \E\big[ (\bm{H}-A) M (\bm{H}-A) \big] \ .
\end{align*}
To date, the most general results on the approximation of $\bm{G}(v)$ by $G(v)$ require strong assumptions on the covariance structure between entries of $\bm{H}$ (cf. \cite{MDE1, MDE_SlowCorDecay}), which makes them not applicable to the separable covariance mixture model analyzed in this paper.

\section{Main results}

\begin{assumption}[Non-asymptotic assumptions]\label{Assumption_Basic}\
	\begin{enumerate}[label=(A\arabic*)]
		\item\label{ItemAssumption_cBound}
		Let $d,n \in \N$ and let $c_* \geq 1$ denote a constant satisfying
		\begin{align}\label{Eq_Assumption_cBound}
			& \frac{d}{n} \leq c_* \ .
		\end{align}
		
		\item\label{ItemAssumption_sigmaBound}
		Let $(A_1,\dots,A_R)$ be a family of $(d \times d)$-matrices and let $(B_1,\dots,B_R)$ be a family of $(n \times n)$-matrices satisfying
		\begin{align}\label{Eq_sigmaAssumption_NonAsymp}
			& \sum\limits_{r=1}^R ||A_{r}||^2 \leq \sigma^2 \ \ \text{ and } \ \ \sum\limits_{r=1}^R ||B_{r}||^2 \leq \sigma^2
		\end{align}
		for some constant $\sigma^2 \geq 1$.
		
		\item\label{ItemAssumption_boundedSixthMoment}
		Suppose the random $(d \times n)$-matrix $\bm{X}$ has independent centered $\C$-valued entries, each with $\E[|\bm{X}_{i,j}|^2]=1$. Further suppose there exists a constant $C_6 \geq 1$ such that
		\begin{align*}
			& \forall i \leq d, \, j \leq n : \ \E\big[|\bm{X}_{i,j}|^6\big] \leq C_6 \ .
		\end{align*}
		
		\item\label{ItemTempAssumption_NonDegeneracy}
		Suppose there exists a constant $\tau \in (0,1)$ such that the \textit{non-degeneracy bounds}
		\begin{align}\label{Eq_AssumptionNonDegeneracy}
			& \lambda_{\min}\Big( \sum\limits_{r=1}^R A_rA_r^* \Big) \geq \tau \ \ \text{ and } \ \ \lambda_{\min}\Big( \sum\limits_{r=1}^R B_r^*B_r \Big) \geq \tau
		\end{align}
		hold and that the positive semi-definite $(R \times R)$-matrices
		\begin{align}\label{Eq_DefGram}
			& \mathbb{G}^{(A)} \coloneq \Big( \frac{1}{n} \tr\big(A_rA_s^*\big) \Big)_{r,s \leq R} \ \ \text{ and } \ \ \mathbb{G}^{(B)} \coloneq \Big( \frac{1}{n} \tr\big(B_s^*B_r\big) \Big)_{r,s \leq R}
		\end{align}
		satisfy the \textit{identifiability bounds}
		\begin{align}\label{Eq_AssumptionIdentifiability}
			& \lambda_{\min}\big(\mathbb{G}^{(A)}\big) \geq \tau \ \ \text{ and } \ \ \lambda_{\min}\big(\mathbb{G}^{(B)}\big) \geq \tau \ .
		\end{align}
		
	\end{enumerate}
\end{assumption}

\begin{remark}[Discussion of assumptions]\label{Remark_AssumptionDiscussion}\
	\\
	Assumption~\ref{ItemAssumption_cBound} may seem weaker than the usual assumption that $\frac{d}{n} \in [c_1,c_2]$ for some $c_1,c_2>0$ or even $\frac{d}{n} \xrightarrow{n \to \infty} c_\infty > 0$ as is often assumed in works on high-dimensional models (cf. \cite{BaiSALDRM,BaiCLT,MP_original}). While the presented results will also hold for $d \ll n$, the scaling of $\frac{1}{n}$ in (\ref{Eq_Def_hatRhoA})-(\ref{Eq_Def_hatDelta_B}) or (\ref{Eq_DualSystem}) implies that these results are of limited interest in the regime $\frac{d}{n} \xrightarrow{n \rightarrow \infty} 0$.
	\\[0.5em]
	Assumption~\ref{ItemAssumption_sigmaBound} is the model-specific version of a standard bound on the population eigenvalues, which is used in most works on eigen inference (cf. \cite{DingFan_SpikedShrinkage, MPIKaroui, MPIKong, LedoitWolf2}).
	\\[0.5em]
	Assumption~\ref{ItemAssumption_boundedSixthMoment} is a standard moment bound on the entries of $\bm{X}$. Such assumptions on the moments or tail behavior of the entries also arise in most works on random matrix theory (RMT) (cf. \cite{BaiSALDRM,VershyninHDP,MPIYaoZhengBai}). Proving results of RMT under minimal moment assumptions can be highly challenging (cf. \cite{Pastur_SemiCircleLaw, TaoVu_CircularLaw}) and some results, such as local laws, commonly assume the existence of all moments (cf. \cite{AltGram, KnowlesAnisotropicLocalLaws}).
	\\[0.5em]
	Assumption~\ref{ItemTempAssumption_NonDegeneracy} is specific to the model under consideration and thus cannot immediately be compared to assumptions in previous works. It is a quantitative strengthening of Assumptions (\ref{Eq_Assumption_NonDegeneracy}) and (\ref{Eq_Assumption_Identifiability}), and is employed in non-asymptotic bounds in the proof of Theorem~\ref{Thm_SepCovMP_NonAsymp}. Although this is not pursued here, Assumption (\ref{Eq_AssumptionNonDegeneracy}) (and thus indirectly (\ref{Eq_Assumption_NonDegeneracy})) may be removed entirely from the asymptotic result in Corollary~\ref{Cor_SepCovMP_Asymp} by an approximation argument. On the other hand, the non-quantitative Assumption (\ref{Eq_Assumption_Identifiability}) is necessary for the existence of the deterministic equivalents introduced in Theorem~\ref{Thm_DetEquiv}, which may be seen by comparing the condition (\ref{Eq_Uniqueness_posDefCondition}) with the property (\ref{Eq_DetEquiv_RhoProp2}).
\end{remark}

\begin{definition}[Spectral domain]\label{Def_SpectralDomain}\
	\\
	For any constants $\eta,\kappa>0$, let $\bD(\eta,\kappa) \subset \C^+$ denote the spectral domain
	\begin{align}\label{Eq_Def_bD}
		& \bD(\eta,\kappa) \coloneq \big\{ z \in \C^+ \ \big| \ \Im(z) \geq \eta , \, |z| \leq \kappa \big\} \ .
	\end{align}
\end{definition}

\begin{theorem}[Non-asymptotic MP law analogue]\label{Thm_SepCovMP_NonAsymp}\
	\\
	Suppose~\ref{ItemAssumption_cBound}-\ref{ItemTempAssumption_NonDegeneracy} hold.
	For any $\eta,\kappa>0$ and $\tvarepsilon \in (0,1)$, there exist constants
	\begin{align*}
		& \mathcal{C}=\mathcal{C}(c_*,\sigma^2,C_6,\tau,\eta,\kappa,\tvarepsilon)>0 \ \ \text{ and } \ \ N=N(R,c_*,\sigma^2,\tau,\eta,\kappa,\tvarepsilon)>0
	\end{align*}
	which neither depend on $d$ and $n$ nor on the model $\big( \bm{X}, (A_r)_{r \leq R}, (B_r)_{r \leq R} \big)$, such that for all $n \geq N$ the bound
	\begin{align}\label{Eq_MainRes_Result}
		& \bP\Big( \forall z \in \bD(\eta,\kappa) : \ \big|\big| \hat{\delta}^{(A)}(z) - \delta^{(A)}(z) \big|\big| \leq R \mathcal{C} n^{\frac{\tvarepsilon-1}{2}} \nonumber\\
		& \hspace{2.5cm} \text{ and } \big|\big| \hat{\delta}^{(B)}(z) - \delta^{(B)}(z) \big|\big| \leq R \mathcal{C} n^{\frac{\tvarepsilon-1}{2}} \Big) \nonumber\\
		& \geq 1 - \mathcal{C} R^2 n \exp\Big( -\frac{n^{\tvarepsilon}}{\mathcal{C} R^2} \Big)
	\end{align}
	holds.
	Additionally, for any matrices $M \in \C^{d \times d}$ and $\tilde{M} \in \C^{n \times n}$ with $||M||, ||\tilde{M}|| \leq \sigma^2$, one has
	\begin{align}\label{Eq_MainRes_Result_M}
		& \bP\Big( \forall z \in \bD(\eta,\kappa) : \nonumber\\
		& \hspace{0.5cm} \Big| \frac{1}{n}\tr\big( M \bm{R}(z) \big) - \frac{-1}{z n}\tr\Big( M \Big( \Id_d + \sum\limits_{r',s'=1}^R \delta^{(B)}_{r',s'}(z) A_{r'}A_{s'}^* \Big)^{-1} \Big) \Big| \leq R \mathcal{C} n^{\frac{\tvarepsilon-1}{2}} , \nonumber\\
		& \hspace{0.5cm} \Big| \frac{1}{n}\tr\big( \tilde{M} \tilde{\bm{R}}(z) \big) - \frac{-1}{z n}\tr\Big( \tilde{M} \Big( \Id_n + \sum\limits_{r',s'=1}^R \delta^{(A)}_{r',s'}(z) B_{s'}^*B_{r'} \Big)^{-1} \Big) \Big| \leq R \mathcal{C} n^{\frac{\tvarepsilon-1}{2}} \Big) \nonumber\\
		& \geq 1 - \mathcal{C} R^2 n \exp\Big( -\frac{n^{\tvarepsilon}}{\mathcal{C} R^2} \Big) \ .
	\end{align}
\end{theorem}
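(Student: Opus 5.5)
The plan follows the three tools announced in the introduction: first show that the empirical Stieltjes transforms $(\hat{\delta}^{(A)},\hat{\delta}^{(B)})$ solve the dual system (\ref{Eq_DualSystem}) up to a small error; then convert this into closeness to $(\delta^{(A)},\delta^{(B)})$ through the quantitative uniqueness result (Theorem~\ref{Thm_DualApprox}); finally pass from Gaussian to general entries by universality (Theorem~\ref{Thm_Universality}).

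\textbf{Step 1: Gaussian case in expectation.} Assume first that $\bm{X}$ has standard complex Gaussian entries. Write $\bm{R}(z)$ through the identity $\bm{S}\bm{R} - z\bm{R} = \Id_d$, so that $z\,\E[\bm{R}] = -\Id_d + \frac{1}{n}\sum_{r,s}\E[A_r\bm{X}B_rB_s^*\bm{X}^*A_s^*\bm{R}]$. Apply Stein's lemma (Gaussian integration by parts) in the entries of $\bm{X}$; the derivatives of $\bm{R}$ produce the traces $\hat{\delta}^{(B)}_{r,s}$. This gives an approximate identity
\begin{align*}
	& \E[\bm{R}(z)] \approx -\frac{1}{z}\Big(\Id_d + \sum_{r,s}\E[\hat{\delta}^{(B)}_{r,s}(z)]A_rA_s^*\Big)^{-1} \ ,
\end{align*}
and the symmetric identity holds for $\tilde{\bm{R}}$. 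This is the content of Theorem~\ref{Thm_EmpiricalDualSystem}. Multiplying by $A_rA_s^*$ (resp.\ $B_s^*B_r$) and taking $\frac1n\tr$ shows that $(\E\hat{\delta}^{(A)},\E\hat{\delta}^{(B)})$ satisfies (\ref{Eq_DualSystem}) up to an error of order $n^{-1/2}$ or better. The error terms consist of covariances of traces with resolvent entries, and these are controlled by Poincaré-type variance bounds. Throughout, the bounds $\|\bm{R}\|\le \eta^{-1}$ and Assumption~\ref{ItemAssumption_sigmaBound} are used.

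\textbf{Step 2: concentration.} The functions $\bm{X}\mapsto \frac1n\tr(M\bm{R}(z))$ are Lipschitz with constant $O(n^{-1/2})$ in Frobenius norm, with the constant depending on $\sigma,\eta$. Gaussian concentration then gives deviations of size $n^{(\tvarepsilon-1)/2}$ with probability at least $1-C\exp(-n^{\tvarepsilon}/C)$. A union bound over the $R^2$ entries, combined with a net of polynomially many points in $\bD(\eta,\kappa)$, yields the uniform statement; the union bound over the net is what produces the prefactor $R^2n$. Lipschitz continuity in $z$ (derivative bounded by $\eta^{-2}$) extends the bound from the net to all of $\bD(\eta,\kappa)$. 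On this event, $(\hat{\delta}^{(A)},\hat{\delta}^{(B)})$ is an $O(R n^{(\tvarepsilon-1)/2})$-near-solution of (\ref{Eq_DualSystem}).

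\textbf{Step 3: stability and test matrices.} Apply Theorem~\ref{Thm_DualApprox} to the near-solution from Step~2. This requires that $\Im\hat{\delta}^{(A)}$ and $\Im\hat{\delta}^{(B)}$ are bounded below by a positive multiple of $\Id_R$. That lower bound comes from $\Im \bm{R}\succeq \frac{\eta}{(\|\bm{S}\|+\kappa)^2}\Id$, together with the identifiability bound (\ref{Eq_AssumptionIdentifiability}) and a high-probability bound on $\|\bm{S}\|$ (norm of $\bm{X}$, via (A1)--(A2)). The theorem then yields (\ref{Eq_MainRes_Result}). For (\ref{Eq_MainRes_Result_M}), the same Stein computation is carried out with a general test matrix $M$ instead of $A_rA_s^*$. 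The empirical $\hat{\delta}^{(B)}$ is then replaced by $\delta^{(B)}$ using the resolvent identity. The inverse $(\Id+\sum\delta^{(B)}_{r,s}A_rA_s^*)^{-1}$ is bounded because its imaginary part is definite (this is where (\ref{Eq_AssumptionNonDegeneracy}) enters), so the replacement costs only a constant times $\|\hat{\delta}^{(B)}-\delta^{(B)}\|$.

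\textbf{Step 4: universality.} Use Theorem~\ref{Thm_Universality}, a Lindeberg-type replacement that needs only third-order expansions and hence the sixth-moment bound $C_6$, to transfer the Gaussian statements to general $\bm{X}$. Truncating the entries at a level such as $n^{\tvarepsilon/4}$ keeps the high-probability form of the estimates.

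I expect the main obstacle to be Step~1 together with the stability step. Because the $A_r$ do not commute, the Stein expansion produces cross terms $A_r\bm{X}B_rB_s^*\bm{X}^*A_s^*$ that do not close into a scalar equation. The proof must organize them into the matrix-valued quantities $\hat{\delta}$, and must secure the uniform lower bound on $\Im\hat{\delta}$ that Theorem~\ref{Thm_DualApprox} needs.
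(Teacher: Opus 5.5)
Your architecture matches the paper's. You use a Stein computation plus concentration in the Gaussian case, Theorem~\ref{Thm_DualApprox} for stability, and Theorem~\ref{Thm_Universality} for general entries.

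Inside the Gaussian step you differ. You put the deterministic $\E[\hat{\delta}^{(B)}]$ inside the inverse and pay with covariance terms controlled by Poincar\'e bounds. The paper (Proposition~\ref{Prop_Stein_MP_Analogue}) keeps the random $\bm{Q}^{(\bm{Z})}(z)^{-1}$ inside the expectation, so Stein's lemma produces derivatives of $\bm{Q}^{-1}$ instead of covariances. It then concentrates $\frac{1}{zn}\tr(M\bm{Q}^{(\bm{Z})}(z)^{-1})$ separately via a McShane extension (Lemma~\ref{Lemma_ConcentrationLargeExpectation}), because $\|\bm{Q}^{-1}\|$ is only controlled while $\|\bm{S}\|$ is bounded. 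Your variant works and even avoids the McShane step. For it you need a lower bound on $\Im\E[\hat{\delta}^{(B)}]$, which follows from Jensen plus Lemma~\ref{Lemma_SMomentBound}. You also need Lipschitz dependence of the right-hand side of (\ref{Eq_DualSystem}) on $\delta^{(B)}$ where the inverses are bounded. It is not, however, the content of Theorem~\ref{Thm_EmpiricalDualSystem}.

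There are two genuine gaps.

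\emph{First gap: the true solution is never checked quantitatively.} You never verify (\ref{Eq_DualApprox_posDefCondition_delta})--(\ref{Eq_DualApprox_BoundCondition_delta}) of Theorem~\ref{Thm_DualApprox} for the true solution $(\delta^{(A)},\delta^{(B)})$, with $\tilde{\tau},\tilde{\kappa}$ depending only on $(c_*,\sigma^2,\tau,\eta,\kappa)$. Theorem~\ref{Thm_DetEquiv}(a) gives only $0\prec\Im\delta^{(A)}$, which would make $\mathcal{C}$ model-dependent. The paper uses part (c) instead. From $\supp\rho^{(A)}\subset[0,8\sigma^4(1+\sqrt{c_*})^2]$ and $\rho^{(A)}(\R)=\mathbb{G}^{(A)}\succeq\tau\Id_R$ it gets $\lambda_{\min}(\Im\delta^{(A)}(z))\geq\tau\Im(z)/(8\sigma^4(1+\sqrt{c_*})^2+|z|)^2$, and the total mass gives $\|\delta^{(A)}\|\leq c_*\sigma^2/\eta$. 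The same omission affects your test-matrix step. Definiteness of the imaginary part does not bound $\|(\Id_d+\sum_{r,s}\delta^{(B)}_{r,s}A_rA_s^*)^{-1}\|$ uniformly; you need this quantitative bound combined with (\ref{Eq_NonDegeneracyC_A}), which is Lemma~\ref{Lemma_DetEquivBound}.

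\emph{Second gap: circular Gaussians do not chain with universality.} Restricting Step~1 to standard (circular) complex Gaussians does not connect to Theorem~\ref{Thm_Universality}. That theorem compares $\bm{X}$ with the similar Gaussian $\bm{Z}$ of Definition~\ref{Def_SimilarGaussianMatrix}. Its pseudo-variances $\E[\bm{Z}_{i,j}^2]=\E[\bm{X}_{i,j}^2]$ need not vanish, for instance with Rademacher or $t$-distributed entries. The second-order Lindeberg terms cancel only because these match. So your Stein computation must use the mixed Stein lemma (Lemma~\ref{Lemma_SteinMixed}), and you must show the extra terms weighted by $\E[\bm{Z}_{i,j}^2]$ are $O(1/n)$, as the paper does with Lemma~\ref{Lemma_TraceTriplet}.

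Finally, drop the truncation remark in Step~4; it is both wrong and unnecessary. Take $d\asymp n$ and entries with tails of order $t^{-7}$, which have finite sixth moment. Then $\max_{i,j}|\bm{X}_{i,j}|>n^{\tvarepsilon/4}$ with probability tending to one. More generally, for any polynomial truncation level the event that truncation is active is at best polynomially small, so it can never give failure probabilities of order $\exp(-n^{\tvarepsilon}/\mathcal{C})$. The paper truncates nothing. Lindeberg is applied only to expectations, and the high-probability form comes from McDiarmid's inequality (Lemma~\ref{Lemma_McDiarmid}). By the rank-$2R$ argument of Lemma~\ref{Lemma_R_Concentration}, replacing one column of $\bm{X}$ changes $\frac1n\tr(M\bm{R}(z))$ by at most $4R\|M\|/(n\Im(z))$, and this needs no moment assumption.
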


In order to state the asymptotic consequences of this theorem for the empirical matrix-valued measures $\hat{\rho}^{(A)}$ and $\hat{\rho}^{(B)}$ from (\ref{Eq_Def_hatRhoA}) and (\ref{Eq_Def_hatRhoB}), the following lemma characterizes the weak convergence of matrix-valued measures.

\begin{lemma}[Characterization of weak convergence for matrix-valued measures]\label{Lemma_MatrWeakConv}\
	\\
	For any matrix-valued measures
	\begin{align*}
		& \rho, \rho_1,\rho_2,\dots : \cB(\R) \rightarrow \{ M \in \C^{R \times R} \text{ positive semi-definite} \} \ ,
	\end{align*}
	the following statements are equivalent.
	\begin{itemize}
		\item[a)]
		For all $v \in \C^R$, the convergence of finite Radon measures $v^* \rho_n v \xRightarrow{n \to \infty} v^* \rho v$ holds.
		
		\item[b)]
		For every $f \in C_b(\R;\C)$, the convergence $\big|\big| \int_\R f \, d(\rho_n - \rho) \big|\big| \xrightarrow{n \to \infty} 0$ holds.
		
		\item[c)]
		The convergence
		\begin{align}\label{Eq_MatrWeakConv_SameMass}
			& ||\rho_n(\R) - \rho(\R)|| \xrightarrow{n \rightarrow \infty} 0
		\end{align}
		holds and the corresponding matrix-valued Stieltjes transforms
		\begin{align*}
			& \delta_n(z) \coloneq \int_\R \frac{1}{\lambda - z} \, d\rho_n(\lambda) \ \ \text{ and } \ \ \delta(z) \coloneq \int_\R \frac{1}{\lambda - z} \, d\rho(\lambda)
		\end{align*}
		satisfy
		\begin{align*}
			& \forall z \in \C^+ : \ ||\delta_n(z) - \delta(z)|| \xrightarrow{n \rightarrow \infty} 0 \ .
		\end{align*}
		
		\item[d)]
		The convergence (\ref{Eq_MatrWeakConv_SameMass}) holds and for all $v \in \C^R$ and $z \in \C^+$, one has
		\begin{align*}
			& v^* \delta_n(z) v \xrightarrow{n \rightarrow \infty} v^* \delta(z) v \ .
		\end{align*}
	\end{itemize}
\end{lemma}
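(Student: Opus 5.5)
The plan is to reduce everything to the scalar case through polarization, and then use the classical characterization of weak convergence of finite positive measures on $\R$ via Stieltjes transforms. Each $\rho_n$ takes values in positive semi-definite matrices, so for every $v \in \C^R$ the map $S \mapsto v^*\rho_n(S)v$ is a finite positive Borel measure. The entries are recovered from such quadratic forms by polarization:
\begin{align*}
	& e_r^* \rho_n e_s = \frac{1}{4} \sum\limits_{k=0}^{3} i^{k} \, (e_s + i^k e_r)^* \rho_n (e_s + i^k e_r) \ ,
\end{align*}
where $e_r$ denotes the $r$-th standard unit vector of $\C^R$ (the correct placement of $r,s$ and the conjugation will be checked). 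Since $R$ is fixed, all matrix norms are equivalent, so a matrix sequence converges in $||\cdot||$ if and only if all its entries converge. Consequently, a statement that holds for all quadratic forms $v^*(\cdot)v$ upgrades to the matrix-valued statement.

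For (a)$\Rightarrow$(b), fix $f \in C_b(\R;\C)$ and split it into real and imaginary parts. Weak convergence of $v^*\rho_n v$ gives $\int f \, d(v^*\rho_n v) \to \int f\, d(v^*\rho v)$ for every $v$, and polarization then gives entrywise convergence of $\int f\, d\rho_n$, hence convergence in norm. For (b)$\Rightarrow$(c), take $f\equiv 1$ to obtain (\ref{Eq_MatrWeakConv_SameMass}), and for fixed $z \in \C^+$ take $f(\lambda)=(\lambda-z)^{-1}$, which is bounded by $1/\Im(z)$ and continuous. The step (c)$\Rightarrow$(d) is trivial, since $|v^*(\delta_n-\delta)v| \le ||v||^2 ||\delta_n-\delta||$.

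The substantive step is (d)$\Rightarrow$(a). Fix $v$ and set $\mu_n := v^*\rho_n v$ and $\mu := v^*\rho v$. These are finite positive measures, and their masses converge because $v^*\rho_n(\R)v \to v^*\rho(\R)v$ by (\ref{Eq_MatrWeakConv_SameMass}). Their Stieltjes transforms are exactly $v^*\delta_n(z)v$, so by (d) these converge pointwise on $\C^+$. I will argue via tightness and subsequences, since this is where mass escaping to infinity has to be ruled out, and I expect it to be the main obstacle. The masses are bounded, so by Helly's selection theorem every subsequence has a further subsequence converging vaguely to some finite measure $\mu'$ with $\mu'(\R)\le \mu(\R)$. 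Because $(\lambda - z)^{-1} \in C_0(\R)$, vague convergence already yields convergence of the Stieltjes transforms, so $\cs_{\mu'}=\cs_\mu$ on $\C^+$. Stieltjes inversion then gives $\mu'=\mu$, and in particular $\mu'(\R)=\mu(\R)$. Vague convergence together with convergence of total masses gives weak convergence, because it gives tightness: take $\phi\in C_c$ with $0\le\phi\le 1$ and $\phi = 1$ on a large interval, and then $\mu_n(\R)-\int\phi\,d\mu_n \to \mu(\R)-\int\phi\,d\mu$, which is small. Since the limit $\mu$ does not depend on the subsequence, the whole sequence converges weakly, which is (a). Without the mass condition the implication would fail (consider $\delta_{n}$ escaping to infinity), which is exactly why (\ref{Eq_MatrWeakConv_SameMass}) appears in (c) and (d).
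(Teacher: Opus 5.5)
Your proposal is correct and follows the paper's proof: the same cycle (a)$\Rightarrow$(b)$\Rightarrow$(c)$\Rightarrow$(d)$\Rightarrow$(a), with polarization for (a)$\Rightarrow$(b) and the test functions $f\equiv 1$ and $f(\lambda)=(\lambda-z)^{-1}$ for (b)$\Rightarrow$(c). The only difference is in (d)$\Rightarrow$(a): the paper treats $v^*\rho(\R)v=0$ separately, normalizes to probability measures and cites the Stieltjes continuity theorem (Theorem B.9 in \cite{BaiSALDRM}), whereas you prove that scalar statement inline via Helly selection, uniqueness of Stieltjes transforms and tightness from mass convergence, which handles both cases at once.
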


If any of the equivalent statements in Lemma~\ref{Lemma_MatrWeakConv} hold, write $\rho_n \xRightarrow{n \to \infty} \rho$. Theorem~\ref{Thm_SepCovMP_NonAsymp} allows the following corollary.

\begin{corollary}[Asymptotic MP law analogue]\label{Cor_SepCovMP_Asymp}\
	\\
	Let $d=d_n$ be a sequence of integers satisfying
	\begin{align}\label{Eq_CorAsymp_cInfAssumption}
		& \frac{d_n}{n} \xrightarrow{n \rightarrow \infty} c_\infty \ \text{ for some constant $c_\infty > 0 \, $.}
	\end{align}
	For fixed $R \in \N$ and all $n \in \N$, let $A^{(n)}_1,\dots,A^{(n)}_R \in \C^{d_n \times d_n}$ and $B^{(n)}_1,\dots,B^{(n)}_R \in \C^{n \times n}$ be deterministic matrices satisfying~\ref{ItemAssumption_sigmaBound} and~\ref{ItemTempAssumption_NonDegeneracy} for the same constants $\sigma^2 \geq 1$ and $\tau \in (0,1)$.
	Suppose the deterministic equivalent matrix-valued measures $\rho^{(A,n)}$ and $\rho^{(B,n)}$ from Theorem~\ref{Thm_DetEquiv} satisfy the convergences
	\begin{align}\label{Eq_CorAsymp_RhoConvergence}
		& \rho^{(A,n)} \xRightarrow{n \rightarrow \infty} \rho^{(A,\infty)} \ \ \text{ and } \ \ \rho^{(B,n)} \xRightarrow{n \rightarrow \infty} \rho^{(B,\infty)}
	\end{align}
	for some matrix-valued measures $\rho^{(A,\infty)}$ and $\rho^{(B,\infty)}$. Then for any sequence $(\bm{X}_n)_{n \in \N}$ of random matrices satisfying~\ref{ItemAssumption_boundedSixthMoment} for the same constant $C_6 \geq 1$, the weak convergences
	\begin{align}\label{Eq_CorAsymp_hatRhoConvergence}
		& 1 = \bP\Big( \hat{\rho}^{(A,n)} \xRightarrow{n \rightarrow \infty} \rho^{(A,\infty)} \ \ \text{ and } \ \ \hat{\rho}^{(B,n)} \xRightarrow{n \rightarrow \infty} \rho^{(B,\infty)} \Big)
	\end{align}
	hold almost surely, where the matrix-valued measures $\hat{\rho}^{(A,n)}$ and $\hat{\rho}^{(B,n)}$ are as defined in (\ref{Eq_Def_hatRhoA}) and (\ref{Eq_Def_hatRhoB}) using the model $(\bm{X}_n, (A^{(n)}_r)_{r \leq R}, (B^{(n)}_r)_{r \leq R})$.
	\\[0.5em]
	Furthermore, for the random probability measure $\hat{\ul{\nu}}_n : \cB(\R) \rightarrow [0,1]$ given by
	\begin{align}\label{Eq_Def_hatNu}
		& \hat{\ul{\nu}}_n(S) \coloneq \frac{1}{n} \#\{ j \leq n \mid \lambda_j(\tilde{\bm{S}}^{(n)}) \in S \}
	\end{align}
	with $\tilde{\bm{S}}^{(n)} = \frac{1}{n} (\bm{Y}^{(n)})^* \bm{Y}^{(n)}$ and $\bm{Y}^{(n)} = \sum\limits_{r=1}^R A^{(n)}_r \bm{X}_n B^{(n)}_r$, it also holds that
	\begin{align}\label{Eq_CorAsymp_hatNuConvergence}
		& 1 = \bP\big( \hat{\ul{\nu}}_n \xRightarrow{n \to \infty} \ul{\nu}_\infty \big) \ ,
	\end{align}
	where $\ul{\nu}_\infty$ is the uniquely defined probability measure whose Stieltjes transform is given by
	\begin{align}\label{Eq_Def_NuStil}
		& \forall z \in \C^+ : \ \cs_{\ul{\nu}_\infty}(z) \coloneq \int_\R \frac{1}{\lambda - z} \, d\ul{\nu}_\infty(\lambda) = -\frac{1}{z} - \sum\limits_{r,s=1}^R \delta^{(A,\infty)}_{r,s}(z) \delta^{(B,\infty)}_{r,s}(z) \ .
	\end{align}
\end{corollary}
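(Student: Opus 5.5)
Since Theorem~\ref{Thm_SepCovMP_NonAsymp} is available, the corollary is a Borel--Cantelli argument followed by Lemma~\ref{Lemma_MatrWeakConv}. We fix $\eta>0$, $\kappa>0$ and $\tvarepsilon\in(0,1)$, say $\tvarepsilon=1/2$. Since $d_n/n\to c_\infty$, some $c_*\geq 1$ satisfies $d_n/n\le c_*$ for all $n$. The constants $\mathcal{C}$ and $N$ of Theorem~\ref{Thm_SepCovMP_NonAsymp} then depend only on $(R,c_*,\sigma^2,C_6,\tau,\eta,\kappa,\tvarepsilon)$, so they are uniform in $n$. The failure probabilities $\mathcal{C}R^2 n\exp(-n^{\tvarepsilon}/(\mathcal{C}R^2))$ are summable in $n$. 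By Borel--Cantelli, almost surely and for all large $n$, we have $\sup_{z\in\bD(\eta,\kappa)}\|\hat\delta^{(A,n)}(z)-\delta^{(A,n)}(z)\|\le R\mathcal{C}n^{(\tvarepsilon-1)/2}\to0$, and the same holds for $B$. Intersecting over countably many pairs $(\eta,\kappa)=(1/m,m)$ gives, on a single almost sure event, $\|\hat\delta^{(A,n)}(z)-\delta^{(A,n)}(z)\|\to 0$ for every $z\in\C^+$.

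Next we combine this with the hypothesis (\ref{Eq_CorAsymp_RhoConvergence}). By criterion (c) of Lemma~\ref{Lemma_MatrWeakConv}, $\delta^{(A,n)}(z)\to\delta^{(A,\infty)}(z)$ pointwise, so $\hat\delta^{(A,n)}(z)\to\delta^{(A,\infty)}(z)$ almost surely for every $z$. To apply (c) to $\hat\rho^{(A,n)}$ we also need convergence of total masses. We have $\hat\rho^{(A,n)}(\R)=(\frac1n\tr(A_rA_s^*))_{r,s}$, because the $u_j$ form an orthonormal basis. This equals $\rho^{(A,n)}(\R)$ by (\ref{Eq_DetEquiv_RhoProp2}), and $\rho^{(A,n)}(\R)\to\rho^{(A,\infty)}(\R)$ by assumption. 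The same argument with $\tilde u_j$ handles $B$. Lemma~\ref{Lemma_MatrWeakConv} (c)$\Rightarrow$(a) then yields (\ref{Eq_CorAsymp_hatRhoConvergence}).

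For $\hat{\ul\nu}_n$ we use the Stieltjes transform. We need the identity $\cs_{\hat{\ul\nu}_n}(z)=\frac1n\tr\tilde{\bm R}(z)=-\frac1z-\sum_{r,s}\hat\delta^{(A)}_{r,s}\hat\delta^{(B)}_{r,s}$, up to $O(n^{-1/2+})$ errors. The cleaner route avoids needing this identity exactly. Apply (\ref{Eq_MainRes_Result_M}) with $\tilde M=\Id_n$, which gives $\frac1n\tr\tilde{\bm R}(z)\approx-\frac1{zn}\tr(\Id_n+\sum\delta^{(A)}_{r',s'}B_{s'}^*B_{r'})^{-1}$ uniformly on $\bD(\eta,\kappa)$ almost surely eventually. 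Then verify from the dual system (\ref{Eq_DualSystem}) that this deterministic quantity equals $\cs_{\ul\nu^{(n)}}(z)=-\frac1z-\sum\delta^{(A)}_{r,s}\delta^{(B)}_{r,s}$, which is presumably how Theorem~\ref{Thm_DetEquiv}(d) is built. To check this, write $T=\sum\delta^{(A)}_{r',s'}B_{s'}^*B_{r'}$. Then $-\frac1{zn}\tr((\Id+T)^{-1})=-\frac1z+\frac1{zn}\tr(T(\Id+T)^{-1})$, and inserting the second equation of (\ref{Eq_DualSystem}) turns the last term into $-\sum\delta^{(A)}_{r,s}\delta^{(B)}_{r,s}$.

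It remains to show that $\cs_{\ul\nu^{(n)}}(z)\to -\frac1z-\sum\delta^{(A,\infty)}_{r,s}\delta^{(B,\infty)}_{r,s}$, which is the pointwise limit of products of convergent entries. We must also show that this limit is the Stieltjes transform of a probability measure $\ul\nu_\infty$. This is the main point needing care. The measures $\ul\nu^{(n)}$ are probability measures supported in the fixed compact interval $[0,8\sigma^4(1+\sqrt{c_*})^2]$ by Theorem~\ref{Thm_DetEquiv}(d). Hence they are tight, every subsequential weak limit is a probability measure, and its Stieltjes transform is the limit function, so the limit is unique. Thus $\ul\nu^{(n)}\Rightarrow\ul\nu_\infty$. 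The empirical measures $\hat{\ul\nu}_n$ are probability measures whose Stieltjes transforms converge almost surely at every $z$ on a countable dense set, and by equicontinuity on all of $\C^+$, to $\cs_{\ul\nu_\infty}$. The standard Stieltjes continuity theorem, which is the scalar case of Lemma~\ref{Lemma_MatrWeakConv} with $R=1$ and total mass one, then gives (\ref{Eq_CorAsymp_hatNuConvergence}).
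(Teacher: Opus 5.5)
Your proposal is correct and follows essentially the same route as the paper. Both arguments use Borel--Cantelli on the bounds of Theorem~\ref{Thm_SepCovMP_NonAsymp} (including $\tilde M=\Id_n$) over the domains $\bD(1/m,m)$, then criterion (c) of Lemma~\ref{Lemma_MatrWeakConv} together with $\hat\rho^{(A,n)}(\R)=\rho^{(A,n)}(\R)$, the trace identity the paper records as (\ref{Eq_DeltaProductCalc}), and a tightness/Prokhorov subsequence argument for $\ul{\nu}^{(n)}\Rightarrow\ul{\nu}_\infty$. The detour through a countable dense set and equicontinuity is not needed, since the uniform bounds on $\bD(1/m,m)$ already give convergence at every $z\in\C^+$ on a single almost sure event.
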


\begin{remark}[Examples used in the simulation study]\label{Remark_Examples}\
	\\
	The validity of Theorem~\ref{Thm_SepCovMP_NonAsymp} is illustrated by numerical simulations for the following three examples. These examples fall into the setting of Assumptions~\ref{ItemAssumption_cBound}-\ref{ItemTempAssumption_NonDegeneracy} and are not covered by previous generalizations of the Marchenko--Pastur law (cf. \cite{TimeMP4, MDE_SlowCorDecay, SepCovMixture_Paul, StrongMP}). The \texttt{Python} code used to create the plots may be found in the author's GitHub repository\footnote{\href{https://github.com/BenDeitmar/SepCovMixture_Examples}{{https://github.com/BenDeitmar/SepCovMixture\_Examples}}}.
	\begin{itemize}
		\item[1)] \textit{Mixture of data with two different covariances}:\\
		For $d = 5n$ and $R=2$, define
		\begin{align*}
			& B_1 \coloneq \diag\big( \underbrace{1,\dots,1}_{\times \lceil \frac{n}{2} \rceil},\underbrace{0,\dots,0}_{\times (n- \lceil \frac{n}{2} \rceil)} \big) \ \ \text{ and } \ \ B_2 \coloneq \diag\big( \underbrace{0,\dots0}_{\times \lceil \frac{n}{2} \rceil},\underbrace{1,\dots,1}_{\times (n-\lceil \frac{n}{2} \rceil)} \big)
		\end{align*}
		as well as
		\begin{align*}
			& A_1 \coloneq \diag\Big( \underbrace{\frac{1}{3},\dots,\frac{1}{3}}_{\times \lceil \frac{d}{2} \rceil},\underbrace{0,\dots,0}_{\times (d-\lceil \frac{d}{2} \rceil)} \Big) \ \ \text{ and } \ \ A_2 \coloneq V_d\diag\Big( \underbrace{\frac{1}{2},\dots,\frac{1}{2}}_{\times \lceil \frac{d}{2} \rceil},\underbrace{1,\dots,1}_{\times (d-\lceil \frac{d}{2} \rceil)} \Big) \ ,
		\end{align*}
		where $V_d = \frac{1}{\sqrt{d}} \big( e^{-2\pi \bm{i} \frac{(k-1)(l-1)}{d}} \big)_{k,l \in \{1,\dots,d\}}$ is the $(d \times d)$ fast Fourier transform matrix. Let the entries of the $(d \times n)$ random matrix $\bm{X}$ be iid standard complex normal, then the data-matrix $\bm{Y} = A_1 \bm{X} B_1 + A_2 \bm{X} B_2$ consists of independent Gaussian columns, the first $\lceil \frac{n}{2} \rceil$ of which have population covariance matrix $\Sigma_1 = A_1 A_1^*$ and the last $n-\lceil \frac{n}{2} \rceil$ of which have population covariance $\Sigma_2 = A_2 A_2^*$. The unitary matrix $V_d$ was added to the definition of $A_2$, to ensure that $\Sigma_1$ and $\Sigma_2$ are not simultaneously diagonalizable.
		\\[0.5em]
		Lastly, define the example test-matrices $M = \Id_d$ and $\tilde{M} = \frac{1}{\sqrt{n}} (1)_{k,l\leq n}$.
		\\[0.5em]
		Assumptions~\ref{ItemAssumption_cBound}-\ref{ItemAssumption_boundedSixthMoment} are satisfied with constants $c_* = 5$, $\sigma^2 = 1$ and $C_6=6$. Possible choices of $\tau$ such that Assumption~\ref{ItemTempAssumption_NonDegeneracy} holds, depend on $d$ and will be given for each realization of the example.
		
		\item[2)] \textit{Moving average process}:\\
		For $d=n$ and $R \leq n$, define $B_r$ as the matrix with ones on the $(r-1)$-th upper diagonal and zeroes else, i.e.
		\begin{align*}
			& \forall r \in \{1,\dots,R\} : \ B_r \coloneq \big( \mathbbm{1}_{k-(r-1)=l} \big)_{k,l \leq n} \ .
		\end{align*}
		Further, for independent Haar-unitary matrices $U_2,\dots,U_R$, define $A_1 = \Id_d$ and
		\begin{align*}
			& \forall r \in \{2,\dots,R\} : \ A_r \coloneq U_r \diag\Big( \frac{1}{d}, \frac{2}{d}, \dots, \frac{d}{d} \Big) \ .
		\end{align*}
		Let the entries of $\bm{X}$ be iid Rademacher random variables, i.e. $\bP(\bm{X}_{i,j} = 1) = \frac{1}{2} = \bP(\bm{X}_{i,j} = -1)$, which are also independent of the unitary matrices $U_2,\dots,U_R$. The columns $\bm{Y}_{\bullet,t}$ of the data-matrix $\bm{Y} = \sum\limits_{r=1}^R A_r \bm{X} B_r$ for $t \geq R$ form a moving average process by the observation
		\begin{align*}
			& \forall t \in \{R,\dots,n\} : \ \bm{Y}_{\bullet,t} = \sum\limits_{i=1}^{R-1} A_{i+1} \xi_{t-i} + \xi_t \ ,
		\end{align*}
		where the $d$-dimensional innovations $(\xi_t)_{t \in \{1,\dots,n\}}$ are given by $\xi_t = \bm{X}_{\bullet,t}$.
		\\[0.5em]
		Lastly, define $M = \diag\Big( \frac{1}{d}, \frac{2}{d}, \dots, \frac{d}{d} \Big)$ and $\tilde{M} = \diag(\underbrace{1,\dots,1}_{\times \lfloor \frac{n}{2} \rfloor},\underbrace{0,\dots,0}_{\times (n - \lfloor \frac{n}{2} \rfloor)})$.\\
		Assumptions~\ref{ItemAssumption_cBound}-\ref{ItemAssumption_boundedSixthMoment} are satisfied with constants $c_* = 1$, $\sigma^2 = 1$ and $C_6=1$. Possible choices of $\tau$ such that Assumption~\ref{ItemTempAssumption_NonDegeneracy} holds, depend on $d$, $n$, $R$ and the realizations of $U_2,\dots,U_R$ and will be given for each realization of the example.
		
		\item[3)] \textit{Mixture of permutations}:\\
		For $d = 2n$ and arbitrary $R$, let $p_1,\dots,p_R$ be iid random permutations of $\{1,\dots,d\}$ with uniform distribution on the symmetric group $\operatorname{Sym}_d$ and likewise let $q_1,\dots,q_R$ be iid random permutations of with uniform distribution on $\operatorname{Sym}_n$. Suppose $(p_r)_{r \leq R}$, $(q_r)_{r \leq R}$ and $\bm{X}$ are all independent, where the entries of $\sqrt{\frac{7}{5}} \bm{X}$ are iid with Student's $t$-distribution with $7$ degrees of freedom. Define
		\begin{align*}
			& A_r \coloneq \big( \mathbbm{1}_{p_r(i)=j} \big)_{i,j \leq d} \text{ as the permutation matrix to } p_r
		\end{align*}
		and
		\begin{align*}
			& B_r \coloneq \big( \mathbbm{1}_{q_r(k)=l} \big)_{k,l \leq n} \text{ as the permutation matrix to } q_r \ .
		\end{align*}
		Lastly, define the example test-matrices $M = \diag( 1, 0, \dots, 0)$ and $\tilde{M} = B_1$.
		\\[0.5em]
		Assumptions~\ref{ItemAssumption_cBound}-\ref{ItemAssumption_boundedSixthMoment} are satisfied with constants $c_* = 2$, $\sigma^2 = 1$ and $C_6=125$. Possible choices of $\tau$ such that Assumption~\ref{ItemTempAssumption_NonDegeneracy} holds, depend on $d$, $n$, $R$ and the realizations of $p_1,\dots,p_R,q_1,\dots,q_R$ and will be given for each realization of the example.
	\end{itemize}
\end{remark}

\vspace{-0.2cm}
\begin{figure}[h]
	{\centering
		\includegraphics[width=0.32\textwidth]{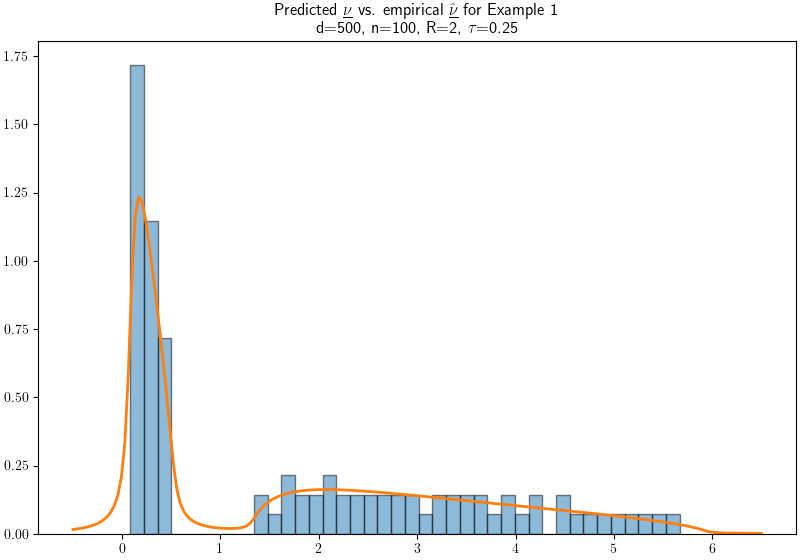} \includegraphics[width=0.32\textwidth]{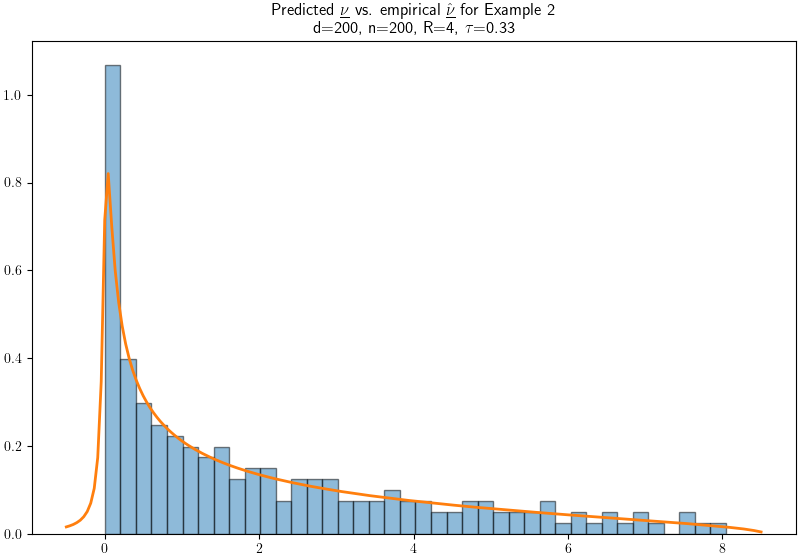} \includegraphics[width=0.32\textwidth]{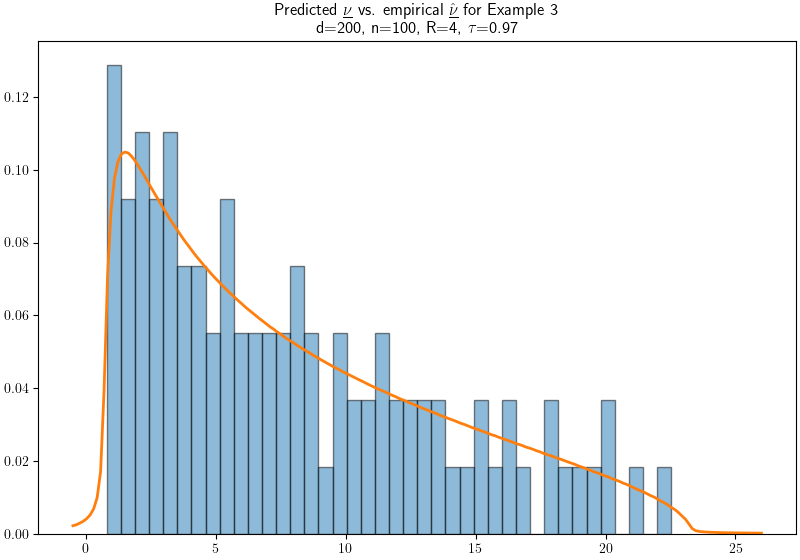}
	}
	\vspace{-0.2cm}
	\caption{Histograms (blue) of the eigenvalues of $\tilde{\bm{S}}$ for Example 1 with $n=100$ (left), Example 2 with $n=200$ and $R=4$ (middle) and Example 3 with $n=100$ and $R=4$ (right). Plots (orange) of the maps $x \mapsto \frac{1}{\pi}\Im(\cs_{\ul{\nu}}(x+i\eta))$ for $\eta=0.05$ are overlaid, where $\ul{\nu}$ is as defined in Theorem~\ref{Thm_DetEquiv}.}\label{Fig_SC1_Ex}
\end{figure}

\begin{figure}[h]
	{\centering
		\includegraphics[width=0.32\textwidth]{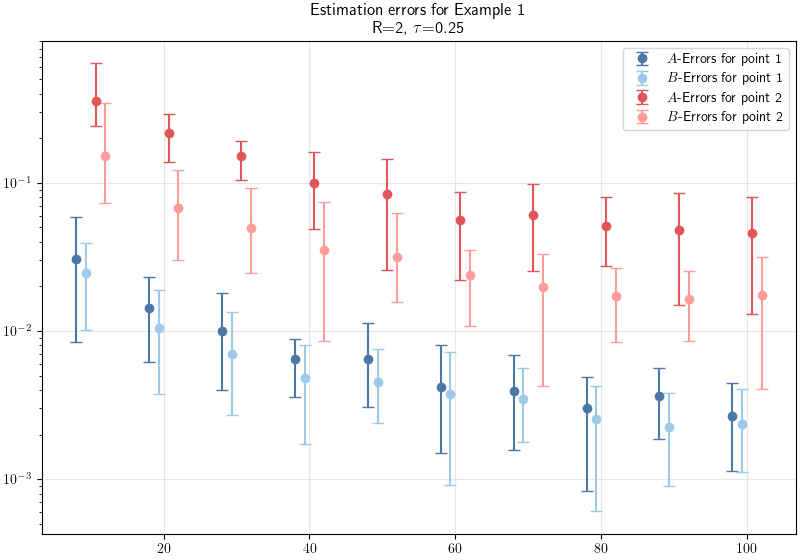} \includegraphics[width=0.32\textwidth]{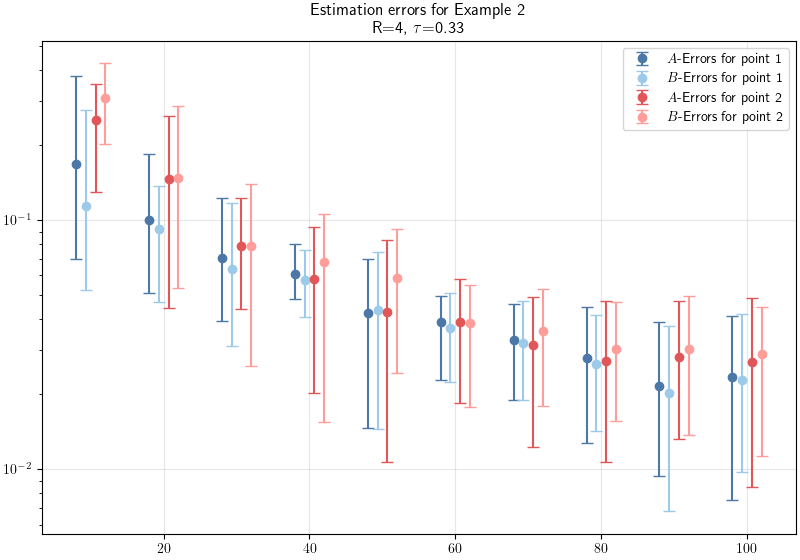} \includegraphics[width=0.32\textwidth]{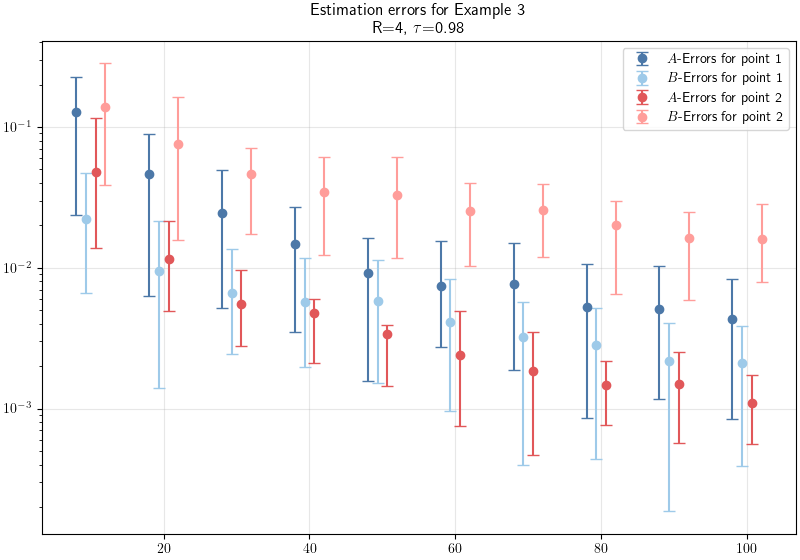}
	}
	\vspace{-0.2cm}
	\caption{Average approximation errors from (\ref{Eq_MainRes_Result_M}), where the upper absolute value in (\ref{Eq_MainRes_Result_M}) is called "A-Error" and the lower absolute value in (\ref{Eq_MainRes_Result_M}) is called "B-Error". The test-matrices $M, \tilde{M}$ are as defined in Remark~\ref{Remark_Examples}. The data is from 25 realizations of Examples 1 (left), 2 (middle) and 3 (right) with $n$ ranging from $10$ to $100$. The $z$-points are $z_1=1.5+\bm{i}$ and $z_2=1.5+0.1\bm{i}$ (left), $z_1=6+0.1\bm{i}$ and $z_2=0.5+0.1\bm{i}$ (middle) as well as $z_1=0+0.1\bm{i}$ and $z_2=2+0.1\bm{i}$ (right). The y-axes are logarithmic and the error bars present the range between the $10\%$ and $90\%$ quantiles.}\label{Fig_SC2_Ex}
	\vspace{-0.2cm}
\end{figure}

\newpage
\section{Supporting theorems}\label{Section_SupportingTheorems}
Begin by introducing a notation that allows for the comparison of resolvents $\bm{R}(z)$, when different inputs for the matrix $\bm{X}$ are inserted.

\begin{definition}[Robust notation]\label{Def_RobustNotation}\
	\\
	For any matrix $X$, let $d_X$ denote the number of rows and $n_X$ denote the number of columns. For given matrices $A_1,\dots,A_R \in \C^{d_X \times d_X}$ and $B_1,\dots,B_R \in \C^{n_X \times n_X}$, define
	\begin{align}\label{Eq_Def_Y_Robust}
		& \bm{Y}^{(X)} = \bm{Y}^{(X,A,B)} \coloneq \sum\limits_{r=1}^R A_r X B_r \ ,
	\end{align}
	where the dependence on $A_1,\dots,A_R,B_1,\dots,B_R$ is suppressed in notation. Further define
	\begin{align}
		& \bm{S}^{(X)} \coloneq \frac{1}{n_X} \bm{Y}^{(X)} (\bm{Y}^{(X)})^* \ \ \text{ and } \ \ \tilde{\bm{S}}^{(X)} \coloneq \frac{1}{n_X} (\bm{Y}^{(X)})^* \bm{Y}^{(X)} \label{Eq_Def_S_Robust}
	\end{align}
	as well as
	\begin{align}
		& \bm{R}^{(X)}(z) \coloneq \big( \bm{S}^{(X)} - z\Id_{d_X} \big)^{-1} \ \ \text{ and } \ \ \tilde{\bm{R}}^{(X)}(z) \coloneq \big( \tilde{\bm{S}}^{(X)} - z\Id_{n_X} \big)^{-1} \ . \label{Eq_Def_R_Robust}
	\end{align}
	Lastly, for ease of future notation, define the matrices
	\begin{align}
		& \bm{Q}^{(X)}(z) \coloneq \Id_{d_X} +  \sum\limits_{r,s=1}^R \frac{1}{n_X}\tr\big( B_{s}^* B_{r} \tilde{\bm{R}}^{(X)}(z) \big) A_{r} A_{s}^* \label{Eq_Def_Q}\\
		& \tilde{\bm{Q}}^{(X)}(z) \coloneq \Id_{n_X} +  \sum\limits_{r,s=1}^R \frac{1}{n_X}\tr\big( A_r A_s^* \bm{R}^{(X)}(z) \big) B_s^* B_r \ . \label{Eq_Def_tQ}
	\end{align}
\end{definition}

The following theorem is a generalization of the uniqueness statement for deterministic equivalents given in Theorem~\ref{Thm_DetEquiv} and it shows that even approximate solutions to the dual system of equations (\ref{Eq_DualSystem}) will be close to the true solutions.
It is a technical result, which will be used to show uniqueness in (a) of Theorem~\ref{Thm_DetEquiv} and which plays a crucial role in the proof of Theorem~\ref{Thm_SepCovMP_NonAsymp}. The theorem will be proved in Section~\ref{Proof_Thm_DualApprox} by an operator-valued generalization of the method employed in the proof of Lemma 4.3.3 in \cite{LixinSALDRM} (see pages 222--224 therein) to show uniqueness of deterministic equivalents for the separable covariance model.

\begin{theorem}[Quantitative uniqueness argument for solutions to (\ref{Eq_DualSystem})]\label{Thm_DualApprox}\
	\\
	Suppose~\ref{ItemAssumption_cBound},~\ref{ItemAssumption_sigmaBound} and~\ref{ItemTempAssumption_NonDegeneracy} hold. For any $\tilde{\tau},\tilde{\kappa},\eta,\kappa>0$, there exists a constant
	\begin{align*}
		& \mathcal{C} = \mathcal{C}(c_*, \sigma^2, \tau, \eta, \kappa, \tilde{\tau}, \tilde{\kappa})>0 \ ,
	\end{align*}
	which does not depend on $d$, $n$ or the matrices $A_1,\dots,A_R,B_1,\dots,B_R$, such that the following statement holds.
	\\[0.5em]
	For any $z \in \bD(\eta,\kappa)$, let $(\delta^{(A)}(z),\delta^{(B)}(z)) \in \C^{R \times R} \times \C^{R \times R}$ be a solution to (\ref{Eq_DualSystem}), which also satisfies
	\begin{align}
		& \tilde{\tau} \Id_R \preceq \Im\big( \delta^{(A)}(z) \big) \ \ , \ \ \tilde{\tau} \Id_R \preceq \Im\big( \delta^{(B)}(z) \big) \label{Eq_DualApprox_posDefCondition_delta}\\
		& ||\delta^{(A)}(z)|| \leq \tilde{\kappa} \ \ \text{ and } \ \ ||\delta^{(B)}(z)|| \leq \tilde{\kappa} \ . \label{Eq_DualApprox_BoundCondition_delta}
	\end{align}
	Further, let $(\tilde{\delta}^{(A)}(z),\tilde{\delta}^{(B)}(z)) \in \C^{R \times R} \times \C^{R \times R}$ be an approximate solution to (\ref{Eq_DualSystem}) in the sense
	\begin{align}\label{Eq_DualApprox_DualSystem}
		& \forall r,s \in \{1,\dots,R\} : \nonumber\\
		& \tilde{\delta}^{(A)}_{r,s}(z) = -\frac{1}{z} \frac{1}{n} \tr\bigg( A_rA_s^* \Big( \Id_d + \sum\limits_{r',s'=1}^R \tilde{\delta}^{(B)}_{r',s'}(z) A_{r'} A_{s'}^* \Big)^{-1} \bigg) + q^{(A)}_{r,s}(z) \nonumber\\
		& \tilde{\delta}^{(B)}_{r,s}(z) = -\frac{1}{z} \frac{1}{n} \tr\bigg( B_s^*B_r \Big( \Id_n + \sum\limits_{r',s'=1}^R \tilde{\delta}^{(A)}_{r',s'}(z) B_{s'}^*B_{r'} \Big)^{-1} \bigg) + q^{(B)}_{r,s}(z)
	\end{align}
	for $q^{(A)}(z),q^{(B)}(z) \in \C^{R \times R}$ whose spectral norms are bounded by
	\begin{align}
		& ||q^{(A)}(z)|| \leq \frac{\tau^2 \tilde{\tau}}{2|z|(1 + \tilde{\kappa} \sigma^2)^2} \label{Eq_DualApprox_qA_Assumption}\\
		& ||q^{(A)}(z)|| \leq \frac{\tau^7 \tilde{\tau} \Im(z)}{8|z|^4 \sigma^2 (1 + \tilde{\kappa} \sigma^2)^{7}} \label{Eq_DualApprox_qSmall}
	\end{align}
	and
	\begin{align}\label{Eq_DualApprox_tqSmall}
		& ||q^{(B)}(z)|| \leq \frac{\tau^9 \tilde{\tau} \Im(z)}{8|z|^3 c_* \sigma^6 (1 + \tilde{\kappa} \sigma^2)^{7}} \ .
	\end{align}
	Also assume that
	\begin{align}
		& \tilde{\tau} \Id_R \preceq \Im\big( \tilde{\delta}^{(A)}(z) \big) \ \ , \ \ \tilde{\tau} \Id_R \preceq \Im\big( \tilde{\delta}^{(B)}(z) \big) \label{Eq_DualApprox_posDefCondition_tdelta}\\
		& ||\tilde{\delta}^{(A)}(z)|| \leq \tilde{\kappa} \ \ \text{ and } \ \ ||\tilde{\delta}^{(B)}(z)|| \leq \tilde{\kappa} \ , \label{Eq_DualApprox_BoundCondition_tdelta}
	\end{align}
	then the difference between $(\tilde{\delta}^{(A)}(z),\tilde{\delta}^{(B)}(z))$ and $(\delta^{(A)}(z),\delta^{(B)}(z))$ may be bounded by
	\begin{align}\label{Eq_DualApprox_Result_deltaA}
		& \big|\big| \delta^{(A)}(z) - \tilde{\delta}^{(A)}(z) \big|\big| \leq \mathcal{C} \big(||q^{(A)}(z)||+||q^{(B)}(z)||\big)
	\end{align}
	and
	\begin{align}\label{Eq_DualApprox_Result_deltaB}
		& \big|\big| \delta^{(B)}(z) - \tilde{\delta}^{(B)}(z) \big|\big| \leq \mathcal{C} \big(||q^{(A)}(z)||+||q^{(B)}(z)||\big) \ .
	\end{align}
\end{theorem}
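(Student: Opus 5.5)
The plan is to write the difference of the two systems as a linear equation in the differences $\Delta^{(A)} \coloneq \delta^{(A)}-\tilde{\delta}^{(A)}$ and $\Delta^{(B)} \coloneq \delta^{(B)}-\tilde{\delta}^{(B)}$, and then show that the associated linear operator is a strict contraction. Write $Q \coloneq \Id_d + \sum_{r,s}\delta^{(B)}_{r,s}A_rA_s^*$ and $\tilde{Q}$ for the same expression with $\tilde{\delta}^{(B)}$, and analogously $P,\tilde{P}$ on the $n$-side using $\delta^{(A)},\tilde{\delta}^{(A)}$. The resolvent identity $Q^{-1}-\tilde{Q}^{-1} = -Q^{-1}(Q-\tilde{Q})\tilde{Q}^{-1}$ gives
\begin{align*}
 & \Delta^{(A)}_{r,s} = \frac{1}{zn}\sum_{r',s'} \Delta^{(B)}_{r',s'} \tr\big( A_rA_s^* Q^{-1}A_{r'}A_{s'}^*\tilde{Q}^{-1}\big) - q^{(A)}_{r,s} \ ,
\end{align*}
together with the analogous identity for $\Delta^{(B)}$ in terms of $\Delta^{(A)}$ and $q^{(B)}$. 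Abbreviate this as $\Delta^{(A)} = \mathcal{T}_A(\Delta^{(B)}) - q^{(A)}$ and $\Delta^{(B)} = \mathcal{T}_B(\Delta^{(A)}) - q^{(B)}$, where $\mathcal{T}_A,\mathcal{T}_B$ are linear maps on $\C^{R\times R}$. Iterating yields $(I-\mathcal{T}_A\mathcal{T}_B)\Delta^{(A)} = -q^{(A)} - \mathcal{T}_A(q^{(B)})$. It therefore suffices to bound $\|\mathcal{T}_A\|,\|\mathcal{T}_B\|$ and to show that $\|(I-\mathcal{T}_A\mathcal{T}_B)^{-1}\|$ is bounded by a constant depending only on the listed parameters. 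Once $\Delta^{(A)}$ is bounded, the bound on $\Delta^{(B)}$ follows immediately from the second identity.

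The norm bounds on the inverses come first. Since $\Im(\delta^{(B)})\succeq\tilde{\tau}\Id_R$, one has $\Im(Q)=\sum_{r,s}\Im(\delta^{(B)})_{r,s}A_rA_s^* \succeq \tilde{\tau}\sum_r A_rA_r^*\succeq \tilde{\tau}\tau \Id_d$, using (\ref{Eq_AssumptionNonDegeneracy}). Writing $\sum_{r,s}M_{r,s}A_rA_s^* = \mathcal{A}(M\otimes \Id_d)\mathcal{A}^*$ with $\mathcal{A}=[A_1,\dots,A_R]$ makes these manipulations clean, and also gives $\|Q\|\le 1+\tilde{\kappa}\sigma^2$. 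Consequently $\|Q^{-1}\|\le(\tau\tilde{\tau})^{-1}$, and the same holds for $\tilde{Q}$; here (\ref{Eq_DualApprox_posDefCondition_tdelta}) and (\ref{Eq_DualApprox_BoundCondition_tdelta}) are used. The factor $c_*$ enters through $d/n$ when traces over $\C^d$ are normalized by $n$, which explains its appearance in (\ref{Eq_DualApprox_tqSmall}).

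The main obstacle is the contraction estimate, which should follow the separable-covariance argument of Lemma 4.3.3 in \cite{LixinSALDRM}. I would take imaginary parts of the exact system. Using $\Im(Q^{-1}) = -Q^{-1}\Im(Q)Q^{-*}$, one gets for $v\in\C^R$
\begin{align*}
 & v^*\Im(z\delta^{(A)})v = \frac{1}{n}\tr\big(\mathcal{A}(vv^*\otimes\Id)\mathcal{A}^* Q^{-1}\mathcal{A}(\Im\delta^{(B)}\otimes\Id)\mathcal{A}^*Q^{-*}\big) \ .
\end{align*}
This identity exhibits $\Im(z\delta^{(A)})$ as $\mathcal{T}^{+}_A(\Im\delta^{(B)})$ for a completely positive map $\mathcal{T}^+_A$. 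Combined with $\Im(\delta^{(A)}) = \Im(z\delta^{(A)})/|z|^2\cdot(\cdots)$, i.e. the extra positive term $\Im(z)\,\frac1n\tr(\cdots Q^{-1}Q^{-*})$ that appears when passing from $\Im(z\delta)$ to $\Im\delta$, it yields a strict inequality of the form $\Im\delta^{(A)} \succeq \mathcal{T}^+_A(\Im\delta^{(B)}) + \Im(z)\,c\,\Id_R$, and analogously for $B$. A Cauchy--Schwarz splitting then gives $|\langle w,\mathcal{T}_A\mathcal{T}_B u\rangle|$ bounded by a geometric mean of $\mathcal{T}^+$-type quantities evaluated along the exact solution and along the approximate solution; this is why both $Q^{-1}$ and $\tilde{Q}^{-1}$ appear. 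Applying the same imaginary-part identity to the approximate system produces $q$-error terms, and the smallness conditions (\ref{Eq_DualApprox_qA_Assumption})--(\ref{Eq_DualApprox_tqSmall}) guarantee that these terms are at most half of the $\Im(z)$-gain. The result should be a Perron--Frobenius-type bound $\|\mathcal{T}_A\mathcal{T}_B\|\le 1-c(\tau,\tilde{\tau},\eta,\kappa,\sigma^2,\tilde{\kappa},c_*)$ in a suitable weighted norm, for instance the norm induced by $\Im\delta^{(A)}$, which is comparable to the spectral norm because $\tilde{\tau}\preceq\Im\delta\preceq\tilde{\kappa}$. A Neumann series then inverts $I-\mathcal{T}_A\mathcal{T}_B$ with a controlled constant, which completes (\ref{Eq_DualApprox_Result_deltaA}) and (\ref{Eq_DualApprox_Result_deltaB}).
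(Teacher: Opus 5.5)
Your overall strategy is the paper's: linearize, show that the linearized map is a strict contraction by feeding the imaginary-part identities of the exact and of the approximate system into a Cauchy--Schwarz splitting, and invert. The paper runs this on $V^{(B,\delta)}-V^{(B,\tilde\delta)}=\sum_{r,s}\Delta^{(A)}_{r,s}B_s^*B_r\in\C^{n\times n}$, via $T_{\delta,\tilde\delta}$ and Lemma~\ref{Lemma_ContractionEVs} instead of a Neumann series. Working with $\Delta^{(A)}\in\C^{R\times R}$ is a legitimate alternative.

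The gap is that the contraction step, which carries the whole difficulty, is misformulated in two places.

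First, the gain inequality is wrong as stated. With your own $\mathcal{T}^+_A$, the identity $\Im(z\delta^{(A)})=\mathcal{T}^+_A(\Im\delta^{(B)})$ is exact and contains no gain. So your inequality reads $\Im\delta^{(A)}\succeq\Im(z\delta^{(A)})+c\,\Im(z)\Id_R$, which is false in general. For example, with $R=1$, $A_1=B_1=\Id_n$, $d=n$ and $z=x+\bm{i}$, one has $\Im\delta^{(A)}-\Im(z\delta^{(A)})\approx-2x^{-3}$ as $x\to\infty$. The gain only appears when you pass through $|z|^2\Im\delta=\Re(z)\Im(z\delta)-\Im(z)\Re(z\delta)$. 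Using $\Re(Q^{-1})=Q^{-1}Q^{-*}+Q^{-1}\Re(Q-\Id_d)Q^{-*}$, this gives
\[
|z|^2\,\Im\delta^{(A)}=\Im(z)\,\mathcal{P}_A+\mathcal{T}^+_A\big(\Im(z\delta^{(B)})\big) , \qquad \mathcal{P}_A\coloneq\Big(\frac{1}{n}\tr\big(A_rA_s^*Q^{-1}Q^{-*}\big)\Big)_{r,s\leq R}\succeq\frac{\tau}{(1+\tilde\kappa\sigma^2)^2}\Id_R .
\]
Combined with $\Im(z\delta^{(B)})=\mathcal{T}^+_B(\Im\delta^{(A)})$, you get the closed loop $\Im\delta^{(A)}=|z|^{-2}\mathcal{T}^+_A\mathcal{T}^+_B(\Im\delta^{(A)})+|z|^{-2}\Im(z)\,\mathcal{P}_A$. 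This is the analogue of (\ref{Eq_DualApprox_Comb3}), which the paper writes for $\Im(zV^{(B,\delta)})=\sum_{r,s}\Im(z\delta^{(A)})_{r,s}B_s^*B_r$. There is one gain per full loop, and the factor $|z|^{-2}$ is exactly what matches the $z^{-2}$ in $\mathcal{T}_A\mathcal{T}_B$.

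Second, a single weight such as $\Im\delta^{(A)}$ cannot work. $\mathcal{T}_A\mathcal{T}_B$ is $z^{-2}$ times a map $M\mapsto\sum_jC_jMD_j$, where one family of factors involves only $\tilde Q^{-1},\tilde P^{-1}$ and the other only $Q^{-1},P^{-1}$. After Lemma~\ref{Lemma_OperatorBound}, one of the two resulting factors is a positive map of the approximate solution evaluated at the weight. The approximate identity, with its $q$-errors, controls it only if the weight on that side is the imaginary part coming from $\tilde\delta$. So you need a two-sided norm: the weight from $\delta$ on the side of the exact factors, and the one from $\tilde\delta$ on the other. This is why $T_{\delta,\tilde\delta}$ is conjugated by $H_\delta$ on one side and by $H_{\tilde\delta}$ on the other.

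One of the two factors also comes out in the reversed order, e.g.\ $\tilde Q^{-*}(\cdot)\tilde Q^{-1}$. The identities survive this because $\Im(Q^{-1})=-Q^{-1}\Im(Q)Q^{-*}=-Q^{-*}\Im(Q)Q^{-1}$, and likewise for the real part.

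Finally, your claim that (\ref{Eq_DualApprox_qA_Assumption})--(\ref{Eq_DualApprox_tqSmall}) absorb the $q$-errors depends on the weight. These thresholds are tuned to the weights $\Im(z\delta^{(A)})$ and $\Im(z\tilde\delta^{(A)})$, i.e.\ to $H_\delta^2$ and $H_{\tilde\delta}^2$. Condition (\ref{Eq_DualApprox_qA_Assumption}) exists only to bound $\Im(z\tilde\delta^{(A)})$ from below. If you use $\Im\tilde\delta^{(A)}$ as the weight, the approximate loop has the gain $|z|^{-2}\Im(z)\tilde{\mathcal{P}}_A$ (with $\tilde Q$ in place of $Q$). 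Next to it sits an error term $\Im q^{(A)}$ that lacks the factor $|z|^{-2}$. So (\ref{Eq_DualApprox_qSmall}) no longer guarantees that the error stays below the gain when $|z|$ is small.
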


The next theorem shows that the empirical matrix-valued Stieltjes transforms $\hat{\delta}^{(A)}(z)$ and $\hat{\delta}^{(B)}(z)$ are, in the Gaussian case, approximate solutions to the dual system of equations (\ref{Eq_DualSystem}) with high probability. Together with the previous theorem, it will form the core of the proof of Theorem~\ref{Thm_SepCovMP_NonAsymp}. The theorem is proved in Sections~\ref{Section_Stein} and~\ref{Section_GaussianCase}, where Section~\ref{Section_Stein} employs Stein's Lemma to show that the expectations of the errors are small, and Section~\ref{Section_GaussianCase} uses concentration results to show that this translates to the errors being small with high probability.

\begin{theorem}[Approximate dual system of equations in the Gaussian case]\label{Thm_EmpiricalDualSystem}\
	\\
	Suppose~\ref{ItemAssumption_cBound},~\ref{ItemAssumption_sigmaBound} and~\ref{ItemTempAssumption_NonDegeneracy} hold and let $\bm{Z}$ denote a random $(d \times n)$-matrix whose entries are independent centered complex-valued Gaussian random variables with $\E[|\bm{Z}_{j,k}|^2]=1$. Note that the entries may differ in their second moments $\E[\bm{Z}_{j,k}^2]$.\\
	For any $\eta,\kappa>0$ there exists a constant $\mathcal{C} = \mathcal{C}(c_*,\sigma^2,\tau,\eta,\kappa)>0$, which does not depend on the exact choices of $A_1,\dots,A_R,B_1,\dots,B_R$, such that the empirical matrix-valued Stieltjes transforms $\big(\hat{\delta}^{(A,\bm{Z})}(z),\hat{\delta}^{(B,\bm{Z})}(z)\big)$, defined by
	\begin{align}\label{Eq_Def_hatDeltaZ}
		& \hat{\delta}^{(A,\bm{Z})}(z)_{r,s} \coloneq \frac{1}{n} \tr\big( A_rA_s^* \bm{R}^{(\bm{Z})}(z) \big) \ \ \text{ and } \ \ \hat{\delta}^{(B,\bm{Z})}(z)_{r,s} \coloneq \frac{1}{n} \tr\big( B_s^*B_r \tilde{\bm{R}}^{(\bm{Z})}(z) \big) \ ,
	\end{align}
	for any $\tvarepsilon \in (0,1)$ and deterministic matrices
	\begin{align*}
		& M \in \C^{d \times d} \ \ \text{ and } \ \ \tilde{M} \in \C^{n \times n} \ \ \text{ with } \ \ ||M||, ||\tilde{M}|| \leq \sigma^2
	\end{align*}
	satisfy the bounds
	\begin{align}\label{Eq_EmpDualGaussian_ResultA_M}
		& \bP\Big( \exists z \in \bD(\eta,\kappa) : \ \Big| \frac{1}{n} \tr\big( M \bm{R}^{(\bm{Z})}(z) \big) - \frac{-1}{z} \frac{1}{n} \tr\Big( M \Big( \Id_d + \sum\limits_{r',s'=1}^R \hat{\delta}^{(B,\bm{Z})}_{r',s'}(z) A_{r'} A_{s'}^* \Big)^{-1} \Big) \Big| \nonumber\\
		& \hspace{9.5cm} \geq \mathcal{C} n^{\frac{\tvarepsilon-1}{2}} + \mathcal{C} n^2 \exp(-n/\mathcal{C}) \Big) \nonumber\\
		& \leq \mathcal{C} n \exp\Big( -\frac{n^{\tvarepsilon}}{\mathcal{C} R^2} \Big)
	\end{align}
	and
	\begin{align}\label{Eq_EmpDualGaussian_ResultB_M}
		& \bP\Big( \exists z \in \bD(\eta,\kappa) : \ \Big| \frac{1}{n} \tr\big( \tilde{M} \tilde{\bm{R}}^{(\bm{Z})}(z) \big) - \frac{-1}{z} \frac{1}{n} \tr\Big( \tilde{M} \Big( \Id_n + \sum\limits_{r',s'=1}^R \hat{\delta}^{(A,\bm{Z})}_{r',s'}(z) B_{s'}^* B_{r'} \Big)^{-1} \Big) \Big| \nonumber\\
		& \hspace{9.5cm} \geq \mathcal{C}n^{\frac{\tvarepsilon-1}{2}} + \mathcal{C} n^2 \exp(-n/\mathcal{C}) \Big) \nonumber\\
		& \leq \mathcal{C} n \exp\Big( -\frac{n^{\tvarepsilon}}{\mathcal{C} R^2} \Big) \ .
	\end{align}
	Inserting $A_rA_s^*$ for $M$ in (\ref{Eq_EmpDualGaussian_ResultA_M}) yields
	\begin{align}\label{Eq_EmpDualGaussian_ResultA}
		& \bP\Big( \exists r,s \leq R , \, \exists z \in \bD(\eta,\kappa) : \nonumber\\
		& \hspace{1cm} \Big| \hat{\delta}^{(A,\bm{Z})}_{r,s}(z) - \frac{-1}{z} \frac{1}{n} \tr\Big( A_rA_s^* \Big( \Id_d + \sum\limits_{r',s'=1}^R \hat{\delta}^{(B,\bm{Z})}_{r',s'}(z) A_{r'} A_{s'}^* \Big)^{-1} \Big) \Big| \nonumber\\
		& \hspace{8cm} \geq \mathcal{C}n^{\frac{\tvarepsilon-1}{2}} + \mathcal{C} n^2 \exp(-n/\mathcal{C}) \Big) \nonumber\\
		& \leq \mathcal{C} R^2 n \exp\Big( -\frac{n^{\tvarepsilon}}{\mathcal{C} R^2} \Big)
	\end{align}
	and inserting $B_s^*B_r$ for $\tilde{M}$ in (\ref{Eq_EmpDualGaussian_ResultB_M}) yields
	\begin{align}\label{Eq_EmpDualGaussian_ResultB}
		& \bP\Big( \exists r,s \leq R , \, \exists z \in \bD(\eta,\kappa) : \nonumber\\
		& \hspace{1cm} \Big| \hat{\delta}^{(B,\bm{Z})}_{r,s}(z) - \frac{-1}{z} \frac{1}{n} \tr\Big( B_s^*B_r \Big( \Id_n + \sum\limits_{r',s'=1}^R \hat{\delta}^{(A,\bm{Z})}_{r',s'}(z) B_{s'}^* B_{r'} \Big)^{-1} \Big) \Big| \nonumber\\
		& \hspace{8cm} \geq \mathcal{C}n^{\frac{\tvarepsilon-1}{2}} + \mathcal{C} n^2 \exp(-n/\mathcal{C}) \Big) \nonumber\\
		& \leq \mathcal{C} R^2 n \exp\Big( -\frac{n^{\tvarepsilon}}{\mathcal{C} R^2} \Big) \ .
	\end{align}
\end{theorem}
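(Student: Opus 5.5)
The plan follows the two-step structure announced before the theorem: first bound the expectation of the error via Stein's lemma, then pass to high probability via Gaussian concentration and a net argument over $\bD(\eta,\kappa)$. I only treat (\ref{Eq_EmpDualGaussian_ResultA_M}). The statement (\ref{Eq_EmpDualGaussian_ResultB_M}) follows by the symmetric argument applied to $\bm{Y}^*=\sum_r B_r^*\bm{Z}^*A_r^*$. The bounds (\ref{Eq_EmpDualGaussian_ResultA}) and (\ref{Eq_EmpDualGaussian_ResultB}) follow by inserting $M=A_rA_s^*$, which satisfies $||A_rA_s^*||\le\sigma^2$, and taking a union bound over the $R^2$ pairs $(r,s)$.

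\textbf{Step 1 (algebraic identity).} Write $\bm{Q}=\bm{Q}^{(\bm{Z})}(z)$ as in (\ref{Eq_Def_Q}). Since $\Im\hat\delta^{(B,\bm Z)}\succeq0$ and (\ref{Eq_AssumptionNonDegeneracy}) holds, $\bm{Q}$ is invertible with $||\bm{Q}^{-1}||$ bounded by a constant depending only on $(\sigma^2,\tau,\eta,\kappa)$. This uses the standard argument that $\Im(z\bm{Q})$ is bounded below. Next use the resolvent identity $\bm{S}\bm{R}=\Id+z\bm{R}$. This gives
\[
\bm{R}+\tfrac1z\bm{Q}^{-1}=\tfrac1z\bm{Q}^{-1}\big(\bm{Q}+z\bm{Q}\bm{R}\big)\cdot\tfrac1z\cdots,
\]
or more cleanly $\bm{R}-\tfrac{-1}{z}\bm{Q}^{-1}=\tfrac1z\bm{Q}^{-1}\big(\bm{S}\bm{R}-z(\bm{Q}-\Id)\bm{R}\big)\cdot\tfrac{1}{?}$. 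I will make this precise as follows: the error equals $\tfrac1z\tfrac1n\tr\big(M\bm{Q}^{-1}[\bm{S}-(\bm{Q}-\Id)(-z)]\bm{R}\big)$ up to rearrangement. So it suffices to show that
\[
\tfrac1n\tr\big(N\bm{S}\bm{R}\big)\approx -z\sum_{r,s}\hat\delta^{(B)}_{r,s}\tfrac1n\tr\big(NA_rA_s^*\bm{R}\big),\qquad N=M\bm{Q}^{-1}.
\]
Expanding $\bm{S}=\frac1n\sum_{r,s}A_r\bm{Z}B_rB_s^*\bm{Z}^*A_s^*$, each summand has the form $\frac1n\tr(NA_r\bm{Z}B_rB_s^*\bm{Z}^*A_s^*\bm{R})$.

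\textbf{Step 2 (Stein's lemma).} I apply Gaussian integration by parts in the entries $\bm{Z}_{j,k}$ against $\ol{\bm{Z}_{j,k}}$. This is exactly where the possibly nonzero $\E[\bm{Z}_{j,k}^2]$ enters. It produces an extra term involving derivatives in $\bm{Z}$ rather than $\bar{\bm{Z}}$, and this term must be shown to be of lower order. The derivative of $\bm{R}$ is $-\bm{R}(\partial\bm{S})\bm{R}$. Differentiating the explicit $\bm Z^*$ yields the leading term $\frac1n\tr(B_rB_s^*\tilde{\bm R})$-type quantities. Using the identity $\bm{Y}^*\bm{R}\bm{Y}/n=\Id+z\tilde{\bm{R}}$ restructured through $\tilde{\bm R}$, these combine with $-z$ into $\hat\delta^{(B)}_{r,s}$.

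The remaining terms are products of two normalized traces, for instance $\frac1n\tr(\cdot\bm{R})\cdot\frac1n\tr(\cdot\tilde{\bm{R}})$. The Stein identity gives the expectation of a product. To replace it by the product appearing in the target identity, I need the variance of the normalized traces to be $O(n^{-1})$. This follows from the Gaussian Poincar\'e inequality, since each such trace is Lipschitz in $\bm{Z}$ with constant $O(n^{-1/2})$ on the event $||\bm{Z}||\le C\sqrt n$. The complement of that event costs $\exp(-n/\mathcal C)$, which explains the additive term $\mathcal C n^2\exp(-n/\mathcal C)$. The genuinely non-leading terms carry an explicit $1/n$ factor. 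With $N$ random through $\bm{Q}^{-1}$, I handle it by first conditioning on, or freezing, $\hat\delta^{(B)}$ at its expectation, again at a cost of the variance. This yields $\big|\E[\text{error}]\big|\le\mathcal C n^{-1/2}$ (or better) plus the exponential term.

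\textbf{Step 3 (concentration and uniformity).} Restricted to $||\bm{Z}||\le C\sqrt n$, the error is an $O(n^{-1/2})$-Lipschitz function of the Gaussian vector; extending it Lipschitz-ly outside that set, Gaussian concentration gives deviation $n^{(\tvarepsilon-1)/2}$ with probability $\ge1-\exp(-n^{\tvarepsilon}/\mathcal C R^2)$. Here the $R^2$ comes from the Lipschitz constant, which scales with $\sum_r||A_r||\,||B_r||$-type sums. Finally I take a net of $\bD(\eta,\kappa)$ of mesh $n^{-1}$ with $O(n^2)$ points, or $O(n)$ points after a suitable choice, and use the fact that all quantities are $\mathcal C$-Lipschitz in $z$ on $\bD(\eta,\kappa)$. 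A union bound then gives the factor $n$ in the probability.

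\textbf{Main obstacle.} The hardest part is Step 2. One must organize the Stein expansion so that the random matrix $\bm{Q}^{-1}$ inside the test matrix, and the pseudo-covariance terms from $\E[\bm Z^2_{j,k}]\ne0$, produce only $O(n^{-1/2})$ contributions uniformly in $R$ (up to the stated dependence). My first attempt will be to replace $\bm{Q}$ by $\E\bm{Q}$ before integrating by parts, and then control the swap via the Poincar\'e variance bound.
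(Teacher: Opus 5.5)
Your outline is sound and shares the paper's skeleton: Stein's lemma for the expectation, Gaussian concentration of a Lipschitz extension from $\{\|\bm Z\|\le C\sqrt n\}$, and a net of mesh of order $n^{-1/2}$ in $\bD(\eta,\kappa)$. The real difference is how you treat the random test matrix $N=M\bm Q^{-1}$; your Step~1 identity is just $\bm R+\frac1z\bm Q^{-1}=\frac1z\bm Q^{-1}(\bm S+z(\bm Q-\Id))\bm R$.

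The paper (Proposition~\ref{Prop_Stein_MP_Analogue}) freezes nothing. It integrates by parts in $\E[\tr(M\bm Q^{-1}\bm S\bm R)]$ with the random $\bm Q^{-1}$ kept inside. The two leading terms, from differentiating $\bm Z^*$ and $\bm R$ in $\ol{\bm Z_{ij}}$, merge via $\frac1n\bm Y^*\bm R\bm Y=\Id+z\tilde{\bm R}$ into exactly $-z\E[\tr(M\bm Q^{-1}(\bm Q-\Id)\bm R)]$, because $\bm Q$ is built from the random $\hat\delta^{(B,\bm Z)}$. The product of traces therefore stays inside one expectation and cancels identically, so no variance bound or decoupling is needed. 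Only two kinds of terms are left over. One is the $\partial\bm Q^{-1}$ term, which carries an extra $1/n$ because $\bm Q$ depends on $\bm Z$ only through normalized traces. The other is the pseudo-covariance terms weighted by $\E[\bm Z_{ij}^2]$. Both are $O(\|M\|/n)$ by the Frobenius Cauchy--Schwarz bound of Lemma~\ref{Lemma_TraceTriplet}, which is the concrete mechanism behind your ``must be shown to be of lower order''.

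Your route also works, at the cost of two extra steps. You freeze $\bm Q$ at $\E\bm Q$ and apply Stein with a deterministic test matrix. You then pay $\sum_{r,s}\mathrm{Cov}(\hat\delta^{(B)}_{r,s},\frac1n\tr(NA_rA_s^*\bm R))=O(1/n)$ by Poincar\'e; this is $R$-free since $\sum_r\|A_r\|\,\|B_r\|\le\sigma^2$. Finally you swap $(\E\bm Q)^{-1}$ back to $\bm Q^{-1}$ with high probability. Note that ``conditioning on $\hat\delta^{(B)}$'' is not compatible with integrating by parts in $\bm Z$, so only freezing at the mean makes sense. With $N=M\bm Q^{-1}$ as in your Step~1, the decoupling you describe is unnecessary.

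In return, your bound $\|\bm Q^{-1}\|\le|z|/\Im(z)$, from $\Im(z\bm Q)\succeq\Im(z)\Id$, is deterministic. It is sharper than Lemma~\ref{Lemma_QSpectralBound}, which grows with $\|\bm S\|$ and needs $\tau$, and it spares you moment bounds on $\|\bm S\|$ for the $\bm Q^{-1}$ factors.

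Lastly, the $R^2$ does not come from Gaussian Lipschitz constants, which are $R$-free by the same Cauchy--Schwarz argument. In the paper it enters through McDiarmid's inequality for $\frac1n\tr(M\bm R)$ (Lemma~\ref{Lemma_R_Concentration}), where replacing one column perturbs $\bm S$ by rank at most $2R$. This misattribution is harmless, since an $R$-free bound implies the stated one.
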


While the previous two theorems suffice to show Theorem~\ref{Thm_SepCovMP_NonAsymp} for Gaussian data, the following universality argument will be required to extend the result to non-Gaussian models.

\begin{definition}[Similar Gaussian matrix]\label{Def_SimilarGaussianMatrix}\
	\\
	Let $\bm{U}$ and $\bm{V}$ be independent random $(d \times n)$-matrices each with iid entries such that
	\begin{align}\label{Eq_SimilarDef_UV}
		& \bm{U}_{i,j} \sim \mathcal{N}(0,1) \ \ \text{ and } \ \ \bm{V}_{i,j} \sim \mathcal{CN}(0,1) \ .
	\end{align}
	For the random $(d \times n)$-matrix $\bm{X}$ with independent centered entries, each with variance one, define the similar random $(d \times n)$-matrix $\bm{Z}$ to $\bm{X}$ by
	\begin{align}\label{Eq_SimilarDef_Z}
		& \bm{Z}_{i,j} \coloneq a_{i,j} \bm{U}_{i,j} + b_{i,j} \bm{V}_{i,j} \ ,
	\end{align}
	where $a_{i,j},b_{i,j} \in \C$ are chosen such that
	\begin{align}\label{Eq_SimilarDef_ab}
		& a_{i,j}^2 = \E[\bm{X}_{i,j}^2] \ \ \text{ and } \ \ |a_{i,j}|^2 + |b_{i,j}|^2 = 1 \ .
	\end{align}
\end{definition}

Straightforward calculations show that the first and second moments of all entries of $\bm{X}$ and $\bm{Z}$ coincide, i.e.
\begin{align*}
	& \E[\bm{Z}_{i,j}] = 0 = \E[\bm{X}_{i,j}] \ \ , \ \ \E[|\bm{Z}_{i,j}|^2] = 1 = \E[|\bm{X}_{i,j}|^2] \ \ \text{ and } \ \ \E[\bm{Z}_{i,j}^2] = \E[\bm{X}_{i,j}^2] \ .
\end{align*}

\begin{theorem}[Universality]\label{Thm_Universality}\
	\\
	Suppose~\ref{ItemAssumption_cBound}-\ref{ItemTempAssumption_NonDegeneracy} hold and let $\bm{Z}$ be the Gaussian matrix similar to $\bm{X}$ as constructed in Definition~\ref{Def_SimilarGaussianMatrix}. For any $\eta,\kappa>0$ there exists a constant $\mathcal{C} = \mathcal{C}(c_*, \sigma^2, C_6, \tau, \eta, \kappa)>0$, which does not depend on the model $\big( \bm{X}, (A_r)_{r \leq R}, (B_r)_{r \leq R} \big)$, such that for all $\tvarepsilon \in (0,1)$ the bound
	\begin{align}\label{Eq_Universality_Result1}
		& \bP\Big( \sup\limits_{z \in \bD(\eta,\kappa)}\Big| \frac{1}{n} \tr(M\bm{R}^{(\bm{X})}(z)) - \frac{1}{n} \tr(M\bm{R}^{(\bm{Z})}(z)) \Big| > \mathcal{C} n^{\frac{\tvarepsilon-1}{2}} ||M|| \Big) \leq \mathcal{C} n \exp\Big( -\frac{n^{\tvarepsilon}}{\mathcal{C} R^2} \Big)
	\end{align}
	holds for all $M \in \C^{d \times d}$
	and
	\begin{align}\label{Eq_Universality_Result2}
		& \bP\Big( \sup\limits_{z \in \bD(\eta,\kappa)}\Big| \frac{1}{n} \tr(\tilde{M}\tilde{\bm{R}}^{(\bm{X})}(z)) - \frac{1}{n} \tr(\tilde{M}\tilde{\bm{R}}^{(\bm{Z})}(z)) \Big| > \mathcal{C} n^{\frac{\tvarepsilon-1}{2}} ||\tilde{M}|| \Big) \leq \mathcal{C} n \exp\Big( -\frac{n^{\tvarepsilon}}{\mathcal{C} R^2} \Big)
	\end{align}
	holds for all $\tilde{M} \in \C^{n \times n}$.
\end{theorem}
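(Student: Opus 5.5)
We prove Theorem~\ref{Thm_Universality} with a Lindeberg-type interpolation in expectation, followed by a concentration bound and a net argument over $\bD(\eta,\kappa)$. We treat (\ref{Eq_Universality_Result1}) in detail; (\ref{Eq_Universality_Result2}) is identical with $\tilde{\bm{R}}$ in place of $\bm{R}$.

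\textbf{Step 1: the expectations are close.} Fix $z\in\bD(\eta,\kappa)$ and $M$ with $||M||\le 1$, and set
\begin{align*}
& F(X) \coloneq \frac{1}{n}\tr\big(M\bm{R}^{(X)}(z)\big) \ .
\end{align*}
Order the $dn$ entries and replace the entries of $\bm{X}$ by those of $\bm{Z}$ one at a time. At each swap, write $X = X^0 + x\,e_{i,d}e_{j,n}^*$ and Taylor-expand $x\mapsto F(X)$ to third order in $(x,\bar x)$ around $x=0$.

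The derivatives are computed by the resolvent identity. Since $\bm{Y}^{(X)}$ is linear in $X$, the perturbation $\partial_x \bm{Y} = \sum_r A_r e_{i,d}e_{j,n}^* B_r$ is a rank-one matrix of norm at most $\sigma^2$. Each derivative of $F$ is then $\frac{1}{n}$ times a sum of terms of the form $u^*\bm{R}\cdots\bm{R}\,w$, where $u,w$ are products of $\bm{Y}/\sqrt n$ with deterministic vectors.

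The first two moments of $\bm{X}_{i,j}$ and $\bm{Z}_{i,j}$ coincide, including $\E[x^2]$, so the zeroth-, first- and second-order terms cancel in expectation. What remains is the third-order remainder. It is bounded by
\begin{align*}
& C\,n^{-1}\,\E\big[(|x|^3+|z_{ij}|^3)\,\sup_t \|\text{factors}\|\big] \ ,
\end{align*}
and the factors involve $||\bm{Y}/\sqrt n||$ up to a fixed power and $||\bm{R}||\le 1/\eta$.

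The main obstacle is to get a per-entry error of order $n^{-3/2}$ rather than $n^{-1}$. Write $\bm{Y}/\sqrt n$ against the rank-one perturbation as $\frac{1}{\sqrt n}\sum_r A_r e_i\, e_j^* B_r$. Each third derivative then carries a factor $n^{-1/2}$ per derivative, i.e.\ $n^{-3/2}$ in total. Combined with the outer $\frac{1}{n}\tr$, which is really a single quadratic form rather than a full trace, the per-swap remainder is $O(n^{-5/2}\cdot n)$ only if one is careless. To avoid this, keep $F$ as $\frac1n$ times quadratic forms $e_j^*B_r^*\cdots$, giving a genuine $n^{-1}\cdot n^{-3/2}$ per swap and a total of $dn\cdot n^{-5/2}\le c_*n^{-1/2}$.

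Unbounded factors $||\bm{Y}^{(X^t)}/\sqrt n||$ at interpolated matrices are handled with Hölder's inequality and the sixth-moment assumption \ref{ItemAssumption_boundedSixthMoment}. It suffices to use the truncation $\E||\bm Y||^k/n^{k/2}\le C$, which follows from \ref{ItemAssumption_sigmaBound} and standard operator-norm bounds for $\bm X$. I expect this bookkeeping, namely that the third-order term involves $|x|^3$ times quantities only weakly dependent on $x$, to require the most care. Its outcome is
\begin{align*}
& \big|\E F(\bm X)-\E F(\bm Z)\big|\le \mathcal C n^{-1/2} \ .
\end{align*}

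\textbf{Step 2: concentration.} Show that $F(\bm X)$ and $F(\bm Z)$ are each within $\mathcal C n^{(\tvarepsilon-1)/2}$ of their means, with probability at least $1-\mathcal C\exp(-n^{\tvarepsilon}/(\mathcal C R^2))$.

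For $\bm X$, use a martingale decomposition over the columns $k=1,\dots,n$ of $\bm X$ together with Azuma's inequality. Replacing one column of $\bm X$ changes $\bm{Y}$ by at most $R$ rank-one terms, since the perturbation is $\sum_r A_r x_k e_k^* B_r$, which has rank $\le R$. Hence $\bm S$ changes by a matrix of rank $\le 2R$, and $|F|$ changes by at most $2R\,||M||/(n\eta)$. Azuma then gives the tail $\exp(-n^{\tvarepsilon}/(CR^2))$ for deviations $n^{(\tvarepsilon-1)/2}$.

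The same argument applies to $\bm Z$. Alternatively, for $\bm Z$ one can use Gaussian concentration for the Lipschitz function $F$, with a cutoff on $||\bm Z||$ contributing the $n\exp(-n/\mathcal C)$-type terms.

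\textbf{Step 3: uniformity in $z$.} On $\bD(\eta,\kappa)$ the maps $z\mapsto F$ are $\eta^{-2}||M||$-Lipschitz. Take a grid of mesh $n^{-1}$ with $O(n^2)$ points, apply a union bound over the grid, and absorb the polynomial factor into $\mathcal C n\exp(-n^{\tvarepsilon}/(\mathcal CR^2))$ by enlarging $\mathcal C$. Finally, rescaling from $||M||\le1$ to general $M$ produces the factor $||M||$ in (\ref{Eq_Universality_Result1}).
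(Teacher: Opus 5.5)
Your proposal is correct and follows the paper's proof essentially step by step. The shared steps are:
\begin{itemize}
\item an entrywise (holomorphic) Lindeberg swap with third-order Taylor remainder;
\item cancellation of the first- and second-order terms by matching $\E[x]$, $\E[x^2]$ and $\E[|x|^2]$;
\item third derivatives of size $n^{-5/2}$, paired with $|x|^3$ by Cauchy--Schwarz;
\item bounded differences over columns via the rank-$\le 2R$ perturbation (McDiarmid in the paper, Azuma in yours);
\item a net in $z$. The paper uses mesh $n^{-1/2}$, which gives the factor $n$ directly; your mesh $n^{-1}$ also works, since $n^2$ can be absorbed into $\mathcal{C} n$ uniformly in $\tvarepsilon$ and $R$.
\end{itemize}
The one ingredient you treat as ``standard'' is the bound $\E\|X\|^6\le \mathcal{C} n^3$ for the interpolated matrices, whose entries have only bounded sixth moments (exactly the exponent Cauchy--Schwarz forces). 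The paper does not treat this as citable: it proves it separately as Lemma~\ref{Lemma_SixthMoment}, via truncation, Rosenthal's inequality, Latala's bound and Meckes' concentration.
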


\newpage
\section{Proof of Theorem~\ref{Thm_DualApprox}}\label{Proof_Thm_DualApprox}
Before starting the proof, four basic Lemmas are introduced.

\begin{lemma}[Preservation of positive definiteness]\label{Lemma_NonDegeneracyNew}\
	\\
	Suppose~\ref{ItemTempAssumption_NonDegeneracy} holds, then for any positive definite matrix $C \in \C^{R \times R}$, the bounds
	\begin{align}
		& \lambda_{\min}\Big(\sum\limits_{r,s=1}^R C_{r,s} A_rA_s^*\Big) \geq \tau \lambda_{\min}(C) \label{Eq_NonDegeneracyC_A}\\
		& \lambda_{\min}\Big(\sum\limits_{r,s=1}^R C_{r,s} B_s^*B_r\Big) \geq \tau \lambda_{\min}(C) \label{Eq_NonDegeneracyC_B}
	\end{align}
	hold. Furthermore, then the bounds
	\begin{align}
		& \lambda_{\min}\Big( \frac{1}{n} \tr\big( A_rA_s^* M \big)_{r,s \leq R} \Big) \geq \tau \lambda_{\min}(M) \label{Eq_NonDegeneracy_TrCalcA}\\
		& \lambda_{\min}\Big( \frac{1}{n} \tr\big( B_s^*B_r \tilde{M} \big)_{r,s \leq R} \Big) \geq \tau \lambda_{\min}(\tilde{M}) \ . \label{Eq_NonDegeneracy_TrCalcB}
	\end{align}
	are true for any positive definite matrices $M \in \C^{d \times d}$ and $\tilde{M} \in \C^{n \times n}$. As a trivial consequence of the previous bounds one has
	\begin{align}
		& \lambda_{\min}\Big( \sum\limits_{r,s=1}^R \frac{1}{n}\tr\big( A_r A_s^* M \big) B_s^* B_r \Big) \geq \tau^2 \lambda_{\min}(M) \label{Eq_NonDegeneracyM_A}\\
		& \lambda_{\min}\Big( \sum\limits_{r,s=1}^R \frac{1}{n}\tr\big( B_{s}^* B_{r} \tilde{M} \big) A_{r} A_{s}^* \Big) \geq \tau^2 \lambda_{\min}(\tilde{M}) \ . \label{Eq_NonDegeneracyM_B}
	\end{align}
\end{lemma}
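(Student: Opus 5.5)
The plan is to prove all six bounds using one fact: a positive semi-definite coefficient matrix, contracted against the families $(A_r)$ or $(B_r)$, produces a sum of Hermitian squares. The non-degeneracy and identifiability bounds in \ref{ItemTempAssumption_NonDegeneracy} then give the constant $\tau$.

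\emph{Bounds (\ref{Eq_NonDegeneracyC_A}) and (\ref{Eq_NonDegeneracyC_B}).} Set $\lambda \coloneq \lambda_{\min}(C)$. Since $C - \lambda \Id_R \succeq 0$, it has a factorization $C-\lambda\Id_R = LL^*$ with $L \in \C^{R \times R}$, so that $(C-\lambda \Id_R)_{r,s} = \sum_k L_{r,k}\ol{L_{s,k}}$. Then
\begin{align*}
	& \sum_{r,s=1}^R (C-\lambda\Id_R)_{r,s} A_rA_s^* = \sum_{k=1}^R \Big(\sum_{r=1}^R L_{r,k}A_r\Big)\Big(\sum_{s=1}^R L_{s,k}A_s\Big)^* \succeq 0 \ ,
\end{align*}
and therefore
\begin{align*}
	& \sum_{r,s} C_{r,s}A_rA_s^* \succeq \lambda \sum_r A_rA_r^* \succeq \lambda\tau \Id_d \ .
\end{align*}
The $B$-side works the same way, with the index order matching the statement:
\begin{align*}
	& \sum_{r,s} L_{r,k}\ol{L_{s,k}} B_s^*B_r = \Big(\sum_s L_{s,k}B_s\Big)^*\Big(\sum_r L_{r,k}B_r\Big) \succeq 0 \ .
\end{align*}
This gives $\sum_{r,s} C_{r,s}B_s^*B_r \succeq \lambda\sum_r B_r^*B_r \succeq \lambda\tau\Id_n$.

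\emph{Bounds (\ref{Eq_NonDegeneracy_TrCalcA}) and (\ref{Eq_NonDegeneracy_TrCalcB}).} Let $T_{r,s} \coloneq \frac{1}{n}\tr(A_rA_s^*M)$. Because $M$ is Hermitian, $\ol{T_{r,s}} = \frac1n\tr(M A_sA_r^*) = T_{s,r}$, so $T$ is Hermitian and its minimal eigenvalue is governed by its quadratic form. For $v \in \C^R$ put $W \coloneq \sum_r \ol{v_r}A_r$. Then
\begin{align*}
	& v^*Tv = \frac1n\tr\big(WW^*M\big) = \frac1n\tr\big(W^*MW\big) \geq \lambda_{\min}(M)\,\frac1n\tr(W^*W) = \lambda_{\min}(M)\, v^*\mathbb{G}^{(A)}v \ .
\end{align*}
By the identifiability bound (\ref{Eq_AssumptionIdentifiability}), this is at least $\tau\lambda_{\min}(M)\|v\|^2$. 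For the $B$-side, take $\tilde W \coloneq \sum_r v_r B_r$. Then
\begin{align*}
	& \sum_{r,s}\ol{v_r}v_s B_s^*B_r = \tilde W^*\tilde W \ , \qquad v^*\tilde T v = \frac1n\tr\big(\tilde W\tilde M\tilde W^*\big) \geq \lambda_{\min}(\tilde M)\, v^*\mathbb{G}^{(B)}v \ ,
\end{align*}
and the identifiability bound finishes it.

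\emph{Bounds (\ref{Eq_NonDegeneracyM_A}) and (\ref{Eq_NonDegeneracyM_B}).} These follow by chaining the two previous steps. For (\ref{Eq_NonDegeneracyM_A}), take $C \coloneq T$, which is positive definite with $\lambda_{\min}(T) \geq \tau\lambda_{\min}(M)$, and insert it into (\ref{Eq_NonDegeneracyC_B}). The coefficient $T_{r,s}$ multiplies $B_s^*B_r$, exactly as in (\ref{Eq_NonDegeneracyC_B}), so the bound $\tau^2\lambda_{\min}(M)$ follows. For (\ref{Eq_NonDegeneracyM_B}), insert $\tilde T$ into (\ref{Eq_NonDegeneracyC_A}) in the same way.

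The only delicate point is the bookkeeping of which index is conjugated. One has to check that the Hermitian-square factorizations match the orders $A_rA_s^*$ and $B_s^*B_r$ as written. One also has to check that $T$ and $\tilde T$ are Hermitian, so that they qualify as the positive definite $C$ in the first step; this is what the identity $\ol{T_{r,s}} = T_{s,r}$ above secures. Otherwise the argument is elementary.
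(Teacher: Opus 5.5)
Your proof is correct and is essentially the paper's. For the first pair of bounds the paper applies the Schur product theorem to $\tilde C \odot (v^*A_rA_s^*v)_{r,s}$ with $\tilde C = C-\lambda_{\min}(C)\Id_R$, and your factorization $C-\lambda\Id_R=LL^*$ into Hermitian squares is that step made explicit. Your trace bounds are the same quadratic-form comparison against $\mathbb{G}^{(A)}$ and $\mathbb{G}^{(B)}$, and you usefully verify that $T,\tilde T$ are Hermitian before chaining, which the paper leaves implicit.

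There is one harmless conjugation slip on the $B$-side: you need $\tilde W=\sum_r\ol{v_r}B_r$, as on the $A$-side. With $\tilde W=\sum_r v_rB_r$ you get $\tilde W^*\tilde W=\sum_{r,s}\ol{v_r}v_sB_r^*B_s$, so your inequality actually compares the quadratic forms at $\ol{v}$ rather than $v$. This yields the same bound on $\lambda_{\min}$, since $v$ ranges over all of $\C^R$.
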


\begin{lemma}[Simple operator bounds]\label{Lemma_SimpleOperatorBounds}\
	\\
	Suppose~\ref{ItemAssumption_sigmaBound} holds, then for any $(R \times R)$-matrix $C$, the bounds
	\begin{align}\label{Eq_SimpleBounds_ASum_BSum}
		& \Big|\Big|\sum\limits_{r,s=1}^R C_{r,s} A_rA_s^*\Big|\Big| \leq \sigma^2 ||C|| \ \ \text{ and } \ \ \Big|\Big|\sum\limits_{r,s=1}^R C_{r,s} B_s^*B_r\Big|\Big| \leq \sigma^2 ||C||
	\end{align}
	hold. For any $M \in \C^{d \times d}$ and $\tilde{M} \in \C^{n \times n}$ the bounds
	\begin{align}\label{Eq_SimpleBounds_TraceMatrixBounds}
		& \Big|\Big| \frac{1}{n}\tr\big( A_r A_s^* M \big)_{r,s \leq R} \Big|\Big| \leq \sigma^2 \frac{d}{n} ||M||\ \ \text{ and } \ \ \Big|\Big| \frac{1}{n}\tr\big( B_s^* B_r \tilde{M} \big)_{r,s \leq R} \Big|\Big| \leq \sigma^2 ||\tilde{M}||
	\end{align}
	also hold. A trivial consequence of the last two results is
	\begin{align}
		& \Big|\Big| \sum\limits_{r,s=1}^R \frac{1}{n}\tr\big( A_r A_s^* M \big) B_s^* B_r \Big|\Big| \leq \sigma^4 \frac{d}{n} ||M|| \label{Eq_SimpleBound_SB}\\
		& \Big|\Big| \sum\limits_{r,s=1}^R \frac{1}{n}\tr\big( B_{s}^* B_{r} \tilde{M} \big) A_{r} A_{s}^* \Big|\Big| \leq \sigma^4 ||\tilde{M}|| \ . \label{Eq_SimpleBound_SA}
	\end{align}
\end{lemma}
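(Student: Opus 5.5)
The statement to prove is Lemma~\ref{Lemma_SimpleOperatorBounds}. The plan is to write each sum as a block-matrix product and then use submultiplicativity (\ref{Eq_SubmultiplicativitySpectralNorm}).

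\emph{First bound in (\ref{Eq_SimpleBounds_ASum_BSum}).} Define the $(d \times Rd)$-block row $\mathbf{A} \coloneq [A_1,\dots,A_R]$. Then
\begin{align*}
	& \sum_{r,s=1}^R C_{r,s} A_rA_s^* = \mathbf{A}\,(C \otimes \Id_d)\,\mathbf{A}^* \ .
\end{align*}
Since $||\mathbf{A}||^2 = ||\mathbf{A}\mathbf{A}^*|| = ||\sum_r A_rA_r^*|| \leq \sum_r ||A_r||^2 \leq \sigma^2$ and $||C\otimes \Id_d|| = ||C||$, the bound $\sigma^2||C||$ follows.

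\emph{Second bound in (\ref{Eq_SimpleBounds_ASum_BSum}).} Take the $(Rn \times n)$-block column $\mathbf{B} \coloneq [B_1;\dots;B_R]$. Then
\begin{align*}
	& \sum_{r,s} C_{r,s} B_s^*B_r = \mathbf{B}^*(C^T\otimes \Id_n)\mathbf{B} \ ,
\end{align*}
and one checks the index placement: the $(s,r)$ block of $C^T\otimes\Id_n$ is $C_{r,s}\Id_n$. Moreover $||\mathbf{B}||^2 = ||\sum_r B_r^*B_r|| \leq \sigma^2$ and $||C^T|| = ||C||$, which gives $\sigma^2||C||$.

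\emph{Bounds (\ref{Eq_SimpleBounds_TraceMatrixBounds}).} I will use duality. For unit vectors $v,w\in\C^R$,
\begin{align*}
	& w^*\Big(\frac{1}{n}\tr(A_rA_s^*M)\Big)_{r,s}v = \frac{1}{n}\tr\big(A_w^* M A_v\big) \ ,
\end{align*}
where $A_v \coloneq \sum_s \bar v_s A_s$ and $A_w \coloneq \sum_r \bar w_r A_r$. This holds because $\sum_{r,s}\bar w_r v_s A_rA_s^* = A_w A_v^*$ with the conjugates placed appropriately; the exact conjugation pattern still needs to be checked, but only the norms of $A_v,A_w$ enter the estimate.

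By Cauchy--Schwarz for trace, $|\tr(A_w^*MA_v)| \leq ||M||\,||A_w||_F||A_v||_F$. The trace is over $\C^d$, so $||A_v||_F^2 \leq d\,||A_v||^2$. Writing $A_v = \mathbf{A}(\bar v\otimes \Id_d)$ gives $||A_v||\leq ||\mathbf{A}|| \leq \sigma$. Hence the bound is $\frac{d}{n}\sigma^2||M||$.

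The $B$-version is identical with $B_v = \mathbf{B}^*$-type combinations. There the trace is over $\C^n$, so the factor is $\frac{n}{n}=1$, giving $\sigma^2||\tilde M||$.

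\emph{Consequences (\ref{Eq_SimpleBound_SB}) and (\ref{Eq_SimpleBound_SA}).} These follow by applying (\ref{Eq_SimpleBounds_ASum_BSum}) with $C = \big(\frac{1}{n}\tr(A_rA_s^*M)\big)_{r,s}$ (respectively $C = \big(\frac{1}{n}\tr(B_s^*B_r\tilde M)\big)_{r,s}$) and inserting (\ref{Eq_SimpleBounds_TraceMatrixBounds}).

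The only delicate point is keeping track of the index and conjugation conventions, i.e.\ which of $C$ or $C^T$ appears in the Kronecker factor and whether $v$ or $\bar v$ appears in $A_v$. Since the spectral norm is invariant under transposition and under conjugating unit vectors, none of these choices affects the bounds.
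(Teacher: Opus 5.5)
Your proposal is correct and is essentially the paper's argument in block-matrix form. The factorization $\mathbf{A}(C\otimes\Id_d)\mathbf{A}^*$ is the coordinate-free version of the paper's splitting $u^*\big(\sum_{r,s}C_{r,s}A_rA_s^*\big)v=\sum_j(\tilde u^{(j)})^*C\,\tilde v^{(j)}$ followed by Cauchy--Schwarz. Likewise, your Frobenius Cauchy--Schwarz step with $\|A_v\|_F^2\le d\,\|A_v\|^2$ plays the role of the paper's rank--trace bound (\ref{Eq_TraceBound}).

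The only slip is cosmetic. With your definitions the bilinear form equals $\frac{1}{n}\tr(A_v^*MA_w)$ rather than $\frac{1}{n}\tr(A_w^*MA_v)$, which, as you anticipated, leaves the estimate unchanged.
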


The following simple lemma was suggested by ChatGPT as the correct tool for bounding the operator norm a the custom operator $T_{\delta,\tilde{\delta}}$ in the upcoming proof of Theorem \ref{Thm_DualApprox}.

\begin{lemma}[Cauchy-Schwarz for operators]\label{Lemma_OperatorBound}\
	\\
	Let $C_1,\dots,C_N$ and $D_1,\dots,D_N$ be arbitrary $(n \times n)$-matrices. The operator-norm (with respect to the spectral norm) of the linear operator
	\begin{align*}
		& T : \C^{n \times n} \rightarrow \C^{n \times n} \ \ ; \ \ T(X) \coloneq \sum\limits_{j=1}^N C_j X D_j
	\end{align*}
	may be bounded by
	\begin{align}\label{Eq_Uniqueness_OpBound}
		& ||T||_{\textrm{Op}} \coloneq \sup\limits_{\substack{X \in \C^{n \times n} \\ X \neq 0}} \frac{||T(X)||}{||X||} \leq \Big|\Big| \sum\limits_{j=1}^N C_jC_j^* \Big|\Big|^{\frac{1}{2}} \, \Big|\Big| \sum\limits_{j=1}^N D_j^*D_j \Big|\Big|^{\frac{1}{2}} \ .
	\end{align}
\end{lemma}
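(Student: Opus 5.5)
We prove the bound by duality with the trace (nuclear) norm, followed by a Cauchy--Schwarz argument that separates the $C_j$ from the $D_j$.

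\emph{Reduction to a trace pairing.} The spectral norm satisfies $||Y|| = \sup\{ |\tr(YW)| : ||W||_{*} \leq 1\}$, where $||\cdot||_*$ is the trace norm. The extreme points of the trace-norm unit ball are the rank-one matrices $W = v u^*$ with $u,v$ unit vectors. It therefore suffices to show
\begin{align*}
	& \Big| u^* T(X) v \Big| \leq \Big|\Big| \sum_{j=1}^N C_jC_j^* \Big|\Big|^{\frac{1}{2}} \Big|\Big| \sum_{j=1}^N D_j^*D_j \Big|\Big|^{\frac{1}{2}} ||X||
\end{align*}
for all unit vectors $u,v \in \C^n$. (Equivalently, one can note that $||T(X)|| = \sup_{||u||=||v||=1} |u^*T(X)v|$ directly.)

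\emph{Cauchy--Schwarz step.} Set $a_j \coloneq C_j^* u$ and $b_j \coloneq D_j v$. Then
\begin{align*}
	& u^* T(X) v = \sum_{j=1}^N a_j^* X b_j \ , \qquad |a_j^* X b_j| \leq ||X|| \, ||a_j|| \, ||b_j|| \ .
\end{align*}
Applying the scalar Cauchy--Schwarz inequality to the sum over $j$ gives
\begin{align*}
	& |u^* T(X) v| \leq ||X|| \Big(\sum_{j=1}^N ||a_j||^2\Big)^{\frac{1}{2}} \Big(\sum_{j=1}^N ||b_j||^2\Big)^{\frac{1}{2}} \ .
\end{align*}

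\emph{Identifying the sums.} We have
\begin{align*}
	& \sum_{j=1}^N ||a_j||^2 = u^*\Big(\sum_{j=1}^N C_jC_j^*\Big)u \leq \Big|\Big|\sum_{j=1}^N C_jC_j^*\Big|\Big| \ , \\
	& \sum_{j=1}^N ||b_j||^2 = v^*\Big(\sum_{j=1}^N D_j^*D_j\Big)v \leq \Big|\Big|\sum_{j=1}^N D_j^*D_j\Big|\Big| \ .
\end{align*}
Taking the supremum over unit vectors $u,v$ and then over $X \neq 0$ yields (\ref{Eq_Uniqueness_OpBound}).

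The only step that needs care is the first one: testing against rank-one pairs $u,v$ is what makes the $C_j$- and $D_j$-contributions decouple. Once that is done, the remaining estimates are routine.
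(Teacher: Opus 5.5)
Your proof is correct and is essentially the paper's argument: pair $T(X)$ with unit vectors $u,v$, bound each term by $||C_j^*u||\,||X||\,||D_jv||$, apply Cauchy--Schwarz over $j$, and recognize the resulting sums as quadratic forms in $\sum_j C_jC_j^*$ and $\sum_j D_j^*D_j$. The trace-norm duality framing is not needed; as you note yourself, $||Y|| = \sup_{||u||=||v||=1}|u^*Yv|$ already suffices.
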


\begin{lemma}[Near-eigenvectors of contractions are close to zero]\label{Lemma_ContractionEVs}\
	\\
	Let $T : B \rightarrow B$ be a linear contraction on a Banach space $(B,||\cdot||_B)$ satisfying
	\begin{align*}
		& ||T||_{\textrm{Op}} = \sup\limits_{\substack{x \in B \\ x \neq 0}} \frac{||T(x)||_B}{||x||_B} \leq 1 - \theta
	\end{align*}
	for some $\theta > 0$. For any $v \in B$, let $w=w(v)\coloneq T(v)-v$ such that
	\begin{align*}
		& T(v) = v + w \ ,
	\end{align*}
	then
	\begin{align*}
		& ||v||_B \leq \frac{||w||_B}{\theta} \ .
	\end{align*}
\end{lemma}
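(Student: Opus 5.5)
The argument is a one-line consequence of the triangle inequality together with the operator-norm bound on $T$. I will not use completeness of $B$ or any fixed-point theorem.

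Starting from the defining identity $T(v) = v + w$, rearrange to
\begin{align*}
	& v = T(v) - w \ .
\end{align*}
Take norms and apply the triangle inequality:
\begin{align*}
	& ||v||_B \leq ||T(v)||_B + ||w||_B \ .
\end{align*}
By the definition of the operator norm, $||T(v)||_B \leq ||T||_{\textrm{Op}} ||v||_B \leq (1-\theta)||v||_B$. Substituting gives
\begin{align*}
	& ||v||_B \leq (1-\theta)||v||_B + ||w||_B \ .
\end{align*}
Since $||v||_B$ is a finite real number, it can be subtracted from both sides, leaving $\theta ||v||_B \leq ||w||_B$. Dividing by $\theta>0$ yields the claim.

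There is no genuine obstacle. The only point worth noting is that the hypothesis $\theta>0$ is exactly what permits the final division. If $\theta \geq 1$, then $T$ has norm at most $1-\theta \leq 0$, so $T=0$ and hence $\theta = 1$. In that case $w=-v$ and the bound holds trivially, so no separate case split is needed.

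In the later application to $T_{\delta,\tilde{\delta}}$, I would use this lemma with $v = \delta - \tilde{\delta}$ (suitably arranged) and $w$ built from $q^{(A)}, q^{(B)}$. The operator bound $1-\theta$ would come from Lemma~\ref{Lemma_OperatorBound}.
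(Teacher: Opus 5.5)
Your proof is correct and is essentially the paper's argument. The paper writes $\|v\|_B - \|w\|_B \leq \|v+w\|_B = \|T(v)\|_B \leq (1-\theta)\|v\|_B$, which is the reverse-triangle-inequality form of the same estimate you obtain by applying the triangle inequality to $v = T(v) - w$; both rearrange to $\theta\|v\|_B \leq \|w\|_B$.
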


The proof of Theorem~\ref{Thm_DualApprox} commences.
\begin{itemize}
	\item[i)] \textit{Positive definiteness of $\Im\big( z \delta^{(A)}(z) \big)$ and $\Im\big( z \tilde{\delta}^{(A)}(z) \big)$}:\\
	Since $z \in \C^+$ is fixed, use the abbreviated notation
	\begin{align*}
		(\delta^{(A)},\delta^{(B)}) & = (\delta^{(A)}(z),\delta^{(B)}(z))\\
		(\tilde{\delta}^{(A)},\tilde{\delta}^{(B)}) & = (\tilde{\delta}^{(A)}(z),\tilde{\delta}^{(B)}(z))\\
		(q^{(A)}, q^{(B)}) & = (q^{(A)}(z), q^{(B)}(z)) \ .
	\end{align*}
	With the identity
	\begin{align}\label{Eq_DualApprox_ImInv_Identity}
		& -\Im(A^{-1}) = \frac{1}{2\bm{i}} \big( A^{-*} - A^{-1} \big) \overset{\text{(\ref{Eq_InverseDifferenceIdentity})}}{=} \frac{1}{2\bm{i}} A^{-*} \big( A - A^* \big) A^{-1} = A^{-*} \Im(A) A^{-1}
	\end{align}
	and the bound
	\begin{align}\label{Eq_DualApprox_VA_PrelimBound}
		& \Big|\Big| \sum\limits_{r,s=1}^R \delta^{(B)}_{r,s} A_rA_s^* \Big|\Big| \leq ||\delta^{(B)}|| \, \sum\limits_{r=1}^R ||A_r||^2 \leq ||\delta^{(B)}|| \sigma^2
	\end{align}
	one may calculate
	\begin{align}\label{Eq_DualApprox_zDelta_PosDef}
		& \lambda_{\min}\big( \Im\big( z \delta^{(A)} \big) \big) \nonumber\\
		& \overset{\text{(\ref{Eq_DualSystem})}}{=} \lambda_{\min}\bigg(-\frac{1}{n} \tr\bigg( A_rA_s^* \Im\Big(\Big( \Id_d + \sum\limits_{r',s'=1}^R \delta^{(B)}_{r',s'} A_{r'} A_{s'}^* \Big)^{-1}\Big) \bigg)_{r,s \leq R} \bigg) \nonumber\\
		& \overset{\text{(\ref{Eq_NonDegeneracy_TrCalcA})}}{\geq} \tau \lambda_{\min}\bigg(-\Im\Big(\Big( \Id_d + \sum\limits_{r',s'=1}^R \delta^{(B)}_{r',s'} A_{r'} A_{s'}^* \Big)^{-1}\Big) \bigg) \nonumber\\
		& \overset{\text{(\ref{Eq_DualApprox_ImInv_Identity})}}{=} \tau \lambda_{\min}\bigg(\Big( \overbrace{\Id_d + \sum\limits_{r',s'=1}^R \delta^{(B)}_{r',s'} A_{r'} A_{s'}^*}^{\sigma_{\max}(\cdot) \overset{\text{(\ref{Eq_DualApprox_VA_PrelimBound})}}{\leq} 1 + ||\delta^{(B)}|| \sigma^2} \Big)^{-*} \Big( \underbrace{\sum\limits_{r',s'=1}^R \big(\overbrace{\Im(\delta^{(B)})}^{\overset{\text{(\ref{Eq_DualApprox_posDefCondition_delta})}}{\succeq} \tilde{\tau} \Id_R}\big)_{r',s'} A_{r'} A_{s'}^*}_{\overset{\text{(\ref{Eq_NonDegeneracyC_A})}}{\succeq} \tau \tilde{\tau} \Id_d} \Big) \nonumber\\
		& \hspace{2cm} \times \Big( \underbrace{\Id_d + \sum\limits_{r',s'=1}^R \delta^{(B)}_{r',s'} A_{r'} A_{s'}^*}_{\sigma_{\max}(\cdot) \overset{\text{(\ref{Eq_DualApprox_VA_PrelimBound})}}{\leq} 1 + ||\delta^{(B)}|| \sigma^2} \Big)^{-1} \bigg) \nonumber\\
		& \geq \frac{\tau^2 \tilde{\tau}}{(1 + ||\delta^{(B)}|| \sigma^2)^2} \overset{\text{(\ref{Eq_DualApprox_BoundCondition_delta})}}{\geq} \frac{\tau^2 \tilde{\tau}}{(1 + \tilde{\kappa} \sigma^2)^2} \ .
	\end{align}
	Analogously it may be shown that
	\begin{align*}
		& \lambda_{\min}\bigg(-\frac{1}{n} \tr\bigg( A_rA_s^* \Im\Big(\Big( \Id_d + \sum\limits_{r',s'=1}^R \tilde{\delta}^{(B)}_{r',s'} A_{r'} A_{s'}^* \Big)^{-1}\Big) \bigg)_{r,s \leq R} \bigg) \geq \frac{\tau^2 \tilde{\tau}}{(1 + \tilde{\kappa} \sigma^2)^2} \ ,
	\end{align*}
	which with (\ref{Eq_DualApprox_DualSystem}) and assumption (\ref{Eq_DualApprox_qA_Assumption}) yields
	\begin{align}\label{Eq_DualApprox_ztDelta_PosDef}
		& \lambda_{\min}\big( \Im\big( z \tilde{\delta}^{(A)} \big) \big) \geq \frac{\tau^2 \tilde{\tau}}{2(1 + \tilde{\kappa} \sigma^2)^2} \ .
	\end{align}
	
	\item[ii)] \textit{Definition and properties of $V^{(A/B,\delta)}$}:\\
	Introduce the notation
	\begin{align}\label{Eq_DualApprox_DefV}
		& V^{(A,\delta)} \coloneq \sum\limits_{r,s=1}^R \delta^{(B)}_{r,s} A_r A_s^* \ \ ; \ \ V^{(B,\delta)} \coloneq \sum\limits_{r,s=1}^R \delta^{(A)}_{r,s} B_s^* B_r \ ,
	\end{align}
	then the observations
	\begin{align}\label{Eq_DualApprox_ImVA_Calc}
		& \Im\big(V^{(A,\delta)}\big) = \frac{1}{2\bm{i}} \big( V^{(A,\delta)} - (V^{(A,\delta)})^* \big) \nonumber\\
		& \overset{\text{(\ref{Eq_DualApprox_DefV})}}{=} \sum\limits_{r,s=1}^R \frac{1}{2\bm{i}} \big( \delta^{(B)}_{r,s} - \ol{\delta^{(B)}_{s,r}} \big) A_rA_s^* = \sum\limits_{r,s=1}^R \Im\big( \delta^{(B)}\big)_{r,s} A_rA_s^*
	\end{align}
	and analogously
	\begin{align}\label{Eq_DualApprox_ImVB_Calc}
		& \Im\big(V^{(B,\delta)}\big) = \frac{1}{2\bm{i}} \big( V^{(B,\delta)} - (V^{(B,\delta)})^* \big) \nonumber\\
		& \overset{\text{(\ref{Eq_DualApprox_DefV})}}{=} \sum\limits_{r,s=1}^R \frac{1}{2\bm{i}} \big( \delta^{(A)}_{r,s} - \ol{\delta^{(A)}_{s,r}} \big) B_s^*B_r = \sum\limits_{r,s=1}^R \Im\big( \delta^{(A)}\big)_{r,s} B_s^*B_r
	\end{align}
	by Lemma~\ref{Lemma_NonDegeneracyNew} prove
	\begin{align}
		& \lambda_{\min}\big( \Im\big(V^{(A,\delta)}\big) \big) \overset{\substack{\text{(\ref{Eq_DualApprox_ImVA_Calc})} \\ \text{(\ref{Eq_NonDegeneracyC_A})}}}{\geq} \tau \lambda_{\min}\big( \Im\big( \delta^{(B)}\big) \big) \overset{\text{(\ref{Eq_DualApprox_posDefCondition_delta})}}{\geq} \tau \tilde{\tau} \label{Eq_DualApprox_ImVA_posDef_delta}\\
		& \lambda_{\min}\big( \Im\big(V^{(B,\delta)}\big) \big) \overset{\substack{\text{(\ref{Eq_DualApprox_ImVB_Calc})} \\ \text{(\ref{Eq_NonDegeneracyC_B})}}}{\geq} \tau \lambda_{\min}\big( \Im\big( \delta^{(A)}\big) \big) \overset{\text{(\ref{Eq_DualApprox_posDefCondition_delta})}}{\geq} \tau \tilde{\tau} \ . \label{Eq_DualApprox_ImVB_posDef_delta}
	\end{align}
	The same constructions for $\tilde{\delta}$ instead of $\delta$ analogously yield
	\begin{align}
		& \lambda_{\min}\big( \Im\big(V^{(A,\tilde{\delta})}\big) \big) \overset{\substack{\text{(\ref{Eq_DualApprox_ImVA_Calc})} \\ \text{(\ref{Eq_NonDegeneracyC_A})}}}{\geq} \tau \lambda_{\min}\big( \Im\big( \tilde{\delta}^{(B)}\big) \big) \overset{\text{(\ref{Eq_DualApprox_posDefCondition_tdelta})}}{\geq} \tau \tilde{\tau} \label{Eq_DualApprox_ImVA_posDef_tdelta}\\
		& \lambda_{\min}\big( \Im\big(V^{(B,\tilde{\delta})}\big) \big) \overset{\substack{\text{(\ref{Eq_DualApprox_ImVB_Calc})} \\ \text{(\ref{Eq_NonDegeneracyC_B})}}}{\geq} \tau \lambda_{\min}\big( \Im\big( \tilde{\delta}^{(A)}\big) \big) \overset{\text{(\ref{Eq_DualApprox_posDefCondition_tdelta})}}{\geq} \tau \tilde{\tau} \ . \label{Eq_DualApprox_ImVB_posDef_tdelta}
	\end{align}
	
	\item[iii)] \textit{Definition and properties of $G^{(A/B,\delta)}$}:\\
	Further, introduce the matrices
	\begin{align}\label{Eq_DualApprox_DefG}
		& G^{(A,\delta)} \coloneq \big( \Id_d + V^{(A,\delta)} \big)^{-1} \ \ ; \ \ G^{(B,\delta)} \coloneq \big( \Id_n + V^{(B,\delta)} \big)^{-1} \ ,
	\end{align}
	which exist, and even satisfy
	\begin{align}\label{Eq_DualApprox_GBound}
		& ||G^{(A,\delta)}|| \leq \frac{1}{\tau \tilde{\tau}} \ \ \text{ and } \ \ ||G^{(B,\delta)}|| \leq \frac{1}{\tau \tilde{\tau}} \ ,
	\end{align}
	by (\ref{Eq_DualApprox_ImVA_posDef_delta}) and (\ref{Eq_DualApprox_ImVB_posDef_delta}).
	With the basic matrix identities
	\begin{align}\label{Eq_InverseDifferenceIdentity}
		& A^{-1} + B^{-1} = A^{-1}BB^{-1} + A^{-1}AB^{-1} = A^{-1} (A+B) B^{-1} \nonumber\\
		& A^{-1} - B^{-1} = A^{-1}BB^{-1} - A^{-1}AB^{-1} = A^{-1} (B-A) B^{-1} \ ,
	\end{align}
	one may calculate
	\begin{align}\label{Eq_DualApprox_ReGA_Calc}
		& \Re\big(G^{(A,\delta)}\big) = \frac{1}{2} \big( G^{(A,\delta)} + (G^{(A,\delta)})^* \big) \nonumber\\
		& \overset{\text{(\ref{Eq_InverseDifferenceIdentity})}}{=} \frac{1}{2} G^{(A,\delta)} \Big( \big( \Id_d + V^{(A,\delta)} \big) + \big( \Id_d + V^{(A,\delta)} \big)^* \Big) (G^{(A,\delta)})^* \nonumber\\
		& = G^{(A,\delta)} (G^{(A,\delta)})^* + G^{(A,\delta)} \Re\big(V^{(A,\delta)}\big) (G^{(A,\delta)})^*
	\end{align}
	as well as
	\begin{align}\label{Eq_DualApprox_ImGA_Calc}
		& \Im\big(G^{(A,\delta)}\big) = \frac{1}{2\bm{i}} \big( G^{(A,\delta)} - (G^{(A,\delta)})^* \big) \nonumber\\
		& \overset{\text{(\ref{Eq_InverseDifferenceIdentity})}}{=} \frac{1}{2\bm{i}} G^{(A,\delta)} \Big( \big( \Id_d + V^{(A,\delta)} \big)^* - \big( \Id_d + V^{(A,\delta)} \big) \Big) (G^{(A,\delta)})^* \nonumber\\
		& = - G^{(A,\delta)} \Im\big(V^{(A,\delta)}\big) (G^{(A,\delta)})^*
	\end{align}
	and analogously
	\begin{align}
		& \Re\big(G^{(B,\delta)}\big) = G^{(B,\delta)} (G^{(B,\delta)})^* + G^{(B,\delta)} \Re\big(V^{(B,\delta)}\big) (G^{(B,\delta)})^* \label{Eq_DualApprox_ReGB_Calc}\\
		& \Im\big(G^{(B,\delta)}\big) = - G^{(B,\delta)} \Im\big(V^{(B,\delta)}\big) (G^{(B,\delta)})^* \ . \label{Eq_DualApprox_ImGB_Calc}
	\end{align}
	This construction and the calculation (\ref{Eq_DualApprox_ReGA_Calc})-(\ref{Eq_DualApprox_ImGB_Calc}) hold analogously for $\tilde{\delta}$ instead of $\delta$.
	
	\item[iv)] \textit{Translation of the dual system of equations}:\\
	The dual system of equations (\ref{Eq_DualSystem}) by the construction (\ref{Eq_DualApprox_DefV}) and (\ref{Eq_DualApprox_DefG}) implies
	\begin{align}\label{Eq_DualApprox_DualSystem_VG_delta}
		& z V^{(B,\delta)} = - \sum\limits_{r,s=1}^R \frac{1}{n} \tr\big( A_rA_s^* G^{(A,\delta)} \big) B_s^* B_r \nonumber\\
		& z V^{(A,\delta)} = - \sum\limits_{r,s=1}^R \frac{1}{n} \tr\big( B_s^*B_r G^{(B,\delta)} \big) A_r A_s^*
	\end{align}
	and its approximate analogon (\ref{Eq_DualApprox_DualSystem}) implies
	\begin{align}\label{Eq_DualApprox_DualSystem_VG_tdelta}
		& z V^{(B,\tilde{\delta})} = - \sum\limits_{r,s=1}^R \frac{1}{n} \tr\big( A_rA_s^* G^{(A,\tilde{\delta})} \big) B_s^* B_r + z \sum\limits_{r,s=1}^R q^{(A)}_{r,s} B_s^* B_r \nonumber\\
		& z V^{(A,\tilde{\delta})} = - \sum\limits_{r,s=1}^R \frac{1}{n} \tr\big( B_s^*B_r G^{(B,\tilde{\delta})} \big) A_r A_s^* + z \sum\limits_{r,s=1}^R q^{(B)}_{r,s} A_r A_s^* \ .
	\end{align}
	Taking real and imaginary parts of (\ref{Eq_DualApprox_DualSystem_VG_tdelta}) then yields
	\begin{align}\label{Eq_DualApprox_RezVB_Calc_tdelta}
		& \Re(z) \Re\big( V^{(B,\tilde{\delta})} \big) - \Im(z) \Im\big( V^{(B,\tilde{\delta})} \big) = \Re\big( z V^{(B,\tilde{\delta})} \big) \nonumber\\
		& \overset{\text{(\ref{Eq_DualApprox_DualSystem_VG_tdelta})}}{=} -\frac{1}{2} \sum\limits_{r,s=1}^R \frac{1}{n} \Big( \tr\big( A_rA_s^* G^{(A,\tilde{\delta})} \big) + \ol{\tr\big( A_sA_r^* G^{(A,\tilde{\delta})} \big)} \Big) B_s^*B_r \nonumber\\
		& \hspace{0.5cm} + \frac{1}{2} \sum\limits_{r,s=1}^R \big( zq^{(A)}_{r,s} + \ol{zq^{(A)}_{s,r}} \big) B_s^*B_r \nonumber\\
		& = - \sum\limits_{r,s=1}^R \frac{1}{n} \tr\big( A_rA_s^* \Re\big(G^{(A,\tilde{\delta})}\big) \big) B_s^*B_r + \sum\limits_{r,s=1}^R \Re(zq^{(A)})_{r,s} B_s^*B_r \nonumber\\
		& \overset{\text{(\ref{Eq_DualApprox_ReGA_Calc})}}{=} - \sum\limits_{r,s=1}^R \frac{1}{n} \tr\Big( A_rA_s^* G^{(A,\tilde{\delta})} (G^{(A,\tilde{\delta})})^* \Big) B_s^*B_r \nonumber\\
		& \hspace{0.5cm} - \sum\limits_{r,s=1}^R \frac{1}{n} \tr\Big( A_rA_s^* G^{(A,\tilde{\delta})} \Re\big(V^{(A,\tilde{\delta})}\big) (G^{(A,\tilde{\delta})})^* \Big) B_s^*B_r \nonumber\\
		& \hspace{0.5cm} + \sum\limits_{r,s=1}^R \Re(zq^{(A)})_{r,s} B_s^*B_r
	\end{align}
	and
	\begin{align}\label{Eq_DualApprox_ImzVB_Calc_tdelta}
		& \Re(z) \Im\big( V^{(B,\tilde{\delta})} \big) + \Im(z) \Re\big( V^{(B,\tilde{\delta})} \big) = \Im\big( z V^{(B,\tilde{\delta})} \big) \nonumber\\
		& \overset{\text{(\ref{Eq_DualApprox_DualSystem_VG_tdelta})}}{=} -\frac{1}{2\bm{i}} \sum\limits_{r,s=1}^R \frac{1}{n} \Big( \tr\big( A_rA_s^* G^{(A,\tilde{\delta})} \big) - \ol{\tr\big( A_sA_r^* G^{(A,\tilde{\delta})} \big)} \Big) B_s^*B_r \nonumber\\
		& \hspace{0.5cm} + \frac{1}{2\bm{i}} \sum\limits_{r,s=1}^R \big( zq^{(A)}_{r,s} - \ol{zq^{(A)}_{s,r}} \big) B_s^*B_r \nonumber\\
		& = - \sum\limits_{r,s=1}^R \frac{1}{n} \tr\big( A_rA_s^* \Im\big(G^{(A,\tilde{\delta})}\big) \big) B_s^*B_r + \sum\limits_{r,s=1}^R \Im(zq^{(A)})_{r,s} B_s^*B_r \nonumber\\
		& \overset{\text{(\ref{Eq_DualApprox_ImGA_Calc})}}{=} \sum\limits_{r,s=1}^R \frac{1}{n} \tr\Big( A_rA_s^* G^{(A,\tilde{\delta})} \Im\big(V^{(A,\tilde{\delta})}\big) (G^{(A,\tilde{\delta})})^* \Big) B_s^*B_r + \sum\limits_{r,s=1}^R \Im(zq^{(A)})_{r,s} B_s^*B_r
	\end{align}
	as well as
	\begin{align}\label{Eq_DualApprox_RezVA_Calc_tdelta}
		& \Re(z) \Re\big( V^{(A,\tilde{\delta})} \big) - \Im(z) \Im\big( V^{(A,\tilde{\delta})} \big) = \Re\big( z V^{(A,\tilde{\delta})} \big) \nonumber\\
		& = - \sum\limits_{r,s=1}^R \frac{1}{n} \tr\Big( B_s^*B_r G^{(B,\tilde{\delta})} (G^{(B,\tilde{\delta})})^* \Big) A_rA_s^* \nonumber\\
		& \hspace{0.5cm} - \sum\limits_{r,s=1}^R \frac{1}{n} \tr\Big( B_s^*B_r G^{(B,\tilde{\delta})} \Re\big(V^{(B,\tilde{\delta})}\big) (G^{(B,\tilde{\delta})})^* \Big) A_rA_s^* \nonumber\\
		& \hspace{0.5cm} + \sum\limits_{r,s=1}^R \Re(zq^{(B)})_{r,s} A_r A_s^*
	\end{align}
	and
	\begin{align}\label{Eq_DualApprox_ImzVA_Calc_tdelta}
		& \Re(z) \Im\big( V^{(A,\tilde{\delta})} \big) + \Im(z) \Re\big( V^{(A,\tilde{\delta})} \big) = \Im\big( z V^{(A,\tilde{\delta})} \big) \nonumber\\
		& = \sum\limits_{r,s=1}^R \frac{1}{n} \tr\Big( B_s^*B_r G^{(B,\tilde{\delta})} \Im\big(V^{(B,\tilde{\delta})}\big) (G^{(B,\tilde{\delta})})^* \Big) A_rA_s^* + \sum\limits_{r,s=1}^R \Im(zq^{(B)})_{r,s} A_r A_s^* \ .
	\end{align}
	
	\item[v)] \textit{Strategic combination of equations}:\\
	Combining (\ref{Eq_DualApprox_RezVA_Calc_tdelta}) and (\ref{Eq_DualApprox_ImzVA_Calc_tdelta}) yields
	\begin{align}\label{Eq_DualApprox_Comb1}
		& |z|^2 \Im\big( V^{(A,\tilde{\delta})} \big) = \Im(z)^2 \Im\big( V^{(A,\tilde{\delta})} \big) + \Re(z)^2 \Im\big( V^{(A,\tilde{\delta})} \big) \nonumber\\
		& = -\Im(z) \text{(\ref{Eq_DualApprox_RezVA_Calc_tdelta})} + \Re(z) \text{(\ref{Eq_DualApprox_ImzVA_Calc_tdelta})} \nonumber\\
		& = \Im(z) \sum\limits_{r,s=1}^R \frac{1}{n} \tr\Big( B_s^*B_r G^{(B,\tilde{\delta})} (G^{(B,\tilde{\delta})})^* \Big) A_rA_s^* \nonumber\\
		& \hspace{0.5cm} + \Im(z) \sum\limits_{r,s=1}^R \frac{1}{n} \tr\Big( B_s^*B_r G^{(B,\tilde{\delta})} \Re\big(V^{(B,\tilde{\delta})}\big) (G^{(B,\tilde{\delta})})^* \Big) A_rA_s^* \nonumber\\
		& \hspace{0.5cm} + \Re(z) \sum\limits_{r,s=1}^R \frac{1}{n} \tr\Big( B_s^*B_r G^{(B,\tilde{\delta})} \Im\big(V^{(B,\tilde{\delta})}\big) (G^{(B,\tilde{\delta})})^* \Big) A_rA_s^* \nonumber\\
		& \hspace{0.5cm} - \Im(z) \sum\limits_{r,s=1}^R \Re(zq^{(B)})_{r,s} A_rA_s^* + \Re(z) \sum\limits_{r,s=1}^R \Im(zq^{(B)})_{r,s} A_rA_s^* \nonumber\\
		& = \Im(z) \sum\limits_{r,s=1}^R \frac{1}{n} \tr\Big( B_s^*B_r G^{(B,\tilde{\delta})} (G^{(B,\tilde{\delta})})^* \Big) A_rA_s^* \nonumber\\
		& \hspace{0.5cm} + \sum\limits_{r,s=1}^R \frac{1}{n} \tr\Big( B_s^*B_r G^{(B,\tilde{\delta})} \big( \Im(z) \Re\big(V^{(B,\tilde{\delta})}\big) + \Re(z) \Im\big(V^{(B,\tilde{\delta})}\big) \big) (G^{(B,\tilde{\delta})})^* \Big) A_rA_s^* \nonumber\\
		& \hspace{0.5cm} + \sum\limits_{r,s=1}^R \big( - \Im(z) \Re(zq^{(B)}) + \Re(z) \Im(zq^{(B)}) \big)_{r,s} A_rA_s^* \nonumber\\
		& = \Im(z) \sum\limits_{r,s=1}^R \frac{1}{n} \tr\Big( B_s^*B_r G^{(B,\tilde{\delta})} (G^{(B,\tilde{\delta})})^* \Big) A_rA_s^* \nonumber\\
		& \hspace{0.5cm} + \sum\limits_{r,s=1}^R \frac{1}{n} \tr\Big( B_s^*B_r G^{(B,\tilde{\delta})} \Im\big(z V^{(B,\tilde{\delta})}\big) (G^{(B,\tilde{\delta})})^* \Big) A_rA_s^* \nonumber\\
		& \hspace{0.5cm} + \sum\limits_{r,s=1}^R |z|^2 \Im(q^{(B)})_{r,s} A_rA_s^* \ ,
	\end{align}
	which in turn may be inserted into (\ref{Eq_DualApprox_ImzVB_Calc_tdelta}) for
	\begin{align}\label{Eq_DualApprox_Comb2}
		& |z|^2\Im\big( z V^{(B,\tilde{\delta})} \big) \nonumber\\
		& \overset{\text{(\ref{Eq_DualApprox_ImzVB_Calc_tdelta})}}{=} \sum\limits_{r,s=1}^R \frac{1}{n} \tr\Big( A_rA_s^* G^{(A,\tilde{\delta})} |z|^2 \Im\big(V^{(A,\tilde{\delta})}\big) (G^{(A,\tilde{\delta})})^* \Big) B_s^*B_r \nonumber\\
		& \hspace{0.5cm} + |z|^2 \sum\limits_{r,s=1}^R \Im(zq^{(A)})_{r,s} B_s^*B_r \nonumber\\
		& \overset{\text{(\ref{Eq_DualApprox_Comb1})}}{=} \Im(z) \sum\limits_{r,r',s,s'=1}^R \frac{1}{n} \tr\Big( A_rA_s^* G^{(A,\tilde{\delta})} A_{r'}A_{s'}^* (G^{(A,\tilde{\delta})})^* \Big) \nonumber\\
		& \hspace{3cm} \times \frac{1}{n} \tr\Big( B_{s'}^*B_{r'} G^{(B,\tilde{\delta})} (G^{(B,\tilde{\delta})})^* \Big) B_{s}^*B_{r} \nonumber\\
		& \hspace{0.5cm} + \sum\limits_{r,r',s,s'=1}^R \frac{1}{n} \tr\Big( A_rA_s^* G^{(A,\tilde{\delta})} A_{r'}A_{s'}^* (G^{(A,\tilde{\delta})})^* \Big) \nonumber\\
		& \hspace{2cm} \times \frac{1}{n} \tr\Big( B_{s'}^*B_{r'} G^{(B,\tilde{\delta})} \Im\big(z V^{(B,\tilde{\delta})}\big) (G^{(B,\tilde{\delta})})^* \Big) B_s^*B_r \nonumber\\
		& \hspace{0.5cm} + |z|^2 \sum\limits_{r,s=1}^R \Im(zq^{(A)})_{r,s} B_s^*B_r \nonumber\\
		& \hspace{0.5cm} + |z|^2 \sum\limits_{r,r',s,s'=1}^R \Im(q^{(B)})_{r',s'} \frac{1}{n} \tr\Big( A_rA_s^* G^{(A,\tilde{\delta})} A_{r'}A_{s'}^* (G^{(A,\tilde{\delta})})^* \Big) B_s^*B_r \ .
	\end{align}
	
	\item[vi)] \textit{Defining the operators $S^{(A)}$ and $S^{(B)}$ and translating equations}:\\
	Define the linear operators
	\begin{align*}
		& S^{(A)} : \C^{n \times n} \rightarrow \C^{d \times d} \ \ \text{ and } \ \ S^{(B)} : \C^{d \times d} \rightarrow \C^{n \times n}
	\end{align*}
	by
	\begin{align}\label{Eq_DualApprox_DefS}
		& S^{(A)}(\tilde{M}) \coloneq \sum\limits_{r,s=1}^R \frac{1}{n} \tr\big( B_s^*B_r \tilde{M} \big) A_rA_s^* \nonumber\\
		& S^{(B)}(M) \coloneq \sum\limits_{r,s=1}^R \frac{1}{n} \tr\big( A_rA_s^* M \big) B_s^*B_r \ .
	\end{align}
	The properties
	\begin{align}\label{Eq_DualApprox_S_passes_conjugation}
		& S^{(A)}(\tilde{M})^* = S^{(A)}(\tilde{M}^*) \ \ \text{ and } \ \ S^{(B)}(M)^* = S^{(B)}(M^*)
	\end{align}
	are straightforward to calculate. The equations (\ref{Eq_DualApprox_DualSystem_VG_delta}) and (\ref{Eq_DualApprox_DualSystem_VG_tdelta}) may then be written as
	\begin{align}\label{Eq_DualApprox_DualSystem_VSG_delta}
		& - z V^{(B,\delta)} = S^{(B)}\big( G^{(A,\delta)} \big) \nonumber\\
		& - z V^{(A,\delta)} = S^{(A)}\big( G^{(B,\delta)} \big)
	\end{align}
	and
	\begin{align}\label{Eq_DualApprox_DualSystem_VSG_tdelta}
		& - z V^{(B,\tilde{\delta})} + z \sum\limits_{r,s=1}^R q^{(A)}_{r,s} B_s^* B_r = S^{(B)}\big( G^{(A,\tilde{\delta})} \big) \nonumber\\
		& - z V^{(A,\tilde{\delta})} + z \sum\limits_{r,s=1}^R q^{(B)}_{r,s} A_r A_s^* = S^{(A)}\big( G^{(B,\tilde{\delta})} \big) \ .
	\end{align}
	Furthermore, the equality (\ref{Eq_DualApprox_Comb2}) may be written as
	\begin{align}\label{Eq_DualApprox_Comb3_tdelta}
		& |z|^2\Im\big( z V^{(B,\tilde{\delta})} \big) \nonumber\\
		& = \Im(z) S^{(B)}\Big( G^{(A,\tilde{\delta})} S^{(A)}\big( G^{(B,\tilde{\delta})} (G^{(B,\tilde{\delta})})^* \big) (G^{(A,\tilde{\delta})})^* \Big) \nonumber\\
		& \hspace{0.5cm} + S^{(B)}\Big( G^{(A,\tilde{\delta})} S^{(A)}\Big( G^{(B,\tilde{\delta})} \Im\big(z V^{(B,\tilde{\delta})}\big) (G^{(B,\tilde{\delta})})^* \Big) (G^{(A,\tilde{\delta})})^* \Big) \nonumber\\
		& \hspace{0.5cm} + |z|^2 \sum\limits_{r,s=1}^R \Im(zq^{(A)})_{r,s} B_s^*B_r\\
		& \hspace{0.5cm} + |z|^2 S^{(B)}\Big( G^{(A,\tilde{\delta})} \Big( \sum\limits_{r',s'=1}^R \Im(q^{(B)})_{r',s'} A_{r'}A_{s'}^* \Big) (G^{(A,\tilde{\delta})})^* \Big) \ .
	\end{align}
	The same calculations from (iv) and (v) hold with $(\delta,0,0)$ instead of $(\tilde{\delta},q^{(A)},q^{(B)})$, which yields
	\begin{align}\label{Eq_DualApprox_Comb3}
		& |z|^2\Im\big( z V^{(B,\delta)} \big) \nonumber\\
		& = \Im(z) S^{(B)}\Big( G^{(A,\delta)} S^{(A)}\big( G^{(B,\delta)} (G^{(B,\delta)})^* \big) (G^{(A,\delta)})^* \Big) \nonumber\\
		& \hspace{0.5cm} + S^{(B)}\Big( G^{(A,\delta)} S^{(A)}\Big( G^{(B,\delta)} \Im\big(z V^{(B,\delta)}\big) (G^{(B,\delta)})^* \Big) (G^{(A,\delta)})^* \Big) \ .
	\end{align}
	
	\item[vii)] \textit{Bounding error terms from (\ref{Eq_DualApprox_Comb3_tdelta})}:\\
	Lemma~\ref{Lemma_SimpleOperatorBounds} implies
	\begin{align}\label{Eq_DualApprox_BSumBound}
		& |z|^2 \Big|\Big| \sum\limits_{r,s=1}^R \Im(zq^{(A)})_{r,s} B_s^*B_r \Big|\Big| \leq |z|^2 ||\Im(zq^{(A)})|| \, \sigma^2
	\end{align}
	and
	\begin{align}\label{Eq_DualApprox_ASumBound}
		& \Big|\Big| \sum\limits_{r,s=1}^R \Im(q^{(B)})_{r,s} A_rA_s^* \Big|\Big| \leq ||\Im(q^{(B)})|| \, \sigma^2 \ .
	\end{align}
	By the construction of $S^{(B)}$, Lemma~\ref{Lemma_SimpleOperatorBounds} also proves
	\begin{align}\label{Eq_DualApprox_SBBound}
		& ||S^{(B)}||_{\textrm{Op}} \leq \frac{d}{n} \sigma^4 \overset{\text{\ref{ItemAssumption_cBound}}}{\leq} c_* \sigma^4 \ ,
	\end{align}
	which for the error term
	\begin{align*}
		& |z|^2 S^{(B)}\Big( G^{(A,\tilde{\delta})} \Big( \sum\limits_{r',s'=1}^R \Im(q^{(B)})_{r',s'} A_{r'}A_{s'}^* \Big) (G^{(A,\tilde{\delta})})^* \Big)
	\end{align*}
	from (\ref{Eq_DualApprox_Comb3_tdelta}), yields
	\begin{align}\label{Eq_DualApprox_SecondErrorBound}
		& |z|^2 \Big|\Big| S^{(B)}\Big( G^{(A,\tilde{\delta})} \Big( \sum\limits_{r',s'=1}^R \Im(q^{(B)})_{r',s'} A_{r'}A_{s'}^* \Big) (G^{(A,\tilde{\delta})})^* \Big) \Big|\Big| \nonumber\\
		& \overset{\text{(\ref{Eq_DualApprox_SBBound})}}{\leq} c_* \sigma^4 |z|^2 \Big|\Big| G^{(A,\tilde{\delta})} \Big( \sum\limits_{r',s'=1}^R \Im(q^{(B)})_{r',s'} A_{r'}A_{s'}^* \Big) (G^{(A,\tilde{\delta})})^* \Big|\Big| \nonumber\\
		& \overset{\text{(\ref{Eq_DualApprox_GBound})}}{\leq} \frac{c_* \sigma^4 |z|^2}{\tau^2 \tilde{\tau}^2} \Big|\Big| \sum\limits_{r',s'=1}^R \Im(q^{(B)})_{r',s'} A_{r'}A_{s'}^* \Big|\Big| \nonumber\\
		& \overset{\text{(\ref{Eq_DualApprox_ASumBound})}}{\leq} \frac{c_* \sigma^6 |z|^2}{\tau^2 \tilde{\tau}^2} ||\Im(q^{(B)})|| \leq \frac{c_* \sigma^6 |z|^2}{\tau^2 \tilde{\tau}^2} ||q^{(B)}|| \ .
	\end{align}
	
	\item[viii)] \textit{Lower bound for the first summand in (\ref{Eq_DualApprox_Comb3_tdelta})}:\\
	Analogously to (\ref{Eq_DualApprox_ASumBound}), one has the bounds
	\begin{align}\label{Eq_DualApprox_VBound}
		& ||V^{(A,\tilde{\delta})}|| \overset{\text{(\ref{Eq_DualApprox_DefV})}}{=} \Big|\Big| \sum\limits_{r,s=1}^R \tilde{\delta}^{(B)}_{r,s} A_rA_s^* \Big|\Big| \leq ||\tilde{\delta}^{(B)}|| \, \sum\limits_{r=1}^R ||A_r||^2 \overset{\text{\ref{ItemAssumption_sigmaBound}}}{\leq} ||\tilde{\delta}^{(B)}|| \, \sigma^2 \overset{\text{(\ref{Eq_DualApprox_BoundCondition_tdelta})}}{\leq} \tilde{\kappa} \sigma^2 \nonumber\\
		& ||V^{(B,\tilde{\delta})}|| \overset{\text{(\ref{Eq_DualApprox_DefV})}}{=} \Big|\Big| \sum\limits_{r,s=1}^R \tilde{\delta}^{(A)}_{r,s} B_s^* B_r \Big|\Big| \leq ||\tilde{\delta}^{(A)}|| \, \sum\limits_{r=1}^R ||B_r||^2 \overset{\text{\ref{ItemAssumption_sigmaBound}}}{\leq} ||\tilde{\delta}^{(A)}|| \, \sigma^2 \overset{\text{(\ref{Eq_DualApprox_BoundCondition_tdelta})}}{\leq} \tilde{\kappa} \sigma^2 \ ,
	\end{align}
	which by construction of $G^{(A,\tilde{\delta})}$ and $G^{(B,\tilde{\delta})}$ imply that
	\begin{align}\label{Eq_DualApprox_GLowerBound}
		& \sigma_{\min}\big( G^{(A,\tilde{\delta})} \big) \geq \big(1 + \tilde{\kappa} \, \sigma^2\big)^{-1} \nonumber\\
		& \sigma_{\min}\big( G^{(B,\tilde{\delta})} \big) \geq \big(1 + \tilde{\kappa} \, \sigma^2\big)^{-1} \ .
	\end{align}
	It directly follows that
	\begin{align*}
		& \big(1 + \tilde{\kappa} \sigma^2\big)^{-2} \Id_n \preceq G^{(B,\tilde{\delta})} (G^{(B,\tilde{\delta})})^*
	\end{align*}
	and from Lemma~\ref{Lemma_NonDegeneracyNew} one has
	\begin{align*}
		& \tau^2 \big(1 + \tilde{\kappa} \sigma^2\big)^{-2} \Id_d \preceq S^{(A)}\big( G^{(B,\tilde{\delta})} (G^{(B,\tilde{\delta})})^* \big) \ .
	\end{align*}
	Another application of (\ref{Eq_DualApprox_GLowerBound}) and Lemma~\ref{Lemma_NonDegeneracyNew} then yields
	\begin{align}\label{Eq_DualApprox_Sum1LowerBound_tdelta}
		& \tau^4 \big(1 + \tilde{\kappa} \sigma^2\big)^{-4} \Id_n \preceq S^{(B)}\Big( G^{(A,\tilde{\delta})} S^{(A)}\big( G^{(B,\tilde{\delta})} (G^{(B,\tilde{\delta})})^* \big) (G^{(A,\tilde{\delta})})^* \Big) \ .
	\end{align}
	In complete analogy one proves
	\begin{align}\label{Eq_DualApprox_Sum1LowerBound_delta}
		& \tau^4 \big(1 + \tilde{\kappa} \sigma^2\big)^{-4} \Id_n \preceq S^{(B)}\Big( G^{(A,\delta)} S^{(A)}\big( G^{(B,\delta)} (G^{(B,\delta)})^* \big) (G^{(A,\delta)})^* \Big) \ .
	\end{align}
	
	\item[ix)] \textit{Constructing the square root matrices $H_\delta$ and $H_{\tilde{\delta}}$}:\\
	In complete analogy to (\ref{Eq_DualApprox_ImVA_Calc}) one may show
	\begin{align*}
		& \Im\big(z V^{(B,\delta)}\big) = \sum\limits_{r,s=1}^R \Im\big( z \delta^{(A)}\big)_{r,s} B_s^*B_r \ \ ; \ \ \Im\big(z V^{(B,\tilde{\delta})}\big) = \sum\limits_{r,s=1}^R \Im\big( z \tilde{\delta}^{(A)}\big)_{r,s} B_s^*B_r \ .
	\end{align*}
	Lemma~\ref{Lemma_NonDegeneracyNew} together with (\ref{Eq_DualApprox_zDelta_PosDef}) and (\ref{Eq_DualApprox_ztDelta_PosDef}) then yields
	\begin{align*}
		& \tau \frac{\tau^2 \tilde{\tau}}{(1 + \tilde{\kappa} \sigma^2)^2} \Id_n \preceq \Im\big(z V^{(B,\delta)}\big) \ \ \text{ and } \ \ \tau \frac{\tau^2 \tilde{\tau}}{2(1 + \tilde{\kappa} \sigma^2)^2} \Id_n \preceq \Im\big(z V^{(B,\tilde{\delta})}\big) \ .
	\end{align*}
	Let $H_\delta$ and $H_{\tilde{\delta}}$ denote the positive definite square roots of $\Im\big(z V^{(B,\delta)}\big)$ and $\Im\big(z V^{(B,\tilde{\delta})}\big)$ respectively, then these satisfy
	\begin{align}\label{Eq_DualApprox_HProp1}
		& \frac{\sqrt{\tau^3 \tilde{\tau}}}{1 + \tilde{\kappa} \sigma^2} \Id_n \preceq H_\delta \ \ \text{ and } \ \ \frac{\sqrt{\tau^3 \tilde{\tau}/2}}{1 + \tilde{\kappa} \sigma^2} \Id_n \preceq H_{\tilde{\delta}}
	\end{align}
	as well as
	\begin{align}\label{Eq_DualApprox_HProp2}
		& \Im\big(z V^{(B,\delta)}\big) = H_\delta H_\delta \ \ \text{ and } \ \ \Im\big(z V^{(B,\tilde{\delta})}\big) = H_{\tilde{\delta}} H_{\tilde{\delta}} \ .
	\end{align}
	The bounds (\ref{Eq_DualApprox_VBound}), which hold analogously for $\delta$ instead of $\tilde{\delta}$, also yield
	\begin{align}\label{Eq_DualApprox_HBounds}
		& ||H_{\delta}|| \leq \sqrt{|z| \, \tilde{\kappa} \, \sigma^2} \ \ \text{ and } \ \ ||H_{\tilde{\delta}}|| \leq \sqrt{|z| \, \tilde{\kappa} \, \sigma^2} \ .
	\end{align}
	
	\item[x)] \textit{Constructing the operator $T_{\delta,\tilde{\delta}}$}:\\
	Define the linear operator
	\begin{align*}
		& T_{\delta,\tilde{\delta}} : \C^{n \times n} \rightarrow \C^{n \times n}
	\end{align*}
	by
	\begin{align}\label{Eq_DualApprox_DefT}
		& T_{\delta,\tilde{\delta}}(X) \coloneq \frac{1}{z^2} H_\delta^{-1} \, S^{(B)}\Big( G^{(A,\delta)} S^{(A)}\Big( G^{(B,\delta)} H_\delta X H_{\tilde{\delta}} G^{(B,\tilde{\delta})} \Big) G^{(A,\tilde{\delta})} \Big) \, H_{\tilde{\delta}}^{-1} \ .
	\end{align}
	The equalities (\ref{Eq_DualApprox_Comb3_tdelta}) and (\ref{Eq_DualApprox_Comb3}) will now be employed to show that $T_{\delta,\tilde{\delta}}$ is a contraction.
	\\[0.5em]
	To this end, introduce the notation
	\begin{align}\label{Eq_DualApprox_DefU}
		& U^{(a,k)} \coloneq \frac{1}{\sqrt{n}} \sum\limits_{r=1}^R A_r e_{a,d} e_{k,n}^\top B_r
	\end{align}
	for any $a \in \{1,\dots,d\}$ and $k \in \{1,\dots,n\}$, where $e_{\bullet,d}$ and $e_{\bullet,n}$ denote the unit vectors in $\C^d$ and $\C^n$ respectively. The calculation
	\begin{align*}
		& \sum\limits_{a=1}^d \sum\limits_{k=1}^n U^{(a,k)} \tilde{M} (U^{(a,k)})^* \overset{\text{(\ref{Eq_DualApprox_DefU})}}{=} \frac{1}{n} \sum\limits_{a=1}^d \sum\limits_{k=1}^n \Big( \sum\limits_{r=1}^R A_r e_{a,d} e_{k,n}^\top B_r \Big) \tilde{M} \Big( \sum\limits_{s=1}^R A_s e_{a,d} e_{k,n}^\top B_s \Big)^*\\
		& = \sum\limits_{r,s=1}^R \frac{1}{n} \sum\limits_{a=1}^d \sum\limits_{k=1}^n \big( e_{k,n}^\top B_r \tilde{M} B_s^* e_{k,n} \big) A_r e_{a,d}e_{a,d}^\top A_s^* = \sum\limits_{r,s=1}^R \frac{1}{n} \tr\big( B_r \tilde{M} B_s^* \big) A_rA_s^* 
	\end{align*}
	proves
	\begin{align}\label{Eq_DualApprox_SA_Decomp}
		& S^{(A)}(\tilde{M}) = \sum\limits_{a=1}^d \sum\limits_{k=1}^n U^{(a,k)} \tilde{M} (U^{(a,k)})^*
	\end{align}
	and analogously one shows
	\begin{align}\label{Eq_DualApprox_SB_Decomp}
		& S^{(B)}(M) = \sum\limits_{b=1}^d \sum\limits_{l=1}^n (U^{(b,l)})^* M U^{(b,l)} \ .
	\end{align}
	The operator (\ref{Eq_DualApprox_DefT}) may then be written as
	\begin{align*}
		& T_{\delta,\tilde{\delta}}(X)\\
		& \overset{\text{(\ref{Eq_DualApprox_SB_Decomp})}}{=} \frac{1}{z^2} H_\delta^{-1} \Big(\sum\limits_{b=1}^d \sum\limits_{l=1}^n (U^{(b,l)})^* G^{(A,\delta)} S^{(A)}\Big( G^{(B,\delta)} H_\delta X H_{\tilde{\delta}} G^{(B,\tilde{\delta})} \Big) G^{(A,\tilde{\delta})} U^{(b,l)} \Big) H_{\tilde{\delta}}^{-1}\\
		& \overset{\text{(\ref{Eq_DualApprox_SA_Decomp})}}{=} \frac{1}{z^2} \sum\limits_{a,b=1}^d \sum\limits_{k,l=1}^n \overbrace{H_\delta^{-1} (U^{(b,l)})^* G^{(A,\delta)} U^{(a,k)} G^{(B,\delta)} H_\delta}^{\eqcolon  C_{(a,b,k,l)}} X\\
		& \hspace{5cm} \times \underbrace{H_{\tilde{\delta}} G^{(B,\tilde{\delta})} (U^{(a,k)})^* G^{(A,\tilde{\delta})} U^{(b,l)} H_{\tilde{\delta}}^{-1}}_{\eqcolon  D_{(a,b,k,l)}} \ .
	\end{align*}
	Applying Lemma~\ref{Lemma_OperatorBound} before reinstating (\ref{Eq_DualApprox_SA_Decomp}) and (\ref{Eq_DualApprox_SB_Decomp}) then yields
	\begin{align}\label{Eq_DualApprox_TOpBound}
		& ||T_{\delta,\tilde{\delta}}||_{\textrm{Op}} \overset{\text{(\ref{Eq_Uniqueness_OpBound})}}{\leq} \frac{1}{|z|^2} \Big|\Big| \sum\limits_{a,b=1}^d \sum\limits_{k,l=1}^n C_{(a,b,k,l)} C_{(a,b,k,l)}^* \Big|\Big|^{\frac{1}{2}} \Big|\Big| \sum\limits_{a,b=1}^d \sum\limits_{k,l=1}^n D_{(a,b,k,l)}^* D_{(a,b,k,l)} \Big|\Big|^{\frac{1}{2}} \nonumber\\
		& = \frac{1}{|z|^2} \Big|\Big| \sum\limits_{a,b=1}^d \sum\limits_{k,l=1}^n H_\delta^{-1} (U^{(b,l)})^* G^{(A,\delta)} U^{(a,k)} G^{(B,\delta)} H_\delta \nonumber\\
		& \hspace{3cm} \times H_\delta (G^{(B,\delta)})^* (U^{(a,k)})^* (G^{(A,\delta)})^* U^{(b,l)} H_\delta^{-1} \Big|\Big|^{\frac{1}{2}} \nonumber\\
		& \hspace{1cm} \times \Big|\Big| \sum\limits_{a,b=1}^d \sum\limits_{k,l=1}^n H_{\tilde{\delta}}^{-1} (U^{(b,l)})^* (G^{(A,\tilde{\delta})})^* U^{(a,k)} (G^{(B,\tilde{\delta})})^* H_{\tilde{\delta}} \nonumber\\
		& \hspace{3cm} \times H_{\tilde{\delta}} G^{(B,\tilde{\delta})} (U^{(a,k)})^* G^{(A,\tilde{\delta})} U^{(b,l)} H_{\tilde{\delta}}^{-1} \Big|\Big|^{\frac{1}{2}} \nonumber\\
		& \overset{\text{(\ref{Eq_DualApprox_SA_Decomp})}}{=} \frac{1}{|z|^2} \Big|\Big| \sum\limits_{b=1}^d \sum\limits_{l=1}^n H_\delta^{-1} (U^{(b,l)})^* G^{(A,\delta)} S^{(A)} \Big( G^{(B,\delta)} H_\delta H_\delta (G^{(B,\delta)})^* \Big) \nonumber\\
		& \hspace{7cm} \times (G^{(A,\delta)})^* U^{(b,l)} H_\delta^{-1} \Big|\Big|^{\frac{1}{2}} \nonumber\\
		& \hspace{1cm} \times \Big|\Big| \sum\limits_{b=1}^d \sum\limits_{l=1}^n H_{\tilde{\delta}}^{-1} (U^{(b,l)})^* (G^{(A,\tilde{\delta})})^* S^{(A)} \Big( (G^{(B,\tilde{\delta})})^* H_{\tilde{\delta}} H_{\tilde{\delta}} G^{(B,\tilde{\delta})} \Big) \nonumber\\
		& \hspace{7.5cm} \times G^{(A,\tilde{\delta})} U^{(b,l)} H_{\tilde{\delta}}^{-1} \Big|\Big|^{\frac{1}{2}} \nonumber\\
		& \overset{\text{(\ref{Eq_DualApprox_SB_Decomp})}}{=} \frac{1}{|z|^2} \Big|\Big| H_\delta^{-1} S^{(B)} \Big( G^{(A,\delta)} S^{(A)} \Big( G^{(B,\delta)} H_\delta H_\delta (G^{(B,\delta)})^* \Big) (G^{(A,\delta)})^* \Big) H_\delta^{-1} \Big|\Big|^{\frac{1}{2}} \nonumber\\
		& \hspace{1cm} \times \Big|\Big| H_{\tilde{\delta}}^{-1} S^{(B)} \Big( (G^{(A,\tilde{\delta})})^* S^{(A)} \Big( (G^{(B,\tilde{\delta})})^* H_{\tilde{\delta}} H_{\tilde{\delta}} G^{(B,\tilde{\delta})} \Big) G^{(A,\tilde{\delta})} \Big) H_{\tilde{\delta}}^{-1} \Big|\Big|^{\frac{1}{2}} \nonumber\\
		& \overset{\substack{\text{(\ref{Eq_DualApprox_HProp2})} \\ \text{(\ref{Eq_DualApprox_S_passes_conjugation})}}}{=} \frac{1}{|z|^2} \Big|\Big| H_\delta^{-1} S^{(B)} \Big( G^{(A,\delta)} S^{(A)} \Big( G^{(B,\delta)} \Im\big(z V^{(B,\delta)}\big) (G^{(B,\delta)})^* \Big) (G^{(A,\delta)})^* \Big) H_\delta^{-1} \Big|\Big|^{\frac{1}{2}} \nonumber\\
		& \hspace{1cm} \times \Big|\Big| H_{\tilde{\delta}}^{-1} S^{(B)} \Big( G^{(A,\tilde{\delta})} S^{(A)} \Big( G^{(B,\tilde{\delta})} \Im\big(z V^{(B,\tilde{\delta})}\big) (G^{(B,\tilde{\delta})})^* \Big) (G^{(A,\tilde{\delta})})^* \Big) H_{\tilde{\delta}}^{-1} \Big|\Big|^{\frac{1}{2}} \ .
	\end{align}
	
	\item[xi)] \textit{Proving that $T_{\delta,\tilde{\delta}}$ is a contraction}:\\
	Plugging (\ref{Eq_DualApprox_HProp2}) into (\ref{Eq_DualApprox_Comb3}) leads to the equality
	\begin{align}\label{Eq_DualApprox_HSG_Identity_delta}
		& |z|^2 \Id_n \nonumber\\
		& = \Im(z) H_\delta^{-1} S^{(B)}\Big( G^{(A,\delta)} S^{(A)}\big( G^{(B,\delta)} (G^{(B,\delta)})^* \big) (G^{(A,\delta)})^* \Big) H_\delta^{-1} \nonumber\\
		& \hspace{0.5cm} + H_\delta^{-1} S^{(B)}\Big( G^{(A,\delta)} S^{(A)}\Big( G^{(B,\delta)} \Im\big(z V^{(B,\delta)}\big) (G^{(B,\delta)})^* \Big) (G^{(A,\delta)})^* \Big) H_\delta^{-1} \ ,
	\end{align} 
	where the first summand on the right hand side may by (\ref{Eq_DualApprox_Sum1LowerBound_delta}) and (\ref{Eq_DualApprox_HBounds}) be bounded from below by
	\begin{align}\label{Eq_DualApprox_Sum1Bound_delta}
		& \Im(z) H_\delta^{-1} S^{(B)}\Big( G^{(A,\delta)} S^{(A)}\big( G^{(B,\delta)} (G^{(B,\delta)})^* \big) (G^{(A,\delta)})^* \Big) H_\delta^{-1} \nonumber\\
		& \succeq \Im(z) \tau^4 (1 + \tilde{\kappa} \sigma^2)^{-4} \, \frac{1}{|z|} \big( ||\delta^{(A)}|| \, \sigma^2 \big)^{-1} \Id_n \nonumber\\
		& \succeq \frac{\tau^4 \Im(z)}{|z| (1 + \tilde{\kappa} \sigma^2)^{5}} \Id_n \ .
	\end{align}
	As all summands in (\ref{Eq_DualApprox_HSG_Identity_delta}) are positive definite, this implies
	\begin{align}\label{Eq_DualApprox_Factor1Bound}
		& \Big|\Big| H_\delta^{-1} S^{(B)}\Big( G^{(A,\delta)} S^{(A)}\Big( G^{(B,\delta)} \Im\big(z V^{(B,\delta)}\big) (G^{(B,\delta)})^* \Big) (G^{(A,\delta)})^* \Big) H_\delta^{-1} \Big|\Big| \nonumber\\
		& \leq |z|^2 - \frac{\tau^4 \Im(z)}{|z| (1 + \tilde{\kappa} \sigma^2)^{5}} \ ,
	\end{align}
	which gives a bound on the first factor on the right-hand side of (\ref{Eq_DualApprox_TOpBound}).
	\\[0.5em]
	Similarly, plugging (\ref{Eq_DualApprox_HProp2}) into (\ref{Eq_DualApprox_Comb3_tdelta}) yields
	\begin{align}\label{Eq_DualApprox_HSG_Identity_tdelta}
		& |z|^2 \Id_n \nonumber\\
		& = \Im(z) H_{\tilde{\delta}}^{-1} S^{(B)}\Big( G^{(A,\tilde{\delta})} S^{(A)}\big( G^{(B,\tilde{\delta})} (G^{(B,\tilde{\delta})})^* \big) (G^{(A,\tilde{\delta})})^* \Big) H_{\tilde{\delta}}^{-1} \nonumber\\
		& \hspace{0.5cm} + H_{\tilde{\delta}}^{-1} S^{(B)}\Big( G^{(A,\tilde{\delta})} S^{(A)}\Big( G^{(B,\tilde{\delta})} \Im\big(z V^{(B,\tilde{\delta})}\big) (G^{(B,\tilde{\delta})})^* \Big) (G^{(A,\tilde{\delta})})^* \Big) H_{\tilde{\delta}}^{-1} \nonumber\\
		& \hspace{0.5cm} + |z|^2 H_{\tilde{\delta}}^{-1} \sum\limits_{r,s=1}^R \Im(zq^{(A)})_{r,s} B_s^*B_r H_{\tilde{\delta}}^{-1} \nonumber\\
		& \hspace{0.5cm} + |z|^2 H_{\tilde{\delta}}^{-1} S^{(B)}\Big( G^{(A,\tilde{\delta})} \Big( \sum\limits_{r',s'=1}^R \Im(q^{(B)})_{r',s'} A_{r'}A_{s'}^* \Big) (G^{(A,\tilde{\delta})})^* \Big) H_{\tilde{\delta}}^{-1} \ .
	\end{align}
	The last two summands on the right-hand side of (\ref{Eq_DualApprox_HSG_Identity_tdelta}) may be bounded as follows
	\begin{align*}
		& |z|^2 \Big|\Big| H_{\tilde{\delta}}^{-1} \sum\limits_{r,s=1}^R \Im(zq^{(A)})_{r,s} B_s^*B_r H_{\tilde{\delta}}^{-1} \Big|\Big|\\
		& \overset{\text{(\ref{Eq_DualApprox_HProp1})}}{\leq} |z|^2 \frac{2(1 + \tilde{\kappa} \sigma^2)^2}{\tau^3 \tilde{\tau}} \Big|\Big| \sum\limits_{r,s=1}^R \Im(zq^{(A)})_{r,s} B_s^*B_r \Big|\Big|\\
		& \overset{\text{(\ref{Eq_DualApprox_BSumBound})}}{\leq} |z|^2 \frac{2(1 + \tilde{\kappa} \sigma^2)^2}{\tau^3 \tilde{\tau}} \sigma^2 ||\Im(zq^{(A)})|| \leq \frac{2 |z|^3\sigma^2(1 + \tilde{\kappa} \sigma^2)^2}{\tau^3 \tilde{\tau}} ||q^{(A)}||
	\end{align*}
	and
	\begin{align*}
		& |z|^2 \Big|\Big| H_{\tilde{\delta}}^{-1} S^{(B)}\Big( G^{(A,\tilde{\delta})} \Big( \sum\limits_{r',s'=1}^R \Im(q^{(B)})_{r',s'} A_{r'}A_{s'}^* \Big) (G^{(A,\tilde{\delta})})^* \Big) H_{\tilde{\delta}}^{-1} \Big|\Big| \nonumber\\
		& \overset{\text{(\ref{Eq_DualApprox_HProp1})}}{\leq} |z|^2 \frac{2(1 + \tilde{\kappa} \sigma^2)^2}{\tau^3 \tilde{\tau}} \Big|\Big| S^{(B)}\Big( G^{(A,\tilde{\delta})} \Big( \sum\limits_{r',s'=1}^R \Im(q^{(B)})_{r',s'} A_{r'}A_{s'}^* \Big) (G^{(A,\tilde{\delta})})^* \Big) \Big|\Big| \nonumber\\
		& \overset{\text{(\ref{Eq_DualApprox_SecondErrorBound})}}{\leq} \frac{2(1 + \tilde{\kappa} c_* \sigma^2)^2}{\tau^3 \tilde{\tau}} \, \frac{|z|^2 c_* \sigma^6}{\tau^2 \tilde{\tau}^2} ||q^{(B)}|| = \frac{2|z|^2 c_* \sigma^6(1 + \tilde{\kappa} \sigma^2)^2}{\tau^5 \tilde{\tau}^3} ||q^{(B)}|| \ .
	\end{align*}
	Assumptions (\ref{Eq_DualApprox_qSmall}) and (\ref{Eq_DualApprox_tqSmall}) thus imply
	\begin{align}\label{Eq_DualApprox_ErrorBound}
		& |z|^2 \Big|\Big| H_{\tilde{\delta}}^{-1} \sum\limits_{r,s=1}^R \Im(zq^{(A)})_{r,s} B_s^*B_r H_{\tilde{\delta}}^{-1} \Big|\Big| \nonumber\\
		& \hspace{0.5cm} + |z|^2 \Big|\Big| H_{\tilde{\delta}}^{-1} S^{(B)}\Big( G^{(A,\tilde{\delta})} \Big( \sum\limits_{r',s'=1}^R \Im(q^{(B)})_{r',s'} A_{r'}A_{s'}^* \Big) (G^{(A,\tilde{\delta})})^* \Big) H_{\tilde{\delta}}^{-1} \Big|\Big| \nonumber\\
		& \leq \frac{\tau^4 \Im(z)}{2|z| (1 + \tilde{\kappa} \sigma^2)^{5}} \ .
	\end{align}
	In complete analogy to (\ref{Eq_DualApprox_Sum1Bound_delta}), one observes
	\begin{align}\label{Eq_DualApprox_Sum1Bound_tdelta}
		& \Im(z) H_{\tilde{\delta}}^{-1} S^{(B)}\Big( G^{(A,\tilde{\delta})} S^{(A)}\big( G^{(B,\tilde{\delta})} (G^{(B,\tilde{\delta})})^* \big) (G^{(A,\tilde{\delta})})^* \Big) H_{\tilde{\delta}}^{-1} \succeq \frac{\tau^4 \Im(z)}{|z| (1 + \tilde{\kappa} \sigma^2)^{5}} \Id_n \ ,
	\end{align}
	and combining (\ref{Eq_DualApprox_HSG_Identity_tdelta}), (\ref{Eq_DualApprox_ErrorBound}) and (\ref{Eq_DualApprox_Sum1Bound_tdelta}) yields
	\begin{align}\label{Eq_DualApprox_Factor2Bound}
		& \Big|\Big| H_{\tilde{\delta}}^{-1} S^{(B)}\Big( G^{(A,\tilde{\delta})} S^{(A)}\Big( G^{(B,\tilde{\delta})} \Im\big(z V^{(B,\tilde{\delta})}\big) (G^{(B,\tilde{\delta})})^* \Big) (G^{(A,\tilde{\delta})})^* \Big) H_{\tilde{\delta}}^{-1} \Big|\Big| \nonumber\\
		& \leq |z|^2 - \frac{\tau^4 \Im(z)}{2|z| (1 + \tilde{\kappa} \sigma^2)^{5}} \ .
	\end{align}
	which gives a bound on the second factor on the right-hand side of (\ref{Eq_DualApprox_TOpBound}).
	\\[0.5em]
	The bounds (\ref{Eq_DualApprox_TOpBound}), (\ref{Eq_DualApprox_Factor1Bound}) and (\ref{Eq_DualApprox_Factor2Bound}) finally yield
	\begin{align}\label{Eq_DualApprox_TContraction}
		||T_{\delta,\tilde{\delta}}||_{\textrm{Op}} & \leq \Big( 1 - \frac{\tau^4 \Im(z)}{|z|^3 (1 + \tilde{\kappa} \sigma^2)^{5}} \Big)^{\frac{1}{2}} \Big( 1 - \frac{\tau^4 \Im(z)}{2|z|^3 (1 + \tilde{\kappa} \sigma^2)^{5}} \Big)^{\frac{1}{2}} \nonumber\\
		& \leq \Big( 1 - \frac{\tau^4 \Im(z)}{2|z|^3 (1 + \tilde{\kappa} \sigma^2)^{5}} \Big)^{\frac{1}{2}} \leq 1 - \frac{\tau^4 \Im(z)}{4|z|^3 (1 + \tilde{\kappa} \sigma^2)^{5}} \ ,
	\end{align}
	thus proving $T_{\delta,\tilde{\delta}}$ to be a contraction.
	
	\item[xii)] \textit{Proving that $V^{(B,\delta)} - V^{(B,\tilde{\delta})}$ is small by Lemma~\ref{Lemma_ContractionEVs}}:\\
	By linearity of the operators $S^{(A)}$ and $S^{(B)}$, calculate
	\begin{align*}
		& T_{\delta,\tilde{\delta}}\Big( H_\delta^{-1} \big( V^{(B,\delta)} - V^{(B,\tilde{\delta})} \big) H_{\tilde{\delta}}^{-1} \Big)\\
		& \overset{\text{(\ref{Eq_DualApprox_DefT})}}{=} \frac{1}{z^2} H_\delta^{-1} \, S^{(B)}\Big( G^{(A,\delta)} S^{(A)}\Big( \underbrace{G^{(B,\delta)} \big( V^{(B,\delta)} - V^{(B,\tilde{\delta})} \big) G^{(B,\tilde{\delta})}}_{\overset{\substack{\text{(\ref{Eq_DualApprox_DefG})} \\ \text{(\ref{Eq_InverseDifferenceIdentity})}}}{=} G^{(B,\tilde{\delta})} - G^{(B,\delta)}} \Big) G^{(A,\tilde{\delta})} \Big) \, H_{\tilde{\delta}}^{-1}\\
		& \overset{\substack{\text{(\ref{Eq_DualApprox_DualSystem_VSG_delta})} \\ \text{(\ref{Eq_DualApprox_DualSystem_VSG_tdelta})}}}{=} \frac{1}{z^2} H_\delta^{-1} \, S^{(B)}\Big( G^{(A,\delta)} \Big( zV^{(A,\delta)} - zV^{(A,\tilde{\delta})} + z\sum\limits_{r,s=1}^R q^{(B)}_{r,s} A_rA_s^* \Big) G^{(A,\tilde{\delta})} \Big) \, H_{\tilde{\delta}}^{-1}\\
		& = \frac{1}{z} H_\delta^{-1} \, S^{(B)}\Big( \overbrace{G^{(A,\delta)} \big( V^{(A,\delta)} - V^{(A,\tilde{\delta})} \big) G^{(A,\tilde{\delta})}}^{\overset{\substack{\text{(\ref{Eq_DualApprox_DefG})} \\ \text{(\ref{Eq_InverseDifferenceIdentity})}}}{=} G^{(A,\tilde{\delta})} - G^{(A,\delta)}} \Big) \, H_{\tilde{\delta}}^{-1}\\
		& \hspace{0.5cm} + \frac{1}{z} H_\delta^{-1} \, S^{(B)}\Big( G^{(A,\delta)} \Big( \sum\limits_{r,s=1}^R q^{(B)}_{r,s} A_rA_s^* \Big) G^{(A,\tilde{\delta})} \Big) \, H_{\tilde{\delta}}^{-1}\\
		& \overset{\substack{\text{(\ref{Eq_DualApprox_DualSystem_VSG_delta})} \\ \text{(\ref{Eq_DualApprox_DualSystem_VSG_tdelta})}}}{=} \frac{1}{z} H_\delta^{-1} \, \Big( zV^{(B,\delta)} - zV^{(B,\tilde{\delta})} + z\sum\limits_{r,s=1}^R q^{(A)}_{r,s} B_s^*B_r \Big) \, H_{\tilde{\delta}}^{-1}\\
		& \hspace{0.5cm} + \frac{1}{z} H_\delta^{-1} \, S^{(B)}\Big( G^{(A,\delta)} \Big( \sum\limits_{r,s=1}^R q^{(B)}_{r,s} A_rA_s^* \Big) G^{(A,\tilde{\delta})} \Big) \, H_{\tilde{\delta}}^{-1}\\
		& = H_\delta^{-1} \big( V^{(B,\delta)} - V^{(B,\tilde{\delta})} \big) H_{\tilde{\delta}}^{-1}\\
		& \hspace{0.5cm} + H_\delta^{-1} \, \Big( \sum\limits_{r,s=1}^R q^{(A)}_{r,s} B_s^*B_r \Big) \, H_{\tilde{\delta}}^{-1}\\
		& \hspace{0.5cm} + \frac{1}{z} H_\delta^{-1} \, S^{(B)}\Big( G^{(A,\delta)} \Big( \sum\limits_{r,s=1}^R q^{(B)}_{r,s} A_rA_s^* \Big) G^{(A,\tilde{\delta})} \Big) \, H_{\tilde{\delta}}^{-1} \ ,
	\end{align*}
	so $H_\delta^{-1} \big( V^{(B,\delta)} - V^{(B,\tilde{\delta})} \big) H_{\tilde{\delta}}^{-1}$ is a near-eigenvector of $T_{\delta,\tilde{\delta}}$ in the sense of Lemma~\ref{Lemma_ContractionEVs}. One by the same Lemma with
	\begin{align*}
		& \theta_{\text{Lemma~\ref{Lemma_ContractionEVs}}} \overset{\text{(\ref{Eq_DualApprox_TContraction})}}{=} \frac{\tau^4 \Im(z)}{4|z|^3 (1 + \tilde{\kappa} \sigma^2)^{5}}
	\end{align*}
	gets
	\begin{align}\label{Eq_DualApprox_HDiffBound}
		& \big|\big| H_\delta^{-1} \big( V^{(B,\delta)} - V^{(B,\tilde{\delta})} \big) H_{\tilde{\delta}}^{-1} \big|\big| \nonumber\\
		& \leq \frac{\big|\big| H_\delta^{-1} \, \big( \sum\limits_{r,s=1}^R q^{(A)}_{r,s} B_s^*B_r \big) \, H_{\tilde{\delta}}^{-1} + \frac{1}{z} H_\delta^{-1} \, S^{(B)}\big( G^{(A,\delta)} \big( \sum\limits_{r,s=1}^R q^{(B)}_{r,s} A_rA_s^* \big) G^{(A,\tilde{\delta})} \big) \, H_{\tilde{\delta}}^{-1} \big|\big|}{\frac{\tau^4 \Im(z)}{4|z|^3 (1 + \tilde{\kappa} \sigma^2)^{5}}} \nonumber\\
		& \overset{\text{(\ref{Eq_DualApprox_HProp1})}}{\leq} \frac{\frac{2(1+\tilde{\kappa}\sigma^2)^2}{\tau^3\tilde{\tau}} \big|\big| \big( \sum\limits_{r,s=1}^R q^{(A)}_{r,s} B_s^*B_r \big) \big|\big|}{\frac{\tau^4 \Im(z)}{4|z|^3 (1 + \tilde{\kappa} \sigma^2)^{5}}} \nonumber\\
		& \hspace{0.5cm} + \frac{\frac{1}{|z|} \frac{2(1+\tilde{\kappa}\sigma^2)^2}{\tau^3\tilde{\tau}} \overbrace{\big|\big| S^{(B)}\big( G^{(A,\delta)} \big( \sum\limits_{r,s=1}^R q^{(B)}_{r,s} A_rA_s^* \big) G^{(A,\tilde{\delta})} \big) \big|\big|}^{\overset{\text{(\ref{Eq_SimpleBound_SB})}}{\leq} c_* \sigma^4 ||G^{(A,\delta)} \big( \sum\limits_{r,s=1}^R q^{(B)}_{r,s} A_rA_s^* \big) G^{(A,\tilde{\delta})}||}}{\frac{\tau^4 \Im(z)}{4|z|^3 (1 + \tilde{\kappa} \sigma^2)^{5}}} \nonumber\\
		& \overset{\substack{\text{(\ref{Eq_SimpleBounds_ASum_BSum})} \\ \text{(\ref{Eq_DualApprox_GBound})}}}{\leq} \frac{\overbrace{\frac{2\sigma^2(1+\tilde{\kappa}\sigma^2)^2}{\tau^3\tilde{\tau}} ||q^{(A)}|| + \frac{2c_*\sigma^6(1+\tilde{\kappa}\sigma^2)^2}{|z|\tau^5\tilde{\tau}^3} ||q^{(B)}||}^{\overset{\substack{c_*,\sigma^2 \geq 1 \\ \tau,\tilde{\tau} \leq 1}}{\leq} \frac{2c_*\sigma^6 (1+\tilde{\kappa}\sigma^2)^2}{\tau^5 \tilde{\tau}^3} (1+1/|z|) (||q^{(A)}||+||q^{(B)}||)}}{\frac{\tau^4 \Im(z)}{4|z|^3 (1 + \tilde{\kappa} \sigma^2)^{5}}} \nonumber\\
		& \leq \frac{8 c_* \sigma^6 |z|^2 (|z|+1) (1+\tilde{\kappa} \sigma^2)^7}{\tau^9 \tilde{\tau}^3} \frac{||q^{(A)}||+||q^{(B)}||}{\Im(z)} \nonumber\\
		& \leq \frac{16 c_* \sigma^6 |z|^2 \kappa (1+\tilde{\kappa} \sigma^2)^7}{\tau^9 \tilde{\tau}^3 \eta} \big(||q^{(A)}||+||q^{(B)}||\big) \ ,
	\end{align}
	where in the last step it was without loss of generality assumed that $\kappa > 1$ to get $|z|+1 \leq \kappa+1 \leq 2\kappa$.
	Finally, the bounds
	\begin{align*}
		& \lambda_{\min}\big(H_\delta^{-1}\big) \overset{\text{(\ref{Eq_DualApprox_HBounds})}}{\geq} \frac{1}{\sqrt{|z| \tilde{\kappa} \sigma^2}} \ \ \text{ and } \ \ \lambda_{\min}\big(H_{\tilde{\delta}}^{-1}\big) \overset{\text{(\ref{Eq_DualApprox_HBounds})}}{\geq} \frac{1}{\sqrt{|z| \tilde{\kappa} \sigma^2}}
	\end{align*}
	may then be employed to get
	\begin{align}\label{Eq_DualApprox_VDiffBound}
		& \big|\big| V^{(B,\delta)} - V^{(B,\tilde{\delta})} \big|\big| \leq |z| \tilde{\kappa} \sigma^2 \, \big|\big| H_\delta^{-1} \big( V^{(B,\delta)} - V^{(B,\tilde{\delta})} \big) H_{\tilde{\delta}}^{-1} \big|\big| \nonumber\\
		& \overset{\text{(\ref{Eq_DualApprox_HDiffBound})}}{\leq} \frac{16 c_* \sigma^8 |z|^3 \kappa \tilde{\kappa} (1+\tilde{\kappa} \sigma^2)^7}{\tau^9 \tilde{\tau}^3 \eta} \big(||q^{(A)}||+||q^{(B)}||\big) \ .
	\end{align}
	
	\item[xiii)] \textit{Bounding the difference between $(\delta^{(A)},\delta^{(B)})$ and $(\tilde{\delta}^{(A)},\tilde{\delta}^{(B)})$}:\\
	Recalling the equations (\ref{Eq_DualSystem}) and (\ref{Eq_DualApprox_DualSystem}) as well as the definitions (\ref{Eq_DualApprox_DefV}) and (\ref{Eq_DualApprox_DefG}), one has
	\begin{align}\label{Eq_DualApprox_delta_Identities}
		& \delta^{(A)}_{r,s} - \tilde{\delta}^{(A)}_{r,s} = -\frac{1}{z} \frac{1}{n} \tr\big( A_rA_s^* \big(G^{(A,\delta)} - G^{(A,\tilde{\delta})}\big) \big) - q^{(A)}_{r,s} \nonumber\\
		& \delta^{(B)}_{r,s} - \tilde{\delta}^{(B)}_{r,s} = -\frac{1}{z} \frac{1}{n} \tr\big( B_s^*B_r \big(G^{(B,\delta)} - G^{(B,\tilde{\delta})}\big) \big) - q^{(B)}_{r,s}
	\end{align}
	and may then bound
	\begin{align}\label{Eq_DualApprox_deltaA_DiffBound}
		& \big|\big| \delta^{(B)} - \tilde{\delta}^{(B)} \big|\big| \overset{\text{(\ref{Eq_DualApprox_delta_Identities})}}{\leq} \frac{1}{|z|} \Big|\Big| \frac{1}{n} \tr\big( B_s^*B_r \big(G^{(B,\delta)} - G^{(B,\tilde{\delta})}\big) \big)_{r,s \leq R} \Big|\Big| + ||q^{(B)}|| \nonumber\\
		& \overset{\text{(\ref{Eq_SimpleBounds_TraceMatrixBounds})}}{\leq} \frac{\sigma^2 n}{|z| n} \big|\big| G^{(B,\delta)} - G^{(B,\tilde{\delta})} \big|\big| + ||q^{(B)}|| \nonumber\\
		& \overset{\text{(\ref{Eq_InverseDifferenceIdentity})}}{\leq} \frac{\sigma^2}{|z|} \big|\big| G^{(B,\delta)} \big( V^{(B,\tilde{\delta})} - V^{(B,\delta)} \big) G^{(B,\tilde{\delta})} \big|\big| + ||q^{(B)}|| \nonumber\\
		& \overset{\text{(\ref{Eq_DualApprox_GBound})}}{\leq} \frac{\sigma^2}{\tau^2 \tilde{\tau}^2 |z|} \big|\big| V^{(B,\tilde{\delta})} - V^{(B,\delta)} \big|\big| + ||q^{(B)}|| \nonumber\\
		& \overset{\text{(\ref{Eq_DualApprox_VDiffBound})}}{\leq} \frac{16 c_* \sigma^{10} \overbrace{|z|^2}^{\leq \kappa^2} \kappa \tilde{\kappa} (1+\tilde{\kappa} \sigma^2)^7}{\tau^{11} \tilde{\tau}^5 \eta} \big(||q^{(A)}||+||q^{(B)}||\big) + ||q^{(B)}|| \ .
	\end{align}
	Also observe
	\begin{align}\label{Eq_DualApprox_deltaB_DiffBound}
		& \big|\big| \delta^{(A)} - \tilde{\delta}^{(A)} \big|\big| \overset{\text{(\ref{Eq_DualApprox_delta_Identities})}}{\leq} \frac{1}{|z|} \Big|\Big| \frac{1}{n} \tr\big( A_rA_s^* \big(G^{(A,\delta)} - G^{(A,\tilde{\delta})}\big) \big)_{r,s \leq R} \Big|\Big| + ||q^{(A)}|| \nonumber\\
		& \overset{\text{(\ref{Eq_SimpleBounds_TraceMatrixBounds})}}{\leq} \frac{c_* \sigma^2}{|z|} \big|\big| G^{(A,\delta)} - G^{(A,\tilde{\delta})} \big|\big| + ||q^{(A)}|| \nonumber\\
		& \overset{\text{(\ref{Eq_InverseDifferenceIdentity})}}{=} \frac{c_* \sigma^2}{|z|} \Big|\Big| G^{(A,\delta)} \big( V^{(A,\tilde{\delta})} - V^{(A,\delta)} \big) G^{(A,\tilde{\delta})} \Big|\Big| + ||q^{(A)}|| \nonumber\\
		& \overset{\text{(\ref{Eq_DualApprox_GBound})}}{\leq} \frac{c_* \sigma^2}{|z| \tau^2 \tilde{\tau}^2} \big|\big| V^{(A,\tilde{\delta})} - V^{(A,\delta)} \big|\big| + ||q^{(A)}|| \nonumber\\
		& \overset{\substack{\text{(\ref{Eq_DualApprox_DualSystem_VSG_delta})} \\ \text{(\ref{Eq_DualApprox_DualSystem_VSG_tdelta})}}}{\leq} \frac{c_* \sigma^2}{|z|^2 \tau^2 \tilde{\tau}^2} \underbrace{\big|\big| S^{(A)}\big( G^{(B,\tilde{\delta})} - G^{(B,\delta)} \big) \big|\big|}_{\overset{\substack{\text{(\ref{Eq_DualApprox_DefS})} \\ \text{(\ref{Eq_SimpleBound_SA})}}}{\leq} \sigma^4 || G^{(B,\tilde{\delta})} - G^{(B,\delta)}||} + \frac{c_* \sigma^2}{|z| \tau^2 \tilde{\tau}^2} \underbrace{\Big|\Big| \sum\limits_{r,s=1}^R q^{(B)}_{r,s} A_rA_s^* \Big|\Big|}_{\overset{\text{(\ref{Eq_SimpleBounds_ASum_BSum})}}{\leq} \sigma^2 ||q^{(B)}||} + ||q^{(A)}|| \nonumber\\
		& \leq \frac{c_* \sigma^6}{|z|^2 \tau^2 \tilde{\tau}^2} \big|\big| G^{(B,\tilde{\delta})} - G^{(B,\delta)} \big|\big| + \frac{c_* \sigma^4}{|z| \tau^2 \tilde{\tau}^2} ||q^{(B)}|| + ||q^{(A)}|| \nonumber\\
		& \overset{\text{(\ref{Eq_InverseDifferenceIdentity})}}{\leq} \frac{c_* \sigma^6}{|z|^2 \tau^2 \tilde{\tau}^2} \big|\big| G^{(B,\tilde{\delta})} \big( V^{(B,\delta)} - V^{(B,\tilde{\delta})} \big) G^{(B,\delta)} \big|\big| + \frac{c_* \sigma^4}{|z| \tau^2 \tilde{\tau}^2} ||q^{(B)}|| + ||q^{(A)}|| \nonumber\\
		& \overset{\text{(\ref{Eq_DualApprox_GBound})}}{\leq} \frac{c_* \sigma^6}{|z|^2 \tau^4 \tilde{\tau}^4} \big|\big| V^{(B,\delta)} - V^{(B,\tilde{\delta})} \big|\big| + \frac{c_* \sigma^4}{|z| \tau^2 \tilde{\tau}^2} ||q^{(B)}|| + ||q^{(A)}|| \nonumber\\
		& \overset{\text{(\ref{Eq_DualApprox_VDiffBound})}}{\leq} \frac{c_* \sigma^6}{|z|^2 \tau^4 \tilde{\tau}^4} \frac{16 c_* \sigma^8 |z|^3 \kappa \tilde{\kappa} (1+\tilde{\kappa} \sigma^2)^7}{\tau^9 \tilde{\tau}^3 \eta} \big(||q^{(A)}||+||q^{(B)}||\big) \nonumber\\
		& \hspace{0.5cm} + \frac{c_* \sigma^4}{\underbrace{|z|}_{\geq \eta} \tau^2 \tilde{\tau}^2} ||q^{(B)}|| + ||q^{(A)}|| \nonumber\\
		& \leq \frac{16 c_*^2 \sigma^{14} \overbrace{|z|}^{\leq \kappa} \kappa \tilde{\kappa} (1+\tilde{\kappa} \sigma^2)^7}{\tau^{13} \tilde{\tau}^7 \eta} \big(||q^{(A)}||+||q^{(B)}||\big) + \frac{c_* \sigma^4}{\eta \tau^2 \tilde{\tau}^2} ||q^{(B)}|| + ||q^{(A)}|| \ .
	\end{align}
	The bounds (\ref{Eq_DualApprox_deltaA_DiffBound}) and (\ref{Eq_DualApprox_deltaB_DiffBound}) together prove (\ref{Eq_DualApprox_Result_deltaA}) and (\ref{Eq_DualApprox_Result_deltaB}) for
	\begin{align}
		& \mathcal{C} \coloneq \max\Big( \frac{16 c_* \sigma^{10} \kappa^3 \tilde{\kappa} (1+\tilde{\kappa} \sigma^2)^7}{\tau^{11} \tilde{\tau}^5 \eta} + 1, \frac{16 c_*^2 \sigma^{14} \kappa^2 \tilde{\kappa} (1+\tilde{\kappa} \sigma^2)^7}{\tau^{13} \tilde{\tau}^7 \eta} + \frac{c_* \sigma^4}{\eta \tau^2 \tilde{\tau}^2} + 1 \Big) \ .
	\end{align}
	This concludes the proof of Theorem~\ref{Thm_DualApprox}. \qed
\end{itemize}

\section{The dual system of equations in expectations for the Gaussian case}\label{Section_Stein}

\begin{lemma}[Mixed Stein's Lemma]\label{Lemma_SteinMixed}\
	\\
	Let $U$ be an open subset of $\C^2$ which contains $\{(z,\ol{z}) \mid z \in \C\}$, and let $g : U \rightarrow \C$ be a (separately) holomorphic function. For any centered complex Gaussian $Z$, the equality
	\begin{align}\label{Eq_Stein_Mixed}
		\E\big[ Z g(Z,\ol{Z}) \big] & = \E[Z^2] \E\big[ \partial_1 g(Z,\ol{Z}) \big] + \E[|Z|^2]\E\big[ \partial_2 g(Z,\ol{Z}) \big]
	\end{align}
	holds whenever the expectations on both sides exist. Here, $\partial_1 g$ and $\partial_2 g$ are the holomorphic derivatives of $g$ with regard to the first and second arguments respectively.
\end{lemma}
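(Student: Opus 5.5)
The plan is to reduce the mixed Stein identity to the classical real Gaussian integration by parts, applied to a two-dimensional real Gaussian vector. Write $Z = X_1 + \bm{i} X_2$, where $(X_1,X_2)$ is a centered real Gaussian vector with some covariance matrix $\Sigma \in \R^{2\times 2}$, and set $h(x_1,x_2) \coloneq g(x_1+\bm{i}x_2, x_1-\bm{i}x_2)$. Since $U$ is open and contains the anti-diagonal $\{(z,\ol{z})\}$, the map $(x_1,x_2)\mapsto(x_1+\bm{i}x_2,x_1-\bm{i}x_2)$ takes $\R^2$ into $U$, so $h$ is a well-defined smooth function on $\R^2$. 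The chain rule for separately holomorphic functions (which are jointly holomorphic by Hartogs) gives
\begin{align*}
	& \partial_{x_1} h = \partial_1 g + \partial_2 g \ \ , \ \ \partial_{x_2} h = \bm{i}\,\partial_1 g - \bm{i}\,\partial_2 g \ ,
\end{align*}
evaluated at $(Z,\ol{Z})$.

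Next, apply the multivariate real Stein lemma $\E[X_j h(X)] = \sum_{k} \Sigma_{j,k} \E[\partial_{x_k} h(X)]$ to obtain
\begin{align*}
	& \E[Z h] = \sum_{k=1}^2 \big(\Sigma_{1,k} + \bm{i}\Sigma_{2,k}\big) \E[\partial_{x_k} h] = \sum_{k=1}^2 \E[Z X_k]\,\E[\partial_{x_k} h] \ .
\end{align*}
Substituting the chain-rule expressions, the coefficient of $\E[\partial_1 g]$ is $\E[Z X_1] + \bm{i}\E[Z X_2] = \E[Z(X_1+\bm{i}X_2)] = \E[Z^2]$, and the coefficient of $\E[\partial_2 g]$ is $\E[Z X_1] - \bm{i}\E[Z X_2] = \E[Z\ol{Z}] = \E[|Z|^2]$. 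This is exactly (\ref{Eq_Stein_Mixed}).

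The only real subtlety is justifying the real Stein lemma when $\Sigma$ is singular (for instance, when $Z$ is real-valued or supported on a line) and under the weak hypothesis that the expectations merely exist. For degenerate $\Sigma$, I would write $X = \Sigma^{1/2} W$ with $W$ standard normal in $\R^2$ and apply the one-dimensional identity $\E[W_k f(W)] = \E[\partial_k f(W)]$ to $f = h\circ\Sigma^{1/2}$. This identity follows from integrating by parts against the Gaussian density, with boundary terms vanishing whenever $\E|\partial_k f(W)| < \infty$ by the standard truncation argument (or via Fubini, writing $f(w)-f(0)$ as an integral of its derivative). I expect this integrability and boundary-term bookkeeping to be the main, though routine, obstacle. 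In the paper's applications, $g$ is built from resolvent entries that are bounded together with their derivatives, so one could alternatively restrict to such $g$.
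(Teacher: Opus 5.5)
Your argument is correct, and it takes a somewhat different route from the paper's proof. The paper first replaces $Z$ in distribution by $aV+bW$, where $V\sim\mathcal{N}(0,1)$ and $W\sim\mathcal{CN}(0,1)$ are independent and $a^2=\E[Z^2]$, $|a|^2+|b|^2=\E[|Z|^2]$. This uses the fact that a centered complex Gaussian is determined by these two second moments, and implicitly that $|\E[Z^2]|\leq\E[|Z|^2]$. It then derives the circular identity $\E[Wh(W,\ol{W})]=\E[\partial_2 h(W,\ol{W})]$ by applying the one-dimensional Stein lemma to the independent real and imaginary parts of $W$. Finally it conditions on $V$ and on $W$ separately and collects the coefficients $a^2$ and $|a|^2+|b|^2$.

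You instead work directly with the possibly correlated, possibly degenerate real pair $(\Re Z,\Im Z)$ and the multivariate identity $\E[X_jh(X)]=\sum_k\Sigma_{j,k}\E[\partial_{x_k}h(X)]$. The coefficients $\E[Z^2]$ and $\E[|Z|^2]$ then appear immediately as $\E[ZX_1]\pm\bm{i}\,\E[ZX_2]$.

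Your route skips the moment-matching and representation step entirely. The price is that you need Stein's lemma for a general, possibly singular covariance; your reduction $X=\Sigma^{1/2}W$ handles this cleanly. The paper's route only ever invokes Stein's lemma for independent standard normals, at the cost of the distributional identification.

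Both arguments treat integrability at the same level. In each, the existence of $\E[\partial_1 g(Z,\ol{Z})]$, $\E[\partial_2 g(Z,\ol{Z})]$ and $\E[Zg(Z,\ol{Z})]$ is enough to run the standard one-dimensional Stein argument coordinate-wise via Fubini.
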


\begin{lemma}[Basic derivatives]\label{Lemma_BasicDerivatives}\
	\\
	Let $\frac{\partial}{\partial X_{i,j}}$ denote the Wirtinger derivative with respect to the complex variable $X_{i,j}$, which ignores occurrences of $\ol{X}_{i,j}$. Likewise, $\frac{\partial}{\partial \ol{X_{i,j}}}$ ignores occurrences of $X_{i,j}$. The elementary derivatives
	\begin{align}\label{Eq_DerivCalc_BaseX}
		& \frac{\partial}{\partial X_{i,j}} X = e_{i,d} e_{j,n}^\top \ \ ; \ \ \frac{\partial}{\partial X_{i,j}} X^* = 0 \ \ ; \ \ \frac{\partial}{\partial \ol{X_{i,j}}} X = 0 \ \ ; \ \ \frac{\partial}{\partial \ol{X_{i,j}}} X^* = e_{j,n} e_{i,d}^\top
	\end{align}
	by product rule allow for the calculation of
	\begin{align}\label{Eq_DerivCalc_XDeriv_S_Result}
		& \frac{\partial}{\partial X_{i,j}} \bm{S}^{(X)} = \frac{1}{n} \sum\limits_{r=1}^R A_r e_{i,d} e_{j,n}^\top B_r (\bm{Y}^{(X)})^* \ \text{ and } \ \frac{\partial}{\partial \ol{X_{i,j}}} \bm{S}^{(X)} = \frac{1}{n} \sum\limits_{s=1}^R \bm{Y}^{(X)} B_s^* e_{j,n} e_{i,d}^\top A_s^*
	\end{align}
	as well as
	\begin{align}\label{Eq_DerivCalc_XDeriv_tS_Result}
		& \frac{\partial}{\partial X_{i,j}} \tilde{\bm{S}}^{(X)} = \frac{1}{n} \sum\limits_{r=1}^R (\bm{Y}^{(X)})^* A_r e_{i,d} e_{j,n}^\top B_r \ \text{ and } \frac{\partial}{\partial \ol{X_{i,j}}} \tilde{\bm{S}}^{(X)} = \frac{1}{n} \sum\limits_{s=1}^R B_s^* e_{j,n} e_{i,d}^\top A_s^* \bm{Y}^{(X)} \ .
	\end{align}
	One may then further calculate
	\begin{align}
		& \frac{\partial}{\partial X_{i,j}} \big[ \bm{R}^{(X)}(z) \big] = - \frac{1}{n} \sum\limits_{r=1}^R \bm{R}^{(X)}(z) A_r e_{i,d} e_{j,n}^\top B_r (\bm{Y}^{(X)})^* \bm{R}^{(X)}(z) \label{Eq_DerivCalc_XDeriv_R_Result}\\
		& \frac{\partial}{\partial \ol{X_{i,j}}} \big[ \bm{R}^{(X)}(z) \big] = - \frac{1}{n} \sum\limits_{s=1}^R \bm{R}^{(X)}(z) \bm{Y}^{(X)} B_s^* e_{j,n} e_{i,d}^\top A_s^* \bm{R}^{(X)}(z) \label{Eq_DerivCalc_olXDeriv_R_Result}\\
		& \frac{\partial}{\partial X_{i,j}} \big[ \tilde{\bm{R}}^{(X)}(z) \big] = - \frac{1}{n} \sum\limits_{r=1}^R \tilde{\bm{R}}^{(X)}(z) (\bm{Y}^{(X)})^* A_r e_{i,d} e_{j,n}^\top B_r \tilde{\bm{R}}^{(X)}(z) \label{Eq_DerivCalc_XDeriv_tR_Result}\\
		& \frac{\partial}{\partial \ol{X_{i,j}}} \big[ \tilde{\bm{R}}^{(X)}(z) \big] = - \frac{1}{n} \sum\limits_{s=1}^R \tilde{\bm{R}}^{(X)}(z) B_s^* e_{j,n} e_{i,d}^\top A_s^* \bm{Y}^{(X)} \tilde{\bm{R}}^{(X)}(z) \label{Eq_DerivCalc_olXDeriv_tR_Result} \ .
	\end{align}
	Lastly, it holds that
	\begin{align}
		& \frac{\partial}{\partial X_{i,j}} \big[ \bm{R}^{(X)}(z) \bm{Y}^{(X)} \big] = -z \sum\limits_{r=1}^R \bm{R}^{(X)}(z) A_r e_{i,d} e_{j,n}^\top B_r \tilde{\bm{R}}^{(X)}(z) \label{Eq_DerivCalc_XDeriv_RY_result}\\
		& \frac{\partial}{\partial \ol{X_{i,j}}} \big[ (\bm{Y}^{(X)})^* \bm{R}^{(X)}(z) \big] = -z \sum\limits_{s=1}^R \tilde{\bm{R}}^{(X)}(z) B_s^* e_{j,n} e_{i,d}^\top A_s^* \bm{R}^{(X)}(z) \label{Eq_DerivCalc_olXDeriv_YR_Result} \ .
	\end{align}
\end{lemma}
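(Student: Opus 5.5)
The plan is to begin from the elementary derivatives (\ref{Eq_DerivCalc_BaseX}). The entries of $X$ and $\ol{X}$ are treated as independent holomorphic variables, and the derivatives follow from the product rule. Since $\bm{Y}^{(X)} = \sum_r A_r X B_r$, we get $\frac{\partial}{\partial X_{i,j}} \bm{Y}^{(X)} = \sum_r A_r e_{i,d} e_{j,n}^\top B_r$ and $\frac{\partial}{\partial X_{i,j}} (\bm{Y}^{(X)})^* = 0$. Taking adjoints, $\frac{\partial}{\partial \ol{X_{i,j}}} (\bm{Y}^{(X)})^* = \sum_s B_s^* e_{j,n} e_{i,d}^\top A_s^*$ and $\frac{\partial}{\partial \ol{X_{i,j}}} \bm{Y}^{(X)} = 0$. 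Inserting these into $\bm{S}^{(X)} = \frac{1}{n}\bm{Y}^{(X)}(\bm{Y}^{(X)})^*$ and $\tilde{\bm{S}}^{(X)} = \frac{1}{n}(\bm{Y}^{(X)})^*\bm{Y}^{(X)}$ gives (\ref{Eq_DerivCalc_XDeriv_S_Result}) and (\ref{Eq_DerivCalc_XDeriv_tS_Result}) immediately, because only one factor contributes in each case.

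For the resolvents I would differentiate the identity $\bm{R}^{(X)}(\bm{S}^{(X)} - z\Id_d) = \Id_d$. This gives $\partial \bm{R} = -\bm{R}\,(\partial \bm{S})\,\bm{R}$, which is legitimate since $\bm{R}$ is a smooth (rational) function of the entries on the set where $\bm{S}-z\Id$ is invertible, i.e.\ everywhere for $z \in \C^+$. Plugging in the derivatives of $\bm{S}^{(X)}$ and $\tilde{\bm{S}}^{(X)}$ then yields (\ref{Eq_DerivCalc_XDeriv_R_Result})--(\ref{Eq_DerivCalc_olXDeriv_tR_Result}) directly.

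The only step needing a small idea is the last pair of identities. By the product rule,
\begin{align*}
	& \frac{\partial}{\partial X_{i,j}}\big[\bm{R}\bm{Y}\big] = \bm{R} E\,\Big(\Id_n - \frac{1}{n}\bm{Y}^*\bm{R}\bm{Y}\Big) \ ,
\end{align*}
where $E = \sum_r A_r e_{i,d} e_{j,n}^\top B_r$ and $\bm{R}=\bm{R}^{(X)}(z)$, $\bm{Y}=\bm{Y}^{(X)}$, $\tilde{\bm{R}}=\tilde{\bm{R}}^{(X)}(z)$. The key is the push-through identity $\bm{Y}^*\bm{R} = \tilde{\bm{R}}\bm{Y}^*$, which holds because $\bm{Y}^*(\bm{Y}\bm{Y}^*/n - z) = (\bm{Y}^*\bm{Y}/n - z)\bm{Y}^*$. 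Using it,
\begin{align*}
	& \Id_n - \tfrac{1}{n}\bm{Y}^*\bm{R}\bm{Y} = \Id_n - \tilde{\bm{R}}\,\tilde{\bm{S}} = \Id_n - \tilde{\bm{R}}\big((\tilde{\bm{S}} - z\Id_n) + z\Id_n\big) = -z\tilde{\bm{R}} \ ,
\end{align*}
which gives (\ref{Eq_DerivCalc_XDeriv_RY_result}). The identity (\ref{Eq_DerivCalc_olXDeriv_YR_Result}) follows in the same way, since $\bm{Y}^*\bm{R}$ has the form $(\,\cdot\,)(\Id_d - \frac{1}{n}\bm{Y}\ldots)$ after the analogous manipulation. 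Alternatively, one can take adjoints of the first identity together with $\bm{R}(z)^* = \bm{R}(\ol{z})$, keeping track of the Wirtinger conjugation. I expect no real obstacle; the main care point is simplifying with the push-through identity rather than leaving the messy expression unsimplified.
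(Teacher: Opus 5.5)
Your proposal is correct and follows the paper's proof essentially step for step. The shared steps are: the product rule for $\bm{Y}$, $\bm{S}$, $\tilde{\bm{S}}$; differentiating the resolvent identity to get $\partial\bm{R} = -\bm{R}(\partial\bm{S})\bm{R}$; and, for the last pair, the simplification $\Id_n - \frac{1}{n}\bm{Y}^*\bm{R}\bm{Y} = \Id_n - \tilde{\bm{S}}\tilde{\bm{R}} = -z\tilde{\bm{R}}$ via the push-through relation. There is one small slip in your description of the final identity. There the derivative factors as $\big(\Id_n - \frac{1}{n}\bm{Y}^*\bm{R}\bm{Y}\big)\sum_{s} B_s^* e_{j,n} e_{i,d}^\top A_s^* \bm{R}$, so the same $n\times n$ factor sits on the left, not a $d\times d$ factor on the right, and the same simplification finishes it.
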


\begin{lemma}[Triplet sum bound]\label{Lemma_TraceTriplet}\
	\\
	For any $(d \times n)$-matrices $A,B,C$, where the entries of $C$ satisfy $|C_{i,j}| \leq 1$, the bound
	\begin{align}\label{Eq_TraceTripletBound}
		& \bigg| \sum\limits_{i=1}^d \sum\limits_{j=1}^n A_{i,j} B_{i,j} C_{i,j} \bigg| \leq (d \land n) \, ||A|| \, ||B||
	\end{align}
	holds.
\end{lemma}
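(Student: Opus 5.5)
The bound will follow from Cauchy--Schwarz applied twice, once to the sum and once inside the resulting quadratic forms. Since $|C_{i,j}|\le 1$, the triangle inequality gives
\begin{align*}
	& \bigg|\sum_{i,j} A_{i,j}B_{i,j}C_{i,j}\bigg| \le \sum_{i,j} |A_{i,j}|\,|B_{i,j}| \ ,
\end{align*}
so the matrix $C$ drops out entirely and we may assume $C_{i,j}=1$ with $A,B$ replaced by their entrywise absolute values. A first attempt would be $\sum_{i,j}|A_{i,j}||B_{i,j}| \le ||A||_F\,||B||_F$. Then (\ref{Eq_FrobeniusBounds}) with $\operatorname{rank}(A),\operatorname{rank}(B)\le d\wedge n$ yields $||A||_F||B||_F \le (d\wedge n)\,||A||\,||B||$, which is exactly the claimed bound (\ref{Eq_TraceTripletBound}).

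In order, the steps are as follows. First, apply the triangle inequality together with $|C_{i,j}|\le1$ to remove $C$. Second, apply the Cauchy--Schwarz inequality in $\C^{d\times n}$ with the Hilbert--Schmidt inner product, giving the bound $||A||_F||B||_F$. Third, apply the rank bound from (\ref{Eq_FrobeniusBounds}) separately to each factor, $||A||_F\le\sqrt{d\wedge n}\,||A||$ and $||B||_F\le\sqrt{d\wedge n}\,||B||$, and multiply. Note that we must not pass to the matrix $|A|=(|A_{i,j}|)$, since its spectral norm can exceed $||A||$. The Frobenius route avoids this because $||\,|A|\,||_F=||A||_F$.

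There is essentially no obstacle. The only point to check is that the rank bound $\operatorname{rank}(M)\le d\wedge n$ holds for rectangular $(d\times n)$-matrices, and that the Frobenius/spectral comparison (\ref{Eq_FrobeniusBounds}) remains valid there. This is standard: use the singular values, $||M||_F^2=\sum_k \sigma_k(M)^2\le \operatorname{rank}(M)\,\sigma_{\max}(M)^2$.
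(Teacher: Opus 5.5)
Your proof is correct and is essentially the paper's argument: Cauchy--Schwarz for the Frobenius inner product, followed by $||M||_F\le\sqrt{\operatorname{rank}(M)}\,||M||$ from (\ref{Eq_FrobeniusBounds}). The only difference is cosmetic: the paper absorbs $C$ into the Hadamard product $B\circ C$ and uses $||B\circ C||_F\le||B||_F$, whereas you remove $C$ with the triangle inequality first.
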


\begin{lemma}[Spectral bound for $\bm{R}(z)$ and $\bm{Q}(z)^{-1}$]\label{Lemma_QSpectralBound}\
	\\
	For any $X \in \C^{d \times n}$ and $z \in \C^+$, the bounds
	\begin{align}\label{Eq_R_SpectralBounds}
		& ||\bm{R}^{(X)}(z)|| \leq \frac{1}{\Im(z)} \ \ \text{ and } \ \ ||\tilde{\bm{R}}^{(X)}(z)|| \leq \frac{1}{\Im(z)}
	\end{align}
	hold. Additionally, if~\ref{ItemTempAssumption_NonDegeneracy} is true, then one also has
	\begin{align}\label{Eq_QBounds}
		& ||\bm{Q}^{(X)}(z)^{-1}|| \leq \frac{(|z|+||\bm{S}^{(X)}||)^2}{\tau^2 \Im(z)} \ \ \text{ and } \ \ ||\tilde{\bm{Q}}^{(X)}(z)^{-1}|| \leq \frac{(|z|+||\bm{S}^{(X)}||)^2}{\tau^2 \Im(z)} \ .
	\end{align}
\end{lemma}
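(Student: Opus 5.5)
The resolvent bounds (\ref{Eq_R_SpectralBounds}) follow from the spectral theorem. $\bm{S}^{(X)}$ and $\tilde{\bm{S}}^{(X)}$ are Hermitian positive semi-definite, so $\bm{R}^{(X)}(z)$ is normal with eigenvalues $(\lambda_j-z)^{-1}$ for real $\lambda_j$. Since $|\lambda_j - z| \geq \Im(z)$, we get $||\bm{R}^{(X)}(z)|| \leq 1/\Im(z)$, and likewise for $\tilde{\bm{R}}^{(X)}(z)$.

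For (\ref{Eq_QBounds}), we bound the smallest singular value of $\bm{Q}^{(X)}(z)$ from below through its imaginary part. Write $\bm{Q} = \Id_d + \sum_{r,s} \hat{\delta}^{(B)}_{r,s} A_rA_s^*$ with $\hat{\delta}^{(B)}_{r,s} = \frac{1}{n}\tr(B_s^*B_r \tilde{\bm{R}})$. Exactly as in (\ref{Eq_DualApprox_ImVA_Calc}), we have $\Im(\bm{Q}) = \sum_{r,s}\Im(\hat{\delta}^{(B)})_{r,s} A_rA_s^*$, and $\Im(\hat{\delta}^{(B)}) = \big(\frac{1}{n}\tr(B_s^*B_r \Im(\tilde{\bm{R}}))\big)_{r,s}$. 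This holds because the map $\tilde M\mapsto (\frac1n\tr(B_s^*B_r\tilde M))_{r,s}$ commutes with taking adjoints, as in (\ref{Eq_DualApprox_S_passes_conjugation}). Next, $\Im(\tilde{\bm{R}}) = \Im(z)\,\tilde{\bm{R}}\tilde{\bm{R}}^*$, by (\ref{Eq_DualApprox_ImInv_Identity}) or by direct spectral computation. Since $||\tilde{\bm{S}} - z|| \leq |z| + ||\tilde{\bm{S}}||$ and $||\tilde{\bm{S}}|| = ||\bm{S}||$ (the nonzero spectra coincide), this gives
\begin{align*}
\Im(\tilde{\bm{R}}) \succeq \frac{\Im(z)}{(|z|+||\bm{S}^{(X)}||)^2}\Id_n .
\end{align*}
Now apply Lemma~\ref{Lemma_NonDegeneracyNew} twice. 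By (\ref{Eq_NonDegeneracy_TrCalcB}), $\lambda_{\min}(\Im\hat{\delta}^{(B)}) \geq \tau\Im(z)/(|z|+||\bm{S}||)^2$. By (\ref{Eq_NonDegeneracyC_A}), this gives $\Im(\bm{Q}) \succeq \tau^2\Im(z)(|z|+||\bm{S}||)^{-2}\Id_d$.

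Finally, for any matrix $Q$ with $\Im(Q)\succeq c\Id$ and $c>0$, take any unit vector $v$. Then $||Qv|| \geq |v^*Qv| \geq \Im(v^*Qv) = v^*\Im(Q)v \geq c$, so $Q$ is invertible with $||Q^{-1}||\leq 1/c$. This yields the first bound in (\ref{Eq_QBounds}).

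The bound for $\tilde{\bm{Q}}^{(X)}$ is symmetric. We use $\Im(\bm{R}) = \Im(z)\bm{R}\bm{R}^* \succeq \Im(z)(|z|+||\bm{S}||)^{-2}\Id_d$, then (\ref{Eq_NonDegeneracy_TrCalcA}) gives $\lambda_{\min}(\Im\hat\delta^{(A)})\geq \tau\Im(z)(|z|+||\bm S||)^{-2}$, and (\ref{Eq_NonDegeneracyC_B}) lifts this to $\Im(\tilde{\bm{Q}})$.

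The only point needing care is that Lemma~\ref{Lemma_NonDegeneracyNew} is stated for positive definite inputs. It applies here because $\Im(z)>0$, which makes $\Im(\bm{R})$, $\Im(\tilde{\bm{R}})$ and the resulting $R\times R$ imaginary parts strictly positive definite. The key identities are the commutation of trace and imaginary part, and the Hermitian structure of $\Im$ of the weighted sums, which was already verified in the proof of Theorem~\ref{Thm_DualApprox}. I do not expect a substantial obstacle.
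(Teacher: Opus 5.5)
Your proof is correct and follows essentially the same route as the paper. It bounds the resolvents via the spectral decomposition, then lower-bounds the imaginary part of the resolvent by $\Im(z)/(|z|+\|\bm{S}^{(X)}\|)^2$ and pushes this through Lemma~\ref{Lemma_NonDegeneracyNew}, and finally uses $\|Q^{-1}\|\le 1/\lambda_{\min}(\Im Q)$. The paper applies the combined bound (\ref{Eq_NonDegeneracyM_A}) in one step where you apply its two ingredients separately, and it reads the positivity of the resolvent's imaginary part off the spectral decomposition where you use the algebraic identity $\Im(\tilde{\bm{R}})=\Im(z)\,\tilde{\bm{R}}\tilde{\bm{R}}^*$; these differences are only cosmetic.
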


\begin{lemma}[Tail bound for $||\bm{Y}||$ in the Gaussian case]\label{Lemma_ZTailBound}\
	\\
	Suppose~\ref{ItemAssumption_sigmaBound} holds and let $\bm{Z}$ denote a random $(d \times n)$-matrix whose entries are independent centered complex-valued Gaussian with $\E[|\bm{Z}_{j,k}|^2]=1$. Note that the entries need not be identically distributed.
	The tail bounds
	\begin{align}\label{Eq_ZTailBound}
		& \forall t \geq 0 : \ \bP\big( ||\bm{Z}|| > 2 \big( \sqrt{d} + \sqrt{n} \big) + t \big) \leq 2\exp\big( -C t^2 \big)
	\end{align}
	and
	\begin{align}\label{Eq_YTailBound}
		& \forall t \geq 0 : \ \bP\big( ||\bm{Y}^{(\bm{Z})}|| > 2 \sigma^2 \big( \sqrt{d} + \sqrt{n} \big) + t \big) \leq 2\exp\Big( -\frac{C t^2}{\sigma^4} \Big)
	\end{align}
	hold for some universal constant $C>0$.
\end{lemma}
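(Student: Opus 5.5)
The plan is to reduce both bounds to the corresponding statement for the matrix $\bm{Z}$, and then transfer to $\bm{Y}^{(\bm{Z})}$ through the operator norm of the linear map $X \mapsto \sum_r A_r X B_r$.

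\textbf{Step 1: tail bound for $||\bm{Z}||$.} I would write $\bm{Z} = \Re(\bm{Z}) + \bm{i}\,\Im(\bm{Z})$ entrywise. In general the real and imaginary parts of a single entry are correlated, since $\E[\bm{Z}_{j,k}^2]$ may be nonzero. To handle this, I would diagonalize the $2\times 2$ real covariance of $(\Re \bm{Z}_{j,k}, \Im \bm{Z}_{j,k})$ and represent each entry as
\begin{align*}
	\bm{Z}_{j,k} = \alpha_{j,k} g_{j,k} + \beta_{j,k} g'_{j,k}
\end{align*}
with independent standard real Gaussians $g_{j,k}, g'_{j,k}$ and complex coefficients satisfying $|\alpha_{j,k}|^2 + |\beta_{j,k}|^2 = 1$. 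Then
\begin{align*}
	\bm{Z} = \bm{Z}_1 + \bm{Z}_2 \ , \qquad \bm{Z}_1 \coloneq (\alpha_{j,k} g_{j,k})_{j,k} \ , \quad \bm{Z}_2 \coloneq (\beta_{j,k} g'_{j,k})_{j,k} \ ,
\end{align*}
where each summand has independent entries with variance profile bounded by one.

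For each summand I would invoke a known Gaussian norm bound for matrices with independent entries and bounded variance profile (e.g. Bandeira--van Handel, or Gordon/Chevet after a comparison argument). This gives $\E||\bm{Z}_i|| \leq \sqrt{d}+\sqrt{n}$, up to an absolute constant that fits inside the factor $2$ once the two summands are combined. The map $(g,g') \mapsto ||\bm{Z}||$ is $1$-Lipschitz with respect to the Euclidean norm on $\R^{2dn}$, because $|\alpha_{j,k}|, |\beta_{j,k}| \leq 1$ and $||\cdot|| \leq ||\cdot||_F$. Gaussian concentration for Lipschitz functions then yields
\begin{align*}
	\bP\big( ||\bm{Z}|| > \E||\bm{Z}|| + t \big) \leq 2 e^{-t^2/2} \ ,
\end{align*}
which is (\ref{Eq_ZTailBound}).

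\textbf{Step 2: transfer to $\bm{Y}^{(\bm{Z})}$.} Applying Lemma~\ref{Lemma_OperatorBound} (in its rectangular version, with the same proof) to $T(X) = \sum_r A_r X B_r$ gives
\begin{align*}
	||T||_{\textrm{Op}} \leq \Big|\Big| \sum_r A_rA_r^* \Big|\Big|^{1/2} \Big|\Big| \sum_r B_r^*B_r \Big|\Big|^{1/2} \leq \sigma^2
\end{align*}
by \ref{ItemAssumption_sigmaBound}. Hence $||\bm{Y}^{(\bm{Z})}|| \leq \sigma^2 ||\bm{Z}||$. Substituting $t \mapsto t/\sigma^2$ in (\ref{Eq_ZTailBound}) then gives (\ref{Eq_YTailBound}) directly.

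\textbf{Main obstacle.} The delicate point is the expectation bound $\E||\bm{Z}|| \leq 2(\sqrt d+\sqrt n)$ for non-identically distributed, possibly improper complex entries. Gordon's inequality is stated for iid real entries, so the plan is to compare with the iid case. I would use the Sudakov--Fernique inequality on the process
\begin{align*}
	(u,v) \mapsto \Re\big( u^* \bm{Z}_i v \big) \ ,
\end{align*}
comparing its increments with those of a real iid Gaussian process. If the resulting comparison constant is not sharp, I would instead quote Bandeira--van Handel, which gives $\E||\bm{Z}_i|| \leq (1+\epsilon)(\sqrt d+\sqrt n) + C_\epsilon\sqrt{\log(d\wedge n)}$, and absorb the lower-order term into the factor $2$ and the universal constant $C$.
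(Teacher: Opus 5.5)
Your Lipschitz concentration step and the transfer to $\bm{Y}^{(\bm{Z})}$ are fine. The paper uses the triangle inequality and submultiplicativity instead of Lemma~\ref{Lemma_OperatorBound}, with the same constant $\sigma^2$. The gap is the expectation bound: you flag it as the crux, but you do not secure it.

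Since the tail bound must hold for all $t\ge 0$ with a universal $C$, you need $\E\|\bm{Z}\|\le 2(\sqrt d+\sqrt n)+O(1)$. Only an additive $O(1)$ error can be absorbed into $C$. Your split $\bm{Z}=\bm{Z}_1+\bm{Z}_2$ into complex-weighted real Gaussian matrices uses up the whole factor $2$. So you would need the sharp bound $\E\|\bm{Z}_i\|\le\sqrt d+\sqrt n$ for each summand, and neither of your tools gives it:
\begin{itemize}
\item For Sudakov--Fernique on $(u,v)\mapsto\Re(u^*\bm{Z}_i v)$ over complex unit vectors, the increments are $\sum_{j,k}(\Re(\alpha_{j,k}w_{j,k}))^2$ with $w=\bar u v^\top-\bar u' v'^\top$. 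These are not dominated by the increments $\sum_{j,k}(\Re w_{j,k})^2$ of the real iid process; take $\alpha_{j,k}=\bm{i}$.
\item A complex Ginibre comparison matrix does dominate them, but only yields a bound of order $\sqrt2(\sqrt d+\sqrt n)$.
\item The Chevet-type domination $\|\bar u v^\top-\bar u' v'^\top\|_F^2\le\|u-u'\|_2^2+\|v-v'\|_2^2$ is false for complex unit vectors. It amounts to $\Re((1-\bar a)(1-b))\ge 0$ with $a=u^*u'$, $b=v^*v'$, which fails for $a=e^{-\bm{i}\theta}$, $b=e^{\bm{i}\theta}$ with $\theta$ small.
\item The Bandeira--van Handel fallback leaves an excess $2\epsilon(\sqrt d+\sqrt n)+2C_\epsilon\sqrt{\log(d\wedge n)}$. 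This grows with $d,n$, so it can be absorbed neither into the factor $2$ (no slack is left) nor into a universal $C$.
\end{itemize}

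The remedy is to split $\bm{Z}$ entrywise into $\Re(\bm{Z})$ and $\Im(\bm{Z})$ for the expectation, as the paper does. Each is a real Gaussian matrix with independent centered entries of variance at most $1$, since the two variances sum to $\E|\bm{Z}_{j,k}|^2=1$. The correlation between $\Re(\bm{Z}_{j,k})$ and $\Im(\bm{Z}_{j,k})$ is irrelevant, because you only use $\E\|\bm{Z}\|\le\E\|\Re(\bm{Z})\|+\E\|\Im(\bm{Z})\|$.

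For a real matrix the supremum runs over real unit vectors $x\in\R^d$, $y\in\R^n$, and the Chevet comparison then works exactly as in the proof of Gordon's bound. The increments satisfy $\sum_{j,k}\E[\Re(\bm{Z}_{j,k})^2](x_jy_k-x'_jy'_k)^2\le\|xy^\top-x'y'^\top\|_F^2\le\|x-x'\|_2^2+\|y-y'\|_2^2$. The last step is just $(1-x^\top x')(1-y^\top y')\ge 0$. So non-identical variances cost nothing, and Sudakov--Fernique gives $\E\|\Re(\bm{Z})\|,\E\|\Im(\bm{Z})\|\le\sqrt d+\sqrt n$. Combined with your Lipschitz concentration, for which your $\alpha g+\beta g'$ representation is perfectly adequate, this yields (\ref{Eq_ZTailBound}), and your Step 2 then gives (\ref{Eq_YTailBound}).
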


\begin{lemma}[Moment bound for $||\bm{S}||$ in the Gaussian case]\label{Lemma_SMomentBound}\
	\\
	Suppose~\ref{ItemAssumption_cBound} and~\ref{ItemAssumption_sigmaBound} hold and let $\bm{Z}$ be as in Lemma~\ref{Lemma_ZTailBound}. The moment bounds
	\begin{align}\label{Eq_Z_MomentBound}
		& \E\big[ ||\bm{Z}||^{2m} \big] \leq 2^{4m} (\sqrt{d}+\sqrt{n})^{2m} K_m
	\end{align}
	and
	\begin{align}\label{Eq_SZ_MomentBound}
		& \E\big[ ||\bm{S}^{(\bm{Z})}||^m \big] \leq \Big(\frac{16 \sigma^4 (\sqrt{d} + \sqrt{n})^2}{n}\Big)^{m} K_m \overset{\text{\ref{ItemAssumption_cBound}}}{\leq} \big(32 \sigma^4 (1+c_*) \big)^{m} K_m
	\end{align}
	hold for all $m \in \N$, where $K_m = 1 + m \sqrt{\frac{\pi}{C}} + \frac{m!}{C^m}$ and $C>0$ is the universal constant from Lemma~\ref{Lemma_ZTailBound}.
\end{lemma}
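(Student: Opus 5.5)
The plan is to reduce both bounds to the tail bound of Lemma~\ref{Lemma_ZTailBound} by integrating tails. For (\ref{Eq_Z_MomentBound}), write $\E[||\bm{Z}||^{2m}] = \int_0^\infty \bP(||\bm{Z}||^{2m} > u)\,du$. Set $a \coloneq 2(\sqrt{d}+\sqrt{n})$ and split the integral at $u = (2a)^{2m}$. Below that threshold the probability is at most one, which contributes at most $(2a)^{2m} = 2^{4m}(\sqrt{d}+\sqrt{n})^{2m}$. This matches the leading term $1$ in $K_m$.

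Above the threshold, substitute $u = (2a+t)^{2m}$ for $t \ge 0$. Since $2a+t \ge a+t$, (\ref{Eq_ZTailBound}) gives $\bP(||\bm{Z}|| > 2a+t) \le 2\exp(-Ct^2)$. The integral then becomes
\begin{align*}
& \int_0^\infty 2m(2a+t)^{2m-1}\, 2 e^{-Ct^2}\,dt .
\end{align*}
To obtain the specific shape of $K_m$, a cruder but cleaner route is to bound $(2a+t)^{2m} \le (2a)^{2m}(1+t)^{2m}$, using $2a \ge 1$ (otherwise treat $d=n=0$ trivially). Expanding and estimating the Gaussian moments gives terms of the form $m\sqrt{\pi/C}$ from the lowest-order part, since $\int_0^\infty e^{-Ct^2}\,dt = \tfrac12\sqrt{\pi/C}$, and $m!/C^m$ from the top-order part, since $\int_0^\infty t^{2m-1}e^{-Ct^2}\,dt = \tfrac{(m-1)!}{2C^m}$.

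\textbf{Main obstacle.} The constant bookkeeping is the delicate part: matching exactly $K_m = 1 + m\sqrt{\pi/C} + m!/C^m$ with prefactor $2^{4m}$ needs care. I would first try the split $(2a+t)^{2m-1} \le 2^{2m-2}\big((2a)^{2m-1} + t^{2m-1}\big)$ by convexity. The first piece gives
\begin{align*}
& 2^{2m}m(2a)^{2m-1}\sqrt{\pi/C} ,
\end{align*}
which is absorbed into $2^{4m}(\sqrt{d}+\sqrt{n})^{2m}\, m\sqrt{\pi/C}$ after using $2a \ge 1$. The second piece gives
\begin{align*}
& 2^{2m}\, m\, (m-1)!/C^m = 2^{2m}\, m!/C^m ,
\end{align*}
absorbed similarly since $(\sqrt{d}+\sqrt{n})^{2m} \ge 1$. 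The power of $2$ works out because $(2a)^{2m} = 4^m \cdot 2^{2m}(\sqrt{d}+\sqrt{n})^{2m}$ only arises in the first region. Elsewhere one has slack, so I would adjust the split point (for instance, splitting at $a^{2m}$ and using $\bP(||\bm{Z}||>a+t)$) to make the powers fit.

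For (\ref{Eq_SZ_MomentBound}), use $||\bm{S}^{(\bm{Z})}|| = ||\bm{Y}^{(\bm{Z})}||^2/n$ and $||\bm{Y}^{(\bm{Z})}|| \le \sum_r ||A_r||\,||\bm{Z}||\,||B_r|| \le \sigma^2 ||\bm{Z}||$. The last step is Cauchy--Schwarz together with \ref{ItemAssumption_sigmaBound}. Hence
\begin{align*}
& \E||\bm{S}^{(\bm{Z})}||^m \le \sigma^{4m} n^{-m}\, \E||\bm{Z}||^{2m} ,
\end{align*}
and inserting (\ref{Eq_Z_MomentBound}) yields the middle expression. Finally, $(\sqrt{d}+\sqrt{n})^2 \le 2(d+n) \le 2n(1+c_*)$ by \ref{ItemAssumption_cBound}, which gives the factor $32\sigma^4(1+c_*)$.
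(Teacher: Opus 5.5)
Your overall route is the paper's: integrate the tail from Lemma~\ref{Lemma_ZTailBound}, use $(x+t)^{2m-1}\le 2^{2m-2}(x^{2m-1}+t^{2m-1})$, evaluate the two Gaussian integrals, and absorb using $\sqrt{d}+\sqrt{n}\ge 1$. The passage to $\bm{S}^{(\bm{Z})}$ via $||\bm{Y}^{(\bm{Z})}||\le\sigma^2||\bm{Z}||$ and $(\sqrt{d}+\sqrt{n})^2\le 2n(1+c_*)$ is also exactly as in the paper.

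However, your primary split at $||\bm{Z}||=2a=4(\sqrt{d}+\sqrt{n})$ does not give the stated constant, so the claim ``absorbed \dots\ after using $2a\ge 1$'' is wrong as written. At that split the bulk region already contributes $(2a)^{2m}=2^{4m}(\sqrt{d}+\sqrt{n})^{2m}$, which uses up the whole ``$1$'' slot of $K_m$. The $\sqrt{\pi/C}$-piece of the tail is then $2^{2m-1}m(2a)^{2m-1}\sqrt{\pi/C}=2^{6m-3}m(\sqrt{d}+\sqrt{n})^{2m-1}\sqrt{\pi/C}$. This fits under the available $2^{4m}m(\sqrt{d}+\sqrt{n})^{2m}\sqrt{\pi/C}$ only if $\sqrt{d}+\sqrt{n}\ge 2^{2m-3}$, which already fails for $d=n=1$, $m=3$. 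The $m!/C^m$ slot cannot compensate for an unspecified $C$. The ``cleaner'' variant via $2a+t\le 2a(1+t)$ has the same defect: it leaves an extra factor of order $2^{2m}$ in front of $m\sqrt{\pi/C}+m!/C^m$.

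Your fallback is the right fix and is exactly what the paper does. Split at $||\bm{Z}||=a=2(\sqrt{d}+\sqrt{n})$, the threshold appearing in (\ref{Eq_ZTailBound}), and use $\bP(||\bm{Z}||>a+t)\le 2e^{-Ct^2}$ directly. The bulk then contributes $2^{2m}(\sqrt{d}+\sqrt{n})^{2m}$. The tail contributes at most $2^{4m-2}m(\sqrt{d}+\sqrt{n})^{2m-1}\sqrt{\pi/C}+2^{2m-1}m!/C^m$. All three terms fit under $2^{4m}(\sqrt{d}+\sqrt{n})^{2m}K_m$ with room to spare. With that choice of split your argument, including the $\bm{S}^{(\bm{Z})}$ step, is complete.
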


\begin{proposition}[MP-equation analogue for expectations]\label{Prop_Stein_MP_Analogue}\
	\\
	Suppose~\ref{ItemAssumption_cBound},~\ref{ItemAssumption_sigmaBound} and~\ref{ItemTempAssumption_NonDegeneracy} hold and let $\bm{Z}$ be as in Lemma~\ref{Lemma_ZTailBound}.
	For any $\eta,\kappa>0$ there exists a constant $C=C(c_*,\sigma^2,\tau,\eta,\kappa)>0$ such that the bounds
	\begin{align}\label{Eq_MPSteinProp_A}
		& \forall z \in \bD(\eta,\kappa), \, \forall M \in \C^{d \times d} : \nonumber\\
		& \Big| \E\Big[ \frac{1}{n} \tr\big( M \bm{R}^{(\bm{Z})}(z) \big)\Big] - \frac{-1}{z} \E\Big[ \frac{1}{n} \tr\big( M \bm{Q}^{(\bm{Z})}(z)^{-1} \big) \Big] \Big| \leq C \frac{||M||}{n}
	\end{align}
	and
	\begin{align}\label{Eq_MPSteinProp_B}
		& \forall z \in \bD(\eta,\kappa), \, \forall \tilde{M} \in \C^{n \times n} : \nonumber\\
		& \Big| \E\Big[ \frac{1}{n} \tr\big( \tilde{M} \tilde{\bm{R}}^{(\bm{Z})}(z) \big) \Big] - \frac{-1}{z} \E\Big[ \frac{1}{n} \tr\big( \tilde{M} \tilde{\bm{Q}}^{(\bm{Z})}(z)^{-1} \big) \Big] \Big| \leq C \frac{||\tilde{M}||}{n}
	\end{align}
	hold.
\end{proposition}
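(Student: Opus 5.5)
The plan is to apply the mixed Stein's lemma (Lemma~\ref{Lemma_SteinMixed}) entrywise to $\E\big[\frac{1}{n}\tr(M\bm{R}\bm{S})\big]$ and then close the resulting identity with the algebraic identity $\bm{R}\bm{S} = \Id_d + z\bm{R}$. We only treat (\ref{Eq_MPSteinProp_A}); (\ref{Eq_MPSteinProp_B}) follows symmetrically, with the roles of $(A_r)$ and $(B_r)$ swapped and $\bm{Y}^*\bm{R}$ replaced by $\tilde{\bm{R}}\bm{Y}^*$.

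First we write
\begin{align*}
	\frac{1}{n}\tr(M\bm{R}\bm{S}) = \frac{1}{n^2}\sum_{s}\sum_{i,j} \ol{\bm{Z}_{i,j}}\, \big(B_s^* e_{j,n} e_{i,d}^\top A_s^* M \bm{R}\bm{Y}\big)_{\text{trace}} .
\end{align*}
Equivalently, we expand $\bm{Y}^* = \sum_s B_s^* \bm{Z}^* A_s^*$ and isolate each conjugate entry $\ol{\bm{Z}_{i,j}}$. Applying Lemma~\ref{Lemma_SteinMixed} to $\ol{\bm{Z}_{i,j}}$ (a centered complex Gaussian with $\E|\bm{Z}_{i,j}|^2=1$) produces two kinds of terms:
\begin{itemize}
	\item a term with $\E[|\bm{Z}_{i,j}|^2]=1$ times the $\partial/\partial X_{i,j}$-derivative of $M\bm{R}\bm{Y}$;
	\item a term with $\E[\ol{\bm{Z}_{i,j}}^2]$ times the $\partial/\partial\ol{X_{i,j}}$-derivative.
\end{itemize}
For the first term we use (\ref{Eq_DerivCalc_XDeriv_RY_result}), i.e.
\begin{align*}
	\partial_{X_{i,j}}(\bm{R}\bm{Y}) = -z\sum_r \bm{R}A_r e_{i,d}e_{j,n}^\top B_r\tilde{\bm{R}} .
\end{align*}
Summing over $i,j$ collapses this to $-\frac{z}{n}\sum_{r,s} \frac{1}{n}\tr(B_s^*B_r\tilde{\bm{R}})\,\tr(MA_s^{*}\cdots)$. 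More precisely, it becomes $-z\,\frac{1}{n}\tr\big(M\bm{R}\sum_{r,s}\hat\delta^{(B)}_{r,s}A_rA_s^*\big)$. Combined with $\bm{R}\bm{S}=\Id+z\bm{R}$, this gives, up to error,
\begin{align*}
	\E\Big[\tfrac1n\tr\big(M(\Id_d + z\bm{R}\bm{Q})\big)\Big]\approx 0 .
\end{align*}
Here $\bm{Q}=\bm{Q}^{(\bm{Z})}(z)$ is exactly the matrix from (\ref{Eq_Def_Q}). Replacing $M$ by $M\bm{Q}^{-1}$ is not allowed inside the expectation, since $\bm{Q}$ is random. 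Instead we run the same Stein computation with the random test matrix $\bm{Q}^{-1}M$, writing the target as $\E[\frac1n\tr(M\bm{R}\bm{S}\bm{Q}^{-1}\cdots)]$. The derivatives of $\bm{Q}^{-1}$ then create additional terms, each carrying an extra factor $\frac1n\tr(B_s^*B_r\,\partial\tilde{\bm{R}})$ and hence an extra $1/n$. We bound these using (\ref{Eq_DerivCalc_olXDeriv_tR_Result}) and the trace-triplet bound in Lemma~\ref{Lemma_TraceTriplet}.

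Next we control every remainder by $C\|M\|/n$. The ingredients are:
\begin{itemize}
	\item $\|\bm{R}\|,\|\tilde{\bm{R}}\|\le 1/\eta$ by (\ref{Eq_R_SpectralBounds});
	\item $\|\bm{Q}^{-1}\|\le (|z|+\|\bm{S}\|)^2/(\tau^2\eta)$ by (\ref{Eq_QBounds});
	\item moments of $\|\bm{S}\|$ from Lemma~\ref{Lemma_SMomentBound}, which keep all expectations finite;
	\item Hölder's inequality.
\end{itemize}
The second-type Stein term has coefficients $\E[\ol{\bm{Z}_{i,j}}^2]$ of modulus at most $1$. This is precisely the setting of Lemma~\ref{Lemma_TraceTriplet}: the sum $\sum_{i,j}(\cdot)_{ij}(\cdot)_{ij}c_{ij}$ is bounded by $(d\wedge n)\|\cdot\|\|\cdot\|$. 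After the $1/n^2$ normalization this gives an $O(1/n)$ contribution, with norms like $\|\bm{R}\bm{Y}\|^2/n\lesssim\|\bm{S}\|/\eta^2$.

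Finally we multiply the resulting identity by $-1/z$ (with $|z|\ge\eta$) to obtain (\ref{Eq_MPSteinProp_A}).

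The main obstacle is the bookkeeping when $\bm{Q}^{-1}$ sits inside the Stein argument. Its derivative produces products of two traces that must be shown to be $O(1/n)$ uniformly, with only polynomial moments of $\|\bm{S}\|$ available. I expect to handle this by writing each such term as $\frac1n\sum_{i,j}$ of a product of a matrix entry and an entry of $\frac1n(\cdots)\bm{Y}$, applying Lemma~\ref{Lemma_TraceTriplet} or Cauchy--Schwarz, and then using Lemma~\ref{Lemma_SMomentBound} with $m$ up to about $6$.
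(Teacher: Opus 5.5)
Your proposal is correct and follows essentially the paper's route: Stein's lemma applied entrywise to a trace containing the random factor $\bm{Q}^{-1}$, the main term identified through $\hat\delta^{(B)}$ and closed with $\bm{R}\bm{S}=\Id_d+z\bm{R}$, and all remainders bounded via Lemma~\ref{Lemma_TraceTriplet}, (\ref{Eq_R_SpectralBounds}), (\ref{Eq_QBounds}) and Lemma~\ref{Lemma_SMomentBound}. The differences are only cosmetic. You integrate by parts in $\ol{\bm{Z}_{i,j}}$ and use the combined derivative (\ref{Eq_DerivCalc_XDeriv_RY_result}) of $\bm{R}\bm{Y}$, whereas the paper integrates in $\bm{Z}_{i,j}$ on $\tr(M\bm{Q}^{-1}\bm{S}\bm{R})$ and merges (\ref{Eq_MPDirectCalc_4_3}) and (\ref{Eq_MPDirectCalc_4_4}) by hand; also, the random test matrix must indeed be $\bm{Q}^{-1}M$ (as you finally chose) rather than $M\bm{Q}^{-1}$, since only then does cyclicity give $\tr(\bm{Q}^{-1}M\bm{R}\bm{Q})=\tr(M\bm{R})$.
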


\subsection{Proof of Proposition \ref{Prop_Stein_MP_Analogue}}\label{Proof_Prop_Stein_MP_Analogue}
Without loss of generality assume
\begin{align}\label{Eq_MPDirectCalc_Wlog_vareps_kappa}
	& \eta < 1 \ \ \text{ and } \ \ \kappa > 1 \ .
\end{align}
By the simple identity $\Id_d = (\bm{S} - z\Id_d) \bm{R}(z)$ one has
\begin{align}\label{Eq_MPDirectCalc_1}
	& \E\big[ \tr\big( M \bm{Q}(z)^{-1} \big) \big] = \E\big[ \tr\big( M \bm{Q}(z)^{-1} \big( \bm{S} - z\Id_d \big) \bm{R}(z) \big) \big] \nonumber\\
	& = \E\big[ \tr\big( M \bm{Q}(z)^{-1} \bm{S} \bm{R}(z) \big) \big] - z\E\big[ \tr\big( M \bm{Q}(z)^{-1} \bm{R}(z) \big) \big] \ .
\end{align}
The first summand on the right-hand side of (\ref{Eq_MPDirectCalc_1}) will now be calculated with Stein's Lemma. Plugging in the definition of $\bm{S}$ by the cyclic property of the trace yields
\begin{align}\label{Eq_MPDirectCalc_2}
	& \E\big[ \tr\big( M \bm{Q}(z)^{-1} \bm{S} \bm{R}(z) \big) \big] \nonumber\\
	& = \frac{1}{n} \sum\limits_{r',s'=1}^R \E\big[ \tr\big( M \bm{Q}(z)^{-1} A_{r'} \bm{Z} B_{r'} B_{s'}^* \bm{Z}^* A_{s'}^* \bm{R}(z) \big) \big] \nonumber\\
	& = \frac{1}{n} \sum\limits_{r',s'=1}^R \E\big[ \tr\big( \bm{Z} B_{r'} B_{s'}^* \bm{Z}^* A_{s'}^* \bm{R}(z) M \bm{Q}(z)^{-1} A_{r'} \big) \big] \nonumber\\
	& = \frac{1}{n} \sum\limits_{r',s'=1}^R \sum\limits_{i=1}^d \sum\limits_{j=1}^n \E\Big[  \bm{Z}_{i,j} e_{j,n}^\top B_{r'} B_{s'}^* \bm{Z}^* A_{s'}^* \bm{R}(z) M \bm{Q}(z)^{-1} A_{r'} e_{i,d} \Big] \ ,
\end{align}
where $e_{i,d}$ is the $d$-dimensional unit vector with a one at position $i$ and $e_{j,n}$ is defined analogously.
With the notation
\begin{align*}
	& g_{i,j}(\bm{Z}_{i,j},\ol{\bm{Z}_{i,j}}) = G_{i,j}(\bm{Z}_{i,j}) \coloneq \E\Big[ e_{j,n}^\top B_{r'} B_{s'}^* \bm{Z}^* A_{s'}^* \bm{R}(z) M \bm{Q}(z)^{-1} A_{r'} e_{i,d} \ \Big| \ \bm{Z}_{i,j} \Big] \ ,
\end{align*}
the final expectation in (\ref{Eq_MPDirectCalc_2}) is in the correct shape to apply the complex version of Stein's lemma as presented in Lemma~\ref{Lemma_SteinMixed}. One by product rule gets
\begin{align}\label{Eq_MPDirectCalc_3}
	& \E\big[ \tr\big( M \bm{Q}(z)^{-1} \bm{S} \bm{R}(z) \big) \big] \nonumber\\
	& \overset{\text{(\ref{Eq_Stein_Mixed})}}{=} \frac{1}{n} \sum\limits_{r',s'=1}^R \sum\limits_{i=1}^d \sum\limits_{j=1}^n \E[\bm{Z}_{i,j}^2] \E\Big[ \frac{\partial}{\partial \bm{Z}_{i,j}} \big[ e_{j,n}^\top B_{r'} B_{s'}^* \bm{Z}^* A_{s'}^* \bm{R}(z) M \bm{Q}(z)^{-1} A_{r'} e_{i,d} \big] \Big] \nonumber\\
	& \hspace{0.5cm} + \frac{1}{n} \sum\limits_{r',s'=1}^R \sum\limits_{i=1}^d \sum\limits_{j=1}^n \underbrace{\E[|\bm{Z}_{i,j}|^2]}_{=1} \E\Big[ \frac{\partial}{\partial \ol{\bm{Z}_{i,j}}} \big[ e_{j,n}^\top B_{r'} B_{s'}^* \bm{Z}^* A_{s'}^* \bm{R}(z) M \bm{Q}(z)^{-1} A_{r'} e_{i,d} \big] \Big] \nonumber\\
	& = \frac{1}{n} \sum\limits_{r',s'=1}^R \sum\limits_{i=1}^d \sum\limits_{j=1}^n \E[\bm{Z}_{i,j}^2] \E\Big[ e_{j,n}^\top B_{r'} B_{s'}^* \overbrace{\frac{\partial}{\partial \bm{Z}_{i,j}} \big[ \bm{Z}^* \big]}^{\overset{\text{(\ref{Eq_DerivCalc_BaseX})}}{=} 0} A_{s'}^* \bm{R}(z) M \bm{Q}(z)^{-1} A_{r'} e_{i,d} \Big] \nonumber\\
	& \hspace{0.5cm} + \frac{1}{n} \sum\limits_{r',s'=1}^R \sum\limits_{i=1}^d \sum\limits_{j=1}^n \E[\bm{Z}_{i,j}^2] \E\Big[ e_{j,n}^\top B_{r'} B_{s'}^* \bm{Z}^* A_{s'}^* \underbrace{\frac{\partial}{\partial \bm{Z}_{i,j}} \big[ \bm{R}(z) \big]}_{\text{(\ref{Eq_DerivCalc_XDeriv_R_Result})}} M \bm{Q}(z)^{-1} A_{r'} e_{i,d} \Big] \nonumber\\
	& \hspace{0.5cm} + \frac{1}{n} \sum\limits_{r',s'=1}^R \sum\limits_{i=1}^d \sum\limits_{j=1}^n \E[\bm{Z}_{i,j}^2] \E\Big[ e_{j,n}^\top B_{r'} B_{s'}^* \bm{Z}^* A_{s'}^* \bm{R}(z) M \frac{\partial}{\partial \bm{Z}_{i,j}} \big[ \bm{Q}(z)^{-1} \big] A_{r'} e_{i,d} \Big] \nonumber\\
	& \hspace{0.5cm} + \frac{1}{n} \sum\limits_{r',s'=1}^R \sum\limits_{i=1}^d \sum\limits_{j=1}^n \E\Big[ e_{j,n}^\top B_{r'} B_{s'}^* \overbrace{\frac{\partial}{\partial \ol{\bm{Z}_{i,j}}} \big[ \bm{Z}^* \big]}^{\overset{\text{(\ref{Eq_DerivCalc_BaseX})}}{=} e_{j,n} e_{i,d}^\top} A_{s'}^* \bm{R}(z) M \bm{Q}(z)^{-1} A_{r'} e_{i,d} \Big] \nonumber\\
	& \hspace{0.5cm} + \frac{1}{n} \sum\limits_{r',s'=1}^R \sum\limits_{i=1}^d \sum\limits_{j=1}^n \E\Big[ e_{j,n}^\top B_{r'} B_{s'}^* \bm{Z}^* A_{s'}^* \underbrace{\frac{\partial}{\partial \ol{\bm{Z}_{i,j}}} \big[ \bm{R}(z) \big]}_{\text{(\ref{Eq_DerivCalc_olXDeriv_R_Result})}} M \bm{Q}(z)^{-1} A_{r'} e_{i,d} \Big] \nonumber\\
	& \hspace{0.5cm} + \frac{1}{n} \sum\limits_{r',s'=1}^R \sum\limits_{i=1}^d \sum\limits_{j=1}^n \E\Big[ e_{j,n}^\top B_{r'} B_{s'}^* \bm{Z}^* A_{s'}^* \bm{R}(z) M \frac{\partial}{\partial \ol{\bm{Z}_{i,j}}} \big[ \bm{Q}(z)^{-1} \big] A_{r'} e_{i,d} \Big] \ ,
\end{align}
where $\frac{\partial}{\partial \bm{Z}_{i,j}}$ and $\frac{\partial}{\partial \ol{\bm{Z}_{i,j}}}$ denote the Wirtinger derivatives, which ignore occurrences of $\ol{\bm{Z}_{i,j}}$and $\bm{Z}_{i,j}$ respectively.
The derivatives of $\bm{Z}$, $\bm{Z}^*$ and $\bm{R}(z)$ are calculated in Lemma~\ref{Lemma_BasicDerivatives}. For the derivatives of $\bm{Q}(z)$, one observes
\begin{align*}
	& 0 = \frac{\partial}{\partial \bm{Z}_{i,j}} \big[ \bm{Q}(z) \bm{Q}(z)^{-1} \big] = \frac{\partial}{\partial \bm{Z}_{i,j}} \big[ \bm{Q}(z) \big] \bm{Q}(z)^{-1} + \bm{Q}(z) \frac{\partial}{\partial \bm{Z}_{i,j}} \big[ \bm{Q}(z)^{-1} \big] \ ,
\end{align*}
which, when multiplied from the left with $\bm{Q}(z)^{-1}$ and rearranged, gives
\begin{align}\label{Eq_ZDerivQ}
	& \frac{\partial}{\partial \bm{Z}_{i,j}} \big[ \bm{Q}(z)^{-1} \big] = - \bm{Q}(z)^{-1} \frac{\partial}{\partial \bm{Z}_{i,j}} \big[ \bm{Q}(z) \big] \bm{Q}(z)^{-1} \nonumber\\
	& \overset{\text{(\ref{Eq_Def_Q})}}{=} - \sum\limits_{r,s=1}^R \frac{1}{n}\tr\Big( B_{s}^* B_{r} \frac{\partial}{\partial \bm{Z}_{i,j}} \big[ \tilde{\bm{R}}(z) \big] \Big) \bm{Q}(z)^{-1} A_{r} A_{s}^* \bm{Q}(z)^{-1} \nonumber\\
	& \overset{\text{(\ref{Eq_DerivCalc_XDeriv_tR_Result})}}{=} \frac{1}{n^2} \sum\limits_{r,\ul{r},s=1}^R \tr\Big( B_{s}^* B_{r} \tilde{\bm{R}}(z) (\bm{Y})^* A_{\ul{r}} e_{i,d} e_{j,n}^\top B_{\ul{r}} \tilde{\bm{R}}(z) \Big) \bm{Q}(z)^{-1} A_{r} A_{s}^* \bm{Q}(z)^{-1} \nonumber\\
	& = \frac{1}{n^2} \sum\limits_{r,\ul{r},s=1}^R \big( e_{j,n}^\top B_{\ul{r}} \tilde{\bm{R}}(z) B_{s}^* B_{r} \tilde{\bm{R}}(z) (\bm{Y})^* A_{\ul{r}} e_{i,d} \big) \bm{Q}(z)^{-1} A_{r} A_{s}^* \bm{Q}(z)^{-1} \ .
\end{align}
Analogously, one may show
\begin{align}\label{Eq_tZDerivQ}
	& \frac{\partial}{\partial \ol{\bm{Z}_{i,j}}} \big[ \bm{Q}(z)^{-1} \big] 
	= \frac{1}{n^2} \sum\limits_{\ul{s},r,s=1}^R \big( e_{i,d}^\top A_{\ul{s}}^* \bm{Y} \tilde{\bm{R}}(z) B_{s}^* B_{r} \tilde{\bm{R}}(z) B_{\ul{s}}^* e_{j,n} \big) \bm{Q}(z)^{-1} A_{r} A_{s}^* \bm{Q}(z)^{-1} \ .
\end{align}
Inserting these derivatives back into (\ref{Eq_MPDirectCalc_3}) yields
\begin{align}
	& \E\big[ \tr\big( M \bm{Q}(z)^{-1} \bm{S} \bm{R}(z) \big) \big] \nonumber\\
	& = - \frac{1}{n^2} \sum\limits_{\ul{r},r',s'=1}^R \sum\limits_{i=1}^d \sum\limits_{j=1}^n \E[\bm{Z}_{i,j}^2] \E\Big[ e_{j,n}^\top B_{r'} B_{s'}^* \bm{Z}^* A_{s'}^* \bm{R}(z) A_{\ul{r}} e_{i,d} \nonumber\\
	& \hspace{5cm} \times e_{j,n}^\top B_{\ul{r}} \bm{Y}^* \bm{R}(z) M \bm{Q}(z)^{-1} A_{r'} e_{i,d} \Big] \nonumber\\
	& \hspace{0.5cm} + \frac{1}{n^3} \sum\limits_{\ul{r},r',r,s',s=1}^R \sum\limits_{i=1}^d \sum\limits_{j=1}^n \E[\bm{Z}_{i,j}^2] \E\Big[ \big( e_{j,n}^\top B_{\ul{r}} \tilde{\bm{R}}(z) B_{s}^* B_{r} \tilde{\bm{R}}(z) \bm{Y}^* A_{\ul{r}} e_{i,d} \big) \nonumber\\
	& \hspace{4cm} \times e_{j,n}^\top B_{r'} B_{s'}^* \bm{Z}^* A_{s'}^* \bm{R}(z) M \bm{Q}(z)^{-1} A_{r} A_{s}^* \bm{Q}(z)^{-1} A_{r'} e_{i,d} \Big] \nonumber\\
	& \hspace{0.5cm} + \frac{1}{n} \sum\limits_{r',s'=1}^R \sum\limits_{i=1}^d \sum\limits_{j=1}^n \E\Big[ e_{j,n}^\top B_{r'} B_{s'}^*  e_{j,n} e_{i,d}^\top A_{s'}^* \bm{R}(z) M \bm{Q}(z)^{-1} A_{r'} e_{i,d} \Big] \nonumber\\
	& \hspace{0.5cm} - \frac{1}{n^2} \sum\limits_{r',\ul{s},s'=1}^R \sum\limits_{i=1}^d \sum\limits_{j=1}^n \E\Big[ e_{j,n}^\top B_{r'} B_{s'}^* \bm{Z}^* A_{s'}^* \bm{R}(z) \bm{Y} B_{\ul{s}}^* e_{j,n} e_{i,d}^\top A_{\ul{s}}^* \bm{R}(z) M \bm{Q}(z)^{-1} A_{r'} e_{i,d} \Big] \nonumber\\
	& \hspace{0.5cm} + \frac{1}{n^3} \sum\limits_{r',r,\ul{s},s',s=1}^R \sum\limits_{i=1}^d \sum\limits_{j=1}^n \E\Big[ \big( e_{i,d}^\top A_{\ul{s}}^* \bm{Y} \tilde{\bm{R}}(z) B_{s}^* B_{r} \tilde{\bm{R}}(z) B_{\ul{s}}^* e_{j,n} \big) \nonumber\\
	& \hspace{4cm} \times e_{j,n}^\top B_{r'} B_{s'}^* \bm{Z}^* A_{s'}^* \bm{R}(z) M \bm{Q}(z)^{-1} A_{r} A_{s}^* \bm{Q}(z)^{-1} A_{r'} e_{i,d} \Big] \nonumber\\
	& \overset{\text{(\ref{Eq_DefY})}}{=} - \frac{1}{n^2} \sum\limits_{\ul{r},r'=1}^R \sum\limits_{i=1}^d \sum\limits_{j=1}^n \E[\bm{Z}_{i,j}^2] \E\Big[ \big( B_{r'} \bm{Y}^* \bm{R}(z) A_{\ul{r}} \big)^\top_{i,j} \big( B_{\ul{r}} \bm{Y}^* \bm{R}(z) M \bm{Q}(z)^{-1} A_{r'} \big)^\top_{i,j} \Big] \label{Eq_MPDirectCalc_4_1}\\
	& \hspace{0.5cm} + \frac{1}{n^3} \sum\limits_{\ul{r},r',r,s=1}^R \sum\limits_{i=1}^d \sum\limits_{j=1}^n \E[\bm{Z}_{i,j}^2] \E\Big[ \big( B_{\ul{r}} \tilde{\bm{R}}(z) B_{s}^* B_{r} \tilde{\bm{R}}(z) \bm{Y}^* A_{\ul{r}} \big)^\top_{i,j} \nonumber\\
	& \hspace{4cm} \times \big( B_{r'} \bm{Y}^* \bm{R}(z) M \bm{Q}(z)^{-1} A_{r} A_{s}^* \bm{Q}(z)^{-1} A_{r'} \big)^\top_{i,j} \Big]  \label{Eq_MPDirectCalc_4_2}\\
	& \hspace{0.5cm} + \frac{1}{n} \sum\limits_{r',s'=1}^R \tr\big( B_{r'} B_{s'}^* \big) \E\Big[ \tr\big( A_{s'}^* \bm{R}(z) M \bm{Q}(z)^{-1} A_{r'} \big) \Big]  \label{Eq_MPDirectCalc_4_3}\\
	& \hspace{0.5cm} - \frac{1}{n^2} \sum\limits_{r',\ul{s}=1}^R \E\Big[ \tr\big( B_{r'} \bm{Y}^* \bm{R}(z) \bm{Y} B_{\ul{s}}^* \big)  \tr\big( A_{\ul{s}}^* \bm{R}(z) M \bm{Q}(z)^{-1} A_{r'} \big) \Big]  \label{Eq_MPDirectCalc_4_4}\\
	& \hspace{0.5cm} + \frac{1}{n^3} \sum\limits_{r',r,\ul{s},s=1}^R \sum\limits_{i=1}^d \sum\limits_{j=1}^n \E\Big[ \big( A_{\ul{s}}^* \bm{Y} \tilde{\bm{R}}(z) B_{s}^* B_{r} \tilde{\bm{R}}(z) B_{\ul{s}}^* \big)_{i,j} \nonumber\\
	& \hspace{4cm} \times \big( B_{r'} \bm{Y}^* \bm{R}(z) M \bm{Q}(z)^{-1} A_{r} A_{s}^* \bm{Q}(z)^{-1} A_{r'} \big)^\top_{i,j} \Big]  \label{Eq_MPDirectCalc_4_5} \ .
\end{align}
The two summands (\ref{Eq_MPDirectCalc_4_3}) and (\ref{Eq_MPDirectCalc_4_4}) are now of a very similar form. The two identities
\begin{align*}
	& \frac{1}{n} \bm{Y}^* \bm{R}(z) \bm{Y} = \tilde{\bm{S}} \tilde{\bm{R}}(z) \ \ \text{ and } \ \ \tilde{\bm{S}} \tilde{\bm{R}}(z) - \Id_n = z\tilde{\bm{R}}(z)
\end{align*}
may be used to merge them into
\begin{align*}
	& \text{(\ref{Eq_MPDirectCalc_4_3}) $+$ (\ref{Eq_MPDirectCalc_4_4})}\\
	& = - \frac{z}{n} \sum\limits_{r',s'=1}^R \E\big[ \tr\big( B_{s'}^* B_{r'} \tilde{\bm{R}}(z) \big) \tr\big( M \bm{Q}(z)^{-1} A_{r'} A_{s'}^* \bm{R}(z) \big) \big]\\
	& = - z \E\Big[ \tr\big( M \bm{Q}(z)^{-1} \Big(\underbrace{\sum\limits_{r',s'=1}^R \frac{1}{n} \tr\big( B_{s'}^* B_{r'} \tilde{\bm{R}}(z) \big) A_{r'} A_{s'}^*}_{= \bm{Q}(z) - \Id_d}\Big) \bm{R}(z) \big) \Big]\\
	& = z \E\big[ \tr\big( M \bm{Q}(z)^{-1} \bm{R}(z) \big) \big] - z\E\big[\tr\big( M \bm{R}(z) \big)\big] \ .
\end{align*}
This for the decomposition (\ref{Eq_MPDirectCalc_4_1})-(\ref{Eq_MPDirectCalc_4_5}) implies
\begin{align}\label{Eq_MPDirectCalc_5}
	& \E\big[ \tr\big( M \bm{Q}(z)^{-1} \bm{S} \bm{R}(z) \big) \big] \nonumber\\
	& = z \E\big[ \tr\big( M \bm{Q}(z)^{-1} \bm{R}(z) \big) \big] - z\E\big[\tr\big( M \bm{R}(z) \big)\big] \nonumber\\
	& \hspace{0.5cm} + \text{(\ref{Eq_MPDirectCalc_4_1}) $+$ (\ref{Eq_MPDirectCalc_4_2}) $+$ (\ref{Eq_MPDirectCalc_4_5})} \ .
\end{align}
Combining (\ref{Eq_MPDirectCalc_5}) with (\ref{Eq_MPDirectCalc_1}), it is shown that
\begin{align*}
	& \E\big[ \tr\big( M \bm{Q}(z)^{-1} \big) \big] = - z\E\big[\tr\big( M \bm{R}(z) \big)\big] + \text{(\ref{Eq_MPDirectCalc_4_1}) $+$ (\ref{Eq_MPDirectCalc_4_2}) $+$ (\ref{Eq_MPDirectCalc_4_5})} \ ,
\end{align*}
which may be rearranged and multiplied with $\frac{1}{n}$ for
\begin{align}\label{Eq_SteinDecommp_WithoutBounds}
	& \E\Big[ \frac{1}{n} \tr\big( M \bm{R}(z) \big)\Big] = - \frac{1}{z} \E\Big[ \frac{1}{n} \tr\big( M \bm{Q}(z)^{-1} \big) \Big] + \frac{1}{z n} \big(\text{(\ref{Eq_MPDirectCalc_4_1}) $+$ (\ref{Eq_MPDirectCalc_4_2}) $+$ (\ref{Eq_MPDirectCalc_4_5})}\big) \ .
\end{align}
It for (\ref{Eq_MPSteinProp_A}) remains to bound the absolute values of (\ref{Eq_MPDirectCalc_4_1}), (\ref{Eq_MPDirectCalc_4_2}) and (\ref{Eq_MPDirectCalc_4_5}) by using Lemma~\ref{Lemma_TraceTriplet} and the fact that $|\E[\bm{Z}_{i,j}^2]| \leq \E[|\bm{Z}_{i,j}|^2]=1$. With $C_{i,j}=\E[\bm{Z}_{i,j}^2]$ in Lemma~\ref{Lemma_TraceTriplet} one gets
\begin{align*}
	& \frac{|\text{(\ref{Eq_MPDirectCalc_4_1})}|}{|z| n} \leq \frac{|\text{(\ref{Eq_MPDirectCalc_4_1})}|}{\Im(z) n}\\
	& \overset{\text{(\ref{Eq_TraceTripletBound})}}{\leq} \frac{1}{\Im(z) n^3} \sum\limits_{\ul{r},r'=1}^R \E\Big[ \min(d,n) || B_{r'} \bm{Y}^* \bm{R}(z) A_{\ul{r}} || \, || B_{\ul{r}} \bm{Y}^* \bm{R}(z) M \bm{Q}(z)^{-1} A_{r'} || \Big]\\
	& \overset{\substack{\text{(\ref{Eq_SubmultiplicativitySpectralNorm})} \\ \text{(\ref{Eq_sigmaAssumption_NonAsymp})}}}{\leq} \frac{\sigma^4 ||M||}{\Im(z) n^2} \E\big[ \underbrace{||\bm{Y}||^2}_{\overset{\text{(\ref{Eq_DefS})}}{=} n||\bm{S}||} \,  ||\bm{R}(z)||^2 \, ||\bm{Q}(z)^{-1}|| \big]\\
	& \overset{\substack{\text{(\ref{Eq_R_SpectralBounds})} \\ \text{(\ref{Eq_QBounds})}}}{\leq} \frac{\sigma^4 ||M||}{\tau^2 \Im(z)^4 n} \E\big[ ||\bm{S}|| \, \underbrace{\big((1 \lor |z|)+||\bm{S}||\big)^2}_{\leq 2(1 \lor |z|^2) + 2||\bm{S}||^2} \big]\\
	& \leq \frac{2 \sigma^4 ||M||}{\tau^2 \underbrace{\Im(z)^4}_{\geq \eta^4} n} \Big( \underbrace{(1 \lor |z|^2)}_{\overset{\text{(\ref{Eq_MPDirectCalc_Wlog_vareps_kappa})}}{\leq} \kappa^2} \E\big[ ||\bm{S}|| \big] + \E\big[ ||\bm{S}||^3 \big] \Big)\\
	& \overset{\text{(\ref{Eq_SZ_MomentBound})}}{\leq} \frac{2 \sigma^4 ||M||}{\tau^2 \eta^4 n} \Big( \kappa^2 \big(32 \sigma^4 (1+c_*) \big) K_1 + \big(32 \sigma^4 (1+c_*) \big)^{3} K_3 \Big)
\end{align*}
and
\begin{align*}
	& \frac{|\text{(\ref{Eq_MPDirectCalc_4_2})}|}{|z| n} \leq \frac{|\text{(\ref{Eq_MPDirectCalc_4_2})}|}{\Im(z) n}\\
	& \overset{\text{(\ref{Eq_TraceTripletBound})}}{\leq} \frac{1}{\Im(z) n^4} \sum\limits_{\ul{r},r',r,s=1}^R \E\Big[ \min(d,n) || B_{\ul{r}} \tilde{\bm{R}}(z) B_{s}^* B_{r} \tilde{\bm{R}}(z) \bm{Y}^* A_{\ul{r}} || \nonumber\\
	& \hspace{4cm} \times || B_{r'} \bm{Y}^* \bm{R}(z) M \bm{Q}(z)^{-1} A_{r} A_{s}^* \bm{Q}(z)^{-1} A_{r'} || \Big]\\
	& \overset{\substack{\text{(\ref{Eq_SubmultiplicativitySpectralNorm})} \\ \text{(\ref{Eq_sigmaAssumption_NonAsymp})}}}{\leq} \frac{\sigma^8 ||M||}{\Im(z) n^3} \E\big[ ||\bm{Y}||^2 \, ||\tilde{\bm{R}}(z)||^2 \, ||\bm{R}(z)|| \, ||\bm{Q}(z)^{-1}||^2 \big]\\
	& \overset{\substack{\text{(\ref{Eq_R_SpectralBounds})} \\ \text{(\ref{Eq_QBounds})}}}{\leq} \frac{\sigma^8 ||M||}{\tau^4 \Im(z)^6 n^2} \E\big[ ||\bm{S}|| \, \underbrace{\big((1 \lor |z|)+||\bm{S}||\big)^4}_{\leq 8(1 \lor |z|^4) + 8||\bm{S}||^4} \big]\\
	& \leq \frac{8 \sigma^8 ||M||}{\tau^4 \underbrace{\Im(z)^6}_{\geq \eta^6} n^2} \Big( \underbrace{(1 \lor |z|^4)}_{\overset{\text{(\ref{Eq_MPDirectCalc_Wlog_vareps_kappa})}}{\leq} \kappa^4} \E\big[ ||\bm{S}|| \big] + \E\big[ ||\bm{S}||^5 \big] \Big)\\
	& \overset{\text{(\ref{Eq_SZ_MomentBound})}}{\leq} \frac{8 \sigma^8 ||M||}{\tau^4 \eta^6 n^2} \Big( \kappa^4 \big(32 \sigma^4 (1+c_*) \big) K_1 + \big(32 \sigma^4 (1+c_*) \big)^{5} K_5 \Big) \ .
\end{align*}
With $C_{i,j} = 1$ in Lemma~\ref{Lemma_TraceTriplet}, one also sees
\begin{align*}
	& \frac{|\text{(\ref{Eq_MPDirectCalc_4_5})}|}{|z| n} \leq \frac{|\text{(\ref{Eq_MPDirectCalc_4_5})}|}{\Im(z) n}\\
	& \overset{\text{(\ref{Eq_TraceTripletBound})}}{\leq} \frac{1}{\Im(z) n^4} \sum\limits_{r',r,\ul{s},s=1}^R \E\Big[ \min(d,n) || A_{\ul{s}}^* \bm{Y} \tilde{\bm{R}}(z) B_{s}^* B_{r} \tilde{\bm{R}}(z) B_{\ul{s}}^* || \nonumber\\
	& \hspace{4cm} \times || B_{r'} \bm{Y}^* \bm{R}(z) M \bm{Q}(z)^{-1} A_{r} A_{s}^* \bm{Q}(z)^{-1} A_{r'} || \Big]\\
	& \overset{\substack{\text{(\ref{Eq_SubmultiplicativitySpectralNorm})} \\ \text{(\ref{Eq_sigmaAssumption_NonAsymp})}}}{\leq} \frac{\sigma^8 ||M||}{\Im(z) n^3} \E\big[ ||\bm{Y}||^2 \, ||\tilde{\bm{R}}(z)||^2 \, ||\bm{R}(z)|| \, ||\bm{Q}(z)^{-1}||^2 \big]\\
	& \overset{\substack{\text{(\ref{Eq_R_SpectralBounds})} \\ \text{(\ref{Eq_QBounds})}}}{\leq} \frac{\sigma^8 ||M||}{\tau^4 \Im(z)^6 n^2} \E\big[ ||\bm{S}|| \, \underbrace{\big((1 \lor |z|)+||\bm{S}||\big)^4}_{\leq 8(1 \lor |z|^4) + 8||\bm{S}||^4} \big]\\
	& \leq \frac{8 \sigma^8 ||M||}{\tau^4 \underbrace{\Im(z)^6}_{\geq \eta^6} n^2} \Big( \underbrace{(1 \lor |z|^4)}_{\overset{\text{(\ref{Eq_MPDirectCalc_Wlog_vareps_kappa})}}{\leq} \kappa^4} \E\big[ ||\bm{S}|| \big] + \E\big[ ||\bm{S}||^5 \big] \Big)\\
	& \overset{\text{(\ref{Eq_SZ_MomentBound})}}{\leq} \frac{8 \sigma^8 ||M||}{\tau^4 \eta^6 n^2} \Big( \kappa^4 \big(32 \sigma^4 (1+c_*) \big) K_1 + \big(32 \sigma^4 (1+c_*) \big)^{5} K_5 \Big) \ .
\end{align*}
The last three bounds with (\ref{Eq_SteinDecommp_WithoutBounds}) thus prove (\ref{Eq_MPSteinProp_A}).The bound (\ref{Eq_MPSteinProp_B}) may be proved by analogous arguments. This concludes the proof of Proposition~\ref{Prop_Stein_MP_Analogue}. \qed

\section{Proof of Theorem~\ref{Thm_EmpiricalDualSystem}}\label{Section_GaussianCase}
Before starting the proof, two lemmas are introduced.

\begin{lemma}[Concentration of $\bm{R}(z)$]\label{Lemma_R_Concentration}\
	\\
	Suppose $\ref{ItemAssumption_cBound}$ holds.
	For any random $(d \times n)$-matrix $\bm{X}$ with independent entries, the bounds
	\begin{align}\label{Eq_R_Concentration}
		& \forall M \in \C^{d \times d} , \, \forall \delta \in (0,\kappa) , \, \forall t>0 : \nonumber\\
		& \bP\Big( \sup\limits_{z \in \bD(\eta,\kappa)}\Big| \frac{1}{n} \tr(M\bm{R}^{(\bm{X})}(z)) - \E\Big[ \frac{1}{n} \tr(M\bm{R}^{(\bm{X})}(z)) \Big] \Big| > t + 2\frac{c_* \delta}{\eta^2} ||M|| \Big) \nonumber\\
		& \hspace{1cm} \leq \frac{16\kappa^2}{\delta^2} \exp\Big( -\frac{t^2\eta^2 n}{16 R^2 ||M||^2} \Big)
	\end{align}
	and
	\begin{align}\label{Eq_tR_Concentration}
		& \forall \tilde{M} \in \C^{n \times n} , \, \forall \delta \in (0,\kappa) , \, \forall t>0 : \nonumber\\
		& \bP\Big( \sup\limits_{z \in \bD(\eta,\kappa)}\Big| \frac{1}{n} \tr(\tilde{M}\tilde{\bm{R}}^{(\bm{X})}(z)) - \E\Big[ \frac{1}{n} \tr(\tilde{M}\tilde{\bm{R}}^{(\bm{X})}(z)) \Big] \Big| > t + 2\frac{\delta}{\eta^2} ||\tilde{M}|| \Big) \nonumber\\
		& \hspace{1cm} \leq \frac{16\kappa^2}{\delta^2} \exp\Big( -\frac{t^2\eta^2 n}{16 R^2 ||\tilde{M}||^2} \Big)
	\end{align}
	hold.
\end{lemma}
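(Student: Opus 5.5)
Fix $z$ first, then pass to the supremum over $\bD(\eta,\kappa)$ with a net. For fixed $z\in\bD(\eta,\kappa)$, consider $f(X)\coloneq\frac1n\tr(M\bm{R}^{(X)}(z))$ as a function of the $n$ independent columns of $\bm{X}$. To use the bounded-differences (McDiarmid) inequality, we bound how much $f$ changes when one column $X_{\bullet,j}$ is replaced by another column $X'_{\bullet,j}$.

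\emph{Effect of changing one column.} The matrix $\bm{Y}^{(X)}=\sum_r A_rXB_r$ changes by
\begin{align*}
	& \sum_{r=1}^R A_r (X'_{\bullet,j}-X_{\bullet,j}) e_{j,n}^\top B_r \ ,
\end{align*}
which has rank at most $R$. Then $\bm{S}^{(X)}=\frac1n\bm{Y}\bm{Y}^*$ changes by a matrix of rank at most $2R$, since $\bm{Y}'\bm{Y}'^*-\bm{Y}\bm{Y}^*=(\bm{Y}'-\bm{Y})\bm{Y}'^*+\bm{Y}(\bm{Y}'-\bm{Y})^*$. The standard rank-perturbation bound gives, for Hermitian $S,S'$ with $\operatorname{rank}(S-S')\le k$,
\begin{align*}
	& |\tr(M(S-z)^{-1})-\tr(M(S'-z)^{-1})|\le \frac{2k\,||M||}{\Im(z)} \ .
\end{align*}
To prove it, write the difference as $(S-z)^{-1}(S'-S)(S'-z)^{-1}$. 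Project onto the range of $S'-S$ and use the interlacing/Stieltjes estimate: each rank-one Hermitian update changes $\tr(M(\cdot-z)^{-1})$ by at most $||M||/\Im(z)$ (times a constant), via $|\tr(M G u u^* G)|\le ||M||\,|u^*G^2u|/|1+u^*Gu|\le||M||\,||Gu||^2/\big(\Im(z)\,||Gu||^2\big)$. Hence $f$ has bounded differences $c_j\le 4R||M||/(n\eta)$.

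\emph{Fixed-$z$ tail.} Applying McDiarmid separately to real and imaginary parts gives
\begin{align*}
	& \bP\big(|f-\E f|>t\big)\le 4\exp\big(-t^2\eta^2n/(16R^2||M||^2)\big) \ ,
\end{align*}
which is exactly the exponent in (\ref{Eq_R_Concentration}). The constants may need $8R$ rather than $4R$ or a factor $2$ adjusted, but the paper's constant suggests $c_j\le 2R||M||/(n\eta)$ with the bound $k/\Im z$ per rank.

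\emph{Uniformity in $z$.} Cover $\bD(\eta,\kappa)\subset[-\kappa,\kappa]\times[\eta,\kappa]$ by a grid of mesh $\delta$. This needs at most $(2\kappa/\delta+1)(\kappa/\delta+1)\le 4\kappa^2/\delta^2$ points for $\delta<\kappa$, which with the factor $4$ from the tails gives $16\kappa^2/\delta^2$ via a union bound. For the Lipschitz step, $\|\partial_z\bm{R}\|=\|\bm{R}^2\|\le\eta^{-2}$, so $|f(z)-f(z')|\le \frac dn||M|||z-z'|/\eta^2\le c_*\delta||M||/\eta^2$. The same bound holds for $\E f$, which yields the additive term $2c_*\delta||M||/\eta^2$; the trace over $d$ dimensions explains the $c_*$. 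For $\tilde{\bm{R}}$ the trace is over $n$ dimensions, so the Lipschitz term carries no $c_*$. The bounded-differences argument is identical, since $\tilde{\bm{S}}$ also changes by rank at most $2R$ (indeed $\bm{Y}^*\bm{Y}$ changes by $(\bm{Y}'-\bm{Y})^*\bm{Y}'+\bm{Y}^*(\bm{Y}'-\bm{Y})$).

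\emph{Main obstacle.} The key step is the rank-perturbation resolvent bound with the right constant, namely $k||M||/\Im z$ rather than the crude $2||M||/\Im z\cdot\operatorname{rank}$, and getting it uniform in $M$. I would prove it by writing the rank-$k$ Hermitian difference as a sum of $k$ rank-one Hermitian terms $\pm u_iu_i^*$ and applying the rank-one Sherman–Morrison estimate successively. Constants are absorbed as the statement allows.
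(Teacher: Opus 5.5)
Your proposal is correct and follows the paper's proof. You apply McDiarmid over the $n$ independent columns. A column swap changes $\bm{Y}$ by rank at most $R$, hence $\bm{S}$, $\tilde{\bm{S}}$ and their resolvents by rank at most $2R$. This gives bounded differences $4R\|M\|/(n\Im(z))$ and exactly the stated exponent. You then finish with a $\delta$-net of cardinality $4\kappa^2/\delta^2$ and the Lipschitz bound $\frac{d}{n}\|M\|\,|z-z'|/\eta^2$ in $z$.

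The Sherman--Morrison refinement you call the main obstacle is not needed. The crude estimate $|\tr(M(\bm{R}^{(X)}(z)-\bm{R}^{(X')}(z)))|\le 2R\,\|M\|\cdot 2/\Im(z)$, which the paper uses, already gives the required constant. As a minor point, your grid count $(2\kappa/\delta+1)(\kappa/\delta+1)\le 4\kappa^2/\delta^2$ fails for $\delta$ close to $\kappa$; using cells of side $\sqrt{2}\delta$ repairs this.
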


\begin{lemma}[Concentration of $\bm{Q}(z)^{-1}$ in the Gaussian case]\label{Lemma_ConcentrationLargeExpectation}\
	\\
	Suppose~\ref{ItemAssumption_cBound},~\ref{ItemAssumption_sigmaBound} and~\ref{ItemTempAssumption_NonDegeneracy} hold and let $\bm{Z}$ be as in Lemma~\ref{Lemma_ZTailBound}. For any $\eta,\kappa>0$, there exists a constant $K=K(c_*,\sigma^2,\tau,\eta,\kappa)>0$ such that
	\begin{align}\label{Eq_QConcentrationA}
		& \forall M \in \C^{d \times d} , \, \delta \in (0,\kappa) , \, \forall t>0 : \nonumber\\
		& \bP\Big( \sup\limits_{z \in \bD(\eta,\kappa)}\Big| \frac{1}{z n} \tr(M\bm{Q}^{(\bm{Z})}(z)^{-1}) - \E\Big[ \frac{1}{z n} \tr(M\bm{Q}^{(\bm{Z})}(z)^{-1}) \Big] \Big| \nonumber\\
		& \hspace{4cm} > t + 2K n^2 \exp(-C n) ||M|| + 2K ||M|| \delta \Big) \nonumber\\
		& \leq \frac{16\kappa^2}{\delta^2} \exp\big( -C n \big) + \frac{16\kappa^2}{\delta^2} \exp\Big( -\frac{C t^2 n}{4K ||M||^2} \Big) \ ,
	\end{align}
	and
	\begin{align}\label{Eq_QConcentrationB}
		& \forall \tilde{M} \in \C^{n \times n} , \, \delta \in (0,\kappa) , \, \forall t>0 : \nonumber\\
		& \bP\Big( \sup\limits_{z \in \bD(\eta,\kappa)}\Big| \frac{1}{z n} \tr(\tilde{M}\tilde{\bm{Q}}^{(\bm{Z})}(z)^{-1}) - \E\Big[ \frac{1}{z n} \tr(\tilde{M}\tilde{\bm{Q}}^{(\bm{Z})}(z)^{-1}) \Big] \Big| \nonumber\\
		& \hspace{4cm} > t + 2K n^2 \exp(-C n) ||\tilde{M}|| + 2K ||\tilde{M}|| \delta \Big) \nonumber\\
		& \leq \frac{16\kappa^2}{\delta^2} \exp\big( -C n \big) + \frac{16\kappa^2}{\delta^2} \exp\Big( -\frac{C t^2 n}{4 K ||\tilde{M}||^2} \Big)
	\end{align}
	hold, where $C>0$ is the universal constant from Lemma~\ref{Lemma_ZTailBound}.
\end{lemma}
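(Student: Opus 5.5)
The approach combines three ingredients: Gaussian concentration for Lipschitz functions, a truncation on the event that $||\bm{Z}||$ is of typical size, and a $\delta$-net over $\bD(\eta,\kappa)$. This mirrors the structure of Lemma~\ref{Lemma_R_Concentration}. The only new difficulty is that $\bm{Q}(z)^{-1}$ is not uniformly bounded. By Lemma~\ref{Lemma_QSpectralBound} it is controlled only through $||\bm{S}||$. I only describe (\ref{Eq_QConcentrationA}); (\ref{Eq_QConcentrationB}) is symmetric.

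\textbf{Step 1 (good event).} Let $\Omega_n \coloneq \{||\bm{Z}|| \leq 3(\sqrt{d}+\sqrt{n})\}$. Taking $t=\sqrt{d}+\sqrt{n}\geq \sqrt n$ in (\ref{Eq_ZTailBound}) gives $\bP(\Omega_n^c)\leq 2\exp(-Cn)$; this is the source of the first term on the right of (\ref{Eq_QConcentrationA}). On $\Omega_n$ one has $||\bm{S}||\leq 18\sigma^4(1+c_*)$. Lemma~\ref{Lemma_QSpectralBound} then bounds $||\bm{Q}(z)^{-1}||$ by a constant $K_0(c_*,\sigma^2,\tau,\eta,\kappa)$ uniformly in $z\in\bD(\eta,\kappa)$.

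\textbf{Step 2 (Lipschitz bound in $\bm{Z}$).} Fix $z$ and set $f(Z)\coloneq \frac{1}{zn}\tr(M\bm{Q}^{(Z)}(z)^{-1})$. Differentiating, using (\ref{Eq_ZDerivQ}), (\ref{Eq_tZDerivQ}) and Lemma~\ref{Lemma_BasicDerivatives}, gives
\begin{align*}
df = -\tfrac{1}{zn}\tr\big(N\, d\tilde{\bm{R}}\big), \qquad N\coloneq \sum_{r,s=1}^R \tfrac1n \tr\big(A_rA_s^*\bm{Q}^{-1}M\bm{Q}^{-1}\big)B_s^*B_r .
\end{align*}
By (\ref{Eq_SimpleBound_SB}), $||N||\leq c_*\sigma^4||\bm{Q}^{-1}||^2||M||$. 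The derivative of $\tilde{\bm{R}}$ is of the form $\frac1n\tilde{\bm{R}}\bm{Y}^*A_r E_{ij}B_r\tilde{\bm{R}}$ plus its conjugate analogue. Hence the gradient of $f$ with respect to the real and imaginary parts of the entries is a $(d\times n)$-matrix of the form $\frac{1}{n^2}\sum_r A_r^\top(\cdots)^\top B_r^\top$. Its Frobenius norm is bounded via (\ref{Eq_FrobeniusBounds}) by
\begin{align*}
\frac{1}{|z|n^2}\,\sigma^2\, ||N||_F\, ||\tilde{\bm{R}}||^2\, ||\bm{Y}||\lesssim \frac{\sqrt{n}\cdot\sqrt{n}}{n^2}.
\end{align*}
On $\Omega_n$ this is at most $K_1/n$. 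Since $\Omega_n$ is convex, $f$ is $K_1/n$-Lipschitz there, hence also $K_1/\sqrt n$-Lipschitz. I then replace $f$ by a McShane extension $\hat f$ of $f|_{\Omega_n}$ (applied to real and imaginary parts separately) with the same Lipschitz constant. Gaussian concentration for independent Gaussian entries, which is the same tool underlying Lemma~\ref{Lemma_ZTailBound}, gives $\bP(|\hat f-\E\hat f|>t)\leq 2\exp(-Ct^2n/K)$.

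\textbf{Step 3 (truncation error in the mean).} Next I bound $|\E f-\E\hat f|\leq \E[(|f|+|\hat f|)\mathbbm{1}_{\Omega_n^c}]$. For $f$, the crude bound (\ref{Eq_QBounds}) gives $|f|\lesssim ||M||(1+||\bm{S}||)^2$. For $\hat f$, one has $|\hat f|\leq \sup_{\Omega_n}|f| + \frac{K_1}{n}\dist(\bm{Z},\Omega_n)$, and $\dist(\bm{Z},\Omega_n)\leq ||\bm{Z}||_F$ has polynomial moments. Cauchy--Schwarz together with Lemma~\ref{Lemma_SMomentBound} and $\bP(\Omega_n^c)\leq 2e^{-Cn}$ then yields a bound $K n^2 e^{-Cn}||M||$ after enlarging constants; the powers of $n$ absorb the Frobenius moments. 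On $\Omega_n$ we have $f=\hat f$, which gives the term $2Kn^2\exp(-Cn)||M||$.

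\textbf{Step 4 (net over $z$).} Choose a $\delta$-grid of $\bD(\eta,\kappa)$ with at most $16\kappa^2/\delta^2$ points and take a union bound over the grid. On $\Omega_n$, the map $z\mapsto f$ is $K||M||$-Lipschitz, because $\partial_z\bm{Q}^{-1}=-\bm{Q}^{-1}S^{(A)}(\tilde{\bm{R}}^2)\bm{Q}^{-1}$ is bounded by (\ref{Eq_SimpleBound_SA}) and Step 1, and $1/z$ is Lipschitz for $\Im z\geq\eta$. The same bound holds for $z\mapsto\E\hat f$. This produces the $2K||M||\delta$ term.

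\textbf{Main obstacle.} I expect the main obstacle to be the unboundedness of $\bm{Q}^{-1}$ handled in Steps 2--3: making the truncation rigorous via the extension $\hat f$ while keeping the mean shift exponentially small. The Frobenius-norm gradient computation in Step 2 is routine but bookkeeping-heavy.
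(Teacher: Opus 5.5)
Your route is essentially the paper's: truncate to a spectral-norm ball of radius of order $\sqrt n$ for $\bm{Z}$, prove a Lipschitz bound there, pass to a McShane extension, apply Gaussian concentration, control the mean shift caused by the truncation, and finish with a $\delta$-net in $z$. Your Frobenius-gradient computation gives a Lipschitz constant of order $\|M\|/n$. This is sharper than the paper's $\|M\|/\sqrt n$, which comes from operator-norm resolvent differences; you only need the weaker bound, so this is harmless. Your claim that $z\mapsto\E\hat f_z$ is Lipschitz is also fine. McShane extensions built with a common constant satisfy $|\hat f_z(X)-\hat f_{z'}(X)|\le\sup_{Y\in\Omega_n}|f_z(Y)-f_{z'}(Y)|$.

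The step that fails as written is Step~3. Cauchy--Schwarz gives $\E[|f|\mathbbm{1}_{\Omega_n^c}]\le\E[|f|^2]^{1/2}\,\bP(\Omega_n^c)^{1/2}$. With your $\Omega_n$ you only know $\bP(\Omega_n^c)\le 2\exp(-C(\sqrt d+\sqrt n)^2)$, so you get a factor $\exp(-C(\sqrt d+\sqrt n)^2/2)$, not $e^{-Cn}$. When $d/n$ is small, which \ref{ItemAssumption_cBound} allows, this decays slower than $e^{-Cn}$ by an exponentially growing factor. Neither the $n^2$ nor the choice of $K$ can absorb that factor. Since the statement fixes $C$ as the constant of Lemma~\ref{Lemma_ZTailBound}, your claimed bound $Kn^2e^{-Cn}\|M\|$ does not follow.

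There are two easy repairs:
\begin{itemize}
\item Enlarge the ball, e.g.\ take $\Omega_n=\{\|\bm{Z}\|\le 2(\sqrt d+\sqrt n)+2\sqrt n\}$. Then $\bP(\Omega_n^c)\le 2e^{-4Cn}$, its square root is at most $\sqrt2\,e^{-2Cn}$, and $\|\bm{S}\|$ is still bounded on this ball in terms of $c_*,\sigma^2$.
\item Avoid Cauchy--Schwarz, as the paper does. On $\Omega_n^c$, bound $|f|+|\hat f|$ by a constant plus a multiple of $\|\bm{Z}\|^4$ and write $\E|f-\hat f|$ as a tail integral. Then use $\bP(\|\bm{Z}\|>\kappa_*+s)\le 2e^{-C(\sqrt n+s)^2}\le 2e^{-Cn}e^{-Cs^2}$ with $\kappa_*=2(\sqrt d+\sqrt n)+\sqrt n$. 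The factor $n^2$ in the statement comes from $\kappa_*^4$ in that computation.
\end{itemize}
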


The proof of Theorem~\ref{Thm_EmpiricalDualSystem} commences.

Let $C,K>0$ be the constants from Lemma~\ref{Lemma_ZTailBound} and Lemma~\ref{Lemma_ConcentrationLargeExpectation} respectively. Suppose without loss of generality that $\kappa$ is large enough for
\begin{align}\label{Eq_EmpDualGaussian_kappaWlog}
	& \kappa > \frac{\eta^2}{4c_*\sigma^2} \ \ \text{ and } \ \ \kappa > \frac{1}{4 \sigma^2 K} \ .
\end{align}
Inserting $t \coloneq \frac{1}{2} n^{\frac{\tvarepsilon-1}{2}}$ and $\delta \coloneq \frac{\eta^2}{4c_*\sigma^2} n^{\frac{\tvarepsilon-1}{2}} \overset{\text{(\ref{Eq_EmpDualGaussian_kappaWlog})}}{<} \kappa$ into (\ref{Eq_R_Concentration}) for every $M \in \C^{d \times d}$ with $||M|| \leq \sigma^2$ yields
\begin{align}\label{Eq_R_Concentration_copy}
	& \bP\Big( \sup\limits_{z \in \bD(\eta,\kappa)}\Big| \frac{1}{n} \tr(M\bm{R}^{(\bm{Z})}(z)) - \E\Big[ \frac{1}{n} \tr(M\bm{R}^{(\bm{Z})}(z)) \Big] \Big| > n^{\frac{\tvarepsilon-1}{2}} \Big) \nonumber\\
	& \hspace{1cm} \leq \frac{2^8 c_*^2 \sigma^4 \kappa^2}{\eta^4} n \exp\Big( -\frac{n^{\tvarepsilon}\eta^2}{2^6 R^2 \sigma^4} \Big) \ .
\end{align}
Likewise, inserting $t \coloneq \frac{1}{2} n^{\frac{\tvarepsilon-1}{2}}$ and $\delta \coloneq \frac{1}{4 \sigma^2 K \sqrt{n}} \overset{\text{(\ref{Eq_EmpDualGaussian_kappaWlog})}}{<} \kappa$ into (\ref{Eq_QConcentrationA}) for every $M \in \C^{d \times d}$ with $||M|| \leq \sigma^2$ yields
\begin{align}\label{Eq_QConcentrationA_copy}
	& \bP\Big( \sup\limits_{z \in \bD(\eta,\kappa)}\Big| \frac{1}{z n} \tr(M\bm{Q}^{(\bm{Z})}(z)^{-1}) - \E\Big[ \frac{1}{z n} \tr(M\bm{Q}^{(\bm{Z})}(z)^{-1}) \Big] \Big| \nonumber\\
	& \hspace{5cm} > n^{\frac{\tvarepsilon-1}{2}} + 2K \sigma^2 n^2 \exp(-C n) \Big) \nonumber\\
	& \leq 2^8 \sigma^4 K^2 \kappa^2 n \exp\big( -C n \big) + 2^8 \sigma^4 K^2 \kappa^2 n \exp\Big( -\frac{C n^{\tvarepsilon}}{16 K \sigma^4} \Big) \ .
\end{align}
Combining these bounds with the results of Proposition~\ref{Prop_Stein_MP_Analogue} gives
\begin{align*}
	& \bP\Big( \sup\limits_{z \in \bD(\eta,\kappa)}\Big| \frac{1}{n} \tr\big(M\bm{R}^{(\bm{Z})}(z)\big) - \frac{-1}{z} \frac{1}{n} \tr\big(M\bm{Q}^{(\bm{Z})}(z)^{-1}\big) \Big| \nonumber\\
	& \hspace{3cm} > 2n^{\frac{\tvarepsilon-1}{2}} + \frac{C'\sigma^2}{n} + 2K \sigma^2 n^2 \exp(-C n) \Big) \nonumber\\
	& \overset{\substack{\text{(\ref{Eq_R_Concentration_copy})} \\ \text{(\ref{Eq_QConcentrationA_copy})}}}{\leq} \overbrace{\bP\Big( \sup\limits_{z \in \bD(\eta,\kappa)}\Big| \E\Big[ \frac{1}{n} \tr\big(M\bm{R}^{(\bm{Z})}(z)\big) \Big] - \frac{-1}{z} \E\Big[ \frac{1}{n} \tr\big(M\bm{Q}^{(\bm{Z})}(z)^{-1}\big) \Big] \Big| > \frac{C'\sigma^2}{n} \Big)}^{=0 \text{ by (\ref{Eq_MPSteinProp_A})}} \nonumber\\
	& \hspace{0.5cm} + \frac{2^8 c_*^2 \sigma^4 \kappa^2}{\eta^4} n \exp\Big( -\frac{n^{\tvarepsilon}\eta^2}{2^6 R^2 \sigma^4} \Big) \nonumber\\
	& \hspace{0.5cm} + 2^8 \sigma^4 K^2 \kappa^2 n \exp\big( -C n \big) + 2^8 \sigma^4 K^2 \kappa^2 n \exp\Big( -\frac{C n^{\tvarepsilon}}{16 K \sigma^4} \Big)
\end{align*}
for all $M \in \C^{d \times d}$ with $||M|| \leq \sigma^2$, where $C'$ denotes the constant from Proposition~\ref{Prop_Stein_MP_Analogue}. Choosing
\begin{align*}
	& \mathcal{C} \coloneq \max\Big( C^{-1}, 2K\sigma^2, 2+C'\sigma^2, 3 \cdot \frac{2^8 c_*^2 \sigma^4 \kappa^2}{\eta^4}, \frac{2^6 \sigma^4}{\eta^2}, 3 \cdot 2^8 \sigma^4 K^2 \kappa^2, \frac{16 K \sigma^4}{C} \Big)
\end{align*}
then gives
\begin{align*}
	& \bP\Big( \sup\limits_{z \in \bD(\eta,\kappa)}\Big| \frac{1}{n} \tr\big(M\bm{R}^{(\bm{Z})}(z)\big) - \frac{-1}{z} \frac{1}{n} \tr\big(M\bm{Q}^{(\bm{Z})}(z)^{-1}\big) \Big| \nonumber\\
	& \hspace{5cm} > \mathcal{C}n^{\frac{\tvarepsilon-1}{2}} + \mathcal{C} n^2 \exp(-n/\mathcal{C}) \Big) \nonumber\\
	& \leq \frac{\mathcal{C}}{3} n \exp\Big( -\frac{n^{\tvarepsilon}}{\mathcal{C} R^2} \Big) + \frac{\mathcal{C}}{3} n \exp\big( -n/\mathcal{C} \big) + \frac{\mathcal{C}}{3} n \exp\Big( -\frac{n^{\tvarepsilon}}{\mathcal{C}} \Big)\\
	& \leq \mathcal{C} n \exp\Big( -\frac{n^{\tvarepsilon}}{\mathcal{C} R^2} \Big)
\end{align*}
for every $M \in \C^{d \times d}$ with $||M|| \leq \sigma^2$, thus proving (\ref{Eq_EmpDualGaussian_ResultA_M}). For any $r,s \leq R$, one may then insert $A_rA_s^*$ for $M$, which by construction of $\bm{Q}(z)$ and $\hat{\delta}^{(A/B)}(z)$ proves (\ref{Eq_EmpDualGaussian_ResultA}).
\\[0.5em]
The proof for (\ref{Eq_EmpDualGaussian_ResultB}) and (\ref{Eq_EmpDualGaussian_ResultB_M}) is completely analogous with all occurrences of $\frac{d}{n}$, $A_r$, $M$, $\bm{R}$ and $\bm{Q}$ replaced by $\frac{n}{n}$, $B_r$, $\tilde{M}$, $\tilde{\bm{R}}$ and $\tilde{\bm{Q}}$. The bound $\frac{d}{n} \leq c_*$ is still applicable for $\frac{n}{n} \leq c_*$, since it was assumed in~\ref{ItemAssumption_cBound} that $c_* \geq 1$.
This concludes the proof of Theorem~\ref{Thm_EmpiricalDualSystem}. \qed

\section{Proof of Theorem~\ref{Thm_Universality}}
Before starting the proof, three lemmas are introduced.

\begin{lemma}[Holomorphic Lindeberg principle]\label{Lemma_HolomorphicLindeberg}\
	\\
	Let $U_N$ be an open subset of $\C^N \times \C^N$, which contains
	\begin{align*}
		& D_N \coloneq \big\{ (z_1,\dots,z_N;\ol{z_1},\dots,\ol{z_N}) \ \big| \ z_1,\dots,z_N \in \C \big\} \ .
	\end{align*}
	Let $f_N : U_N \rightarrow \C$ be an entry-wise holomorphic function, which is uniformly bounded on $D_N$.
	Let $X_1,\dots,X_N,Y_1,\dots,Y_N$ be independent $\C$-valued random variables such that the properties
	\begin{align}\label{Eq_Lindeberg_EqualMeans}
		& \forall j \leq N : \ \E[X_j] = 0 = \E[Y_j] \ \ , \ \ \E[X_j^2] = \E[Y_j^2] \ \ , \ \ \E[|X_j|^2] = 1 = \E[|Y_j|^2]
	\end{align}
	and
	\begin{align}\label{Eq_Lindeberg_SixthMomentBound}
		& \exists K_6 > 0 \text{ constant } \ \forall j \leq N : \ \E[|X_j|^6] \leq K_6 \geq \E[|Y_j|^6]
	\end{align}
	hold.
	The bound
	\begin{align}\label{Eq_LindebergEquality}
		& \big|\E\big[ f_N(X_1,\dots,X_N;\ol{X}_1,\dots,\ol{X}_N) - f_N(Y_1,\dots,Y_N;\ol{Y}_1,\dots,\ol{Y}_N) \big]\big| \nonumber\\
		& \leq 3\sqrt{K_6} \sum\limits_{j=1}^N \big( b^{(1)}_j + b^{(2)}_j \big)
	\end{align}
	holds, where $b^{(1)}_j, b^{(2)}_j \in [0,\infty]$ are with the notation
	\begin{align*}
		& Z^{(j,0)} \coloneq \big( X_1,\dots,X_{j-1},0,Y_{j+1},\dots,Y_N \big) \ \ \text{ and } \ \  u_{j} = \big( \underbrace{0,\dots,0}_{\times (j-1)},1,\underbrace{0,\dots,0}_{\times (N-j)} \big)
	\end{align*}
	defined as
	\begin{align*}
		& b^{(1)}_j = \sum\limits_{\substack{a,b \geq 0 \\ a+b=3}} \max\limits_{\tau \in [0,1]} \E\big[ \big|\partial_j^a \partial_{N+j}^b f_N\big(Z^{(j,0)}+\tau u_j X_j;\ol{Z^{(j,0)}+\tau u_j X_j}\big)\big|^2 \big]^{\frac{1}{2}}\\
		& b^{(2)}_j = \sum\limits_{\substack{a,b \geq 0 \\ a+b=3}} \max\limits_{\tau \in [0,1]} \E\big[ \big|\partial_j^a \partial_{N+j}^b f_N\big(Z^{(j,0)}+\tau u_j Y_j;\ol{Z^{(j,0)}+\tau u_j Y_j}\big)\big|^2 \big]^{\frac{1}{2}} \ .
	\end{align*}
\end{lemma}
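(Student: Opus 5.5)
The plan is the classical Lindeberg replacement argument, with a third-order Taylor expansion in each swapped coordinate. Write $F(x) \coloneq f_N(x;\ol{x})$ for $x \in \C^N$ and telescope:
\begin{align*}
	& \E\big[F(X) - F(Y)\big] = \sum\limits_{j=1}^N \E\big[ F(Z^{(j,0)} + u_j X_j) - F(Z^{(j,0)} + u_j Y_j) \big] \ ,
\end{align*}
since $Z^{(j,0)} + u_j X_j$ and $Z^{(j+1,0)} + u_{j+1} Y_{j+1}$ coincide. Uniform boundedness of $f_N$ on $D_N$ makes all these expectations finite. The problem is then reduced to bounding each summand by $3\sqrt{K_6}(b^{(1)}_j + b^{(2)}_j)$.

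For fixed $j$, consider $\phi(\tau) \coloneq F(Z^{(j,0)} + \tau u_j w)$ for $\tau \in [0,1]$ and $w \in \{X_j, Y_j\}$. Because $f_N$ is entrywise holomorphic on $U_N \supset D_N$, the chain rule for Wirtinger derivatives gives
\begin{align*}
	& \phi^{(k)}(\tau) = \sum\limits_{a+b=k} \binom{k}{a} w^a \ol{w}^b \, \partial_j^a \partial_{N+j}^b f_N(\cdots) \ ,
\end{align*}
where the derivatives are evaluated along the path. Taylor's formula with integral remainder up to third order then yields
\begin{align*}
	& \phi(1) = \phi(0) + \phi'(0) + \tfrac12 \phi''(0) + \int_0^1 \tfrac{(1-\tau)^2}{2} \phi'''(\tau) \, d\tau \ .
\end{align*}
The quantities $\phi(0)$ and the derivatives at $0$ depend only on $Z^{(j,0)}$, which is independent of $X_j$ and $Y_j$. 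Hence, taking expectations, the zeroth- through second-order terms factor as products of $\E[w^a \ol{w}^b]$ with $a+b \le 2$ and expectations of derivatives at $Z^{(j,0)}$. By (\ref{Eq_Lindeberg_EqualMeans}) these moments agree for $X_j$ and $Y_j$: the mean is zero, $\E[w^2]$ agrees, $\E[\ol{w}^2]$ agrees as its conjugate, and $\E[|w|^2]=1$. So these terms cancel in the difference.

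It remains to bound the remainders. For $w = X_j$, Cauchy--Schwarz gives
\begin{align*}
	& \Big| \E\big[ X_j^a \ol{X_j}^b \, \partial_j^a \partial_{N+j}^b f_N(Z^{(j,0)} + \tau u_j X_j; \ol{\cdot}) \big] \Big| \leq \E\big[|X_j|^6\big]^{\frac12} \, \E\big[ |\partial_j^a \partial_{N+j}^b f_N(\cdots)|^2 \big]^{\frac12} \ .
\end{align*}
Using (\ref{Eq_Lindeberg_SixthMomentBound}) and taking the maximum over $\tau$, each such term is at most $\sqrt{K_6}$ times the corresponding summand of $b^{(1)}_j$. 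Fubini is used to exchange the $\tau$-integral and the expectation; if the right side is infinite, the bound is trivial. The combinatorial factor is $\binom{3}{a} \int_0^1 \frac{(1-\tau)^2}{2} \, d\tau = \binom{3}{a}/6 \le 1/2 \le 3$, which is comfortably within the claimed constant $3$. The same argument for $Y_j$ gives $b^{(2)}_j$, and summing over $j$ completes the bound.

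The main obstacle is not analytic but in justifying the complex Taylor expansion along the real segment $\tau \mapsto (Z^{(j,0)} + \tau u_j w, \ol{Z^{(j,0)} + \tau u_j w})$. This path stays in $D_N \subset U_N$, and $F$ is real-analytic there, so the multivariate Taylor formula in the two holomorphic variables $(\zeta_j, \zeta_{N+j})$ applies with displacement $(w, \ol{w})$. I would verify this directly by viewing $\tau \mapsto f_N(\dots, z_j + \tau w, \dots; \dots, \ol{z_j} + \tau \ol{w}, \dots)$ as a composition of a holomorphic function with an affine map, which is smooth in $\tau$.
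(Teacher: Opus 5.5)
Your proposal is correct and follows the paper's proof essentially step for step. Both use the same telescoping over $Z^{(j,0)}+u_jX_j$ versus $Z^{(j,0)}+u_jY_j$, a third-order Taylor expansion in $\tau$ with integral remainder, cancellation of all terms of order at most two by independence and (\ref{Eq_Lindeberg_EqualMeans}), and Cauchy--Schwarz with (\ref{Eq_Lindeberg_SixthMomentBound}) on the remainder; your remainder bookkeeping (pairing the derivative only with $|w|^3$ and keeping the factor $\binom{3}{a}\int_0^1 \frac{(1-\tau)^2}{2}\,d\tau$) is slightly tighter than the paper's and yields the constant $\tfrac{1}{2}\sqrt{K_6}$ in place of $3\sqrt{K_6}$.
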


\begin{lemma}[Derivative bounds]\label{Lemma_DerivativeBounds_Lindeberg}\
	\\
	Suppose~\ref{ItemAssumption_cBound} and~\ref{ItemAssumption_sigmaBound} hold and let $X$ be an arbitrary $(d \times n)$-matrix.
	For any fixed $M \in \C^{d \times d}$ and $z \in \C^+$, define the function
	\begin{align*}
		& f_{d \cdot n}(X,\ol{X}) = \frac{1}{n} \tr\big( M \bm{R}^{(X)}(z) \big) \ .
	\end{align*}
	The third derivatives of $f_{d \cdot n}$ may for any $a,b\geq 0$ with $a+b=3$ be bounded by
	\begin{align}
		& \Big| \frac{\partial^a}{\partial X_{i,j}^a} \frac{\partial^b}{\partial \ol{X_{i,j}}^b} f_{d \cdot n}(X,\ol{X}) \Big| \leq \frac{6 \sigma^6 ||M||}{n^{5/2} \Im(z)^4} \big(||\bm{S}^{(X)}||^{\frac{3}{2}} \lor |z| \, ||\bm{S}^{(X)}||^{\frac{1}{2}} \big) \label{Eq_XXXDerivf1Result} \ .
	\end{align}
\end{lemma}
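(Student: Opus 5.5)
The plan is to compute the third Wirtinger derivatives of $f_{d\cdot n}(X,\ol{X}) = \frac{1}{n}\tr(M\bm{R}^{(X)}(z))$ explicitly by iterating the formulas of Lemma~\ref{Lemma_BasicDerivatives}. Each term produced is a trace of a product of resolvents and rank-one factors, and we bound it by operator norms. Write $\bm{R}=\bm{R}^{(X)}(z)$, $\bm{Y}=\bm{Y}^{(X)}$, and introduce the rank-one building blocks
\[
P_r \coloneq A_r e_{i,d}e_{j,n}^\top B_r, \qquad \ol{P}_s \coloneq B_s^* e_{j,n}e_{i,d}^\top A_s^*.
\]
By (\ref{Eq_DerivCalc_XDeriv_R_Result}) and (\ref{Eq_DerivCalc_olXDeriv_R_Result}), one derivative of $\bm{R}$ produces $-\frac1n\sum_r \bm{R}P_r\bm{Y}^*\bm{R}$ or $-\frac1n\sum_s \bm{R}\bm{Y}\ol{P}_s\bm{R}$. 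Differentiating again hits either a resolvent, reproducing such a factor, or the explicit $\bm{Y}$ or $\bm{Y}^*$. In the latter case, (\ref{Eq_DerivCalc_BaseX}) turns $\bm{Y}^*$ into $\sum_s\ol{P}_s$ under $\partial/\partial\ol{X_{i,j}}$, and $\bm{Y}$ into $\sum_r P_r$ under $\partial/\partial X_{i,j}$. Hence, after three derivatives with $a+b=3$, $f$ becomes a finite sum (at most a fixed number of terms, say $\le 6$ after grouping by product rule) of terms of the form
\[
\pm\frac{1}{n}\cdot\frac{1}{n^{k}}\,\tr\big(M\bm{R}W_1\bm{R}W_2\bm{R}\cdots\bm{R}\big),
\]
where each $W_\ell$ is one of $P_r\bm{Y}^*$, $\bm{Y}\ol{P}_s$ or $P_r\ol{P}_s$, and $k\in\{2,3\}$ counts the factors of $\frac1n$. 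Three rank-one insertions occur in total. Terms with three resolvent-derivative steps carry $k=3$ and three factors $\bm{Y}$; terms where one step hit $\bm{Y}$ carry $k=2$ and one factor $\bm{Y}$.

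The key observation is that each term is a trace of a matrix of rank at most one, since it contains the factor $e_{i,d}e_{j,n}^\top$. Its trace is therefore bounded by its operator norm, with no dimension factor. For the operator norm we need three estimates. First, $\|\bm{R}\|\le \Im(z)^{-1}$ by (\ref{Eq_R_SpectralBounds}). Second, the sums over $r$ are controlled via
\[
\Big\|\sum_r A_r e e^\top B_r\Big\|\le \sum_r\|A_r\|\|B_r\|\le\sigma^2
\]
by Cauchy--Schwarz and~\ref{ItemAssumption_sigmaBound}. Third, the $\bm{Y}$ factors are absorbed into resolvents through the identities $\|\bm{R}\bm{Y}\|=\|\bm{Y}^*\bm{R}\|\le \sqrt{n}\,\|\bm{S}\|^{1/2}/\Im(z)$ and $\|\bm{Y}\|=\sqrt{n}\|\bm{S}\|^{1/2}$. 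A more economical alternative for the third estimate is (\ref{Eq_DerivCalc_XDeriv_RY_result}), which trades a $\bm{Y}$ against a factor $z$ and a $\tilde{\bm{R}}$; the $|z|\,\|\bm{S}\|^{1/2}$ alternative in (\ref{Eq_XXXDerivf1Result}) comes from exactly this, for terms where the identity $\frac1n\bm{Y}^*\bm{R}\bm{Y}=\tilde{\bm{S}}\tilde{\bm{R}}=\Id+z\tilde{\bm{R}}$ is used.

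Counting powers now gives the bound. A $k=3$ term carries $n^{-4}$ and three factors of $\sqrt{n}\|\bm{S}\|^{1/2}$, so it is $O(n^{-5/2}\|\bm{S}\|^{3/2})$. A $k=2$ term carries $n^{-3}$ and one factor $\sqrt n\|\bm{S}\|^{1/2}$, giving $n^{-5/2}\|\bm{S}\|^{1/2}$; where it appears with $z$ through the identity above, it becomes $|z|\,\|\bm{S}\|^{1/2}$. Since the $k=2$ terms have only three resolvents, the extra factor of $\Im(z)^{-1}$ needed to reach the uniform $\Im(z)^{-4}$ is supplied by bounding $\Id+z\tilde{\bm{R}}$ or by the WLOG $\Im(z)\le 1$ convention; I would state that convention explicitly if needed. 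Each term then has four resolvent factors (yielding $\Im(z)^{-4}$), three $\sigma^2$ factors from the rank-one insertions (yielding $\sigma^6$), and one $\|M\|$. Summing the at most six product-rule terms yields the constant $6$, after taking the maximum of $\|\bm{S}\|^{3/2}$ and $|z|\,\|\bm{S}\|^{1/2}$.

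I expect the main obstacle to be the bookkeeping of which terms have a $\bm{Y}$ removed by (\ref{Eq_DerivCalc_BaseX}), together with ensuring every term really has the claimed powers of $n$ and of $\Im(z)$. The safest route is to treat the mixed derivatives case by case for $(a,b)\in\{(3,0),(2,1),(1,2),(0,3)\}$, using the symmetry under adjoints to reduce to two of these cases.
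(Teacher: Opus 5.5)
Your overall strategy is the paper's. You iterate Lemma~\ref{Lemma_BasicDerivatives}, reduce each term to a trace of a rank-one product, and bound it by $\|\bm R\|,\|\tilde{\bm R}\|\le\Im(z)^{-1}$ and $\|\bm Y\|=\sqrt n\|\bm S\|^{1/2}$. (The rank-one property requires expanding the $r$-sums first, so what you actually use is $\sum_r\|A_r\|\|B_r\|\le\sigma^2$; with $\sum_rP_r$ kept inside the trace the matrix has rank up to $R$.) For $(a,b)\in\{(3,0),(0,3)\}$ this gives six terms of size $\frac{\sigma^6\|M\|}{n^{5/2}\Im(z)^4}\|\bm S\|^{3/2}$, exactly as in the paper.

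The mixed cases do not close as you describe them. When a derivative falls directly on $\bm Y^*$, the resulting term has only three resolvents and no factor $z$. An example is $\frac{2}{n^3}\tr(M\bm RP\ol{P}\bm RP\bm Y^*\bm R)$, where $P\coloneq\sum_rP_r$ and $\ol P\coloneq\sum_s\ol P_s$; it is only $O\big(\frac{\sigma^6\|M\|}{n^{5/2}\Im(z)^3}\|\bm S\|^{1/2}\big)$. A WLOG $\Im(z)\le1$ is not available, because the lemma is claimed for every fixed $z\in\C^+$. It would not help anyway: it yields $\|\bm S\|^{1/2}/\Im(z)^4$, which is not dominated by $\max(\|\bm S\|^{3/2},|z|\|\bm S\|^{1/2})/\Im(z)^4$ when $\|\bm S\|,|z|<1$. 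The correct elementary repair is $\Im(z)\le|z|$, but then the constant is wrong. From $\partial^2_{X_{i,j}}f=\frac{2}{n^3}\tr(M\bm RP\bm Y^*\bm RP\bm Y^*\bm R)$, applying $\partial_{\ol{X_{i,j}}}$ gives three resolvent hits and two $\bm Y^*$-hits, each with weight $2$. The total is $\frac{\sigma^6\|M\|}{n^{5/2}\Im(z)^4}\big(6\|\bm S\|^{3/2}+4\Im(z)\|\bm S\|^{1/2}\big)$, i.e.\ constant $10$, so your count of ``at most six product-rule terms'' fails for $(2,1)$ and $(1,2)$.

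The missing step is to pair each $\bm Y^*$-hit with the hit on the resolvent immediately to its right. This is exactly (\ref{Eq_DerivCalc_olXDeriv_YR_Result}), $\partial_{\ol{X_{i,j}}}[\bm Y^*\bm R]=-z\sum_s\tilde{\bm R}B_s^*e_{j,n}e_{i,d}^\top A_s^*\bm R$. It comes from the exact cancellation $\Id-\frac1n\bm Y^*\bm R\bm Y=\Id-\tilde{\bm S}\tilde{\bm R}=-z\tilde{\bm R}$, not from any bound on $\Id+z\tilde{\bm R}$. Applied to both $\bm Y^*\bm R$ pairs in $\tr(M\bm RP\bm Y^*\bm RP\bm Y^*\bm R)$, it leaves one pure resolvent hit of weight $2$. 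It also leaves two terms of the form $-\frac{2z}{n^3}\tr(\cdots)$, each containing $\bm R,\bm R,\bm R,\tilde{\bm R}$ and a single $\bm Y$. Now every term has four resolvents, and the total is $\frac{\sigma^6\|M\|}{n^{5/2}\Im(z)^4}\big(2\|\bm S\|^{3/2}+4|z|\|\bm S\|^{1/2}\big)$, which gives the constant $6$. This is the paper's computation (\ref{Eq_MixedThirdDerivCalc2})--(\ref{Eq_ThridDerivBound2}). You should make this pairing the systematic mechanism for all $k=2$ terms rather than an optional alternative. Without it, your route proves the lemma only with $10$ in place of $6$. That weaker constant would still suffice for Theorem~\ref{Thm_Universality}, but it is not the stated bound.
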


\begin{lemma}[Sixth-moment bound]\label{Lemma_SixthMoment}\
	\\
	Suppose~\ref{ItemAssumption_cBound} holds and let $\bm{X}$ be a random $(d \times n)$-matrix with independent, centered entries that satisfy
	\begin{align}\label{Eq_SixthMoment_K6Condition}
		& \exists K_6>0 \, \forall i \leq d , \, \forall j \leq n : \ \E\big[ |\bm{X}_{i,j}|^6 \big] \leq K_6 \ ,
	\end{align}
	then there exists a constant $\mathcal{C}=\mathcal{C}(c_*,K_6)>0$ depending only on $c_*$ and $K_6$ and not the distribution of $\bm{X}$ such that
	\begin{align}\label{Eq_SixthMoment_Result}
		& \E\big[ ||\bm{X}||^6 \big] \leq \mathcal{C} n^3 \ .
	\end{align}
\end{lemma}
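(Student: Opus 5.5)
The plan is to control $\|\bm{X}\|^6$ through the trace of a power of the Gram matrix, after symmetrizing and truncating the entries at the natural scale. Since $\|\bm{X}\|^6 = \|\bm{X}\bm{X}^*\|^3$ and $d \leq c_* n$, it is enough to show $\E\big[\|\bm{X}\|^6\big] \leq \mathcal{C}(\sqrt{d}+\sqrt{n})^6$. A moment bound on $\|\bm{X}\|$ with only a sixth-moment assumption on the entries is of Latała/Seginer type. I would derive it from a trace bound,
\begin{align*}
	& \E\big[\|\bm{X}\|^6\big] \leq \E\big[\tr\big((\bm{X}\bm{X}^*)^3\big)\big] \ ,
\end{align*}
but the crude trace would give $d n^3$ rather than $n^3$. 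So I would not apply it to $\bm{X}$ directly; I would first decompose the entries.

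\textbf{Step 1 (reduction).} First reduce to symmetric entries: with an independent copy $\bm{X}'$ and Jensen, $\E\|\bm{X}\|^6 \leq \E\|\bm{X}-\bm{X}'\|^6$, and the entries of $\bm{X}-\bm{X}'$ are symmetric with sixth moment at most $2^6 K_6$. Then split each entry as $\bm{X}_{i,j} = \bm{X}_{i,j}\mathbbm{1}_{|\bm{X}_{i,j}|\leq n^{1/4}} + \bm{X}_{i,j}\mathbbm{1}_{|\bm{X}_{i,j}|> n^{1/4}} =: \bm{X}^{(1)}_{i,j}+\bm{X}^{(2)}_{i,j}$. Symmetry keeps both parts centered and independent.

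\textbf{Step 2 (bounded part).} For $\bm{X}^{(1)}$, use the moment method with a growing power $p \asymp \log n$:
\begin{align*}
	& \E\big[\|\bm{X}^{(1)}\|^{2p}\big] \leq \E\big[\tr\big((\bm{X}^{(1)}\bm{X}^{(1)*})^p\big)\big] \ .
\end{align*}
The standard path-counting argument (as in Bai--Silverstein or Yin--Bai--Krishnaiah) gives at most $d\,(C(\sqrt d+\sqrt n))^{2p}$ from paths where each edge appears twice. Paths with higher edge multiplicities are controlled using $|\bm{X}^{(1)}_{i,j}|\leq n^{1/4}$ together with the bounded second moments. Hölder then gives $\E\|\bm{X}^{(1)}\|^6 \leq (\E\|\bm{X}^{(1)}\|^{2p})^{3/p} \leq d^{3/p}\,C(\sqrt d+\sqrt n)^6 \leq C' n^3$.

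If this combinatorics becomes too heavy, I would instead cite the Latała bound
\begin{align*}
	& \E\|\bm{X}\| \leq C\Big(\max_i \big(\textstyle\sum_j \E|\bm{X}_{i,j}|^2\big)^{1/2} + \max_j \big(\textstyle\sum_i \E|\bm{X}_{i,j}|^2\big)^{1/2} + \big(\textstyle\sum_{i,j}\E|\bm{X}_{i,j}|^4\big)^{1/4}\Big) \ ,
\end{align*}
where the fourth-moment term is $\leq (d n K_6^{2/3})^{1/4} \lesssim \sqrt n$. I would combine it with Talagrand/Lipschitz concentration of $\|\bm{X}^{(1)}\|$, whose entries are bounded by $n^{1/4}$, to upgrade the first moment to a sixth moment.

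\textbf{Step 3 (heavy part).} Bound $\|\bm{X}^{(2)}\| \leq \|\bm{X}^{(2)}\|_F$, so that $\E\|\bm{X}^{(2)}\|^6 \leq \E\big(\sum_{i,j}|\bm{X}^{(2)}_{i,j}|^2\big)^3$. Expanding the cube and using independence, diagonal terms contribute $\sum \E|\bm{X}_{i,j}|^6 \leq dnK_6 \lesssim n^2$. Off-diagonal terms use $\E|\bm{X}^{(2)}_{i,j}|^2 \leq K_6 n^{-1}$ and $\E|\bm{X}^{(2)}_{i,j}|^4 \leq K_6 n^{-1/2}$ (Markov with the sixth moment). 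This gives a term of order $(dn\cdot n^{-1})^3 \lesssim n^3$ and mixed terms of order $dn \cdot n^{-1/2}\cdot dn\cdot n^{-1} \lesssim n^{5/2}$. Hence $\E\|\bm{X}^{(2)}\|^6 \leq \mathcal{C} n^3$. Combining with $(a+b)^6\leq 2^5(a^6+b^6)$ concludes.

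\textbf{Main obstacle.} The main obstacle is Step 2: getting an $n^3$ (not $d n^3$) sixth-moment bound for the truncated part, uniformly in the entry distributions. I would first try the Latała-plus-concentration route, since it avoids the path counting and only needs the bounded-entry concentration of the spectral norm.
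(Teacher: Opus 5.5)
Your proposal is correct and, in the Latała-plus-concentration form you say you would pursue first, it is essentially the paper's proof: truncate the entries, bound the heavy part through the Frobenius norm, and bound the truncated part by Latała's estimate for the mean combined with Talagrand/Meckes-type concentration to reach the sixth moment. The remaining differences are minor technical choices. You symmetrize first so that both truncated parts stay centered, whereas the paper controls $\|\E[\ul{\bm{X}}]\|=\|\E[\ul{\bm{Y}}]\|$ by Jensen; you truncate at $n^{1/4}$ instead of $\sqrt{n}$; and you expand the cube of the Frobenius norm directly instead of invoking Rosenthal's inequality.
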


The proof of Theorem~\ref{Thm_Universality} commences.
\\[0.5em]
Without loss of generality, assume $\kappa > 1$.
The entries of $\bm{X}$ and $\bm{Z}$ are all independent and by construction satisfy
\begin{align}\label{Eq_Lindeberg_EqualMeans_Checking}
	& \forall (i,j) \in \{1,\dots,d\} \times \{1,\dots,n\} : \nonumber\\
	& \E[\bm{X}_{i,j}] = 0 = \E[\bm{Z}_{i,j}] \ \ , \ \ \E[\bm{X}_{i,j}^2] = \E[\bm{Z}_{i,j}^2] \ \ \text{ and } \ \ \E[|\bm{X}_{i,j}|^2] = 1 = \E[|\bm{Z}_{i,j}|^2] \ .
\end{align}
The fact that the entries of $\bm{Z}$ are complex Gaussian and satisfy $1 = \E[|\bm{Z}_{i,j}|^2]$, also guarantees
\begin{align}\label{Eq_Universality_C6_Gaussian}
	& \E[|\bm{Z}_{i,j}|^6] \leq \E_{Z \sim \mathcal{N}(0,1)}[|Z|^6] = 15 \ .
\end{align}
For the constant $K_6 \coloneq 15 \lor C_6$, it then by~\ref{ItemAssumption_boundedSixthMoment} holds that
\begin{align}\label{Eq_Lindeberg_SixthMomentBound_Checking}
	& \forall (i,j) \in \{1,\dots,d\} \times \{1,\dots,n\} : \ \E[|\bm{X}_{i,j}|^6] \leq K_6 \geq \E[|\bm{Z}_{i,j}|^6] \ .
\end{align}
One may thus apply Lemma~\ref{Lemma_HolomorphicLindeberg} to $f_{d \cdot n}$ as defined in Lemma~\ref{Lemma_DerivativeBounds_Lindeberg} with the entries of $\bm{X}$ and $\bm{Z}$ as $(X_1,\dots,X_N)$ and $(Y_1,\dots,Y_N)$ respectively for $N = d \cdot n$. In order to apply (\ref{Eq_LindebergEquality}), the equivalent of $\E\big[ \big|\partial_j^a \partial_{N+j}^b f_N\big(Z^{(j,0)}+\tau u_j X_j;\ol{Z^{(j,0)}+\tau u_j X_j}\big)\big|^2 \big]^{\frac{1}{2}}$ from the formulation of Lemma~\ref{Lemma_HolomorphicLindeberg} must first be analyzed.
\\[0.5em]
For any $i \leq d$ and $j \leq n$, define the mixed $(d \times n)$-matrix
\begin{align*}
	& \Z^{(i,j)} \coloneq \left( \begin{array}{ccccccc}
		\bm{X}_{1,1} & \cdots & \cdots & \bm{X}_{1,j} & \bm{Z}_{1,j+1} & \cdots & \bm{Z}_{1,n}\\
		\vdots & & & \vdots & & & \vdots\\
		\vdots & & & \bm{X}_{i-1,j} & & & \vdots\\
		\vdots & & & 0 & & & \vdots\\
		\vdots & & & \bm{Z}_{i+1,j} & & & \vdots\\
		\vdots & & & \vdots & & & \vdots\\
		\bm{X}_{d,1} & \cdots & \bm{X}_{d,j-1} & \bm{Z}_{d,j} & \cdots & \cdots & \bm{Z}_{d,n}
	\end{array} \right)
\end{align*}
and $u_{(i,j)} \coloneq e_{i,d} e_{j,n}^\top \in \C^{d \times n}$.
By inserting $(\Z^{(i,j)} + \tau u_{(i,j)} \bm{X}_{i,j})$ for $X$ in Lemma~\ref{Lemma_DerivativeBounds_Lindeberg} and simultaneously for $\bm{X}$ in Lemma~\ref{Lemma_SixthMoment}, one gets
\begin{align}\label{Eq_Universality_b1Bound}
	& b^{(1)}_{(i,j)} \nonumber\\
	& \overset{\text{(\ref{Eq_XXXDerivf1Result})}}{\leq} \sum\limits_{\substack{a,b \geq 0 \\ a+b=3}} \max\limits_{\tau \in [0,1]} \E\Big[ \Big|\frac{6 \sigma^6 ||M||}{n^{5/2} \Im(z)^4} \big(||\bm{S}^{(\Z^{(i,j)} + \tau u_{(i,j)} \bm{X}_{i,j})}||^{\frac{3}{2}} \lor |z| \, ||\bm{S}^{(\Z^{(i,j)} + \tau u_{(i,j)} \bm{X}_{i,j})}||^{\frac{1}{2}} \big)\Big|^2 \Big]^{\frac{1}{2}} \nonumber\\
	& \leq \frac{6 \sigma^6 ||M||}{n^{5/2} \Im(z)^4} \max\limits_{\tau \in [0,1]} \E\Big[ ||\bm{S}^{(\Z^{(i,j)} + \tau u_{(i,j)} \bm{X}_{i,j})}||^{3} \lor |z|^2 \, ||\bm{S}^{(\Z^{(i,j)} + \tau u_{(i,j)} \bm{X}_{i,j})}|| \Big]^{\frac{1}{2}} \sum\limits_{\substack{a,b \geq 0 \\ a+b=3}} 1 \nonumber\\
	& \leq \frac{24 \sigma^6 ||M||}{n^{5/2} \Im(z)^4} \max\limits_{\tau \in [0,1]} \E\big[ ||\bm{S}^{(\Z^{(i,j)} + \tau u_{(i,j)} \bm{X}_{i,j})}||^{3} \big]^{\frac{1}{2}} \nonumber\\
	& \hspace{0.5cm} + \frac{24 \sigma^6 |z|^2 ||M||}{n^{5/2} \Im(z)^4} \max\limits_{\tau \in [0,1]} \E\big[ ||\bm{S}^{(\Z^{(i,j)} + \tau u_{(i,j)} \bm{X}_{i,j})}|| \big]^{\frac{1}{2}} \nonumber\\
	& \overset{\text{(\ref{Eq_Def_S_Robust})}}{=} \frac{24 \sigma^6 ||M||}{n^{4} \Im(z)^4} \max\limits_{\tau \in [0,1]} \E\big[ ||\bm{Y}^{(\Z^{(i,j)} + \tau u_{(i,j)} \bm{X}_{i,j})}||^{6} \big]^{\frac{1}{2}} \nonumber\\
	& \hspace{0.5cm} + \frac{24 \sigma^6 |z|^2 ||M||}{n^{3} \Im(z)^4} \max\limits_{\tau \in [0,1]} \E\big[ ||\bm{Y}^{(\Z^{(i,j)} + \tau u_{(i,j)} \bm{X}_{i,j})}||^2 \big]^{\frac{1}{2}} \nonumber\\
	& \overset{\substack{\text{(\ref{Eq_Def_Y_Robust})} \\ \text{\ref{ItemAssumption_sigmaBound}}}}{\leq} \frac{24 \sigma^{12} ||M||}{n^{4} \Im(z)^4} \max\limits_{\tau \in [0,1]} \E\big[ ||\Z^{(i,j)} + \tau u_{(i,j)} \bm{X}_{i,j}||^{6} \big]^{\frac{1}{2}} \nonumber\\
	& \hspace{0.5cm} + \frac{24 \sigma^8 |z|^2 ||M||}{n^{3} \Im(z)^4} \max\limits_{\tau \in [0,1]} \E\big[ ||\Z^{(i,j)} + \tau u_{(i,j)} \bm{X}_{i,j}||^2 \big]^{\frac{1}{2}} \nonumber\\
	& \overset{\text{(\ref{Eq_SixthMoment_Result})}}{\leq} \frac{24 \sigma^{12} ||M||}{n^{4} \Im(z)^4} \max\limits_{\tau \in [0,1]} \big( \mathcal{C}_{\text{\ref{Lemma_SixthMoment}}} n^3 \big)^{\frac{1}{2}} + \frac{24 \sigma^8 |z|^2 ||M||}{n^{3} \Im(z)^4} \max\limits_{\tau \in [0,1]} \big( \mathcal{C}_{\text{\ref{Lemma_SixthMoment}}} n^3 \big)^{\frac{1}{6}} \nonumber\\
	& = \frac{24 \sigma^{12} ||M||}{n^{5/2} \Im(z)^4} \mathcal{C}_{\text{\ref{Lemma_SixthMoment}}}^{\frac{1}{2}} + \frac{24 \sigma^8 |z|^2 ||M||}{n^{5/2} \Im(z)^4} \mathcal{C}_{\text{\ref{Lemma_SixthMoment}}}^{\frac{1}{6}} \leq \frac{24 \sigma^{12} ||M||}{n^{5/2} \eta^4} \mathcal{C}_{\text{\ref{Lemma_SixthMoment}}}^{\frac{1}{2}} + \frac{24 \sigma^8 \kappa^2 ||M||}{n^{5/2} \eta^4} \mathcal{C}_{\text{\ref{Lemma_SixthMoment}}}^{\frac{1}{6}} \nonumber\\
	& \leq \frac{24 \sigma^{8}}{\eta^4} \big( \sigma^4 \mathcal{C}_{\text{\ref{Lemma_SixthMoment}}}^{\frac{1}{2}} + \kappa^2 \mathcal{C}_{\text{\ref{Lemma_SixthMoment}}}^{\frac{1}{6}} \big) \frac{||M||}{n^{5/2}} \ ,
\end{align}
where $\mathcal{C}_{\text{\ref{Lemma_SixthMoment}}} = \mathcal{C}_{\text{\ref{Lemma_SixthMoment}}}(c_*, 15 \lor C_6)$ is the constant from Lemma~\ref{Lemma_SixthMoment}.
One may analogously bound
\begin{align}\label{Eq_Universality_b2Bound}
	& b^{(2)}_{(i,j)} \leq \Big( \frac{24 \sigma^{12}}{\eta^4} \mathcal{C}_{\text{\ref{Lemma_SixthMoment}}}^{\frac{1}{2}} + \frac{24 \sigma^8 \kappa^2}{\eta^4} \mathcal{C}_{\text{\ref{Lemma_SixthMoment}}}^{\frac{1}{6}} \Big) \frac{||M||}{n^{5/2}} \ .
\end{align}
By Lemma~\ref{Lemma_HolomorphicLindeberg}, thus for every $z \in \bD(\eta,\kappa)$ and $M \in \C^{d \times d}$ gives
\begin{align}\label{Eq_Universality_ExpectationApprox}
	& \Big| \E\Big[ \frac{1}{n} \tr\big( M \bm{R}^{(\bm{X})}(z) \big) - \frac{1}{n} \tr\big( M \bm{R}^{(\bm{Z})}(z) \big) \Big] \Big| \nonumber\\
	& = \big| \E\big[ f_{d \times n}(\bm{X}, \ol{\bm{X}}) - f_{d \times n}(\bm{Z}, \ol{\bm{Z}}) \big] \big| \nonumber\\
	& \overset{\text{(\ref{Eq_LindebergEquality})}}{\leq} 3\sqrt{K_6} \sum\limits_{i=1}^d \sum\limits_{j=1}^n \big( b^{(1)}_{(i,j)} + b^{(2)}_{(i,j)} \big) \nonumber\\
	& \overset{\substack{\text{(\ref{Eq_Universality_b1Bound})} \\ \text{(\ref{Eq_Universality_b2Bound})}}}{\leq} 3\sqrt{K_6} \sum\limits_{i=1}^d \sum\limits_{j=1}^n \frac{48 \sigma^{8}}{\eta^4} \big( \sigma^4 \mathcal{C}_{\text{\ref{Lemma_SixthMoment}}}^{\frac{1}{2}} + \kappa^2 \mathcal{C}_{\text{\ref{Lemma_SixthMoment}}}^{\frac{1}{6}} \big) \frac{||M||}{n^{5/2}} \nonumber\\
	& = 3\sqrt{K_6} \frac{48 \sigma^{8}}{\eta^4} \big( \sigma^4 \mathcal{C}_{\text{\ref{Lemma_SixthMoment}}}^{\frac{1}{2}} + \kappa^2 \mathcal{C}_{\text{\ref{Lemma_SixthMoment}}}^{\frac{1}{6}} \big) \underbrace{\frac{d}{n}}_{\leq c_*} \frac{||M||}{\sqrt{n}} \nonumber\\
	& \leq \underbrace{3\sqrt{K_6} \frac{48 \sigma^{8}}{\eta^4} \big( \sigma^4 \mathcal{C}_{\text{\ref{Lemma_SixthMoment}}}^{\frac{1}{2}} + \kappa^2 \mathcal{C}_{\text{\ref{Lemma_SixthMoment}}}^{\frac{1}{6}} \big) c_*}_{\eqcolon  K'} \frac{||M||}{\sqrt{n}} \ .
\end{align}
The concentration result of Lemma~\ref{Lemma_R_Concentration} may be applied for both
\begin{align*}
	& \bm{X}_{\text{Lemma~\ref{Lemma_R_Concentration}}} = \bm{X} \ \ \text{ and } \ \ \bm{X}_{\text{Lemma~\ref{Lemma_R_Concentration}}} = \bm{Z}
\end{align*}
to see
\begin{align*}
	& \forall M \in \C^{d \times d} , \, \forall \delta \in (0,\kappa) , \, \forall t>0 : \nonumber\\
	& \bP\Big( \sup\limits_{z \in \bD(\eta,\kappa)}\Big| \frac{1}{n} \tr(M\bm{R}^{(\bm{X})}(z)) - \frac{1}{n} \tr(M\bm{R}^{(\bm{Z})}(z)) \Big| > 2t + 4\frac{c_* \delta}{\eta^2} ||M|| + K' \frac{||M||}{\sqrt{n}} \Big) \nonumber\\
	& \overset{\text{(\ref{Eq_R_Concentration})}}{\leq} \overbrace{\bP\Big( \sup\limits_{z \in \bD(\eta,\kappa)}\Big| \E\Big[\frac{1}{n} \tr(M\bm{R}^{(\bm{X})}(z))\Big] - \E\Big[\frac{1}{n} \tr(M\bm{R}^{(\bm{Z})}(z))\Big] \Big| > K' \frac{||M||}{\sqrt{n}} \Big)}^{= 0 \text{ by (\ref{Eq_Universality_ExpectationApprox})}} \nonumber\\
	& \hspace{0.5cm} + \frac{2^5\kappa^2}{\delta^2} \exp\Big( -\frac{t^2\eta^2 n}{16 R^2 ||M||^2} \Big) \ .
\end{align*}
Setting $t = n^{\frac{\tvarepsilon-1}{2}} ||M||$ and $\delta = \frac{1}{\sqrt{n}} < \kappa$ with $K'' = 2+4\frac{c_*}{\eta^2}+K'$ then turns the above bound into
\begin{align*}
	& \bP\Big( \sup\limits_{z \in \bD(\eta,\kappa)}\Big| \frac{1}{n} \tr(M\bm{R}^{(\bm{X})}(z)) - \frac{1}{n} \tr(M\bm{R}^{(\bm{Z})}(z)) \Big| > K'' n^{\frac{\tvarepsilon-1}{2}} ||M|| \Big)\\
	& \leq 2^5\kappa^2 n \exp\Big( -\frac{\eta^2 n^{\tvarepsilon}}{16 R^2} \Big)
\end{align*}
for any $M \in \C^{d \times d}$. This proves (\ref{Eq_Universality_Result1}) for
\begin{align*}
	& \mathcal{C} = \max\Big( K'', 2^5 \kappa^2, \frac{16}{\eta^2} \Big) \ .
\end{align*}
The bound (\ref{Eq_Universality_Result2}) may be shown analogously.
This concludes the proof of Theorem~\ref{Thm_Universality}. \qed

\section{Proof of Theorem~\ref{Thm_DetEquiv}}\label{Proof_Thm_DetEquiv}
Before starting the proof, the following lemma is introduced.

\begin{lemma}[Prokhorov for matrix-valued measures]\label{Lemma_Prokhorov}\
	\\
	For a fixed integer $R \in \N$, let $\mu_1,\mu_2,\dots : \cB(\R) \rightarrow \{ M \in \C^{R \times R} \text{ positive semi-definite}\}$ be a sequence of matrix-valued measures on $\R$, with the following two properties.
	\begin{itemize}
		\item[a)] There exists a constant $c>0$ such that $\mu_n(\R) \prec c\Id_R$ holds for all $n \in \N$.
		
		\item[b)] There exists an interval $[0,C]$ and an $N>0$ such that $\supp(\mu_n) \subset [0,C]$ for all $n \geq N$.
	\end{itemize}
	Then there exists a sub-sequence $(\mu_{n_k})_{k \in \N}$ and a matrix valued measure $\mu : \cB(\R) \rightarrow \{ M \in \C^{R \times R} \text{ positive semi-definite}\}$ such that $v^* \mu_{n_k} v \xRightarrow{k \to \infty} v^* \mu v$ holds for every $v \in \C^R$.
\end{lemma}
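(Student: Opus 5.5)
The plan is to reduce to the scalar Helly selection theorem through polarization, using a countable dense family of test vectors and then identifying the limit as a matrix-valued measure.

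\emph{Step 1: scalar tightness and boundedness.} For each fixed $v \in \C^R$, the scalar measure $v^*\mu_n v$ is a finite positive Borel measure. By (a), its total mass is at most $c\,|v|^2$. By (b), for $n \geq N$ it is supported in $[0,C]$. Hence the family $\{v^*\mu_n v\}_{n\ge N}$ is uniformly bounded and tight. The scalar Prokhorov (Helly) theorem then gives, for each fixed $v$, a weakly convergent subsequence whose limit is supported in $[0,C]$.

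\emph{Step 2: diagonal extraction.} Fix a countable dense set $\{v_m\}_{m\in\N} \subset \C^R$, for instance vectors with rational complex entries. A Cantor diagonal argument yields a single subsequence $(n_k)$ along which $v_m^*\mu_{n_k}v_m \Rightarrow \nu_m$ for every $m$. In particular this holds for the finitely many vectors $e_r$, $e_r+e_s$ and $e_r+\bm{i}e_s$. By polarization,
\[
(\mu_{n_k})_{r,s} = e_r^*\mu_{n_k}e_s
\]
is a fixed complex linear combination of the scalar measures $w^*\mu_{n_k}w$ for these $w$. Consequently, for every $f \in C_b(\R)$, the integral $\int f\,d(\mu_{n_k})_{r,s}$ converges. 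Define $\mu_{r,s}$ as the corresponding linear combination of the limits $\nu$. These are complex measures on $[0,C]$, and we set $\mu \coloneq (\mu_{r,s})_{r,s\le R}$.

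\emph{Step 3: $\mu$ is positive semi-definite valued and gives the right limits.} For arbitrary $v\in\C^R$, write
\[
v^*\mu_{n_k}v = \sum_{r,s}\ol{v_r}v_s(\mu_{n_k})_{r,s}.
\]
Each entry converges weakly, so $\int f\,d(v^*\mu_{n_k}v) \to \int f\,d(v^*\mu v)$ for all $f\in C_b$. This establishes the claimed convergence for every $v$, not only for the dense set. For $f\ge 0$, the limit of the nonnegative numbers $\int f\,d(v^*\mu_{n_k}v)$ is nonnegative. Hence $v^*\mu v$ is a positive measure, since a signed or complex measure that integrates every nonnegative continuous function to a nonnegative real is positive by the Riesz representation theorem on $[0,C]$. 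Because this holds for every $v$, $\mu(S)$ is positive semi-definite for every Borel set $S$, and $\mu$ is a matrix-valued measure of the required type.

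The main obstacle is Step 3, namely passing from convergence along a countable family of vectors to convergence and positivity for all $v$. Polarization resolves this cleanly: finitely many vectors determine all entries, so density is not even needed. The remaining care is that the total masses must not escape, which is guaranteed by the compact support in (b).
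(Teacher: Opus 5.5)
Your proof is correct and follows essentially the same route as the paper: apply scalar Prokhorov to finitely many polarization vectors, define $\mu$ entrywise by polarization of the limits, extend the convergence to all $v$ by linearity, and get positivity from the Riesz representation theorem. The countable dense family is superfluous, as you note yourself; the paper works with the finite set $\{e_r\pm e_s,\, e_r\pm\bm{i}e_s\}$ from the start and uses the general four-term polarization identity, whereas your three-vector version relies on the entries being Hermitian.
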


The proof of Theorem~\ref{Thm_DetEquiv} commences.

\begin{itemize}
	\item[i)] \textit{Uniqueness in (a)}:\\
	By conditions (\ref{Eq_Assumption_NonDegeneracy}) and (\ref{Eq_Assumption_Identifiability}), there exists a $\tau\in(0,1)$ such that condition~\ref{ItemTempAssumption_NonDegeneracy} holds for the model $\big(\bm{X},(A_r)_{r \leq R},(B_r)_{r \leq R}\big)$. Similarly, as $d$ and $n$ are fixed, one may also choose values $c_* \geq \max(\frac{d}{n},1)$ and $\sigma^2 \geq \max\big( \sum\limits_{r=1}^R ||A_r||^2, \sum\limits_{r=1}^R ||B_r||^2 \big)$ such that~\ref{ItemAssumption_cBound} and~\ref{ItemAssumption_sigmaBound} hold. Fix a $z \in \C^+$ and choose $\eta>0$ small enough and $\kappa > 0$ large enough that $z \in \bD(\eta,\kappa)$.
	\\[0.5em]
	Let $(\delta^{(A)}(z),\delta^{(B)}(z))$ and $(\tilde{\delta}^{(A)}(z),\tilde{\delta}^{(B)}(z))$ be two solutions to (\ref{Eq_DualSystem}), which both satisfy (\ref{Eq_Uniqueness_posDefCondition}). Choose $\tilde{\kappa} > 0$ large enough that (\ref{Eq_DualApprox_BoundCondition_delta}) and (\ref{Eq_DualApprox_BoundCondition_tdelta}) hold and choose $\tilde{\tau}>0$ small enough that (\ref{Eq_DualApprox_posDefCondition_delta}) and (\ref{Eq_DualApprox_posDefCondition_tdelta}) hold, the latter of which is possible by the assumptions (\ref{Eq_Uniqueness_posDefCondition}). Theorem~\ref{Thm_DualApprox} is then applicable with $q^{(A)}=0=q^{(B)}$ and yields $\delta^{(A)}(z) = \tilde{\delta}^{(A)}(z)$ as well as $\delta^{(B)}(z) = \tilde{\delta}^{(B)}(z)$, which proves uniqueness in (a).
	
	\item[ii)] \textit{Uniqueness in (b)}:\\
	Suppose there are two tuples of matrix-valued measures $(\rho^{(A)},\rho^{(B)})$ and $(\tilde{\rho}^{(A)},\tilde{\rho}^{(B)})$, which satisfy
	\begin{align}\label{Eq_Uniqueness_tdeltaStil_A}
		& \forall z \in \C^+ : \ \int_\R \frac{1}{\lambda - z} \, d\rho^{(A)}(\lambda) = \delta^{(A)}(z) = \int_\R \frac{1}{\lambda - z} \, d\tilde{\rho}^{(A)}(\lambda)
	\end{align}
	and
	\begin{align}\label{Eq_Uniqueness_tdeltaStil_B}
		& \forall z \in \C^+ : \ \int_\R \frac{1}{\lambda - z} \, d\rho^{(B)}(\lambda) = \delta^{(B)}(z) = \int_\R \frac{1}{\lambda - z} \, d\tilde{\rho}^{(B)}(\lambda) \ .
	\end{align}
	For each $u \in \C^{R}$, the Radon measures $\rho^{(A/B)}_u \coloneq u^* \rho^{(A/B)} u$ and $\tilde{\rho}^{(A/B)}_u \coloneq u^* \tilde{\rho}^{(A/B)} u$ have the Stieltjes transforms
	\begin{align*}
		& \delta^{(A/B)}_u(z) \coloneq \int_\R \frac{1}{\lambda-z} \, d\rho^{(A/B)}_u(\lambda) = u^* \Big( \int_\R \frac{1}{\lambda-z} \, d\rho^{(A/B)}(\lambda) \Big) u = u^* \delta^{(A/B)}(z) u\\
		& = u^* \Big( \int_\R \frac{1}{\lambda-z} \, d\tilde{\rho}^{(A/B)}(\lambda) \Big) u = \int_\R \frac{1}{\lambda-z} \, d\tilde{\rho}^{(A/B)}_u(\lambda) \eqcolon  \tilde{\delta}^{(A/B)}_u(z) \ ,
	\end{align*}
	which by the well-known fact that the Stieltjes transforms uniquely define the underlying Radon measure on $\R$, implies
	\begin{align*}
		& \forall u \in \C^{R} : \ u^* \rho^{(A)} u = u^* \tilde{\rho}^{(A)} u \ \ \text{ and } \ \ u^* \rho^{(B)} u = u^* \tilde{\rho}^{(B)} u \ .
	\end{align*}
	Since $\rho^{(A/B)}(S)$ and $\tilde{\rho}^{(A/B)}(S)$ are positive semi-definite, and thus Hermitian, for every Borel set $S \in \mathcal{B}(\R)$, one by polarization identity has
	\begin{align*}
		& v^* \rho^{(A/B)}(S) w\\
		& = \frac{1}{4} (v+w)^* \rho^{(A/B)}(S) (v+w) - \frac{1}{4} (v-w)^* \rho^{(A/B)}(S) (v-w)\\
		& \hspace{0.5cm} + \frac{\bm{i}}{4} (v-\bm{i}w)^* \rho^{(A/B)}(S) (v-\bm{i}w) - \frac{\bm{i}}{4} (v+\bm{i}w)^* \rho^{(A/B)}(S) (v+\bm{i}w)\\
		& = \frac{1}{4} (v+w)^* \tilde{\rho}^{(A/B)}(S) (v+w) - \frac{1}{4} (v-w)^* \tilde{\rho}^{(A/B)}(S) (v-w)\\
		& \hspace{0.5cm} + \frac{\bm{i}}{4} (v-\bm{i}w)^* \tilde{\rho}^{(A/B)}(S) (v-\bm{i}w) - \frac{\bm{i}}{4} (v+\bm{i}w)^* \tilde{\rho}^{(A/B)}(S) (v+\bm{i}w)\\
		& = v^* \tilde{\rho}^{(A/B)}(S) w
	\end{align*}
	for all $v,w \in \C^R$ and $S \in \cB(\R)$, which proves $\rho^{(A)} = \tilde{\rho}^{(A)}$ and $\rho^{(B)} = \tilde{\rho}^{(B)}$.
	
	\item[iii)] \textit{Construction of the meta model}:\\
	Let $\bm{Z}$ be the Gaussian matrix similar to $\bm{X}$ as constructed in Definition~\ref{Def_SimilarGaussianMatrix}.
	Let $\bm{Z}^{(m)}$ denote the Gaussian $(dm \times nm)$-matrix with independent entries, where the entries of each $(m \times m)$-sub-block have the same distribution of the corresponding entry in $\bm{Z}$, i.e.
	\begin{align*}
		& \forall k \in \{1,\dots,d\} , \, \forall l \in \{1,\dots,n\} , \, \forall k',l' \in \{1,\dots,m\} : \ \bm{Z}^{(m)}_{(k-1)m + k', (l-1)m + l'} \sim \bm{Z}_{k,l} \ .
	\end{align*}
	Further, let $A^{(m)}_1,\dots,A^{(m)}_R$ denote the deterministic $(dm \times dm)$-matrices
	\begin{align}\label{Eq_DetEquiv_metaModel_DefA}
		& A^{(m)}_r \coloneq \left( \begin{array}{ccc}
			(A_r)_{1,1} \Id_m & \cdots & (A_r)_{1,d} \Id_m\\
			\vdots & & \vdots\\
			(A_r)_{d,1} \Id_m & \cdots & (A_r)_{d,d} \Id_m
		\end{array} \right) \ .
	\end{align}
	Analogously, define the deterministic $(nm \times nm)$-matrices $B^{(m)}_1,\dots,B^{(m)}_R$ by
	\begin{align}\label{Eq_DetEquiv_metaModel_DefB}
		& B^{(m)}_r \coloneq \left( \begin{array}{ccc}
			(B_r)_{1,1} \Id_m & \cdots & (B_r)_{1,n} \Id_m\\
			\vdots & & \vdots\\
			(B_r)_{n,1} \Id_m & \cdots & (B_r)_{n,n} \Id_m
		\end{array} \right) \ .
	\end{align}
	With
	\begin{align*}
		& d^{(m)} \coloneq dm \ \ \text{ and } \ \ n^{(m)} \coloneq nm
	\end{align*}
	as well as
	\begin{align*}
		& \hspace{3.5cm} \bm{Y}^{(\bm{Z}^{(m)})} \coloneq \sum\limits_{r=1}^R A^{(m)}_r \bm{Z}^{(m)} B^{(m)}_r ,\\
		& \bm{S}^{(\bm{Z}^{(m)})} \coloneq \frac{1}{n^{(m)}} \bm{Y}^{(\bm{Z}^{(m)})} (\bm{Y}^{(\bm{Z}^{(m)})})^* \ \ \text{ and } \ \ \tilde{\bm{S}}^{(\bm{Z}^{(m)})} \coloneq \frac{1}{n^{(m)}} (\bm{Y}^{(\bm{Z}^{(m)})})^* \bm{Y}^{(\bm{Z}^{(m)})} \ ,
	\end{align*}
	define the empirical matrix-valued measures
	\begin{align*}
		& \hat{\rho}^{(A,m)} , \hat{\rho}^{(B,m)} : \cB(\R) \rightarrow \{ M \in \C^{R \times R} \text{ positive semi-definite} \}
	\end{align*}
	in analogy to (\ref{Eq_Def_hatRhoA}) and (\ref{Eq_Def_hatRhoB}) by
	\begin{align}\label{Eq_Def_hatRhoA_meta}
		& \hat{\rho}^{(A,m)}(S) \coloneq \frac{1}{n^{(m)}} \sum\limits_{\substack{j=1 \\ \lambda_j(\bm{S}^{(\bm{Z}^{(m)})}) \in S}}^{d^{(m)}} \big((u^{(m)}_j)^* A^{(m)}_r (A^{(m)}_s)^* u^{(m)}_j\big)_{r,s \leq R}
	\end{align}
	and
	\begin{align}\label{Eq_Def_hatRhoB_meta}
		& \hat{\rho}^{(B,m)}(S) \coloneq \frac{1}{n^{(m)}} \sum\limits_{\substack{j=1 \\ \lambda_j(\tilde{\bm{S}}^{(\bm{Z}^{(m)})}) \in S}}^{n^{(m)}} \big((\tilde{u}^{(m)}_j)^* (B^{(m)}_s)^* B^{(m)}_r \tilde{u}^{(m)}_j\big)_{r,s \leq R}
	\end{align}
	for any set $S \in \cB(\R)$, where $u^{(m)}_1,\dots,u^{(m)}_{d^{(m)}}$ and $\tilde{u}^{(m)}_1,\dots,\tilde{u}^{(m)}_{n^{(m)}}$ denote the eigenvectors of $\bm{S}^{(\bm{Z}^{(m)})}$ and $\tilde{\bm{S}}^{(\bm{Z}^{(m)})}$ respectively.
	Also, for each $z \in \C^+$, define the empirical matrix-valued Stieltjes transforms
	\begin{align*}
		& \hat{\delta}^{(A,m)}, \hat{\delta}^{(B,m)} : \C^+ \rightarrow \C^{R \times R}
	\end{align*}
	analogously to (\ref{Eq_Def_hatDelta_A}) and (\ref{Eq_Def_hatDelta_B}) by
	\begin{align}\label{Eq_Def_hatDelta_A_meta}
		& \hat{\delta}^{(A,m)}_{r,s}(z) \coloneq \frac{1}{n^{(m)}} \tr\big( A^{(m)}_r (A^{(m)}_s)^* \bm{R}^{(\bm{Z}^{(m)})}(z) \big) = \int_0^\infty \frac{1}{\lambda - z} \, d\hat{\rho}^{(A,m)}_{r,s}(\lambda)
	\end{align}
	and
	\begin{align}\label{Eq_Def_hatDelta_B_meta}
		& \hat{\delta}^{(B,m)}_{r,s}(z) \coloneq \frac{1}{n^{(m)}} \tr\big( (B^{(m)}_s)^* B^{(m)}_r \tilde{\bm{R}}^{(\bm{Z}^{(m)})}(z) \big) = \int_0^\infty \frac{1}{\lambda - z} \, d\hat{\rho}^{(B,m)}_{r,s}(\lambda) \ ,
	\end{align}
	where the resolvents are defined analogously by
	\begin{align*}
		& \bm{R}^{(\bm{Z}^{(m)})}(z) \coloneq \big( \bm{S}^{(\bm{Z}^{(m)})} - z \Id_{d^{(m)}} \big)^{-1} \ \ \text{ and } \ \ \tilde{\bm{R}}^{(\bm{Z}^{(m)})}(z) \coloneq \big( \tilde{\bm{S}}^{(\bm{Z}^{(m)})} - z \Id_{n^{(m)}} \big)^{-1}
	\end{align*}
	
	\item[iv)] \textit{Applicability of prior results to the meta model}:\\
	The meta model $\big(\bm{Z}^{(m)}, (A_r^{(m)})_{r \leq R}, (B^{(m)}_r)_{r \leq R}\big)$
	by construction satisfies the conditions \ref{ItemAssumption_boundedSixthMoment} (with $C_6=15$ by (\ref{Eq_Universality_C6_Gaussian})). As seen in part (i) of this proof, one may choose values $\tau \in (0,1)$ and $c_*,\sigma^2 \geq 1$ such that~\ref{ItemAssumption_cBound},~\ref{ItemAssumption_sigmaBound} and~\ref{ItemTempAssumption_NonDegeneracy} hold for the original model $\big(\bm{X},(A_r)_{r \leq R},(B_r)_{r \leq R}\big)$, and are thus inherited by $\big(\bm{Z}^{(m)}, (A_r^{(m)})_{r \leq R}, (B^{(m)}_r)_{r \leq R}\big)$. Previous results requiring conditions~\ref{ItemAssumption_cBound}-\ref{ItemTempAssumption_NonDegeneracy} are thus applicable to the meta model with constants $\tau \in (0,1)$ and $c_*,\sigma^2 \geq 1$ that do not depend on $m$.
	
	\item[v)] \textit{Application of Theorem~\ref{Thm_EmpiricalDualSystem} to the meta model}:\\
	Theorem~\ref{Thm_EmpiricalDualSystem} for $\hat{\delta}^{(A,m)}(z)$ and $\hat{\delta}^{(B,m)}(z)$ as in (\ref{Eq_Def_hatDelta_A_meta}) and (\ref{Eq_Def_hatDelta_B_meta}) yields
	\begin{align}\label{Eq_EmpDualGaussian_ResultA_copyMeta}
		& \bP\Big( \exists r,s \leq R , \, \exists z \in \bD(\eta,\kappa) : \nonumber\\
		& \hspace{0.5cm} \Big| \hat{\delta}^{(A,m)}_{r,s}(z) - \frac{-1}{z} \overbrace{\frac{1}{n^{(m)}} \tr\Big( A^{(m)}_r(A^{(m)}_s)^* \Big( \Id_{d^{(m)}} + \sum\limits_{r',s'=1}^R \hat{\delta}^{(B,m)}_{r',s'}(z) A^{(m)}_{r'} (A^{(m)}_{s'})^* \Big)^{-1} \Big)}^{\overset{\text{(\ref{Eq_DetEquiv_metaModel_DefA})}}{=} \frac{1}{n} \tr\big( A_rA_s^* \big( \Id_d + \sum\limits_{r',s'=1}^R \hat{\delta}^{(B,m)}_{r',s'}(z) A_{r'} A_{s'}^* \big)^{-1} \big)} \Big| \nonumber\\
		& \hspace{7cm} \geq \mathcal{C}(n^{(m)})^{\frac{\tvarepsilon-1}{2}} + \mathcal{C} (n^{(m)})^2 \exp(-n^{(m)}/\mathcal{C}) \Big) \nonumber\\
		& \leq \mathcal{C} R^2 n^{(m)} \exp\Big( -\frac{(n^{(m)})^{\tvarepsilon}}{\mathcal{C} R^2} \Big)
	\end{align}
	and
	\begin{align}\label{Eq_EmpDualGaussian_ResultB_copyMeta}
		& \bP\Big( \exists r,s \leq R , \, \exists z \in \bD(\eta,\kappa) : \nonumber\\
		& \hspace{0.5cm} \Big| \hat{\delta}^{(B,m)}_{r,s}(z) - \frac{-1}{z} \overbrace{\frac{1}{n^{(m)}} \tr\Big( (B^{(m)}_s)^*B^{(m)}_r \Big( \Id_{n^{(m)}} + \sum\limits_{r',s'=1}^R \hat{\delta}^{(A,m)}_{r',s'}(z) (B^{(m)}_{s'})^* B^{(m)}_{r'} \Big)^{-1} \Big)}^{\overset{\text{(\ref{Eq_DetEquiv_metaModel_DefB})}}{=} \frac{1}{n} \tr\big( B_s^*B_r \big( \Id_n + \sum\limits_{r',s'=1}^R \hat{\delta}^{(A,m)}_{r',s'}(z) B_{s'}^* B_{r'} \big)^{-1} \big)} \Big| \nonumber\\
		& \hspace{7cm} \geq \mathcal{C}(n^{(m)})^{\frac{\tvarepsilon-1}{2}} + \mathcal{C} (n^{(m)})^2 \exp(-n^{(m)}/\mathcal{C}) \Big) \nonumber\\
		& \leq \mathcal{C} R^2 n^{(m)} \exp\Big( -\frac{(n^{(m)})^{\tvarepsilon}}{\mathcal{C} R^2} \Big)
	\end{align}
	for any $\tvarepsilon \in (0,1)$ and $\eta,\kappa>0$. Since $n^{(m)} = nm$, one may then apply Borel-Cantelli over $m$ (for fixed $d$ and $n$) to get
	\begin{align*}
		& 1 = \bP\Big( \forall r,s \leq R, \, \forall z \in \bD(\eta,\kappa) : \nonumber\\
		& \hspace{1cm} \Big| \hat{\delta}^{(A,m)}_{r,s}(z) - \frac{-1}{z n} \tr\Big( A_rA_s^* \Big( \Id_d + \sum\limits_{r',s'=1}^R \hat{\delta}^{(B,m)}_{r',s'}(z) A_{r'} A_{s'}^* \Big)^{-1} \Big) \Big| \xrightarrow{m \to \infty} 0 \nonumber\\
		& \hspace{1cm} \Big| \hat{\delta}^{(B,m)}_{r,s}(z) - \frac{-1}{z n} \tr\Big( B_s^*B_r \Big( \Id_n + \sum\limits_{r',s'=1}^R \hat{\delta}^{(A,m)}_{r',s'}(z) B_{s'}^* B_{r'} \Big)^{-1} \Big) \Big| \xrightarrow{m \to \infty} 0 \Big) \ ,
	\end{align*}
	which by continuity of measures for $\eta \searrow 0$ and $\kappa \nearrow \infty$ gives
	\begin{align}\label{Eq_DetEquiv_metaCovnergenceEvent}
		& 1 = \bP\Big( \forall r,s \leq R, \, \forall z \in \C^+ : \nonumber\\
		& \hspace{1cm} \Big| \hat{\delta}^{(A,m)}_{r,s}(z) - \frac{-1}{z n} \tr\Big( A_rA_s^* \Big( \Id_d + \sum\limits_{r',s'=1}^R \hat{\delta}^{(B,m)}_{r',s'}(z) A_{r'} A_{s'}^* \Big)^{-1} \Big) \Big| \xrightarrow{m \to \infty} 0 \nonumber\\
		& \hspace{1cm} \Big| \hat{\delta}^{(B,m)}_{r,s}(z) - \frac{-1}{z n} \tr\Big( B_s^*B_r \Big( \Id_n + \sum\limits_{r',s'=1}^R \hat{\delta}^{(A,m)}_{r',s'}(z) B_{s'}^* B_{r'} \Big)^{-1} \Big) \Big| \xrightarrow{m \to \infty} 0 \Big) \ .
	\end{align}
	
	\item[vi)] \textit{Almost sure convergence of $(\hat{\rho}^{(A,m)},\hat{\rho}^{(B,m)})_{m \in \N}$ by Prokhorov}:\\
	Lemma~\ref{Lemma_ZTailBound}, when applied to the meta model with $t= \sqrt{\frac{\tvarepsilon}{2} n^{(m)}}$ for some $\tvarepsilon>0$ yields
	\begin{align*}
		& \bP\Big( ||\bm{Y}^{(\bm{Z}^{(m)})}|| > 2\sigma^2 \big(\sqrt{n^{(m)}}+\sqrt{d^{(m)}}\big) + \sqrt{\frac{\tvarepsilon}{2} n^{(m)}} \Big) \leq 2 \exp\Big( -\frac{C\tvarepsilon n^{(m)}}{2\sigma^4} \Big)
	\end{align*}
	for a universal constant $C>0$. Since
	\begin{align*}
		& ||\bm{Y}^{(\bm{Z}^{(m)})}|| > 2\sigma^2 \big(\sqrt{n^{(m)}}+\sqrt{d^{(m)}}\big) + \sqrt{\frac{\tvarepsilon}{2} n^{(m)}}\\
		& \Leftrightarrow \ ||\bm{Y}^{(\bm{Z}^{(m)})}||^2 > \big(2\sigma^2 \big(\sqrt{n^{(m)}}+\sqrt{d^{(m)}}\big) + \sqrt{\frac{\tvarepsilon}{2} n^{(m)}}\big)^2\\
		& \Leftarrow \ ||\bm{Y}^{(\bm{Z}^{(m)})}||^2 > 8\sigma^4 \big(\sqrt{n^{(m)}}+\sqrt{d^{(m)}}\big)^2 + 2\frac{\tvarepsilon}{2}n^{(m)}\\
		& \overset{\text{(\ref{Eq_Def_S_Robust})}}{\Leftrightarrow} \ ||\bm{S}^{(\bm{Z}^{(m)})}|| > 8\sigma^4 \frac{(\sqrt{n^{(m)}}+\sqrt{d^{(m)}})^2}{n^{(m)}} + \tvarepsilon\\
		& \overset{\text{\ref{ItemAssumption_cBound}}}{\Leftarrow} \ ||\bm{S}^{(\bm{Z}^{(m)})}|| > 8\sigma^4 (1+\sqrt{c_*})^2 + \tvarepsilon \ ,
	\end{align*}
	it with $n^{(m)} = nm$ follows that
	\begin{align*}
		& \bP\Big( ||\bm{S}^{(\bm{Z}^{(m)})}|| > 8\sigma^4 (1+\sqrt{c_*})^2 + \tvarepsilon \Big) \leq 2 \exp\Big( -\frac{C \tvarepsilon nm}{2\sigma^4} \Big) \ ,
	\end{align*}
	which by Borel-Cantelli over $m$ (with fixed $d$ and $n$) yields
	\begin{align}\label{Eq_DetEquivProof_TighnessBorelCantelli}
		& 1 = \bP\Big( \limsup\limits_{m \to \infty} ||\bm{S}^{(\bm{Z}^{(m)})}|| \leq 8\sigma^4 (1+\sqrt{c_*})^2 + \tvarepsilon \Big) \ .
	\end{align}
	The bound
	\begin{align}\label{Eq_DetEquivProof_hatRhoA_Bound}
		& ||\hat{\rho}^{(A,m)}(\R)|| \overset{\text{(\ref{Eq_Def_hatRhoA_meta})}}{=} \Big|\Big| \frac{1}{n^{(m)}} \tr\big( A^{(m)}_r(A^{(m)}_s)^* \big)_{r,s \leq R} \Big|\Big| \nonumber\\
		& = \frac{1}{n^{(m)}} \max\limits_{\substack{v \in \C^R \\ ||v||_2 = 1}} \sum\limits_{r,s=1}^R \ol{v_r} v_s \tr\big( A^{(m)}_r(A^{(m)}_s)^* \big) \nonumber\\
		& = \frac{1}{n^{(m)}} \max\limits_{\substack{v \in \C^R \\ ||v||_2 = 1}} \tr\Big( \Big( \sum\limits_{r=1}^R \ol{v_r} A^{(m)}_r \Big) \Big( \sum\limits_{r=1}^R \ol{v_r} A^{(m)}_r \Big)^* \Big) \nonumber\\
		& \overset{\text{(\ref{Eq_TraceBound})}}{\leq} \frac{d^{(m)}}{n^{(m)}} \max\limits_{\substack{v \in \C^R \\ ||v||_2 = 1}} \Big|\Big| \sum\limits_{r=1}^R \ol{v_r} A^{(m)}_r \Big|\Big|^2\overset{\text{C.S.}}{\leq} \frac{d^{(m)}}{n^{(m)}} \max\limits_{\substack{v \in \C^R \\ ||v||_2 = 1}} \Big( \sum\limits_{r=1}^R |v_r|^2 \Big) \Big( \sum\limits_{r=1}^R ||A^{(m)}_r||^2 \Big) \nonumber\\
		& = \underbrace{\frac{d^{(m)}}{n^{(m)}}}_{\leq c_*} \sum\limits_{r=1}^R \underbrace{||A^{(m)}_r||^2}_{= ||A_r||^2} \leq c_* \sigma^2
	\end{align}
	implies $\hat{\rho}^{(A,m)}(\R) \preceq c_* \sigma^2 \Id_R$ and one may analogously show $\hat{\rho}^{(B,m)}(\R) \preceq \sigma^2 \Id_R$. Since the supports of $\hat{\rho}^{(A,m)}$ and $\hat{\rho}^{(B,m)}$ by construction lie in the interval $[0,||\bm{S}^{(\bm{Z}^{(m)})}||]$, one may on the event $\cE_{\text{(\ref{Eq_DetEquivProof_TighnessBorelCantelli})}}$ from (\ref{Eq_DetEquivProof_TighnessBorelCantelli}) apply Lemma~\ref{Lemma_Prokhorov} to any sub-sequences $(\hat{\rho}^{(A,m_j)})_{j \in \N}$ and $(\hat{\rho}^{(B,m_j)})_{j \in \N}$ to get
	\begin{align}\label{Eq_DetEquivProof_SubSubSequenceExistence}
		& \forall \omega \in \cE_{\text{(\ref{Eq_DetEquivProof_TighnessBorelCantelli})}} , \, \forall (m_j)_{j \in \N} \text{ sub-sequence} : \nonumber\\
		& \exists (m_{j_k})_{k \in \N} \text{ sub-sub-sequence} , \, \exists \mu^{(A)}_\omega, \mu^{(B)}_\omega \text{ matrix-valued measures} : \nonumber\\
		& \hat{\rho}^{(A,m_{j_k})}_\omega \xRightarrow{k \to \infty} \mu^{(A)}_\omega \ \ \text{ and } \ \ \hat{\rho}^{(B,m_{j_k})}_\omega \xRightarrow{k \to \infty} \mu^{(B)}_\omega \ ,
	\end{align}
	where the weak convergence is in the sense of Lemma~\ref{Lemma_MatrWeakConv} and the joint convergence follows from a simple diagonal sequence argument.
	
	\item[vii)] \textit{Uniqueness of limits and convergence}:\\
	Taking (c) from Lemma~\ref{Lemma_MatrWeakConv} as the interpretation of weak convergence, shows that (\ref{Eq_DetEquivProof_SubSubSequenceExistence}) implies
	\begin{align}\label{Eq_DetEquivProof_SubSubSequenceExistence2}
		& \forall \omega \in \cE_{\text{(\ref{Eq_DetEquivProof_TighnessBorelCantelli})}} , \, \forall (m_j)_{j \in \N} \text{ sub-sequence} : \nonumber\\
		& \exists (m_{j_k})_{k \in \N} \text{ sub-sub-sequence} , \, \exists \mu^{(A)}_\omega, \mu^{(B)}_\omega \text{ matrix-valued measures} : \nonumber\\
		& \hat{\rho}^{(A,m_{j_k})}_\omega(\R) \xrightarrow{k \to \infty} \mu^{(A)}_\omega(\R) \ \ \text{ and } \ \ \hat{\rho}^{(B,m_{j_k})}_\omega(\R) \xrightarrow{k \to \infty} \mu^{(B)}_\omega(\R) \ , \nonumber\\
		& \forall z \in \C^+ : \ \hat{\delta}^{(A,m_{j_k})}_\omega(z) \xrightarrow{k \to \infty} \delta^{(A)}_\omega(z) \ \ \text{ and } \ \ \hat{\delta}^{(B,m_{j_k})}_\omega(z) \xrightarrow{k \to \infty} \delta^{(B)}_\omega(z) \ ,
	\end{align}
	where
	\begin{align}\label{Eq_DetEquivProof_Def_DeltaOmega}
		& \delta^{(A)}_\omega(z) \coloneq \int_\R \frac{1}{\lambda-z} \, d\mu^{(A)}_\omega(\lambda) \ \ \text{ and } \ \ \delta^{(B)}_\omega(z) \coloneq \int_\R \frac{1}{\lambda-z} \, d\mu^{(B)}_\omega(\lambda) \ .
	\end{align}
	For each such $\omega \in \cE_{\text{(\ref{Eq_DetEquivProof_TighnessBorelCantelli})}}$, the properties
	\begin{align*}
		& \frac{1}{n} \tr\big( A_rA_s^* \big)_{r,s \leq R} \overset{\text{(\ref{Eq_DetEquiv_metaModel_DefA})}}{=} \frac{1}{n^{(m)}} \tr\big( A^{(m)}_r(A^{(m)}_s)^* \big)_{r,s \leq R} \overset{\text{(\ref{Eq_Def_hatRhoA_meta})}}{=} \hat{\rho}^{(A,m_{j_k})}_\omega(\R) \xrightarrow[\text{(\ref{Eq_DetEquivProof_SubSubSequenceExistence2})}]{k \to \infty} \mu^{(A)}_\omega(\R)\\
		& \frac{1}{n} \tr\big( B_s^*B_r \big)_{r,s \leq R} \overset{\text{(\ref{Eq_DetEquiv_metaModel_DefB})}}{=} \frac{1}{n^{(m)}} \tr\big( (B^{(m)}_s)^*B^{(m)}_r \big)_{r,s \leq R} \overset{\text{(\ref{Eq_Def_hatRhoB_meta})}}{=} \hat{\rho}^{(B,m_{j_k})}_\omega(\R) \xrightarrow[\text{(\ref{Eq_DetEquivProof_SubSubSequenceExistence2})}]{k \to \infty} \mu^{(B)}_\omega(\R)
	\end{align*}
	imply $\mu^{(A)}_\omega(\R) = \frac{1}{n} \tr\big( A_rA_s^* \big)_{r,s \leq R} \overset{\text{(\ref{Eq_Assumption_Identifiability})}}{\succ} 0$ and $\mu^{(B)}_\omega(\R) = \frac{1}{n} \tr\big( B_s^*B_r \big)_{r,s \leq R} \overset{\text{(\ref{Eq_Assumption_Identifiability})}}{\succ} 0$, so the property $\Im(\frac{1}{\lambda-z}) = \frac{\Im(z)}{|\lambda-z|^2} > 0$ by (\ref{Eq_DetEquivProof_Def_DeltaOmega}) gives
	\begin{align}\label{Eq_DetEquivProof_PosDef}
		& 0 \prec \Im\big( \delta^{(A)}_\omega(z) \big) \ \ \text{ and } \ \ 0 \prec \Im\big( \delta^{(B)}_\omega(z) \big) \ .
	\end{align}
	For each such $\omega \in \cE_{\text{(\ref{Eq_DetEquivProof_TighnessBorelCantelli})}}$, which is also in the event $\cE_{\text{(\ref{Eq_DetEquiv_metaCovnergenceEvent})}}$ from (\ref{Eq_DetEquiv_metaCovnergenceEvent}), a simple continuity argument with (\ref{Eq_DetEquivProof_PosDef}) shows that $\big( \delta^{(A)}_\omega(z), \delta^{(B)}_\omega(z) \big)$ must be a solution to (\ref{Eq_DualSystem}).
	Part (i) of this proof is thus applicable for each $\omega \in \cE_{\text{(\ref{Eq_DetEquivProof_TighnessBorelCantelli})}} \cap \cE_{\text{(\ref{Eq_DetEquiv_metaCovnergenceEvent})}}$ to see that there is only a single possible value in $\C^{R \times R} \times \C^{R \times R}$ that $(\delta^{(A)}_\omega(z),\delta^{(B)}_\omega(z))$ can take, regardless of the choice of $\omega \in \cE_{\text{(\ref{Eq_DetEquivProof_TighnessBorelCantelli})}} \cap \cE_{\text{(\ref{Eq_DetEquiv_metaCovnergenceEvent})}}$ or sub-sub-sequence $(m_{j_k})_{k \in \N}$.
	As this is true for every $z \in \C^+$, part (ii) of this proof then also yields the same for $(\mu^{(A)}_\omega,\mu^{(B)}_\omega)$, i.e. there is only a single (deterministic) pair of matrix-valued measures $(\rho^{(A)},\rho^{(B)})$ such that $(\mu^{(A)}_\omega,\mu^{(B)}_\omega) = (\rho^{(A)},\rho^{(B)})$ for all $\omega \in \cE_{\text{(\ref{Eq_DetEquivProof_TighnessBorelCantelli})}} \cap \cE_{\text{(\ref{Eq_DetEquiv_metaCovnergenceEvent})}}$ and choices of sub-sub-sequences $(m_{j_k})_{k \in \N}$. The result (\ref{Eq_DetEquivProof_SubSubSequenceExistence}) becomes
	\begin{align}\label{Eq_DetEquivProof_SubSubSequenceExistence3}
		& \forall \omega \in \cE_{\text{(\ref{Eq_DetEquivProof_TighnessBorelCantelli})}} \cap \cE_{\text{(\ref{Eq_DetEquiv_metaCovnergenceEvent})}} , \, \forall (m_j)_{j \in \N} \text{ sub-sequence} : \nonumber\\
		& \exists (m_{j_k})_{k \in \N} \text{ sub-sub-sequence} : \nonumber\\
		& \hat{\rho}^{(A,m_{j_k})}_\omega \xRightarrow{k \to \infty} \rho^{(A)} \ \ \text{ and } \ \ \hat{\rho}^{(B,m_{j_k})}_\omega \xRightarrow{k \to \infty} \rho^{(B)} \ .
	\end{align}
	Standard topological arguments then yield
	\begin{align}\label{Eq_DetEquivProof_SubSubSequenceExistence4}
		& \forall \omega \in \cE_{\text{(\ref{Eq_DetEquivProof_TighnessBorelCantelli})}} \cap \cE_{\text{(\ref{Eq_DetEquiv_metaCovnergenceEvent})}} : \ \hat{\rho}^{(A,m)}_\omega \xRightarrow{m \to \infty} \rho^{(A)} \ \ \text{ and } \ \ \hat{\rho}^{(B,m)}_\omega \xRightarrow{m \to \infty} \rho^{(B)} \ .
	\end{align}
	As both events (\ref{Eq_DetEquiv_metaCovnergenceEvent}) and (\ref{Eq_DetEquivProof_TighnessBorelCantelli}) were shown to have probability one, it follows that
	\begin{align}\label{Eq_DetEquiv_rhoConvergence}
		& 1 = \bP\Big( \hat{\rho}^{(A,m)} \xRightarrow{m \to \infty} \rho^{(A)} \ \ \text{ and } \ \ \hat{\rho}^{(B,m)} \xRightarrow{m \to \infty} \rho^{(B)} \Big)
	\end{align}
	for some deterministic matrix-valued measures $(\rho^{(A)},\rho^{(B)})$ whose matrix-valued Stieltjes transforms
	\begin{align*}
		& \delta^{(A)}(z) = \int_\R \frac{1}{\lambda - z} \, d\rho^{(A)}(\lambda) \ \ \text{ and } \ \ \delta^{(B)}(z) = \int_\R \frac{1}{\lambda - z} \, d\rho^{(B)}(\lambda)
	\end{align*}
	are a solution to (\ref{Eq_DualSystem}). This proves existence in (a) and (b).
	
	\item[viii)] \textit{Properties of $(\rho^{(A)},\rho^{(B)})$}:\\
	By construction of $(\rho^{(A)},\rho^{(B)})$ in part (vii), it is clear that the matrix-valued Stieltjes transforms $(\delta^{(A)}(z),\delta^{(B)}(z))$ are a solution to (\ref{Eq_DualSystem}). A consequence of (\ref{Eq_DetEquiv_rhoConvergence}) and (\ref{Eq_DetEquivProof_TighnessBorelCantelli}) is
	\begin{align}
		& \supp\big( \rho^{(A)} \big) \subset [0,8\sigma^4(1+\sqrt{c_*})^2] \ \ \text{ and } \ \ \supp\big( \rho^{(B)} \big) \subset [0,8\sigma^4(1+\sqrt{c_*})^2] \ ,
	\end{align}
	since the supports of $\hat{\rho}^{(A,m)}$ and $\hat{\rho}^{(B,m)}$ are by construction in $[0,||\bm{S}^{(\bm{Z}^{(m)})}||]$ and $\tvarepsilon>0$ may be chosen arbitrarily small.
	One also observes
	\begin{align*}
		& \hat{\rho}^{(A,m)}(\R) \overset{\text{(\ref{Eq_Def_hatRhoA_meta})}}{=} \frac{1}{n^{(m)}} \tr\big( A^{(m)}_r (A^{(m)}_s)^* \big)_{r,s \leq R} \overset{\text{(\ref{Eq_DetEquiv_metaModel_DefA})}}{=} \underbrace{\frac{m}{n^{(m)}}}_{= \frac{1}{n}} \tr\big( A_r A_s^* \big)_{r,s \leq R}
	\end{align*}
	and analogously 
	\begin{align*}
		& \hat{\rho}^{(B,m)}(\R) = \frac{1}{n} \tr\big( B_s^*B_r \big)_{r,s \leq R} \ .
	\end{align*}
	The convergence (\ref{Eq_DetEquiv_rhoConvergence}) thus yields
	\begin{align}
		& \rho^{(A)}(\R) = \frac{1}{n} \tr\big( A_r A_s^* \big)_{r,s \leq R} \ \ \text{ and } \ \ \rho^{(B)}(\R) = \frac{1}{n} \tr\big( B_s^*B_r \big)_{r,s \leq R}  \ .
	\end{align}
	
	\item[ix)] \textit{Proof of (d)}:\\
	Since every probability measure is uniquely defined by its Stieltjes transform (cf. Theorem B.9 in \cite{BaiSALDRM}), it suffices to show existence of $\ul{\nu}$.
	\\[0.5em]
	Analogously to part (v) of this proof, one may apply (\ref{Eq_EmpDualGaussian_ResultB_M}) with $\tilde{M} = \Id_{nm}$ to the meta model and by Borel-Cantelli see
	\begin{align}\label{Eq_DetEquivProof_NuStilConv0}
		& 1 = \bP\Big( \forall z \in \C^+ : \nonumber\\
		& \hspace{1cm} \Big| \frac{1}{nm} \tr\big(\tilde{\bm{R}}^{(\bm{Z}^{(m)})}\big) - \frac{-1}{z n} \tr\Big( \Big( \Id_n + \sum\limits_{r',s'=1}^R \hat{\delta}^{(A,m)}_{r',s'}(z) B_{s'}^* B_{r'} \Big)^{-1} \Big) \Big| \xrightarrow{m \to \infty} 0 \Big) \ .
	\end{align}
	For each $m \in \N$, let $\hat{\ul{\nu}}^{(m)} : \cB(\R) \rightarrow [0,1]$ denote the random empirical eigenvalue distribution of $\tilde{\bm{S}}^{(\bm{Z}^{(m)})}$, i.e.
	\begin{align*}
		& \forall A \in \cB(\R) : \ \hat{\ul{\nu}}^{(m)}(A) \coloneq \frac{1}{nm} \#\big\{j \leq nm \ \big| \ \lambda_j\big(\tilde{\bm{S}}^{(\bm{Z}^{(m)})}\big) \in A \big\} \ .
	\end{align*}
	For every $\omega \in \cE_{\text{(\ref{Eq_DetEquivProof_TighnessBorelCantelli})}}$, the sequence of probability measures $(\hat{\ul{\nu}}^{(m)}_\omega)_{m \in \N}$ is tight and thus there by Prokhorov one has
	\begin{align}\label{Eq_DetEquivProof_NuProkhorov}
		& \forall \omega \in \cE_{\text{(\ref{Eq_DetEquivProof_TighnessBorelCantelli})}} , \, \exists (m_k)_{k \in \N} \text{ sub-sequence} , \, \exists \ul{\nu}_\omega \text{ prop. measure} : \ \hat{\ul{\nu}}^{(m_k)}_\omega \xRightarrow{k \rightarrow \infty} \ul{\nu}_\omega \ .
	\end{align}
	The fact that $\omega \in \cE_{\text{(\ref{Eq_DetEquivProof_TighnessBorelCantelli})}}$, even yields
	\begin{align}\label{Eq_DetEquivProof_nuSuppOmega}
		& \supp(\ul{\nu}_\omega) \subset [0,8\sigma^4(1+\sqrt{c_*})^2+\tvarepsilon] \ .
	\end{align}
	For every $\omega$ from the intersection $\cE_{\text{(\ref{Eq_DetEquivProof_TighnessBorelCantelli})}} \cap \cE_{\text{(\ref{Eq_DetEquiv_metaCovnergenceEvent})}} \cap \cE_{\text{(\ref{Eq_DetEquivProof_NuStilConv0})}}$ one then has
	\begin{align}\label{Eq_DetEquivProof_NuAlmost}
		& \cs_{\ul{\nu}_\omega}(z) = \frac{-1}{z n} \tr\Big( \Big( \Id_n + \sum\limits_{r',s'=1}^R \delta^{(A)}_{r',s'}(z) B_{s'}^* B_{r'} \Big)^{-1} \Big) \ ,
	\end{align}
	since $\omega \in \cE_{\text{(\ref{Eq_DetEquivProof_TighnessBorelCantelli})}}$ yields
	\begin{align*}
		& \frac{1}{nm_k} \tr\big(\tilde{\bm{R}}^{(\bm{Z}^{(m_k)})}(z)\big) = \cs_{\hat{\ul{\nu}}^{(m_k)}_\omega}(z) \xrightarrow{k \to \infty} \cs_{\ul{\nu}_\omega}(z) \ \text{ by (\ref{Eq_DetEquivProof_NuProkhorov})} \ ,
	\end{align*}
	while $\omega \in \cE_{\text{(\ref{Eq_DetEquivProof_NuStilConv0})}}$ yields
	\begin{align*}
		& \Big| \frac{1}{nm_k} \tr\big(\tilde{\bm{R}}^{(\bm{Z}^{(m_k)})}(z)\big) - \frac{-1}{z n} \tr\Big( \Big( \Id_n + \sum\limits_{r',s'=1}^R \hat{\delta}^{(A,m_k)}_{r',s'}(z) B_{s'}^* B_{r'} \Big)^{-1} \Big) \Big| \xrightarrow{k \to \infty} 0
	\end{align*}
	and $\omega \in \cE_{\text{(\ref{Eq_DetEquivProof_TighnessBorelCantelli})}} \cap \cE_{\text{(\ref{Eq_DetEquiv_metaCovnergenceEvent})}}$ by (\ref{Eq_DetEquivProof_SubSubSequenceExistence4}) yields
	\begin{align*}
		& \Big| \frac{-1}{z n} \tr\Big( \Big( \Id_n + \sum\limits_{r',s'=1}^R \hat{\delta}^{(A,m_k)}_{r',s'}(z) B_{s'}^* B_{r'} \Big)^{-1} \Big)\\
		& \hspace{3cm} - \frac{-1}{z n} \tr\Big( \Big( \Id_n + \sum\limits_{r',s'=1}^R \delta^{(A)}_{r',s'}(z) B_{s'}^* B_{r'} \Big)^{-1} \Big) \Big| \xrightarrow{k \to \infty} 0 \ .
	\end{align*}
	Since (\ref{Eq_DetEquivProof_NuAlmost}) uniquely describes the Stieltjes transform of $\ul{\nu}_\omega$, it is uniquely defined (thus also deterministic) and one may write $\ul{\nu} = \ul{\nu}_\omega$. Existence is clear, since the intersection $\cE_{\text{(\ref{Eq_DetEquivProof_TighnessBorelCantelli})}} \cap \cE_{\text{(\ref{Eq_DetEquiv_metaCovnergenceEvent})}} \cap \cE_{\text{(\ref{Eq_DetEquivProof_NuStilConv0})}}$ of probability-one events can not be empty.
	The calculation
	\begin{align}\label{Eq_DeltaProductCalc}
		& -\frac{1}{z} - \sum\limits_{r,s=1}^R \delta^{(A)}_{r,s}(z) \delta^{(B)}_{r,s}(z) \nonumber\\
		& \overset{\text{(\ref{Eq_DualSystem})}}{=} -\frac{1}{z} + \frac{1}{z}\sum\limits_{r,s=1}^R \delta^{(A)}_{r,s}(z) \frac{1}{n} \tr\bigg( B_s^*B_r \Big( \Id_n + \sum\limits_{r',s'=1}^R \delta^{(A)}_{r',s'}(z) B_{s'}^*B_{r'} \Big)^{-1} \bigg) \nonumber\\
		& = -\frac{1}{z} + \frac{1}{z n} \tr\bigg( \sum\limits_{r,s=1}^R \delta^{(A)}_{r,s}(z) B_s^*B_r \Big( \Id_n + \sum\limits_{r',s'=1}^R \delta^{(A)}_{r',s'}(z) B_{s'}^*B_{r'} \Big)^{-1} \bigg) \nonumber\\
		& = -\frac{1}{z} + \frac{1}{z n} \tr\bigg( \Id_n - \Big( \Id_n + \sum\limits_{r',s'=1}^R \delta^{(A)}_{r',s'}(z) B_{s'}^*B_{r'} \Big)^{-1} \bigg) \nonumber\\
		& = - \frac{1}{z n} \tr\bigg( \Big( \Id_n + \sum\limits_{r',s'=1}^R \delta^{(A)}_{r',s'}(z) B_{s'}^*B_{r'} \Big)^{-1} \bigg)
	\end{align}
	with (\ref{Eq_DetEquivProof_NuAlmost}) shows that $\ul{\nu}=\ul{\nu}_\omega$ indeed satisfies (\ref{Eq_DetEquiv_NuStilProp}). Finally, (\ref{Eq_DetEquivProof_nuSuppOmega}) and the fact that $\tvarepsilon>0$ may be chosen arbitrarily small yields $\supp(\ul{\nu}) \subset [0,8\sigma^4 (1+\sqrt{c_*})^2]$.
\end{itemize}
This concludes the proof of Theorem~\ref{Thm_DetEquiv}. \qed

\section{Proof of Theorem~\ref{Thm_SepCovMP_NonAsymp}}\label{Proof_Thm_SepCovMP_NonAsymp}
Without loss of generality assume
\begin{align}\label{Eq_MainRes_Wlog_delta_eta_kappa}
	& \eta < 1 \ \ \text{ and } \ \ \kappa > 1 \ .
\end{align}
Also introduce the constants
\begin{align}\label{Eq_MainRes_tkappa_ttauChoice}
	& \tilde{\kappa} : = \frac{c_* \sigma^2}{\eta} \ \ \text{ and } \ \ \tilde{\tau} \coloneq \frac{\tau \eta}{2^8\sigma^8(1+\sqrt{c_*})^4 + 4 + 2\kappa^2} \ .
\end{align}
Let $\mathcal{C}_{\text{\ref{Thm_DualApprox}}}$, $\mathcal{C}_{\text{\ref{Thm_EmpiricalDualSystem}}}$ and $\mathcal{C}_{\text{\ref{Thm_Universality}}}$ denote the constants from Theorems~\ref{Thm_DualApprox},~\ref{Thm_EmpiricalDualSystem} and~\ref{Thm_Universality} respectively and note that only $\mathcal{C}_{\text{\ref{Thm_Universality}}}$ depends on $C_6$. Define
\begin{align}\label{Eq_MainRes_DefN}
	& N \coloneq \max\Big( 2, 6\mathcal{C}_{\text{\ref{Thm_EmpiricalDualSystem}}}, \Big( 2R \mathcal{C}_{\text{\ref{Thm_EmpiricalDualSystem}}} \frac{8\kappa^4 c_* \sigma^6 (1 + \frac{c_* \sigma^4}{\eta})^{7}}{\tau^9 \eta \frac{\tau \eta}{2^8\sigma^8(1+\sqrt{c_*})^4 + 4 + 2\kappa^2}} \Big)^{\frac{2}{1-\tvarepsilon}} \Big) \ .
\end{align}
Let $\bm{Z}$ be the Gaussian matrix similar to $\bm{X}$ as constructed in Definition~\ref{Def_SimilarGaussianMatrix}. Before starting the proof, introduce the following lemma.

\begin{lemma}[Spectral bound]\label{Lemma_DetEquivBound}\
	\\
	Suppose~\ref{ItemAssumption_sigmaBound} and~\ref{ItemTempAssumption_NonDegeneracy} hold, then the deterministic equivalents of Theorem~\ref{Thm_DetEquiv} satisfy the bounds
	\begin{align}\label{Eq_DetEquivBoundA}
		& \Big|\Big| \Big( \Id_d + \sum\limits_{r',s'=1}^R \delta^{(B)}_{r',s'}(z) A_{r'} A_{s'}^* \Big)^{-1} \Big|\Big| \leq \frac{(8\sigma^4(1+\sqrt{c_*})^2+|z|)^2}{\tau^2 \Im(z)}
	\end{align}
	and
	\begin{align}\label{Eq_DetEquivBoundB}
		& \Big|\Big| \Big( \Id_n + \sum\limits_{r',s'=1}^R \delta^{(A)}_{r',s'}(z) B_{s'}^* B_{r'} \Big)^{-1} \Big|\Big| \leq \frac{(8\sigma^4(1+\sqrt{c_*})^2+|z|)^2}{\tau^2 \Im(z)}
	\end{align}
	for all $z \in \C^+$.
\end{lemma}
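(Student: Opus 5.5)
The plan is to mirror the argument behind (\ref{Eq_QBounds}) in Lemma~\ref{Lemma_QSpectralBound}. I would bound the imaginary part of the matrix to be inverted from below, and then use the elementary fact that a matrix with uniformly positive definite imaginary part has an inverse with controlled norm. Throughout, write $L \coloneq 8\sigma^4(1+\sqrt{c_*})^2$ and $W \coloneq \Id_d + \sum_{r',s'} \delta^{(B)}_{r',s'}(z) A_{r'}A_{s'}^*$.

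\emph{Step 1: lower bound for $\Im(\delta^{(B)}(z))$.} By Theorem~\ref{Thm_DetEquiv}(b), $\delta^{(B)}(z) = \int_\R \frac{1}{\lambda-z}\,d\rho^{(B)}(\lambda)$. Since $\Im\frac{1}{\lambda-z} = \frac{\Im(z)}{|\lambda-z|^2}$ and $\rho^{(B)}$ takes positive semi-definite values, this gives
\begin{align*}
\Im\big(\delta^{(B)}(z)\big) = \Im(z)\int_\R \frac{1}{|\lambda-z|^2}\,d\rho^{(B)}(\lambda).
\end{align*}
By Theorem~\ref{Thm_DetEquiv}(c), $\rho^{(B)}$ is supported in $[0,L]$. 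On that support $|\lambda - z| \le \lambda + |z| \le L+|z|$, so the integrand is at least $(L+|z|)^{-2}$. Again by (c), $\rho^{(B)}(\R) = \mathbb{G}^{(B)}$, and the identifiability bound (\ref{Eq_AssumptionIdentifiability}) gives $\mathbb{G}^{(B)} \succeq \tau\Id_R$. Combining these,
\begin{align*}
\Im\big(\delta^{(B)}(z)\big) \succeq \frac{\tau\,\Im(z)}{(L+|z|)^2}\Id_R .
\end{align*}

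\emph{Step 2: transfer to $W$.} As in (\ref{Eq_DualApprox_ImVA_Calc}), $\Im(W) = \sum_{r,s}\Im(\delta^{(B)}(z))_{r,s}A_rA_s^*$. Applying (\ref{Eq_NonDegeneracyC_A}) from Lemma~\ref{Lemma_NonDegeneracyNew} to this sum yields
\begin{align*}
\Im(W) \succeq \frac{\tau^2\,\Im(z)}{(L+|z|)^2}\Id_d .
\end{align*}

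\emph{Step 3: invert.} For any unit vector $x$ we have $|x^*Wx| \ge \Im(x^*Wx) = x^*\Im(W)x \ge c$, where $c$ is the constant from Step 2. Cauchy--Schwarz then gives $\|Wx\| \ge c$, so $W$ is invertible with $\|W^{-1}\| \le 1/c$, which is exactly (\ref{Eq_DetEquivBoundA}). The bound (\ref{Eq_DetEquivBoundB}) follows by the same three steps, with $\rho^{(A)}$, $\mathbb{G}^{(A)}$ and (\ref{Eq_NonDegeneracyC_B}) in place of $\rho^{(B)}$, $\mathbb{G}^{(B)}$ and (\ref{Eq_NonDegeneracyC_A}).

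I expect no step to be a real obstacle. The one point needing care is that Theorem~\ref{Thm_DetEquiv}(c) is applied with admissible constants: any $c_* \geq \max(d/n,1)$ and $\sigma^2$ dominating both sums of squared norms. Assumptions~\ref{ItemAssumption_cBound} and~\ref{ItemAssumption_sigmaBound} supply exactly such constants. The paper's lemma lists only~\ref{ItemAssumption_sigmaBound} and~\ref{ItemTempAssumption_NonDegeneracy}, but the constant $L$ involves $c_*$, so~\ref{ItemAssumption_cBound} is being used implicitly.
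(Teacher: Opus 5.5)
Your proof is correct and follows the paper's own argument: the paper chains (\ref{Eq_NonDegeneracyC_A}), the Stieltjes representation of $\delta^{(B)}$, and the support and total-mass properties from Theorem~\ref{Thm_DetEquiv}(c) together with the identifiability bound, obtaining $\lambda_{\min}(\Im(W)) \geq \tau^2\Im(z)/(8\sigma^4(1+\sqrt{c_*})^2+|z|)^2$; it then concludes via $\|W^{-1}\| \leq 1/\lambda_{\min}(\Im(W))$, a step you justify explicitly with Cauchy--Schwarz. You are also right that an admissible $c_* \geq \max(d/n,1)$, as in Theorem~\ref{Thm_DetEquiv}(c), enters implicitly; the paper only invokes the lemma inside the proof of Theorem~\ref{Thm_SepCovMP_NonAsymp}, where all of~\ref{ItemAssumption_cBound}--\ref{ItemTempAssumption_NonDegeneracy} are in force.
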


The proof of Theorem~\ref{Thm_SepCovMP_NonAsymp} commences.

\begin{itemize}
	\item[i)] \textit{Properties of the deterministic equivalent $(\delta^{(A)}, \delta^{(B)})$}:\\
	Making use of the properties of $(\delta^{(A)}, \delta^{(B)})$ shown in (c) of Theorem~\ref{Thm_DetEquiv} and of assumption~\ref{ItemTempAssumption_NonDegeneracy}, one observes
	\begin{align*}
		& \lambda_{\min}\big(\Im(\delta^{(A)})\big) \overset{\text{(\ref{Eq_Uniqueness_deltaStil})}}{=} \lambda_{\min}\Big( \int_\R \Im\Big( \frac{1}{\lambda-z} \Big) \, d\rho^{(A)}(\lambda) \Big)\\
		& = \lambda_{\min}\Big( \int_\R \frac{\Im(z)}{|\lambda-z|^2} \, d\rho^{(A)}(\lambda) \Big) \overset{\text{(\ref{Eq_DetEquiv_RhoProp1})}}{\geq} \lambda_{\min}\Big( \int_\R \frac{\Im(z)}{(8\sigma^4(1+\sqrt{c_*})^2 + |z|)^2} \, d\rho^{(A)}(\lambda) \Big)\\
		& \overset{\text{(\ref{Eq_DetEquiv_RhoProp2})}}{=} \frac{\Im(z)}{(8\sigma^4(1+\sqrt{c_*})^2 + |z|)^2} \lambda_{\min}\Big( \frac{1}{n} \tr\big( A_r A_s^* \big)_{r,s \leq R} \Big) \overset{\text{(\ref{Eq_AssumptionIdentifiability})}}{\geq} \frac{\tau \Im(z)}{(8\sigma^4(1+\sqrt{c_*})^2 + |z|)^2} \ ,
	\end{align*}
	which for any $z \in \bD(\eta,\kappa)$ yields
	\begin{align}\label{Eq_MainRes_ImDeltaA_LowerBound}
		& \lambda_{\min}\big(\Im(\delta^{(A)})\big) \geq \frac{\tau \eta}{(8\sigma^4(1+\sqrt{c_*})^2 + \kappa)^2} \geq \frac{\tau \eta}{2^6\sigma^8(1+\sqrt{c_*})^4 + 2\kappa^2}
	\end{align}
	and one may analogously show
	\begin{align}\label{Eq_MainRes_ImDeltaB_LowerBound}
		& \lambda_{\min}\big(\Im(\delta^{(B)})\big) \geq \frac{\tau \eta}{2^6\sigma^8(1+\sqrt{c_*})^4 + 2\kappa^2} \ .
	\end{align}
	One may also bound $\delta^{(A)}$ from above by the calculation
	\begin{align*}
		& ||\delta^{(A)}|| \overset{\text{(\ref{Eq_Uniqueness_deltaStil})}}{=} \Big|\Big| \int_\R \frac{1}{\lambda-z} \, d\rho^{(A)}(\lambda) \Big|\Big| \leq \Big|\Big| \int_\R \overbrace{\frac{1}{|\lambda-z|}}^{\leq \frac{1}{\Im(z)}} \, d\rho^{(A)}(\lambda) \Big|\Big|\\
		& \leq \frac{1}{\Im(z)} ||\rho^{(A)}(\R)|| \overset{\text{(\ref{Eq_DetEquiv_RhoProp2})}}{=} \frac{1}{\Im(z)} \Big|\Big| \frac{1}{n} \tr\big( A_r A_s^* \big)_{r,s \leq R} \Big|\Big| \overset{\text{(\ref{Eq_SimpleBounds_TraceMatrixBounds})}}{\leq} \frac{\sigma^2}{\Im(z)} \underbrace{\frac{d}{n}}_{\overset{\text{\ref{ItemAssumption_cBound}}}{\leq} c_*} \ ,
	\end{align*}
	which for $z \in \bD(\eta,\kappa)$ yields
	\begin{align}\label{Eq_MainRes_DeltaA_UpperBound}
		& ||\delta^{(A)}|| \leq \frac{c_* \sigma^2}{\eta} \ .
	\end{align}
	One may analogously show
	\begin{align}\label{Eq_MainRes_DeltaB_UpperBound}
		& ||\delta^{(B)}|| \leq \frac{\sigma^2}{\eta} \ .
	\end{align}
	
	\item[ii)] \textit{Construction and properties of $(\hat{\delta}^{(A,\bm{Z})}, \hat{\delta}^{(B,\bm{Z})})$}:\\
	As in (\ref{Eq_Def_hatDeltaZ}), define
	\begin{align}\label{Eq_Def_hatDeltaZ_copy}
		& \hat{\delta}^{(A,\bm{Z})}(z)_{r,s} \coloneq \frac{1}{n} \tr\big( A_rA_s^* \bm{R}^{(\bm{Z})}(z) \big) \ \ \text{ and } \ \ \hat{\delta}^{(B,\bm{Z})}(z)_{r,s} \coloneq \frac{1}{n} \tr\big( B_s^*B_r \tilde{\bm{R}}^{(\bm{Z})}(z) \big) \ .
	\end{align}
	Letting $UDU^* = \bm{S}^{(\bm{Z})}$ denote the spectral decomposition of $\bm{S}^{(\bm{Z})}$, one observes
	\begin{align}\label{Eq_R_SpectralDecomp_copy}
		\bm{R}^{(\bm{Z})}(z) & \overset{\text{(\ref{Eq_Def_R_Robust})}}{=} \big( \bm{S}^{(\bm{Z})} - z\Id_d \big)^{-1} = U \big( D - z\Id_d \big)^{-1} U^* \nonumber\\
		& = U \diag\Big( \frac{1}{\lambda_1(\bm{S}^{(\bm{Z})}) - z},\dots,\frac{1}{\lambda_d(\bm{S}^{(\bm{Z})}) - z}\Big) U^* \ ,
	\end{align}
	which yields
	\begin{align*}
		\Im\big( \bm{R}^{(\bm{Z})}(z) \big) & = U \diag\Big( \Im\Big(\frac{1}{\lambda_1(\bm{S}^{(\bm{Z})}) - z}\Big),\dots,\Im\Big(\frac{1}{\lambda_d(\bm{S}^{(\bm{Z})}) - z}\Big)\Big) U^*\\
		& = U \diag\Big( \frac{\Im(z)}{|\lambda_1(\bm{S}^{(\bm{Z})}) - z|^2},\dots,\frac{\Im(z)}{|\lambda_d(\bm{S}^{(\bm{Z})}) - z|^2}\Big) U^* \ ,
	\end{align*}
	thus proving that $\Im\big( \bm{R}^{(\bm{Z})}(z) \big)$ is positive definite with
	\begin{align}\label{Eq_MainRes_ImR_LowerBound}
		& \lambda_{\min}\Big( \Im\big( \bm{R}^{(\bm{Z})}(z) \big) \Big) = \frac{\Im(z)}{\max\limits_{j\leq d} |\lambda_j(\bm{S}^{(\bm{Z})}) - z|^2} \nonumber\\
		& \geq \frac{\Im(z)}{2||\bm{S}^{(\bm{Z})}||^2 + 2|z|^2} \geq \frac{\eta}{2||\bm{S}^{(\bm{Z})}||^2 + 2\kappa^2} \ .
	\end{align}
	Using $||\tilde{\bm{S}}^{(\bm{Z})}|| = ||\bm{S}^{(\bm{Z})}||$, one may analogously show that $\Im\big( \tilde{\bm{R}}^{(\bm{Z})}(z) \big)$ is positive definite with
	\begin{align}\label{Eq_MainRes_ImtR_LowerBound}
		& \lambda_{\min}\Big( \Im\big( \tilde{\bm{R}}^{(\bm{Z})}(z) \big) \Big) \geq \frac{\eta}{2||\bm{S}^{(\bm{Z})}||^2 + 2\kappa^2} \ .
	\end{align}
	Making use of the bound $\tr(A^* M A) \geq \lambda_{\min}(M) \tr(A^*A)$, which holds for any matrix $A$ and positive definite matrix $M$, one may calculate
	\begin{align}\label{Eq_MainRes_ImHatDeltaA_LowerBound}
		& \lambda_{\min}\Big( \Im\big( \hat{\delta}^{(A,\bm{Z})}(z) \big) \Big) \overset{\text{(\ref{Eq_Def_hatDeltaZ_copy})}}{=} \lambda_{\min} \Big( \frac{1}{n}\tr\big( A_r A_s^* \Im\big( \bm{R}^{(\bm{Z})}(z) \big) \big)_{r,s\leq R} \Big) \nonumber\\
		& \overset{\text{(\ref{Eq_NonDegeneracy_TrCalcA})}}{\geq} \tau \lambda_{\min}\Big( \Im\big( \bm{R}^{(\bm{Z})}(z) \big) \Big) \overset{\text{(\ref{Eq_MainRes_ImR_LowerBound})}}{\geq} \frac{\tau \eta}{2||\bm{S}^{(\bm{Z})}||^2 + 2\kappa^2} \ .
	\end{align}
	Analogously, one with (\ref{Eq_MainRes_ImtR_LowerBound}) proves
	\begin{align}\label{Eq_MainRes_ImHatDeltaB_LowerBound}
		& \lambda_{\min}\Big( \Im\big( \hat{\delta}^{(B,\bm{Z})}(z) \big) \Big) \geq \frac{\tau \eta}{2||\bm{S}^{(\bm{Z})}||^2 + 2\kappa^2} \ .
	\end{align}
	One may further bound $\hat{\delta}^{(A,\bm{Z})}(z)$ and $\hat{\delta}^{(B,\bm{Z})}(z)$ from above by
	\begin{align}\label{Eq_MainRes_HatDeltaA_UpperBound}
		& ||\hat{\delta}^{(A,\bm{Z})}(z)|| = \Big|\Big| \frac{1}{n}\tr\big( A_r A_s^* \bm{R}^{(\bm{Z})}(z) \big)_{r,s\leq R} \Big|\Big| \nonumber\\
		& \overset{\text{(\ref{Eq_SimpleBounds_TraceMatrixBounds})}}{\leq} c_* \sigma^2 \big|\big| \bm{R}^{(\bm{Z})}(z) \big|\big| \overset{\text{(\ref{Eq_R_SpectralBounds})}}{\leq} \frac{c_* \sigma^2}{\Im(z)} \leq \frac{c_* \sigma^2}{\eta}
	\end{align}
	and
	\begin{align}\label{Eq_MainRes_HatDeltaB_UpperBound}
		& ||\hat{\delta}^{(B,\bm{Z})}(z)|| = \Big|\Big| \frac{1}{n}\tr\big( B_s^*B_r \tilde{\bm{R}}^{(\bm{Z})}(z) \big)_{r,s\leq R} \Big|\Big| \nonumber\\
		& \overset{\text{(\ref{Eq_SimpleBounds_TraceMatrixBounds})}}{\leq} \sigma^2 \big|\big| \tilde{\bm{R}}^{(\bm{Z})}(z) \big|\big| \overset{\text{(\ref{Eq_R_SpectralBounds})}}{\leq} \frac{\sigma^2}{\Im(z)} \leq \frac{\sigma^2}{\eta} \ .
	\end{align}
	
	\item[iii)] \textit{Showing (\ref{Eq_MainRes_Result}) in the Gaussian case by Theorems~\ref{Thm_DualApprox} and~\ref{Thm_EmpiricalDualSystem}}:\\
	For any $\tvarepsilon \in (0,1)$, the result of Theorem~\ref{Thm_EmpiricalDualSystem} in the notation of (\ref{Eq_DualApprox_DualSystem}) from Theorem~\ref{Thm_DualApprox} yields
	\begin{align}\label{Eq_MainRes_EmpDualSystem}
		& \bP\Big( \forall z \in \bD(\eta,\kappa) : \ \big( \hat{\delta}^{(A,\bm{Z})}, \hat{\delta}^{(B,\bm{Z})} \big) \text{ satisfy (\ref{Eq_DualApprox_DualSystem}) for error terms $\hat{q}^{(A)}$ and $\hat{q}^{(B)}$ with} \nonumber\\
		& \hspace{3cm} ||\hat{q}^{(A)}||, ||\hat{q}^{(B)}|| \leq R \big( \mathcal{C}_{\text{\ref{Thm_EmpiricalDualSystem}}} n^{\frac{\tvarepsilon-1}{2}} + \mathcal{C}_{\text{\ref{Thm_EmpiricalDualSystem}}} n^2 \exp(-n/\mathcal{C}_{\text{\ref{Thm_EmpiricalDualSystem}}}) \big) \Big) \nonumber\\
		& \geq 1 - 2\mathcal{C}_{\text{\ref{Thm_EmpiricalDualSystem}}} R^2 n \exp\Big( -\frac{n^{\tvarepsilon}}{\mathcal{C}_{\text{\ref{Thm_EmpiricalDualSystem}}} R^2} \Big) \ .
	\end{align}
	Before $\omega$-wise application of Theorem~\ref{Thm_DualApprox}, its prerequisites are checked.
	Lemma~\ref{Lemma_ZTailBound} with $t= \sqrt{\frac{n}{2}}$ yields
	\begin{align*}
		& \bP\Big( ||\bm{Y}^{(\bm{Z})}|| > 2\sigma^2 \big(\sqrt{n}+\sqrt{d}\big) + \sqrt{\frac{n}{2}} \Big) \leq 2 \exp\Big( -\frac{Cn}{2\sigma^4} \Big)
	\end{align*}
	for a universal constant $C>0$. Since
	\begin{align*}
		& ||\bm{Y}^{(\bm{Z})}|| > 2\sigma^2 \big(\sqrt{n}+\sqrt{d}\big) + \sqrt{\frac{n}{2}}\\
		& \Leftrightarrow \ ||\bm{Y}^{(\bm{Z})}||^2 > \Big(2\sigma^2 \big(\sqrt{n}+\sqrt{d}\big) + \sqrt{\frac{n}{2}}\Big)^2\\
		& \Leftarrow \ ||\bm{Y}^{(\bm{Z})}||^2 > 8\sigma^4 \big(\sqrt{n}+\sqrt{d}\big)^2 + 2\frac{n}{2}\\
		& \overset{\text{(\ref{Eq_Def_S_Robust})}}{\Leftrightarrow} \ ||\bm{S}^{(\bm{Z})}|| > 8\sigma^4 \frac{(\sqrt{n}+\sqrt{d})^2}{n} + 1\\
		& \overset{\text{\ref{ItemAssumption_cBound}}}{\Leftarrow} \ ||\bm{S}^{(\bm{Z})}|| > 8\sigma^4 (1+\sqrt{c_*})^2 + 1\\
		& \Leftrightarrow \ ||\bm{S}^{(\bm{Z})}||^2 > \big(8\sigma^4 (1+\sqrt{c_*})^2 + 1\big)^2\\
		& \Leftarrow \ ||\bm{S}^{(\bm{Z})}||^2 > 2^7\sigma^8 (1+\sqrt{c_*})^4 + 2
	\end{align*}
	it follows that
	\begin{align}\label{Eq_MainRes_S_SpectralBound}
		& \bP\Big( ||\bm{S}^{(\bm{Z})}||^2 \leq 2^7\sigma^8 (1+\sqrt{c_*})^4 + 2 \Big) \geq 1 - 2 \exp\Big( -\frac{C n}{2\sigma^4} \Big) \ ,
	\end{align}
	The bounds (\ref{Eq_MainRes_DeltaA_UpperBound}), (\ref{Eq_MainRes_DeltaB_UpperBound}), (\ref{Eq_MainRes_HatDeltaA_UpperBound}) and (\ref{Eq_MainRes_HatDeltaB_UpperBound}) show that the prerequisites (\ref{Eq_DualApprox_BoundCondition_delta}) and (\ref{Eq_DualApprox_BoundCondition_tdelta}) with $(\hat{\delta}^{(A,\bm{Z})}, \hat{\delta}^{(B,\bm{Z})})$ hold for the choice
	\begin{align}\label{Eq_MainRes_tkappaChoice_copy}
		& \tilde{\kappa} \overset{\text{(\ref{Eq_MainRes_tkappa_ttauChoice})}}{=} \frac{c_* \sigma^2}{\eta} \overset{c_* \geq 1}{=} \max\Big( \frac{c_* \sigma^2}{\eta}, \frac{\sigma^2}{\eta} \Big) \ .
	\end{align}
	Similarly, the bounds (\ref{Eq_MainRes_ImDeltaA_LowerBound}), (\ref{Eq_MainRes_ImDeltaB_LowerBound}), (\ref{Eq_MainRes_ImHatDeltaA_LowerBound}) and (\ref{Eq_MainRes_ImHatDeltaB_LowerBound}) for each $\omega$ from the event $\cE_{\text{(\ref{Eq_MainRes_S_SpectralBound})}}$ show that the prerequisites (\ref{Eq_DualApprox_posDefCondition_delta}) and (\ref{Eq_DualApprox_posDefCondition_tdelta}) with $(\hat{\delta}^{(A,\bm{Z})}, \hat{\delta}^{(B,\bm{Z})})$ hold for the choice
	\begin{align}\label{Eq_MainRes_ttauChoice_copy}
		& \tilde{\tau} \overset{\text{(\ref{Eq_MainRes_tkappa_ttauChoice})}}{=} \frac{\tau \eta}{2^8\sigma^8(1+\sqrt{c_*})^4 + 4 + 2\kappa^2} \nonumber\\
		& \overset{\omega \in \cE_{\text{(\ref{Eq_MainRes_S_SpectralBound})}}}{\leq} \min\Big( \frac{\tau \eta}{2^6\sigma^8(1+\sqrt{c_*})^4 + 2\kappa^2}, \frac{\tau \eta}{2||\bm{S}^{(\bm{Z}(\omega))}||^2 + 2\kappa^2} \Big) \ .
	\end{align}
	It remains to check that the prerequisites (\ref{Eq_DualApprox_qA_Assumption})-(\ref{Eq_DualApprox_tqSmall}) hold for $\hat{q}^{(A)}$ and $\hat{q}^{(B)}$, for which it by (\ref{Eq_MainRes_EmpDualSystem}) suffices to prove
	\begin{align}
		& R \mathcal{C}_{\text{\ref{Thm_EmpiricalDualSystem}}} n^{\frac{\tvarepsilon-1}{2}} + R \mathcal{C}_{\text{\ref{Thm_EmpiricalDualSystem}}} n^2 \exp(-n/\mathcal{C}_{\text{\ref{Thm_EmpiricalDualSystem}}}) \leq \frac{\tau^2 \tilde{\tau}}{2|z|(1 + \tilde{\kappa} \sigma^2)^2} \label{Eq_DualApprox_qA_Assumption_ToProve}\\
		& R \mathcal{C}_{\text{\ref{Thm_EmpiricalDualSystem}}} n^{\frac{\tvarepsilon-1}{2}} + R \mathcal{C}_{\text{\ref{Thm_EmpiricalDualSystem}}} n^2 \exp(-n/\mathcal{C}_{\text{\ref{Thm_EmpiricalDualSystem}}}) \leq \frac{\tau^7 \tilde{\tau} \Im(z)}{8|z|^4 \sigma^2 (1 + \tilde{\kappa} \sigma^2)^{7}} \label{Eq_DualApprox_qSmall_ToProve}\\
		& R \mathcal{C}_{\text{\ref{Thm_EmpiricalDualSystem}}} n^{\frac{\tvarepsilon-1}{2}} + R \mathcal{C}_{\text{\ref{Thm_EmpiricalDualSystem}}} n^2 \exp(-n/\mathcal{C}_{\text{\ref{Thm_EmpiricalDualSystem}}}) \leq \frac{\tau^9 \tilde{\tau} \Im(z)}{8|z|^3 c_* \sigma^6 (1 + \tilde{\kappa} \sigma^2)^{7}} \label{Eq_DualApprox_tqSmall_ToProve} \ .
	\end{align}
	With the observation
	\begin{align}\label{Eq_MainRes_nLargeEnough}
		& n^{\frac{\tvarepsilon-1}{2}} \geq n^2 \exp(-n/\mathcal{C}_{\text{\ref{Thm_EmpiricalDualSystem}}}) \ \Leftrightarrow \ \frac{\tvarepsilon-1}{2} \log(n) \geq 2\log(n) - \frac{n}{\mathcal{C}_{\text{\ref{Thm_EmpiricalDualSystem}}}} \nonumber\\
		& \Leftrightarrow \ \frac{5-\tvarepsilon}{2} \log(n) \leq \frac{n}{\mathcal{C}_{\text{\ref{Thm_EmpiricalDualSystem}}}} \ \overset{\tvarepsilon \leq 1}{\Leftarrow} \ 3 \log(n) \leq \frac{n}{\mathcal{C}_{\text{\ref{Thm_EmpiricalDualSystem}}}} \ \Leftrightarrow \ 3\mathcal{C}_{\text{\ref{Thm_EmpiricalDualSystem}}} \leq \frac{n}{\log(n)} \nonumber\\
		& \Leftarrow \ (n \geq 2) \text{ and } \Big(3\mathcal{C}_{\text{\ref{Thm_EmpiricalDualSystem}}} \leq \frac{n}{2}\Big) \ \overset{\text{(\ref{Eq_MainRes_DefN})}}{\Leftarrow} \ n \geq N \ ,
	\end{align}
	one for all $n \geq N$ and $z \in \bD(\eta,\kappa)$ checks the prerequisites (\ref{Eq_DualApprox_qA_Assumption_ToProve})-(\ref{Eq_DualApprox_tqSmall_ToProve}) by the calculation
	\begin{align*}
		& R \mathcal{C}_{\text{\ref{Thm_EmpiricalDualSystem}}} n^{\frac{\tvarepsilon-1}{2}} + R \mathcal{C}_{\text{\ref{Thm_EmpiricalDualSystem}}} n^2 \exp(-n/\mathcal{C}_{\text{\ref{Thm_EmpiricalDualSystem}}}) \overset{\text{(\ref{Eq_MainRes_nLargeEnough})}}{\leq} 2R \mathcal{C}_{\text{\ref{Thm_EmpiricalDualSystem}}} n^{\frac{\tvarepsilon-1}{2}} \overset{\tvarepsilon < 1}{\leq} 2R \mathcal{C}_{\text{\ref{Thm_EmpiricalDualSystem}}} N^{\frac{\tvarepsilon-1}{2}}\\
		& \overset{\text{(\ref{Eq_MainRes_DefN})}}{\leq} \frac{\tau^9 \eta \frac{\tau \eta}{2^8\sigma^8(1+\sqrt{c_*})^4 + 4 + 2\kappa^2}}{8\kappa^4 c_* \sigma^6 (1 + \frac{c_* \sigma^2}{\eta} \sigma^2)^{7}} \overset{\text{(\ref{Eq_MainRes_tkappa_ttauChoice})}}{=} \frac{\tau^9 \tilde{\tau} \eta}{8\kappa^4 c_* \sigma^6 (1 + \tilde{\kappa} \sigma^2)^{7}}\\
		& \overset{*}{\leq} \min\Big( \frac{\tau^2 \tilde{\tau}}{2\kappa (1 + \tilde{\kappa} \sigma^2)^{2}}, \frac{\tau^7 \tilde{\tau} \eta}{8\kappa^4 \sigma^2 (1 + \tilde{\kappa} \sigma^2)^{7}}, \frac{\tau^9 \tilde{\tau} \eta}{8\kappa^3 c_* \sigma^6 (1 + \tilde{\kappa} \sigma^2)^{7}} \Big)\\
		& \hspace{-0.45cm} \overset{z \in \bD(\eta,\kappa)}{\leq} \min\Big( \frac{\tau^2 \tilde{\tau}}{2|z| (1 + \tilde{\kappa} \sigma^2)^{2}}, \frac{\tau^7 \tilde{\tau} \Im(z)}{8|z|^4 \sigma^2 (1 + \tilde{\kappa} \sigma^2)^{7}}, \frac{\tau^9 \tilde{\tau} \Im(z)}{8|z|^3 c_* \sigma^6 (1 + \tilde{\kappa} \sigma^2)^{7}} \Big) \ ,
	\end{align*}
	where the bound marked with $*$ used the facts that $\kappa \overset{\text{(\ref{Eq_MainRes_Wlog_delta_eta_kappa})}}{>} 1$, $c_*,\sigma^2 \geq 1$, $\eta \overset{\text{(\ref{Eq_MainRes_Wlog_delta_eta_kappa})}}{<} 1$ and $\tau < 1$.
	One may thus for every $\omega \in \cE_{\text{(\ref{Eq_MainRes_EmpDualSystem})}} \cap \cE_{\text{(\ref{Eq_MainRes_S_SpectralBound})}}$ apply Theorem~\ref{Thm_DualApprox} to get
	\begin{align*}
		& \big|\big| \hat{\delta}^{(A,\bm{Z})}(z;\omega) - \delta^{(A)}(z) \big|\big| \overset{\text{(\ref{Eq_DualApprox_Result_deltaA})}}{\leq} \mathcal{C}_{\text{\ref{Thm_DualApprox}}} \big( ||\hat{q}^{(A)}(\omega)|| + ||\hat{q}^{(B)}(\omega)|| \big) \overset{\substack{\text{(\ref{Eq_MainRes_EmpDualSystem})} \\ \text{(\ref{Eq_MainRes_nLargeEnough})}}}{\leq} 4R\mathcal{C}_{\text{\ref{Thm_DualApprox}}} \mathcal{C}_{\text{\ref{Thm_EmpiricalDualSystem}}} n^{\frac{\tvarepsilon-1}{2}} \nonumber\\
		& \big|\big| \hat{\delta}^{(B,\bm{Z})}(z;\omega) - \delta^{(B)}(z) \big|\big| \overset{\text{(\ref{Eq_DualApprox_Result_deltaB})}}{\leq} \mathcal{C}_{\text{\ref{Thm_DualApprox}}} \big( ||\hat{q}^{(A)}(\omega)|| + ||\hat{q}^{(B)}(\omega)|| \big) \overset{\substack{\text{(\ref{Eq_MainRes_EmpDualSystem})} \\ \text{(\ref{Eq_MainRes_nLargeEnough})}}}{\leq} 4R\mathcal{C}_{\text{\ref{Thm_DualApprox}}} \mathcal{C}_{\text{\ref{Thm_EmpiricalDualSystem}}} n^{\frac{\tvarepsilon-1}{2}}
	\end{align*}
	for any $n \geq N$ and $z \in \bD(\eta,\kappa)$. This proves
	\begin{align}\label{Eq_MainRes_detltaZ_DetEquiv_Approx}
		& \bP\Big( \forall z \in \bD(\eta,\kappa) : \ \big|\big| \hat{\delta}^{(A,\bm{Z})}(z) - \delta^{(A)}(z) \big|\big| \leq 4R\mathcal{C}_{\text{\ref{Thm_DualApprox}}} \mathcal{C}_{\text{\ref{Thm_EmpiricalDualSystem}}} n^{\frac{\tvarepsilon-1}{2}} \nonumber\\
		& \hspace{2.5cm} \text{ and } \big|\big| \hat{\delta}^{(B,\bm{Z})}(z) - \delta^{(B)}(z) \big|\big| \leq 4R\mathcal{C}_{\text{\ref{Thm_DualApprox}}} \mathcal{C}_{\text{\ref{Thm_EmpiricalDualSystem}}} n^{\frac{\tvarepsilon-1}{2}} \Big) \nonumber\\
		& \geq \bP\big( \cE_{\text{(\ref{Eq_MainRes_EmpDualSystem})}} \cap \cE_{\text{(\ref{Eq_MainRes_S_SpectralBound})}} \big) \overset{\substack{\text{(\ref{Eq_MainRes_EmpDualSystem})} \\ \text{(\ref{Eq_MainRes_S_SpectralBound})}}}{\geq} 1 - 2\mathcal{C}_{\text{\ref{Thm_EmpiricalDualSystem}}} R^2 n \exp\Big( -\frac{n^{\tvarepsilon}}{\mathcal{C}_{\text{\ref{Thm_EmpiricalDualSystem}}} R^2} \Big) - 2 \exp\Big( -\frac{C n}{2\sigma^4} \Big)
	\end{align}
	for all $n \geq N$.
	
	\item[iv)] \textit{Proving (\ref{Eq_MainRes_Result_M}) in the Gaussian case by Theorem~\ref{Thm_EmpiricalDualSystem}}:\\
	Combining results (\ref{Eq_EmpDualGaussian_ResultA_M}) and (\ref{Eq_EmpDualGaussian_ResultB_M}) from Theorem~\ref{Thm_EmpiricalDualSystem} with the calculation (\ref{Eq_MainRes_nLargeEnough}) for all $n \geq N$ gives
	\begin{align}\label{Eq_MainRes_MDiff_Gaussian}
		& \bP\Big( \forall z \in \bD(\eta,\kappa) : \nonumber\\
		&\hspace{0.5cm} \Big| \frac{1}{n} \tr\big( M \bm{R}^{(\bm{Z})}(z) \big) - \frac{-1}{z} \frac{1}{n} \tr\Big( M \Big( \overbrace{\Id_d + \sum\limits_{r',s'=1}^R \hat{\delta}^{(B,\bm{Z})}_{r',s'}(z) A_{r'} A_{s'}^*}^{= \bm{Q}^{(\bm{Z})}(z)} \Big)^{-1} \Big) \Big| < 2\mathcal{C}_{\text{\ref{Thm_EmpiricalDualSystem}}} n^{\frac{\tvarepsilon-1}{2}} , \nonumber\\
		&\hspace{0.5cm} \Big| \frac{1}{n} \tr\big( \tilde{M} \tilde{\bm{R}}^{(\bm{Z})}(z) \big) - \frac{-1}{z} \frac{1}{n} \tr\Big( \tilde{M} \Big( \underbrace{\Id_n + \sum\limits_{r',s'=1}^R \hat{\delta}^{(A,\bm{Z})}_{r',s'}(z) B_{s'}^* B_{r'}}_{= \tilde{\bm{Q}}^{(\bm{Z})}(z)} \Big)^{-1} \Big) \Big| < 2\mathcal{C}_{\text{\ref{Thm_EmpiricalDualSystem}}} n^{\frac{\tvarepsilon-1}{2}} \Big) \nonumber\\
		& \geq 1 - 2\mathcal{C}_{\text{\ref{Thm_EmpiricalDualSystem}}} n \exp\Big( -\frac{n^{\tvarepsilon}}{\mathcal{C}_{\text{\ref{Thm_EmpiricalDualSystem}}} R^2} \Big) \ .
	\end{align}
	For every $\omega \in \cE_{\text{(\ref{Eq_MainRes_S_SpectralBound})}}$, one calculates
	\begin{align}\label{Eq_MainRes_QDiffA_Gaussian}
		& \Big|\Big| \Big( \overbrace{\Id_d + \sum\limits_{r',s'=1}^R \hat{\delta}^{(B,\bm{Z}(\omega))}_{r',s'}(z) A_{r'} A_{s'}^*}^{= \bm{Q}^{(\bm{Z}(\omega))}(z)} \Big)^{-1} - \Big( \Id_d + \sum\limits_{r',s'=1}^R \delta^{(B)}_{r',s'}(z) A_{r'} A_{s'}^* \Big)^{-1} \Big|\Big| \nonumber\\
		& \overset{\text{(\ref{Eq_InverseDifferenceIdentity})}}{=} \Big|\Big| \bm{Q}^{(\bm{Z}(\omega))}(z)^{-1} \Big( \sum\limits_{r',s'=1}^R \big(\delta^{(B)}_{r',s'}(z) - \hat{\delta}^{(B,\bm{Z}(\omega))}_{r',s'}(z)\big) A_{r'} A_{s'}^* \Big) \nonumber\\
		& \hspace{1cm} \times \Big( \Id_d + \sum\limits_{r',s'=1}^R \delta^{(B)}_{r',s'}(z) A_{r'} A_{s'}^* \Big)^{-1}\Big|\Big| \nonumber\\
		& \overset{\substack{\text{(\ref{Eq_QBounds})} \\ \text{(\ref{Eq_DetEquivBoundA})}}}{\leq} \frac{\overbrace{(|z|+||\bm{S}^{(\bm{Z}(\omega))}||)^2}^{\leq 2|z|^2 + 2||\bm{S}^{(\bm{Z}(\omega))}||^2}}{\tau^2 \Im(z)} \, \overbrace{\Big|\Big| \sum\limits_{r',s'=1}^R \big(\delta^{(B)}_{r',s'}(z) - \hat{\delta}^{(B,\bm{Z}(\omega))}_{r',s'}(z)\big) A_{r'} A_{s'}^* \Big|\Big|}^{\overset{\text{(\ref{Eq_SimpleBounds_ASum_BSum})}}{\leq} \sigma^2 ||\delta^{(B)}(z) - \hat{\delta}^{(B,\bm{Z}(\omega))}||} \nonumber\\
		& \hspace{1cm} \times \frac{(8\sigma^4(1+\sqrt{c_*})^2+|z|)^2}{\tau^2 \Im(z)} \nonumber\\
		& \overset{\text{(\ref{Eq_MainRes_S_SpectralBound})}}{\leq} \frac{(2|z|^2+2^8\sigma^8 (1+\sqrt{c_*})^4 + 4)^2}{\tau^4 \Im(z)^2} \, \sigma^2 ||\delta^{(B)}(z) - \hat{\delta}^{(B,\bm{Z}(\omega))}|| \nonumber\\
		& \overset{z \in \bD(\eta,\kappa)}{\leq} \sigma^2 \frac{(2\kappa^2+2^8\sigma^8 (1+\sqrt{c_*})^4 + 4)^2}{\tau^4 \eta^2} ||\delta^{(B)}(z) - \hat{\delta}^{(B,\bm{Z}(\omega))}||
	\end{align}
	and it may for every $\omega \in \cE_{\text{(\ref{Eq_MainRes_S_SpectralBound})}}$ analogously be shown that
	\begin{align}\label{Eq_MainRes_QDiffB_Gaussian}
		& \Big|\Big|\Big( \overbrace{\Id_n + \sum\limits_{r',s'=1}^R \hat{\delta}^{(A,\bm{Z}(\omega))}_{r',s'}(z) B_{s'}^*B_{r'}}^{= \tilde{\bm{Q}}^{(\bm{Z}(\omega))}(z)} \Big)^{-1} - \Big( \Id_n + \sum\limits_{r',s'=1}^R \delta^{(A)}_{r',s'}(z) B_{s'}^*B_{r'} \Big)^{-1}\Big|\Big| \nonumber\\
		& \leq \sigma^2 \frac{(2\kappa^2+2^8\sigma^8 (1+\sqrt{c_*})^4 + 4)^2}{\tau^4 \eta^2} ||\delta^{(A)}(z) - \hat{\delta}^{(A,\bm{Z}(\omega))}|| \ ,
	\end{align}
	which by (\ref{Eq_MainRes_detltaZ_DetEquiv_Approx}) implies
	\begin{align}\label{Eq_MainRes_QDiff_Gaussian}
		& \bP\Big( \forall z \in \bD(\eta,\kappa) : \nonumber\\
		& \hspace{0.5cm} \Big| \frac{-1}{z} \frac{1}{n} \tr\big( M \bm{Q}^{(\bm{Z})}(z)^{-1} \big) - \frac{-1}{z} \frac{1}{n} \tr\Big( M \Big( \Id_d + \sum\limits_{r',s'=1}^R \delta^{(B)}_{r',s'}(z) A_{r'} A_{s'}^* \Big)^{-1} \Big) \Big| \nonumber\\
		& \hspace{3cm} \leq \underbrace{\frac{d ||M||}{n |z|}}_{\leq \frac{c_* \sigma^2}{\eta}} \sigma^2 \frac{(2\kappa^2+2^8\sigma^8 (1+\sqrt{c_*})^4 + 4)^2}{\tau^4 \eta^2} 4R\mathcal{C}_{\text{\ref{Thm_DualApprox}}} \mathcal{C}_{\text{\ref{Thm_EmpiricalDualSystem}}} n^{\frac{\tvarepsilon-1}{2}} \nonumber\\
		& \text{ and } \Big| \frac{-1}{z} \frac{1}{n} \tr\big( \tilde{M} \tilde{\bm{Q}}^{(\bm{Z})}(z)^{-1} \big) - \frac{-1}{z} \frac{1}{n} \tr\Big( \tilde{M} \Big( \Id_n + \sum\limits_{r',s'=1}^R \delta^{(A)}_{r',s'}(z) B_{s'}^*B_{r'} \Big)^{-1} \Big) \Big| \nonumber\\
		& \hspace{3cm} \leq \underbrace{\frac{n ||\tilde{M}||}{n |z|}}_{\leq \frac{c_* \sigma^2}{\eta}} \sigma^2 \frac{(2\kappa^2+2^8\sigma^8 (1+\sqrt{c_*})^4 + 4)^2}{\tau^4 \eta^2} 4R\mathcal{C}_{\text{\ref{Thm_DualApprox}}} \mathcal{C}_{\text{\ref{Thm_EmpiricalDualSystem}}} n^{\frac{\tvarepsilon-1}{2}} \Big) \nonumber\\
		& \geq \bP\big( \cE_{\text{(\ref{Eq_MainRes_EmpDualSystem})}} \cap \cE_{\text{(\ref{Eq_MainRes_S_SpectralBound})}} \big) \overset{\substack{\text{(\ref{Eq_MainRes_EmpDualSystem})} \\ \text{(\ref{Eq_MainRes_S_SpectralBound})}}}{\geq} 1 - 2\mathcal{C}_{\text{\ref{Thm_EmpiricalDualSystem}}} R^2 n \exp\Big( -\frac{n^{\tvarepsilon}}{\mathcal{C}_{\text{\ref{Thm_EmpiricalDualSystem}}} R^2} \Big) - 2 \exp\Big( -\frac{C n}{2\sigma^4} \Big) \ .
	\end{align}
	Combining (\ref{Eq_MainRes_MDiff_Gaussian}) with (\ref{Eq_MainRes_QDiff_Gaussian}) yields
	\begin{align}\label{Eq_MainRes_Diff_Gaussian}
		& \bP\Big( \forall z \in \bD(\eta,\kappa) : \nonumber\\
		& \hspace{0.5cm} \Big| \frac{1}{n} \tr\big( M \bm{R}^{(\bm{Z})}(z) \big) - \frac{-1}{z} \frac{1}{n} \tr\Big( M \Big( \Id_d + \sum\limits_{r',s'=1}^R \delta^{(B)}_{r',s'}(z) A_{r'} A_{s'}^* \Big)^{-1} \Big) \Big| \nonumber\\
		& \hspace{2cm} \leq 2\mathcal{C}_{\text{\ref{Thm_EmpiricalDualSystem}}} n^{\frac{\tvarepsilon-1}{2}} + \frac{c_* \sigma^4}{\eta} \frac{(2\kappa^2+2^8\sigma^8 (1+\sqrt{c_*})^4 + 4)^2}{\tau^4 \eta^2} 4\mathcal{C}_{\text{\ref{Thm_DualApprox}}} \mathcal{C}_{\text{\ref{Thm_EmpiricalDualSystem}}} n^{\frac{\tvarepsilon-1}{2}} \nonumber\\
		& \text{ and } \Big| \frac{1}{n} \tr\big( \tilde{M} \tilde{\bm{R}}^{(\bm{Z})}(z) \big) - \frac{-1}{z} \frac{1}{n} \tr\Big( \tilde{M} \Big( \Id_n + \sum\limits_{r',s'=1}^R \delta^{(A)}_{r',s'}(z) B_{s'}^*B_{r'} \Big)^{-1} \Big) \Big| \nonumber\\
		& \hspace{2cm} \leq 2\mathcal{C}_{\text{\ref{Thm_EmpiricalDualSystem}}} n^{\frac{\tvarepsilon-1}{2}} + \frac{c_* \sigma^4}{\eta} \frac{(2\kappa^2+2^8\sigma^8 (1+\sqrt{c_*})^4 + 4)^2}{\tau^4 \eta^2} 4\mathcal{C}_{\text{\ref{Thm_DualApprox}}} \mathcal{C}_{\text{\ref{Thm_EmpiricalDualSystem}}} n^{\frac{\tvarepsilon-1}{2}} \Big) \nonumber\\
		& \geq 1 - 4\mathcal{C}_{\text{\ref{Thm_EmpiricalDualSystem}}} R^2 n \exp\Big( -\frac{n^{\tvarepsilon}}{\mathcal{C}_{\text{\ref{Thm_EmpiricalDualSystem}}} R^2} \Big) - 2 \exp\Big( -\frac{C n}{2\sigma^4} \Big) \ .
	\end{align}
	
	\item[v)] \textit{Application of Theorem~\ref{Thm_Universality}}:\\
	Theorem~\ref{Thm_Universality} by $||M|| \leq \sigma^2 \geq ||\tilde{M}||$ directly yields
	\begin{align}\label{Eq_Universality_Results_M}
		& \bP\Big( \sup\limits_{z \in \bD(\eta,\kappa)}\Big| \frac{1}{n} \tr(M\bm{R}^{(\bm{X})}(z)) - \frac{1}{n} \tr(M\bm{R}^{(\bm{Z})}(z)) \Big| < \mathcal{C}_{\text{\ref{Thm_Universality}}} n^{\frac{\tvarepsilon-1}{2}} \sigma^2 \nonumber\\
		& \hspace{0.5cm} \text{and } \sup\limits_{z \in \bD(\eta,\kappa)}\Big| \frac{1}{n} \tr(\tilde{M}\tilde{\bm{R}}^{(\bm{X})}(z)) - \frac{1}{n} \tr(\tilde{M}\tilde{\bm{R}}^{(\bm{Z})}(z)) \Big| < \mathcal{C}_{\text{\ref{Thm_Universality}}} n^{\frac{\tvarepsilon-1}{2}} \sigma^2 \Big) \nonumber\\
		& \geq 1 - 2\mathcal{C}_{\text{\ref{Thm_Universality}}} n \exp\Big( -\frac{n^{\tvarepsilon}}{\mathcal{C}_{\text{\ref{Thm_Universality}}} R^2} \Big)
	\end{align}
	For any $r,s \leq R$, the matrices $A_rA_s^*$ and $B_s^*B_r$ by~\ref{ItemAssumption_sigmaBound} satisfy
	\begin{align*}
		& ||A_rA_s^*|| \leq \sigma^2 \ \ \text{ and } \ \ ||B_s^*B_r|| \leq \sigma^2 \ .
	\end{align*}
	Applying Theorem~\ref{Thm_Universality} with $M = A_rA_s^*$ and $\tilde{M} = B_s^*B_r$ to each choice of $r,s \leq R$ yields
	\begin{align}\label{Eq_Universality_Result1_copy}
		& \bP\Big( \forall r,s \leq R : \ \sup\limits_{z \in \bD(\eta,\kappa)}\Big| \overbrace{\frac{1}{n} \tr(A_rA_s^*\bm{R}^{(\bm{X})}(z))}^{\overset{\text{(\ref{Eq_Def_hatDelta_A})}}{=} \hat{\delta}^{(A)}_{r,s}(z)} - \overbrace{\frac{1}{n} \tr(A_rA_s^*\bm{R}^{(\bm{Z})}(z))}^{\overset{\text{(\ref{Eq_Def_hatDeltaZ_copy})}}{=} \hat{\delta}^{(A,\bm{Z})}_{r,s}(z)} \Big| < \mathcal{C}_{\text{\ref{Thm_Universality}}} n^{\frac{\tvarepsilon-1}{2}} \sigma^2 \Big) \nonumber\\
		& \geq 1 - R^2 \mathcal{C}_{\text{\ref{Thm_Universality}}} n \exp\Big( -\frac{n^{\tvarepsilon}}{\mathcal{C}_{\text{\ref{Thm_Universality}}} R^2} \Big)
	\end{align}
	and
	\begin{align}\label{Eq_Universality_Result2_copy}
		& \bP\Big( \forall r,s \leq R : \ \sup\limits_{z \in \bD(\eta,\kappa)}\Big| \overbrace{\frac{1}{n} \tr(B_s^*B_r\tilde{\bm{R}}^{(\bm{X})}(z))}^{\overset{\text{(\ref{Eq_Def_hatDelta_B})}}{=} \hat{\delta}^{(B)}_{r,s}(z)} - \overbrace{\frac{1}{n} \tr(B_s^*B_r\tilde{\bm{R}}^{(\bm{Z})}(z))}^{\overset{\text{(\ref{Eq_Def_hatDeltaZ_copy})}}{=} \hat{\delta}^{(B,\bm{Z})}_{r,s}(z)} \Big| < \mathcal{C}_{\text{\ref{Thm_Universality}}} n^{\frac{\tvarepsilon-1}{2}} \sigma^2 \Big) \nonumber\\
		& \geq 1 - R^2 \mathcal{C}_{\text{\ref{Thm_Universality}}} n \exp\Big( -\frac{n^{\tvarepsilon}}{\mathcal{C}_{\text{\ref{Thm_Universality}}} R^2} \Big) \ .
	\end{align}
	The simple bound
	\begin{align*}
		& \big|\big| \hat{\delta}^{(A/B)}(z) - \hat{\delta}^{(A/B,\bm{Z})}(z) \big|\big| \leq R \max\limits_{r,s \leq R} \big| \hat{\delta}^{(A/B)}_{r,s}(z) - \hat{\delta}^{(A/B,\bm{Z})}_{r,s}(z) \big|
	\end{align*}
	then turns the results (\ref{Eq_Universality_Result1_copy}) and (\ref{Eq_Universality_Result2_copy}) into
	\begin{align}\label{Eq_Universality_Result_Implication}
		& \bP\Big( \sup\limits_{z \in \bD(\eta,\kappa)} \big|\big| \hat{\delta}^{(A)}(z) - \hat{\delta}^{(A,\bm{Z})}(z) \big|\big| < R \mathcal{C}_{\text{\ref{Thm_Universality}}} n^{\frac{\tvarepsilon-1}{2}} \sigma^2 \nonumber\\
		& \hspace{0.5cm} \text{ and } \sup\limits_{z \in \bD(\eta,\kappa)} \big|\big| \hat{\delta}^{(B)}(z) - \hat{\delta}^{(B,\bm{Z})}(z) \big|\big| < R \mathcal{C}_{\text{\ref{Thm_Universality}}} n^{\frac{\tvarepsilon-1}{2}} \sigma^2 \Big) \nonumber\\
		& \geq 1 - 2R^2 \mathcal{C}_{\text{\ref{Thm_Universality}}} n \exp\Big( -\frac{n^{\tvarepsilon}}{\mathcal{C}_{\text{\ref{Thm_Universality}}} R^2} \Big) \ .
	\end{align}
	
	\item[vi)] \textit{Choice of $\mathcal{C}$}:\\
	Combining (\ref{Eq_MainRes_detltaZ_DetEquiv_Approx}) with (\ref{Eq_Universality_Result_Implication}) gives
	\begin{align}\label{Eq_MainRes_ResultBeforeC}
		& \bP\Big( \forall z \in \bD(\eta,\kappa) : \ \big|\big| \hat{\delta}^{(A)}(z) - \delta^{(A)}(z) \big|\big| \leq \big( 4R\mathcal{C}_{\text{\ref{Thm_DualApprox}}} \mathcal{C}_{\text{\ref{Thm_EmpiricalDualSystem}}} + R\sigma^2 \mathcal{C}_{\text{\ref{Thm_Universality}}} \big) n^{\frac{\tvarepsilon-1}{2}} \nonumber\\
		& \hspace{2.5cm} \text{ and } \big|\big| \hat{\delta}^{(B)}(z) - \delta^{(B)}(z) \big|\big| \leq \big( 4R\mathcal{C}_{\text{\ref{Thm_DualApprox}}} \mathcal{C}_{\text{\ref{Thm_EmpiricalDualSystem}}} + R\sigma^2 \mathcal{C}_{\text{\ref{Thm_Universality}}} \big) n^{\frac{\tvarepsilon-1}{2}} \Big) \nonumber\\
		& \geq 1 - 2\mathcal{C}_{\text{\ref{Thm_EmpiricalDualSystem}}} R^2 n \exp\Big( -\frac{n^{\tvarepsilon}}{\mathcal{C}_{\text{\ref{Thm_EmpiricalDualSystem}}} R^2} \Big) - 2 \exp\Big( -\frac{C n}{2\sigma^4} \Big) - 2R^2 \mathcal{C}_{\text{\ref{Thm_Universality}}} n \exp\Big( -\frac{n^{\tvarepsilon}}{\mathcal{C}_{\text{\ref{Thm_Universality}}} R^2} \Big)
	\end{align}
	and combining (\ref{Eq_Universality_Results_M}) with (\ref{Eq_MainRes_Diff_Gaussian}) yields
	\begin{align}\label{Eq_MainRes_MDiff}
		& \bP\Big( \forall z \in \bD(\eta,\kappa) : \nonumber\\
		& \hspace{0.5cm} \Big| \frac{1}{n} \tr\big( M \bm{R}^{(\bm{X})}(z) \big) - \frac{-1}{z} \frac{1}{n} \tr\Big( M \Big( \Id_d + \sum\limits_{r',s'=1}^R \delta^{(B)}_{r',s'}(z) A_{r'} A_{s'}^* \Big)^{-1} \Big) \Big| \nonumber\\
		& \hspace{1.5cm} \leq \frac{\sigma^2}{\eta} \mathcal{C}_{\text{\ref{Thm_Universality}}} n^{\frac{\tvarepsilon-1}{2}} + 2\mathcal{C}_{\text{\ref{Thm_EmpiricalDualSystem}}} n^{\frac{\tvarepsilon-1}{2}} \nonumber\\
		& \hspace{2cm} + \frac{c_* \sigma^4}{\eta} \frac{(2\kappa^2+2^8\sigma^8 (1+\sqrt{c_*})^4 + 4)^2}{\tau^4 \eta^2} 4R\mathcal{C}_{\text{\ref{Thm_DualApprox}}} \mathcal{C}_{\text{\ref{Thm_EmpiricalDualSystem}}} n^{\frac{\tvarepsilon-1}{2}} \nonumber\\
		& \text{ and } \Big| \frac{1}{n} \tr\big( \tilde{M} \tilde{\bm{R}}^{(\bm{X})}(z) \big) - \frac{-1}{z} \frac{1}{n} \tr\Big( \tilde{M} \Big( \Id_n + \sum\limits_{r',s'=1}^R \delta^{(B)}_{r',s'}(z) B_{s'}^*B_{r'} \Big)^{-1} \Big) \Big| \nonumber\\
		& \hspace{1.5cm} \leq \frac{\sigma^2}{\eta} \mathcal{C}_{\text{\ref{Thm_Universality}}} n^{\frac{\tvarepsilon-1}{2}} + 2\mathcal{C}_{\text{\ref{Thm_EmpiricalDualSystem}}} n^{\frac{\tvarepsilon-1}{2}} \nonumber\\
		& \hspace{2cm} + \frac{c_* \sigma^4}{\eta} \frac{(2\kappa^2+2^8\sigma^8 (1+\sqrt{c_*})^4 + 4)^2}{\tau^4 \eta^2} 4R\mathcal{C}_{\text{\ref{Thm_DualApprox}}} \mathcal{C}_{\text{\ref{Thm_EmpiricalDualSystem}}} n^{\frac{\tvarepsilon-1}{2}} \Big) \nonumber\\
		& \geq 1 - 4\mathcal{C}_{\text{\ref{Thm_EmpiricalDualSystem}}} R^2 n \exp\Big( -\frac{n^{\tvarepsilon}}{\mathcal{C}_{\text{\ref{Thm_EmpiricalDualSystem}}} R^2} \Big) - 2 \exp\Big( -\frac{C n}{2\sigma^4} \Big) - 2\mathcal{C}_{\text{\ref{Thm_Universality}}} n \exp\Big( -\frac{n^{\tvarepsilon}}{\mathcal{C}_{\text{\ref{Thm_Universality}}} R^2} \Big) \ ,
	\end{align}
	thus proving both
	\begin{align}\label{Eq_MainRes_Result_copy}
		& \bP\Big( \forall z \in \bD(\eta,\kappa) : \ \big|\big| \hat{\delta}^{(A)}(z) - \delta^{(A)}(z) \big|\big| \leq R \mathcal{C} n^{\frac{\tvarepsilon-1}{2}} \nonumber\\
		& \hspace{2.5cm} \text{ and } \big|\big| \hat{\delta}^{(B)}(z) - \delta^{(B)}(z) \big|\big| \leq R \mathcal{C} n^{\frac{\tvarepsilon-1}{2}} \Big) \nonumber\\
		& \geq 1 - \mathcal{C} R^2 n \exp\Big( -\frac{n^{\tvarepsilon}}{\mathcal{C} R^2} \Big)
	\end{align}
	and
	\begin{align}\label{Eq_MainRes_Result_M_copy}
		& \bP\Big( \forall z \in \bD(\eta,\kappa) : \nonumber\\
		& \hspace{0.5cm} \Big| \frac{1}{n} \tr\big( M \bm{R}^{(\bm{X})}(z) \big) - \frac{-1}{z} \frac{1}{n} \tr\Big( M \Big( \Id_d + \sum\limits_{r',s'=1}^R \delta^{(B)}_{r',s'}(z) A_{r'} A_{s'}^* \Big)^{-1} \Big) \Big| \leq R\mathcal{C} n^{\frac{\tvarepsilon-1}{2}} , \nonumber\\
		& \hspace{0.5cm} \Big| \frac{1}{n} \tr\big( \tilde{M} \tilde{\bm{R}}^{(\bm{X})}(z) \big) - \frac{-1}{z} \frac{1}{n} \tr\Big( \tilde{M} \Big( \Id_n + \sum\limits_{r',s'=1}^R \delta^{(A)}_{r',s'}(z) B_{s'}^*B_{r'} \Big)^{-1} \Big) \Big| \leq R\mathcal{C} n^{\frac{\tvarepsilon-1}{2}} \Big) \nonumber\\
		& \geq 1 - \mathcal{C} R^2 n \exp\Big( -\frac{n^{\tvarepsilon}}{\mathcal{C} R^2} \Big) \ .
	\end{align}
	for
	\begin{align*}
		& \mathcal{C} \coloneq \max\Big( 4\mathcal{C}_{\text{\ref{Thm_DualApprox}}} \mathcal{C}_{\text{\ref{Thm_EmpiricalDualSystem}}} + \sigma^2 \mathcal{C}_{\text{\ref{Thm_Universality}}}, 4\mathcal{C}_{\text{\ref{Thm_EmpiricalDualSystem}}} + 2 + 2\mathcal{C}_{\text{\ref{Thm_Universality}}}, \frac{2\sigma^4}{C} , \\
		& \hspace{2cm} \frac{\sigma^2}{\eta} \mathcal{C}_{\text{\ref{Thm_Universality}}} + 2\mathcal{C}_{\text{\ref{Thm_EmpiricalDualSystem}}} + \frac{c_* \sigma^4}{\eta} \frac{(2\kappa^2+2^8\sigma^8 (1+\sqrt{c_*})^4 + 4)^2}{\tau^4 \eta^2} 4\mathcal{C}_{\text{\ref{Thm_DualApprox}}} \mathcal{C}_{\text{\ref{Thm_EmpiricalDualSystem}}} \Big) \ ,
	\end{align*}
	where $C>0$ is the universal constant from Lemma~\ref{Lemma_ZTailBound}. This concludes the proof of Theorem~\ref{Thm_SepCovMP_NonAsymp}. \qed
\end{itemize}

\subsection{Proof of Corollary~\ref{Cor_SepCovMP_Asymp}}\label{Proof_Cor_SepCovMP_Asymp}
By (\ref{Eq_CorAsymp_cInfAssumption}), there exists a $c_* \geq 1$ such that $\frac{d_n}{n} \leq c_*$ for all $n \in \N$. It by construction follows that the models $(\bm{X}_n, (A^{(n)}_r)_{r \leq R}, (B^{(n)}_r)_{r \leq R})$ satisfy assumptions~\ref{ItemAssumption_cBound}-\ref{ItemTempAssumption_NonDegeneracy} with fixed constants $c_*,\sigma^2,C_6,\tau$ for all $n \in \N$.
\\[0.5em]
Theorem~\ref{Thm_SepCovMP_NonAsymp} with $\tvarepsilon$ for any $\eta,\kappa>0$ and all $n \geq N$ gives
\begin{align}\label{Eq_MainRes_Result_Cor_copy}
	& \bP\Big( \sup\limits_{z \in \bD(\eta,\kappa)} ||\hat{\delta}^{(A,n)}(z)-\delta^{(A,n)}(z)|| + ||\hat{\delta}^{(B,n)}(z)-\delta^{(B,n)}(z)|| > 2R\mathcal{C} n^{\frac{1}{4}} \Big) \nonumber\\
	& \leq \mathcal{C} R^2 n \exp\Big( -\frac{\sqrt{n}}{\mathcal{C} R^2} \Big)
\end{align}
and with $\tilde{M}=\Id_n$ also
\begin{align}\label{Eq_MainRes_Result_M_Cor_copy}
	& \bP\Big( \forall z \in \bD(\eta,\kappa) : \nonumber\\
	& \hspace{0.5cm} \Big| \underbrace{\frac{1}{n}\tr\big( (\tilde{\bm{S}}^{(n)} - z\Id_n)^{-1} \big)}_{\overset{\text{(\ref{Eq_Def_hatNu})}}{=}\cs_{\hat{\ul{\nu}}_n}(z)} - \underbrace{\frac{-1}{z n}\tr\Big( \Big( \Id_n + \sum\limits_{r',s'=1}^R \delta^{(A,n)}_{r',s'}(z) (B^{(n)}_{s'})^*B^{(n)}_{r'} \Big)^{-1} \Big)}_{= \cs_{\ul{\nu}_n}(z) \text{ by (\ref{Eq_DetEquiv_NuStilProp}) and (\ref{Eq_DeltaProductCalc})}} \Big| \leq R \mathcal{C} n^{\frac{\tvarepsilon-1}{2}} \Big) \nonumber\\
	& \geq 1 - \mathcal{C} R^2 n \exp\Big( -\frac{n^{\tvarepsilon}}{\mathcal{C} R^2} \Big)
\end{align}
where $N,\mathcal{C} > 0$ do not depend on $n$. Since the above probabilities are both summable over $n$, Borel-Cantelli yields
\begin{align*}
	& 0 = \bP\Big( \bigcap\limits_{m=1}^\infty \bigcup\limits_{n=m}^\infty \Big\{ \sup\limits_{z \in \bD(\eta,\kappa)} ||\hat{\delta}^{(A,n)}(z)-\delta^{(A,n)}(z)|| + ||\hat{\delta}^{(B,n)}(z)-\delta^{(B,n)}(z)||\\
	& \hspace{7cm} + |\cs_{\hat{\ul{\nu}}_n}(z) - \cs_{\ul{\nu}_n}(z)| > 3R\mathcal{C} n^{\frac{1}{4}} \Big\} \Big)
\end{align*}
and by transition to the counter event
\begin{align*}
	1 & = \bP\Big( \bigcup\limits_{m=1}^\infty \bigcap\limits_{n=m}^\infty \Big\{ \sup\limits_{z \in \bD(\eta,\kappa)} ||\hat{\delta}^{(A,n)}(z)-\delta^{(A,n)}(z)|| + ||\hat{\delta}^{(B,n)}(z)-\delta^{(B,n)}(z)||\\
	& \hspace{7cm} + |\cs_{\hat{\ul{\nu}}_n}(z) - \cs_{\ul{\nu}_n}(z)| \leq 3R\mathcal{C} n^{\frac{1}{4}} \Big\} \Big)\\
	& \leq \bP\Big( \sup\limits_{z \in \bD(\eta,\kappa)} ||\hat{\delta}^{(A,n)}(z)-\delta^{(A,n)}(z)|| + ||\hat{\delta}^{(B,n)}(z)-\delta^{(B,n)}(z)||\\
	& \hspace{6cm} + |\cs_{\hat{\ul{\nu}}_n}(z) - \cs_{\ul{\nu}_n}(z)| \xrightarrow{n \to \infty} 0 \Big)\\
	& \leq \bP\Big( \forall z \in \bD(\eta,\kappa) : \ ||\hat{\delta}^{(A,n)}(z)-\delta^{(A,n)}(z)|| + ||\hat{\delta}^{(B,n)}(z)-\delta^{(B,n)}(z)||\\
	& \hspace{6cm} + |\cs_{\hat{\ul{\nu}}_n}(z) - \cs_{\ul{\nu}_n}(z)| \xrightarrow{n \to \infty} 0 \Big) \ .
\end{align*}
Taking $\eta \searrow 0$ and $\kappa \nearrow \infty$ with continuity of measures then gives
\begin{align}\label{Eq_CorAsymp_Convergences}
	& 1 = \bP\Big( \forall z \in \C^+ : \ ||\hat{\delta}^{(A,n)}(z)-\delta^{(A,n)}(z)|| + ||\hat{\delta}^{(B,n)}(z)-\delta^{(B,n)}(z)||\nonumber\\
	& \hspace{6cm} + |\cs_{\hat{\ul{\nu}}_n}(z) - \cs_{\ul{\nu}_n}(z)| \xrightarrow{n \to \infty} 0 \Big)
\end{align}
and assumption (\ref{Eq_CorAsymp_RhoConvergence}) with interpretation (c) of weak convergence from Lemma~\ref{Lemma_MatrWeakConv} yields
\begin{align}\label{Eq_CorAsymp_BorelCantelli}
	& 1 = \bP\Big( \forall z \in \C^+ : \ ||\hat{\delta}^{(A,n)}(z)-\delta^{(A,\infty)}(z)|| + ||\hat{\delta}^{(B,n)}(z)-\delta^{(B,\infty)}(z)|| \xrightarrow{n \to \infty} 0 \Big)
\end{align}
as well as
\begin{align*}
	& ||\rho^{(A,n)}(\R) - \rho^{(A,\infty)}(\R)|| \xrightarrow{n \to \infty} 0 \ \ \text{ and } \ \ ||\rho^{(B,n)}(\R) - \rho^{(B,\infty)}(\R)|| \xrightarrow{n \to \infty} 0 \ .
\end{align*}
Since
\begin{align*}
	& \hat{\rho}^{(A,n)}(\R) \overset{\text{(\ref{Eq_Def_hatRhoA})}}{=} \frac{1}{n} \tr\big( A^{(n)}_r(A^{(n)}_s)^* \big)_{r,s \leq R} \overset{\text{(\ref{Eq_DetEquiv_RhoProp2})}}{=} \rho^{(A,n)}(\R)\\
	& \hat{\rho}^{(B,n)}(\R) \overset{\text{(\ref{Eq_Def_hatRhoB})}}{=} \frac{1}{n} \tr\big( (B^{(n)}_s)^*B^{(n)}_r \big)_{r,s \leq R} \overset{\text{(\ref{Eq_DetEquiv_RhoProp2})}}{=} \rho^{(B,n)}(\R) \ ,
\end{align*}
one may again use interpretation (c) of weak convergence from Lemma~\ref{Lemma_MatrWeakConv}, this time with (\ref{Eq_CorAsymp_BorelCantelli}), to see
\begin{align*}
	& 1 = \bP\Big( \hat{\rho}^{(A,n)} \xRightarrow{n \to \infty} \rho^{(A,\infty)} \ \ \text{ and } \ \ \hat{\rho}^{(B,n)} \xRightarrow{n \to \infty} \rho^{(B,\infty)} \Big) \ ,
\end{align*}
which proves (\ref{Eq_CorAsymp_hatRhoConvergence}).
\\[0.5em]
For (\ref{Eq_CorAsymp_hatNuConvergence}), it by (\ref{Eq_CorAsymp_Convergences}) suffices to show the deterministic convergence $\cs_{\ul{\nu}_n}(z) \xrightarrow{n \to \infty} \cs_{\ul{\nu}_\infty}(z)$ for all $z \in \C^+$. Since any sub-sequence $(\ul{\nu}_{n_j})_{j \in \N}$ will by (d) in Theorem~\ref{Thm_DetEquiv} have the supports of $\ul{\nu}_{n_j}$ contained on the interval $[0,8\sigma^4(1+\sqrt{c}_*)^2]$, there by Prokhorov must exist a sub-sub-sequence $(\ul{\nu}_{n_{j_k}})_{k \in \N}$ and a probability measure $\ul{\nu}_\infty$ such that $\ul{\nu}_{n_{j_k}} \xRightarrow{k \to \infty} \ul{\nu}_\infty$, which implies
\begin{align*}
	& \forall z \in \C^+ : \ \cs_{\ul{\nu}_{n_{j_k}}}(z) \xrightarrow{k \to \infty} \cs_{\ul{\nu}_\infty}(z) \ .
\end{align*}
Recalling the defining property of $\ul{\nu}_{n}$ from (d) in Theorem~\ref{Thm_DetEquiv}, one also has
\begin{align*}
	\forall z \in \C^+ : \ & \cs_{\ul{\nu}_{n_{j_k}}}(z) \overset{\text{(\ref{Eq_DetEquiv_NuStilProp})}}{=} \frac{1}{z} - \sum\limits_{r,s=1}^R \delta^{(A,n_{j_k})}_{r,s}(z) \delta^{(B,n_{j_k})}_{r,s}(z)\\
	& \xrightarrow[\substack{\text{(\ref{Eq_CorAsymp_RhoConvergence}),} \\ \text{(c) in Lemma~\ref{Lemma_MatrWeakConv}}}]{k \to \infty} \frac{1}{z} - \sum\limits_{r,s=1}^R \delta^{(A,\infty)}_{r,s}(z) \delta^{(B,\infty)}_{r,s}(z) \ .
\end{align*}
Combining the last two convergences gives
\begin{align*}
	& \forall z \in \C^+ : \ \cs_{\ul{\nu}_\infty}(z) = \frac{1}{z} - \sum\limits_{r,s=1}^R \delta^{(A,\infty)}_{r,s}(z) \delta^{(B,\infty)}_{r,s}(z) \ ,
\end{align*}
so $\ul{\nu}_\infty$ is uniquely determined and does not depend on the sub-sub-sequence $(n_{j_k})_{k \in \N}$, which by standard topological arguments yields $\ul{\nu}_{n} \xRightarrow{n \to \infty} \ul{\nu}_\infty$ or equivalently
\begin{align*}
	& \forall z \in \C^+ : \ \cs_{\ul{\nu}_n}(z) \xrightarrow{n \to \infty} \cs_{\ul{\nu}_\infty}(z) \ ,
\end{align*}
which by (\ref{Eq_CorAsymp_Convergences}) was seen to be sufficient for (\ref{Eq_CorAsymp_hatNuConvergence}). \qed

\newpage
\section{Proofs of Lemmas}\label{Section_Proofs}

\subsection{Proof of Lemma~\ref{Lemma_MatrWeakConv}}\label{Proof_Lemma_MatrWeakConv}
\begin{itemize}
	\item (a) $\Rightarrow$ (b):\\
	One for every $f \in C_b(\R;\R)$ observes that the entries of the $(R \times R)$-matrix $M_{f,n} \coloneq \int_\R f \, d(\rho_n - \rho)$ are by polarization identity uniquely defined by
	\begin{align}\label{Eq_MatrixWeakConv_b_to_a}
		& e_{r,R}^* M_{f,n} e_{s,R} = \frac{1}{4} \sum\limits_{k=0}^3 \bm{i}^{-k} \underbrace{(e_{r,R} + \bm{i}^k e_{s,R})^* M_{f,n} (e_{r,R} + \bm{i}^{-k} e_{s,R})}_{\xrightarrow{n \to \infty} 0 \text{ by (a)}} \xrightarrow{n \to \infty} 0 \ .
	\end{align}
	As $R$ is fixed, the convergence of all entries of $M_{f,n}$ to zero implies
	\begin{align*}
		& \forall f \in \C_b(\R,\R) : \ \big|\big| \int_\R f \, d(\rho_n - \rho) \big|\big| \xrightarrow{n \to \infty} 0 \ ,
	\end{align*}
	which is easily extended to hold for all $f \in C_b(\R;\C)$.
	
	\item (b) $\Rightarrow$ (c):\\
	For each $z \in \C^+$, the map $\lambda \mapsto \frac{1}{\lambda-z}$ is in $C_b(\R;\C)$, which makes this direction trivial. The property (\ref{Eq_MatrWeakConv_SameMass}) follows by setting $f=1$.
	
	\item (c) $\Rightarrow$ (d):\\
	This direction is also trivial.
	
	\item (d) $\Rightarrow$ (a):\\
	For each $v \in \C^R$, let $m_n \coloneq v^* \rho_n(\R) v$ and $m \coloneq v^* \rho(\R) v$ denote the total mass of the finite Radon measures $(v^* \rho_n v)$ and $(v^* \rho v)$ respectively, then (\ref{Eq_MatrWeakConv_SameMass}) implies $v^* \rho_n(\R) v = m_n \xrightarrow{n \to \infty} m$. If $m=0$, then one immediately has $v^* \rho_n v \xRightarrow{n \to \infty} 0 = v^* \rho v$, which for the case $m=0$ proves (a).\\
	If, on the other hand $m > 0$, then one may by $m_n \xrightarrow{n \to \infty} m$ without loss of generality assume $m_n > 0$ for all $n \in \N$. 
	By (d) for any $z \in \C^+$ observe that the calculation
	\begin{align*}
		& \int_\R \frac{1}{\lambda-z} \, d\Big( \frac{1}{m_n} v^* \rho_n v \Big)(\lambda) = \overbrace{\frac{1}{m_n}}^{\to \frac{1}{m}} \overbrace{v^* \delta_n(z) v}^{\to v^* \delta(z) v}\\
		& \hspace{1cm} \xrightarrow{n \to \infty} \frac{1}{m} v^* \delta(z) v = \int_\R \frac{1}{\lambda-z} \, d\Big( \frac{1}{m} v^* \rho v \Big)(\lambda)
	\end{align*}
	proves point-wise the convergence of Stieltjes transforms of probability measures $\frac{1}{m_n} v^* \rho_n v$ to the Stieltjes transform of the probability measure $\frac{1}{m} v^* \rho v$. It is well-known (cf. for example Theorem B.9 in \cite{BaiSALDRM}) that this implies $\frac{1}{m_n} v^* \rho_n v \xRightarrow{n \rightarrow \infty} \frac{1}{m} v^* \rho v$. Consequently
	\begin{align*}
		& \underbrace{m_n}_{\to m} \underbrace{\frac{1}{m_n} v^* \rho_n v}_{\Rightarrow \frac{1}{m} v^* \rho v} \xRightarrow{n \to \infty} m \frac{1}{m} v^* \rho v = v^* \rho v \ ,
	\end{align*}
	thus proving (a) for the case $m > 0$.
	\qed
\end{itemize}

\subsection{Proof of Lemma~\ref{Lemma_NonDegeneracyNew}}\label{Proof_Lemma_NonDegeneracyNew}
Let $\mathbbm{1}_R$ denote the vector $(1,\dots,1)^\top \in \C^R$.
For every $v \in \C^{d}$ and positive semi-definite $\tilde{C} \in \C^{R \times R}$ observe
\begin{align}\label{Eq_NondegeneracyNew_Calc1}
	& v^* \Big(\sum\limits_{r,s=1}^R \tilde{C}_{r,s} A_rA_s^*\Big) v = \sum\limits_{r,s=1}^R \tilde{C}_{r,s} v^*A_rA_s^*v = \mathbbm{1}_R^\top \big(\tilde{C} \odot M(v)\big) \mathbbm{1}_R \ ,
\end{align}
where $M(v) = (v^*A_rA_s^*v)_{r,s \leq R}$ and $\odot$ denotes the Hadamard product. The calculation
\begin{align*}
	& \forall w \in \C^{R} : \ w^* M(v) w = \sum\limits_{r,s=1}^R \ol{w_r} w_s (v^*A_rA_s^*v) = \Big|\Big| \Big( \sum\limits_{r=1}^R w_r A_r^* \Big) v \Big|\Big|_2^2
\end{align*}
show that $M(v)$ is positive semi-definite.
It is  well known that the Hadamard product preserves positive semi-definiteness, so $\tilde{C} \odot M(v)$ is also positive semi-definite. By (\ref{Eq_NondegeneracyNew_Calc1}), the matrix $\sum\limits_{r,s=1}^R \tilde{C}_{r,s} A_rA_s^*$ is thus positive semi-definite for any positive semi-definite $\tilde{C} \in \C^{R \times R}$. Taking $\tilde{C} = C - \lambda_{\min}(C)\Id_R$ yields
\begin{align*}
	& \lambda_{\min}(C) \sum\limits_{r=1}^R A_rA_r^* = \sum\limits_{r,s=1}^R (\lambda_{\min}(C)\Id_R)_{r,s} A_rA_s^* \preceq \sum\limits_{r,s=1}^R C_{r,s} A_rA_s^* \ , 
\end{align*}
which by assumption~\ref{ItemTempAssumption_NonDegeneracy} proves (\ref{Eq_NonDegeneracyC_A}). The bound (\ref{Eq_NonDegeneracyC_B}) may be shown by repeating the above proof for $A_rA_s^*$ replaced with $B_s^*B_r$.
\\[0.5em]
By the simple bound $\tr\big( AA^* M \big) \geq \lambda_{\min}(M) \tr(AA^*)$, under the assumption~\ref{ItemTempAssumption_NonDegeneracy} calculate
\begin{align*}
	& \min\limits_{\substack{v \in \C^R \\ ||v||_2=1}} v^* \Big( \frac{1}{n}\tr\big( A_r A_s^* M \big)_{r,s\leq R} \Big) v = \min\limits_{\substack{v \in \C^R \\ ||v||_2=1}} \sum\limits_{r,s=1}^R \ol{v_r} v_s \frac{1}{n}\tr\big( A_s^* M A_r \big) \nonumber\\
	& = \min\limits_{\substack{v \in \C^R \\ ||v||_2=1}} \frac{1}{n}\tr\bigg( \Big(\sum\limits_{s=1}^R \ol{v_s} A_s\Big)^* M \Big(\sum\limits_{r=1}^R \ol{v_r} A_r\Big) \bigg) \nonumber\\
	& \geq \lambda_{\min}(M) \min\limits_{\substack{v \in \C^R \\ ||v||_2=1}} \frac{1}{n}\tr\bigg( \Big(\sum\limits_{s=1}^R \ol{v_s} A_s\Big)^* \Big(\sum\limits_{r=1}^R \ol{v_r} A_r\Big) \bigg) \nonumber\\
	& = \lambda_{\min}(M) \min\limits_{\substack{v \in \C^R \\ ||v||_2=1}} v^* \mathbb{G}^{(A)} v \overset{\text{\ref{ItemTempAssumption_NonDegeneracy}}}{\geq} \tau \lambda_{\min}(M)
\end{align*}
to see
\begin{align*}
	& \lambda_{\min}\Big( \frac{1}{n} \tr\big( A_rA_s^* M \big)_{r,s \leq R} \Big) \geq \tau \lambda_{\min}(M)
\end{align*}
and analogously
\begin{align*}
	& \lambda_{\min}\Big( \frac{1}{n} \tr\big( B_s^*B_r \tilde{M} \big)_{r,s \leq R} \Big) \geq \tau \lambda_{\min}(\tilde{M}) \ ,
\end{align*}
thus proving (\ref{Eq_NonDegeneracy_TrCalcA}) and (\ref{Eq_NonDegeneracy_TrCalcB}). \qed

\subsection{Proof of Lemma~\ref{Lemma_SimpleOperatorBounds}}\label{Proof_Lemma_SimpleOperatorBounds}
For any vectors $u,v \in \C^d$, observe
\begin{align*}
	& \Big| u^* \Big( \sum\limits_{r,s=1}^R C_{r,s} A_r A_s^* \Big) v \Big| = \Big| \sum\limits_{r,s=1}^R C_{r,s} u^*A_r A_s^*v \Big|\\
	& = \Big| \sum\limits_{r,s=1}^R \sum\limits_{j=1}^d C_{r,s} \underbrace{u^*A_r e_{j,d}}_{\eqcolon  \ol{\tilde{u}^{(j)}_r}} \underbrace{e_{j,d}^\top A_s^*v}_{\eqcolon  \tilde{v}^{(j)}_s} \Big| = \Big| \sum\limits_{j=1}^d (\tilde{u}^{(j)})^* C \tilde{v}^{(j)} \Big|\\
	& \leq \sum\limits_{j=1}^d ||C|| \, ||\tilde{u}^{(j)}||_2 \, ||\tilde{v}^{(j)}||_2 \overset{\text{C.S.}}{\leq} ||C|| \Big(\sum\limits_{j=1}^d ||\tilde{u}^{(j)}||_2^2 \Big)^{\frac{1}{2}} \Big(\sum\limits_{j=1}^d ||\tilde{v}^{(j)}||_2^2 \Big)^{\frac{1}{2}}\\
	& = ||C|| \Big(\sum\limits_{j=1}^d \sum\limits_{r=1}^R |u^* A_r e_{j,d}|^2 \Big)^{\frac{1}{2}} \Big(\sum\limits_{j=1}^d \sum\limits_{r=1}^R |e_{j,d}^\top A_r^* v|^2 \Big)^{\frac{1}{2}}\\
	& = ||C|| \Big(\sum\limits_{r=1}^R u^* A_r A_r^* u \Big)^{\frac{1}{2}} \Big(\sum\limits_{r=1}^R v^* A_r A_r^* v \Big)^{\frac{1}{2}}\\
	& \overset{\text{\ref{ItemAssumption_sigmaBound}}}{\leq} ||C|| \big( \sigma^2 ||u||_2^2 \big)^{\frac{1}{2}} \big( \sigma^2 ||v||_2^2 \big)^{\frac{1}{2}} \leq \sigma^2 ||C|| \, ||u||_2 \, ||v||_2 \ ,
\end{align*}
which proves the left-hand side of (\ref{Eq_SimpleBounds_ASum_BSum}). The right-hand side may be proved in complete analogy.
To prove (\ref{Eq_SimpleBounds_TraceMatrixBounds}), employ the well-known trace bound
\begin{align}\label{Eq_TraceBound}
	& \forall A \in \C^{N \times N} : \ |\tr(A)| \leq \operatorname{rank}(A) \, ||A|| \ ,
\end{align}
which for example is proved in a stronger form in \cite{mirsky1975trace}, to for any $x,y \in \C^{R}$ calculate
\begin{align*}
	& \Big| x^* \Big(\frac{1}{n} \tr\big( A_r A_s^* M \big)_{r,s \leq R}\Big) y \Big| = \Big| \sum\limits_{r,s=1}^R \ol{x}_r y_s \frac{1}{n} \tr\big( A_s^* M A_r \big) \Big|\\
	& = \Big| \frac{1}{n} \tr\Big( \Big( \sum\limits_{s=1}^R y_s A_s^* \Big) M \Big( \sum\limits_{r=1}^R \ol{x}_r A_r \Big) \Big) \Big|\\
	& \overset{\text{(\ref{Eq_TraceBound})}}{\leq} \frac{d}{n} ||M|| \, \Big|\Big| \sum\limits_{r=1}^R \ol{x_r} A_r \Big|\Big| \, \Big|\Big| \sum\limits_{r=1}^R \ol{y_r} A_r \Big|\Big|\\
	& \leq \frac{d}{n} ||M|| \Big( \sum\limits_{r=1}^R |x_r| \, ||A_r|| \Big) \Big( \sum\limits_{r=1}^R |y_r| \, ||A_r|| \Big)\\
	& \overset{\text{C.S.}}{\leq} \frac{d}{n} ||M|| \, ||x||_2 \, ||y||_2 \sum\limits_{r=1}^R ||A_r||^2 \overset{\text{\ref{ItemAssumption_sigmaBound}}}{\leq} \sigma^2 \frac{d}{n} ||M|| \, ||x||_2 \, ||y||_2
\end{align*}
and analogously
\begin{align*}
	& \Big| x^* \Big(\frac{1}{n} \tr\big( B_s^* B_r \tilde{M} \big)_{r,s \leq R}\Big) y \Big| = \Big| \sum\limits_{r,s=1}^R \ol{x}_r y_s \frac{1}{n} \tr\big( B_r \tilde{M} B_s^* \big) \Big|\\
	& = \Big| \frac{1}{n} \tr\Big( \Big( \sum\limits_{r=1}^R \ol{x_r} B_r \Big) \tilde{M} \Big( \sum\limits_{s=1}^R y_s B_s^* \Big) \Big) \Big|\\
	& \overset{\text{(\ref{Eq_TraceBound})}}{\leq} ||\tilde{M}|| \, \Big|\Big| \sum\limits_{r=1}^R \ol{x_r} B_r \Big|\Big| \, \Big|\Big| \sum\limits_{r=1}^R \ol{y_r} B_r \Big|\Big|\\
	& \leq ||\tilde{M}|| \Big( \sum\limits_{r=1}^R |x_r| \, ||B_r|| \Big) \Big( \sum\limits_{r=1}^R |y_r| \, ||B_r|| \Big)\\
	& \overset{\text{C.S.}}{\leq} ||\tilde{M}|| \, ||x||_2 \, ||y||_2 \sum\limits_{r=1}^R ||B_r||^2 \overset{\text{\ref{ItemAssumption_sigmaBound}}}{\leq} \sigma^2 ||\tilde{M}|| \, ||x||_2 \, ||y||_2 \ .
\end{align*}
\qed

\subsection{Proof of  Lemma~\ref{Lemma_OperatorBound}}\label{Proof_Lemma_OperatorBound}
For any $u,v \in \C^{n}$ observe
\begin{align*}
	& |u^* T(X) v| = \Big|\sum\limits_{j=1}^N u^* C_j X D_j v\Big| \leq \sum\limits_{j=1}^N ||C_j^*u||_2 \, ||X|| \, ||D_j v||_2\\
	& \overset{\text{C.S.}}{\leq} ||X|| \Big( \sum\limits_{j=1}^N ||C_j^*u||_2^2 \Big)^{\frac{1}{2}} \, \Big( \sum\limits_{j=1}^N ||D_jv||_2^2 \Big)^{\frac{1}{2}}\\
	& = ||X|| \Big( u^* \Big( \sum\limits_{j=1}^N C_jC_j^* \Big) u \Big)^{\frac{1}{2}} \, \Big( v^* \Big(\sum\limits_{j=1}^N D_j^*D_j \Big) v \Big)^{\frac{1}{2}}\\
	& \leq ||X|| \, \Big|\Big| \sum\limits_{j=1}^N C_jC_j^* \Big|\Big|^{\frac{1}{2}} \, \Big|\Big| \sum\limits_{j=1}^N D_j^*D_j \Big|\Big|^{\frac{1}{2}} \, ||u||_2 \, ||v||_2 \ .
\end{align*}
\qed

\subsection{Proof of Lemma~\ref{Lemma_ContractionEVs}}\label{Proof_Lemma_ContractionEVs}
By triangle inequality and the contraction property, one has
\begin{align*}
	& ||v||_B - ||w||_B \leq ||v+w||_B = ||T(v)||_B \leq (1-\theta)||v||_B \ ,
\end{align*}
which may be re-arranged to
\begin{align*}
	& \theta ||v||_B \leq ||w||_B \ .
\end{align*}
\qed


\subsection{Proof of Lemma~\ref{Lemma_SteinMixed}}\label{proof_Lemma_SteinMixed}
Define independent random variables $V \sim \mathcal{N}(0,1)$ and $W \sim \mathcal{CN}(0,1)$.
Following Definition~\ref{Def_SimilarGaussianMatrix}, one may choose $a,b \in \C$ such that the first and second moments of $aV + bW$ coincide with those of $Z$, i.e.
\begin{align*}
	& \E[aV + bW] = 0 = \E[Z] \ , \ \E[|aV + bW|^2] = \E[|Z|^2] \ \text{ and } \ \E[(aV + bW)^2] = \E[Z^2] \ .
\end{align*}
Since both $Z$ and $aV + bW$ are complex Gaussian, it follows that $Z \sim aV + bW$ and it suffices to prove the lemma for $aV + bW$ instead of $Z$.
\\[0.5em]
The well-known real valued Stein's Lemma states that standard normal random variables $X \sim \mathcal{N}(0,1)$ satisfy
\begin{align}\label{Eq_Stein_Real}
	& \E\big[ X f(X) \big] = \E\big[ f'(X) \big]
\end{align}
for any differentiable function $f : \R \rightarrow \R$, whenever both expectations exist.
By writing $W = \frac{1}{\sqrt{2}} \big( X + iY \big)$ for two independent standard normal random variables $X$ and $Y$, one may calculate
\begin{align*}
	& \E\big[ W h(W,\ol{W}) \big] = \frac{1}{\sqrt{2}} \E\Big[ (X+iY) h\Big(\frac{X+iY}{\sqrt{2}},\frac{X-iY}{\sqrt{2}}\Big) \Big]\\
	& = \frac{1}{\sqrt{2}} \E_Y\Big[ \E_X\Big[ X h\Big(\frac{X+iY}{\sqrt{2}},\frac{X-iY}{\sqrt{2}}\Big) \Big] \Big] + \frac{i}{\sqrt{2}} \E_X\Big[ \E_Y\Big[ Y h\Big(\frac{X+iY}{\sqrt{2}},\frac{X-iY}{\sqrt{2}}\Big) \Big] \Big]\\
	& \overset{\text{(\ref{Eq_Stein_Real})}}{=} \frac{1}{\sqrt{2}} \E_Y\Big[ \E_X\Big[ \partial_X h\Big(\frac{X+iY}{\sqrt{2}},\frac{X-iY}{\sqrt{2}}\Big) \Big] \Big] + \frac{i}{\sqrt{2}} \E_X\Big[ \E_Y\Big[ \partial_Y h\Big(\frac{X+iY}{\sqrt{2}},\frac{X-iY}{\sqrt{2}}\Big) \Big] \Big]\\
	& = \frac{1}{2} \E\Big[ \partial_1 h\Big(\frac{X+iY}{\sqrt{2}},\frac{X-iY}{\sqrt{2}}\Big) + \partial_2 h\Big(\frac{X+iY}{\sqrt{2}},\frac{X-iY}{\sqrt{2}}\Big) \Big]\\
	& \hspace{0.5cm} + \frac{i^2}{2} \E\Big[ \partial_1 h\Big(\frac{X+iY}{\sqrt{2}},\frac{X-iY}{\sqrt{2}}\Big) - \partial_2 h\Big(\frac{X+iY}{\sqrt{2}},\frac{X-iY}{\sqrt{2}}\Big) \Big]\\
	& = \E\Big[ \partial_2 h\Big(\frac{X+iY}{\sqrt{2}},\frac{X-iY}{\sqrt{2}}\Big) \Big] = \E\big[ \partial_2 h(W,\ol{W}) \big] \ ,
\end{align*}
thus proving
\begin{align}\label{Eq_Stein_Complex}
	& \E\big[ W h(W,\ol{W}) \big] = \E\big[ \partial_2 h(W,\ol{W}) \big] \eqcolon  \E\Big[ \frac{\partial}{\partial \ol{W}} h(W,\ol{W}) \Big]
\end{align}
for any holomorphic function $h : U \rightarrow \C$, whenever $h$ is of the same shape as $g$ and the involved expectations and $\E\big[ \partial_1 h(W,\ol{W}) \big]$ exist. Here, $\frac{\partial}{\partial \ol{W}}$ denotes the Wirtinger derivative, which ignores occurrences of $W$.
One may then directly employ the previous two formulations of Stein's lemma to see
\begin{align*}
	& \E\big[ (aV+bW) g(aV+bW,\ol{aV+bW}) \big]\\
	& = a\E\big[ V g(aV+bW,\ol{a}V+\ol{bW}) \big] + b\E\big[ W g(aV+bW,\ol{a}V+\ol{bW}) \big]\\
	& = a\E_W\big[ \E_V\big[ V g(aV+bW,\ol{a}V+\ol{bW}) \big] \big] + b\E_V\big[ \E_W\big[ W g(aV+bW,\ol{a}V+\ol{bW}) \big] \big]\\
	& \overset{\substack{\text{(\ref{Eq_Stein_Real})} \\ \text{(\ref{Eq_Stein_Complex})}}}{=} a\E_W\big[ \E_V\big[ \partial_V g(aV+bW,\ol{a}V+\ol{bW}) \big] \big] + b\E_V\Big[ \E_W\Big[ \frac{\partial}{\partial \ol{W}} g(aV+bW,\ol{a}V+\ol{bW}) \Big] \Big]\\
	& = a^2\E\big[ \partial_1 g(aV+bW,\ol{a}V+\ol{bW}) \big] + |a|^2\E\big[ \partial_2 g(aV+bW,\ol{a}V+\ol{bW}) \big]\\
	& \hspace{0.5cm} + |b|^2\E\big[ \partial_2 g(aV+bW,\ol{a}V+\ol{bW}) \big]\\
	& = a^2\E\big[ \partial_1 g(aV+bW,\ol{aV+bW}) \big] + (|a|^2 + |b|^2)\E\big[ \partial_2 g(aV+bW,\ol{aV+bW}) \big]\\
	& = \E[(aV+bW)^2] \E\big[ \partial_1 g(aV+bW,\ol{aV+bW}) \big]\\
	& \hspace{0.5cm} + \E[|aV+bW|^2]\E\big[ \partial_2 g(aV+bW,\ol{aV+bW}) \big] \ .
\end{align*}
\qed

\subsection{Proof of Lemma~\ref{Lemma_BasicDerivatives}}\label{Proof_Lemma_BasicDerivatives}
For ease of notation, write $\bm{Y}$, $\bm{S}$ and $\bm{R}(z)$ for $\bm{Y}^{(X)}$, $\bm{S}^{(X)}$ and $\bm{R}^{(X)}(z)$ respectively. 
\begin{itemize}
	\item Derivatives of $\bm{Y}$ and $\bm{Y}^*$:\\
	One by linearity of derivatives and (\ref{Eq_DerivCalc_BaseX}) directly calculates
	\begin{align}\label{Eq_DerivCalc_XDeriv_Y}
		& \frac{\partial}{\partial X_{i,j}} \bm{Y} \overset{\text{(\ref{Eq_DefY})}}{=} \sum\limits_{r=1}^R A_r \frac{\partial}{\partial X_{i,j}} \big[ X \big] B_r = \sum\limits_{r=1}^R A_r e_{i,d} e_{j,n}^\top B_r
	\end{align}
	and
	\begin{align}\label{Eq_DerivCalc_XDeriv_Y*}
		& \frac{\partial}{\partial X_{i,j}} \bm{Y}^* \overset{\text{(\ref{Eq_DefY})}}{=} \sum\limits_{s=1}^R B_s^* \frac{\partial}{\partial X_{i,j}} \big[ X^* \big] A_s^* = 0
	\end{align}
	as well as
	\begin{align}\label{Eq_DerivCalc_olXDeriv_Y}
		& \frac{\partial}{\partial \ol{X_{i,j}}} \bm{Y} \overset{\text{(\ref{Eq_DefY})}}{=} \sum\limits_{r=1}^R A_r \frac{\partial}{\partial \ol{X_{i,j}}} \big[ X \big] B_r = 0
	\end{align}
	and
	\begin{align}\label{Eq_DerivCalc_olXDeriv_Y*}
		& \frac{\partial}{\partial \ol{X_{i,j}}} \bm{Y}^* \overset{\text{(\ref{Eq_DefY})}}{=} \sum\limits_{s=1}^R B_s^* \frac{\partial}{\partial \ol{X_{i,j}}} \big[ X^* \big] A_s^* = \sum\limits_{s=1}^R B_s^* e_{j,n} e_{i,d}^\top A_s^* \ .
	\end{align}
	
	\item Derivatives of $\bm{S}$ and $\tilde{\bm{S}}$:\\
	By product rule, one observes
	\begin{align}
		& \frac{\partial}{\partial X_{i,j}} \bm{S} \overset{\text{(\ref{Eq_DefS})}}{=} \frac{1}{n} \frac{\partial}{\partial X_{i,j}} \big[ \bm{Y} \big] \bm{Y}^* + \frac{1}{n} \bm{Y} \frac{\partial}{\partial X_{i,j}} \big[ \bm{Y}^* \big] \overset{\substack{\text{(\ref{Eq_DerivCalc_XDeriv_Y})} \\ \text{(\ref{Eq_DerivCalc_XDeriv_Y*})}}}{=} \frac{1}{n} \sum\limits_{r=1}^R A_r e_{i,d} e_{j,n}^\top B_r \bm{Y}^* \label{Eq_DerivCalc_XDeriv_S}\\
		& \frac{\partial}{\partial \ol{X_{i,j}}} \bm{S} \overset{\text{(\ref{Eq_DefS})}}{=} \frac{1}{n} \frac{\partial}{\partial \ol{X_{i,j}}} \big[ \bm{Y} \big] \bm{Y}^* + \frac{1}{n} \bm{Y} \frac{\partial}{\partial \ol{X_{i,j}}} \big[ \bm{Y}^* \big] \overset{\substack{\text{(\ref{Eq_DerivCalc_olXDeriv_Y})} \\ \text{(\ref{Eq_DerivCalc_olXDeriv_Y*})}}}{=} \frac{1}{n} \sum\limits_{s=1}^R \bm{Y} B_s^* e_{j,n} e_{i,d}^\top A_s^* \label{Eq_DerivCalc_olXDeriv_S}
	\end{align}
	and
	\begin{align}
		& \frac{\partial}{\partial X_{i,j}} \tilde{\bm{S}} \overset{\text{(\ref{Eq_DefS})}}{=} \frac{1}{n} \frac{\partial}{\partial X_{i,j}} \big[ \bm{Y}^* \big] \bm{Y} + \frac{1}{n} \bm{Y}^* \frac{\partial}{\partial X_{i,j}} \big[ \bm{Y} \big] \overset{\substack{\text{(\ref{Eq_DerivCalc_XDeriv_Y})} \\ \text{(\ref{Eq_DerivCalc_XDeriv_Y*})}}}{=} \frac{1}{n} \sum\limits_{r=1}^R \bm{Y}^* A_r e_{i,d} e_{j,n}^\top B_r \label{Eq_DerivCalc_XDeriv_tS}\\
		& \frac{\partial}{\partial \ol{X_{i,j}}} \tilde{\bm{S}} \overset{\text{(\ref{Eq_DefS})}}{=} \frac{1}{n} \frac{\partial}{\partial \ol{X_{i,j}}} \big[ \bm{Y}^* \big] \bm{Y} + \frac{1}{n} \bm{Y}^* \frac{\partial}{\partial \ol{X_{i,j}}} \big[ \bm{Y} \big] \overset{\substack{\text{(\ref{Eq_DerivCalc_olXDeriv_Y})} \\ \text{(\ref{Eq_DerivCalc_olXDeriv_Y*})}}}{=} \frac{1}{n} \sum\limits_{s=1}^R B_s^* e_{j,n} e_{i,d}^\top A_s^* \bm{Y} \label{Eq_DerivCalc_olXDeriv_tS} \ .
	\end{align}
	
	\item Derivatives of $\bm{R}(z)$ and $\tilde{\bm{R}}(z)$:\\
	Since $\Id_d = \big( \bm{S} -z\Id_d \big) \bm{R}(z)$ is constant, one by product rule has
	\begin{align*}
		0 & = \frac{\partial}{\partial X_{i,j}} \Big[ \big( \bm{S} -z\Id_d \big) \bm{R}(z) \Big]\\
		& = \frac{\partial}{\partial X_{i,j}} \big[ \bm{S} -z\Id_d \big] \bm{R}(z) + \big( \bm{S} -z\Id_d \big) \frac{\partial}{\partial X_{i,j}} \big[ \bm{R}(z) \big] \ ,
	\end{align*}
	which may be multiplied from the left with $\bm{R}(z)$ and rearranged to
	\begin{align}\label{Eq_DerivCalc_XDeriv_R}
		\frac{\partial}{\partial X_{i,j}} \big[ \bm{R}(z) \big] & = -\bm{R}(z) \frac{\partial}{\partial X_{i,j}} \big[ \bm{S} -z\Id_d \big] \bm{R}(z) \nonumber\\
		& \overset{\text{(\ref{Eq_DerivCalc_XDeriv_S})}}{=} - \frac{1}{n} \sum\limits_{r=1}^R \bm{R}(z) A_r e_{i,d} e_{j,n}^\top B_r \bm{Y}^* \bm{R}(z) \ .
	\end{align}
	The equalities (\ref{Eq_DerivCalc_olXDeriv_R_Result})-(\ref{Eq_DerivCalc_olXDeriv_tR_Result}) may be shown analogously by using (\ref{Eq_DerivCalc_olXDeriv_S})-(\ref{Eq_DerivCalc_olXDeriv_tS}). \qed
	
	\item Derivatives of $\bm{R}(z) \bm{Y}$ and $\bm{Y}^* \bm{R}(z)$:\\
	One calculates
	\begin{align}\label{Eq_DerivCalc_XDeriv_RY}
		& \frac{\partial}{\partial X_{i,j}} \big[ \bm{R}(z) \bm{Y} \big] = \frac{\partial}{\partial X_{i,j}} \big[ \bm{R}(z) \big] \bm{Y} + \bm{R}(z) \frac{\partial}{\partial X_{i,j}} \big[ \bm{Y} \big] \nonumber\\
		& \overset{\text{(\ref{Eq_DerivCalc_XDeriv_R_Result})\&(\ref{Eq_DerivCalc_XDeriv_Y})}}{=} - \frac{1}{n} \sum\limits_{r=1}^R \bm{R}(z) A_r e_{i,d} e_{j,n}^\top B_r \bm{Y}^* \bm{R}(z) \bm{Y} + \sum\limits_{r=1}^R \bm{R}(z) A_r e_{i,d} e_{j,n}^\top B_r \nonumber\\
		& \overset{\text{(\ref{Eq_DefS})}}{=} \sum\limits_{r=1}^R \bm{R}(z) A_r e_{i,d} e_{j,n}^\top B_r \big( \Id_n - \tilde{\bm{S}} \tilde{\bm{R}}(z) \big) \overset{\text{(\ref{Eq_Def_RResolvent})}}{=} -z \sum\limits_{r=1}^R \bm{R}(z) A_r e_{i,d} e_{j,n}^\top B_r \tilde{\bm{R}}(z)
	\end{align}
	and
	\begin{align}\label{Eq_DerivCalc_olXDeriv_YR}
		& \frac{\partial}{\partial \ol{X_{i,j}}} \big[ \bm{Y}^* \bm{R}(z) \big] = \frac{\partial}{\partial \ol{X_{i,j}}} \big[ \bm{Y}^* \big] \bm{R}(z) + \bm{Y}^* \frac{\partial}{\partial \ol{X_{i,j}}} \big[ \bm{R}(z) \big] \nonumber\\
		& \overset{\text{(\ref{Eq_DerivCalc_olXDeriv_Y*})\&(\ref{Eq_DerivCalc_olXDeriv_R_Result})}}{=} \sum\limits_{s=1}^R B_s^* e_{j,n} e_{i,d}^\top A_s^* \bm{R}(z) - \frac{1}{n} \sum\limits_{s=1}^R \bm{Y}^* \bm{R}(z) \bm{Y} B_s^* e_{j,n} e_{i,d}^\top A_s^* \bm{R}(z) \nonumber\\
		& \overset{\text{(\ref{Eq_DefS})}}{=} \sum\limits_{s=1}^R \big( \Id_n - \tilde{\bm{S}} \tilde{\bm{R}}(z) \big) B_s^* e_{j,n} e_{i,d}^\top A_s^* \bm{R}(z) \overset{\text{(\ref{Eq_Def_RResolvent})}}{=} -z \sum\limits_{s=1}^R \tilde{\bm{R}}(z) B_s^* e_{j,n} e_{i,d}^\top A_s^* \bm{R}(z) \ .
	\end{align}
	\qed
\end{itemize}

\subsection{Proof of Lemma~\ref{Lemma_TraceTriplet}}\label{Proof_Lemma_TraceTriplet}
Letting $B\circ C = (B_{i,j} C_{i,j})_{i,j}$ denote the entry-wise product of $B$ and $C$ (Hadamard product), then one by the Cauchy-Schwarz inequality sees
\begin{align*}
	& \bigg| \sum\limits_{i=1}^d \sum\limits_{j=1}^n A_{i,j} B_{i,j} C_{i,j} \bigg| = \bigg| \sum\limits_{i=1}^d \sum\limits_{j=1}^n A_{i,j} (B\circ C)_{i,j} \bigg| = \big| \tr\big( A (B\circ C)^\top \big) \big| \overset{\text{CS.}}{\leq} ||A||_F \, ||B\circ C||_F \ .
\end{align*}
By using the bound
\begin{align*}
	& ||B\circ C||_F^2 = \sum\limits_{i=1}^d \sum\limits_{j=1}^n |B_{i,j} C_{i,j}|^2 \overset{|C_{i,j}| \leq 1}{\leq} \sum\limits_{i=1}^d \sum\limits_{j=1}^n |B_{i,j}|^2 = ||B||_F^2
\end{align*}
and $||A||_F \overset{\text{(\ref{Eq_FrobeniusBounds})}}{\leq} \sqrt{\operatorname{rank}(A)} ||A||$, one thus observes
\begin{align*}
	& \bigg| \sum\limits_{i=1}^d \sum\limits_{j=1}^n A_{i,j} B_{i,j} C_{i,j} \bigg| \leq ||A||_F \, ||B||_F \leq \min(d,n) \, ||A|| \, ||B|| \ .
\end{align*}
\qed

\subsection{Proof of Lemma~\ref{Lemma_QSpectralBound}}\label{Proof_Lemma_QSpectralBound}
Letting $UDU^* = \bm{S}^{(X)}$ denote the spectral decomposition of $\bm{S}^{(X)}$, one observes
\begin{align}\label{Eq_R_SpectralDecomp}
	\bm{R}^{(X)}(z) & \overset{\text{(\ref{Eq_Def_RResolvent})}}{=} \big( \bm{S}^{(X)} - z\Id_d \big)^{-1} = U \big( D - z\Id_d \big)^{-1} U^* \nonumber\\
	& = U \diag\Big( \frac{1}{\lambda_1(\bm{S}^{(X)}) - z},\dots,\frac{1}{\lambda_d(\bm{S}^{(X)}) - z}\Big) U^* \ .
\end{align}
Since $\lambda_1(\bm{S}^{(X)}),\dots,\lambda_d(\bm{S}^{(X)})$ are in $[0,||\bm{S}^{(X)}||]$, one has
\begin{align*}
	& ||\bm{R}^{(X)}(z)|| \overset{\text{(\ref{Eq_R_SpectralDecomp})}}{\leq} \frac{1}{\dist(z,[0,||\bm{S}^{(X)}||])} \leq \frac{1}{\Im(z)} \ .
\end{align*}
One may use the fact that $\lambda_1(\tilde{\bm{S}}^{(X)}),\dots,\lambda_n(\tilde{\bm{S}}^{(X)})$ are also in $[0,||\bm{S}^{(X)}||]$ to analogously show $||\tilde{\bm{R}}^{(X)}(z)|| \leq \frac{1}{\Im(z)}$, which proves (\ref{Eq_R_SpectralBounds}).
\\[0.5em]
Since $\lambda_1(\bm{S}^{(X)}),\dots,\lambda_d(\bm{S}^{(X)}) \in [0,||\bm{S}^{(X)}||]$, one by (\ref{Eq_R_SpectralDecomp}) has
\begin{align*}
	\Im\big( \bm{R}^{(X)}(z) \big) & = U \diag\Big( \Im\Big( \frac{1}{\lambda_1(\bm{S}^{(X)}) - z} \Big),\dots,\Im\Big( \frac{1}{\lambda_d(\bm{S}^{(X)}) - z} \Big)\Big) U^*\\
	& = U \diag\Big( \frac{\Im(z)}{|\lambda_1(\bm{S}^{(X)}) - z|^2},\dots,\frac{\Im(z)}{|\lambda_d(\bm{S}^{(X)}) - z|^2}\Big) U^* \ ,
\end{align*}
which implies that $\Im\big( \bm{R}^{(X)}(z) \big)$ is positive definite and
\begin{align}\label{Eq_QBound_lamMinLowerBound_case1}
	& \lambda_{\min}\big( \Im\big( \bm{R}^{(X)}(z) \big) \big) \geq \frac{\Im(z)}{(|z|+||\bm{S}^{(X)}||)^2} \ .
\end{align}
One may then apply Lemma~\ref{Lemma_NonDegeneracyNew} with $M = \Im\big( \bm{R}^{(X)}(z) \big)$ to see
\begin{align*}
	& \lambda_{\min}\big(\Im(\tilde{\bm{Q}}^{(X)}(z))\big) \nonumber\\
	& \overset{\text{(\ref{Eq_Def_tQ})}}{=} \lambda_{\min}\Big( \sum\limits_{r,s=1}^R \frac{1}{n}\tr\big( A_r A_s^* \Im\big( \bm{R}^{(X)}(z) \big) \big) B_s^* B_r \Big) \overset{\text{(\ref{Eq_NonDegeneracyM_A})}}{\geq} \tau^2 \frac{\Im(z)}{(|z|+||\bm{S}^{(X)}||)^2} \ ,
\end{align*}
which implies
\begin{align*}
	& ||\tilde{\bm{Q}}^{(X)}(z)^{-1}|| \leq \frac{(|z|+||\bm{S}^{(X)}||)^2}{\tau^2 \Im(z)} \ .
\end{align*}
This proves the second bound in (\ref{Eq_QBounds}). The first bound in (\ref{Eq_QBounds}) may be shown by analogous methods. \qed

\subsection{Proof of Lemma~\ref{Lemma_ZTailBound}}\label{Proof_Lemma_ZTailBound}
Define independent random $(d \times n)$-matrices $\bm{V}$ and $\bm{W}$, where the entries of $\bm{V}$ are iid standard normal, and the entries of $\bm{W}$ are iid complex standard normal.
Following Definition~\ref{Def_SimilarGaussianMatrix}, one may for all $i \leq d$ and $j \leq n$ choose $a_{i,j},b_{i,j} \in \C$ such that the first and second moments of $a_{i,j}\bm{V}_{i,j} + b_{i,j}\bm{W}_{i,j} \eqcolon  \tilde{\bm{Z}}_{i,j}$ coincide with those of $\bm{Z}_{i,j}$, i.e.
\begin{align*}
	& \E[\tilde{\bm{Z}}_{i,j}] = 0 = \E[\bm{Z}_{i,j}] \ , \ \E[|\tilde{\bm{Z}}_{i,j}|^2] = 1 = \E[|\bm{Z}_{i,j}|^2] \ \text{ and } \ \E[\tilde{\bm{Z}}_{i,j}^2] = \E[\bm{Z}_{i,j}^2] \ .
\end{align*}
Since both $\bm{Z}_{i,j}$ and $\tilde{\bm{Z}}_{i,j}$ are complex Gaussian, it follows that $\bm{Z} \sim \tilde{\bm{Z}}$ and it suffices to prove the lemma for $\tilde{\bm{Z}}$ instead of $\bm{Z}$.
\\[0.5em]
Observe that the entries of $\bm{V}$, $\sqrt{2}\Re(\bm{W})$ and $\sqrt{2}\Im(\bm{W})$ are all independent and have standard normal distribution. View the map
\begin{align*}
	& f(\bm{V},\sqrt{2}\Re(\bm{W}),\sqrt{2}\Im(\bm{W})) \coloneq ||\tilde{\bm{Z}}||
\end{align*}
as a map from $\R^{3dn}$ to $[0,\infty)$ and observe that $f$ is Lipschitz continuous with Lipschitz constant $1$ by the calculation
\begin{align*}
	& \big| ||\tilde{\bm{Z}}|| - ||\ul{\tilde{\bm{Z}}}|| \big| \leq ||\tilde{\bm{Z}} - \ul{\tilde{\bm{Z}}}|| \overset{\text{(\ref{Eq_FrobeniusBounds})}}{\leq} ||\tilde{\bm{Z}} - \ul{\tilde{\bm{Z}}}||_F\\
	& \overset{\text{(\ref{Eq_SimilarDef_Z})}}{=} \bigg( \sum\limits_{i=1}^d \sum\limits_{j=1}^n \big|a_{i,j} (\bm{V}_{i,j} - \ul{\bm{V}}_{i,j}) + b_{i,j}\big( \Re(\bm{W})_{i,j} - \Re(\ul{\bm{W}})_{i,j} \big)\\
	& \hspace{5.5cm} + \bm{i} b_{i,j}\big( \Im(\bm{W})_{i,j} - \Im(\ul{\bm{W}})_{i,j} \big)\big|^2 \bigg)^{\frac{1}{2}}\\
	& \overset{\text{C.S.}}{\leq} \bigg( \sum\limits_{i=1}^d \sum\limits_{j=1}^n \big( \underbrace{|a_{i,j}|^2 + |b_{i,j}|^2}_{=1} \big) \big(|\bm{V}_{i,j} - \ul{\bm{V}}_{i,j}|^2 + |\Re(\bm{W})_{i,j} - \Re(\ul{\bm{W}})_{i,j}|^2\\
	& \hspace{5.5cm} + |\Im(\bm{W})_{i,j} - \Im(\ul{\bm{W}})_{i,j}|^2 \big) \bigg)^{\frac{1}{2}}\\
	& = \bigg( \sum\limits_{i=1}^d \sum\limits_{j=1}^n \big( |\bm{V}_{i,j} - \ul{\bm{V}}_{i,j}|^2 + \overbrace{|\Re(\bm{W})_{i,j} - \Re(\ul{\bm{W}})_{i,j}|^2}^{\leq |\sqrt{2}\Re(\bm{W})_{i,j} - \sqrt{2}\Re(\ul{\bm{W}})_{i,j}|^2}\\
	& \hspace{5.5cm} + \underbrace{|\Im(\bm{W})_{i,j} - \Im(\ul{\bm{W}})_{i,j}|^2}_{\leq |\sqrt{2}\Im(\bm{W})_{i,j} - \sqrt{2}\Im(\ul{\bm{W}})_{i,j}|^2} \bigg)^{\frac{1}{2}} \ .
\end{align*}
One may thus apply Theorem 5.2.2 of \cite{VershyninHDP} to see
\begin{align}\label{Eq_ZProtoTailBound}
	& \forall t > 0 : \ \bP\big( \big| ||\tilde{\bm{Z}}|| - \E\big[ ||\tilde{\bm{Z}}|| \big] \big| > t \big) \leq 2\exp\big( -C t^2 \big)
\end{align}
for some universal constant $C>0$. In analogy to Theorem 7.3.1 of \cite{VershyninHDP}, it will now be shown that
\begin{align}\label{Eq_ZSpectralExpectationBound}
	& \E[||\tilde{\bm{Z}}||] \leq 2\sqrt{d} + 2\sqrt{n} \ .
\end{align}
Begin by realizing $||\Re(\tilde{\bm{Z}})||$ and $||\Im(\tilde{\bm{Z}})||$ as supremum of real-valued Gaussian processes:
\begin{align*}
	& ||\Re(\tilde{\bm{Z}})|| = \max\limits_{\substack{x \in \R^{d}, \, y \in \R^{n} \\ ||x||_2 = 1 = ||y||_2}} x^\top \Re(\tilde{\bm{Z}}) y \ \ \text{ and } \ \ ||\Im(\tilde{\bm{Z}})|| = \max\limits_{\substack{x \in \R^{d}, \, y \in \R^{n} \\ ||x||_2 = 1 = ||y||_2}} x^\top \Im(\tilde{\bm{Z}}) y \ .
\end{align*}
For any $x,\ul{x} \in \R^d$ and $y,\ul{y} \in \R^n$ with $1 = ||x||_2 = ||\ul{x}||_2 = ||y||_2 = ||\ul{y}||_2$, one has
\begin{align*}
	& \E\big[ \big( x^\top \Re(\tilde{\bm{Z}}) y - \ul{x}^\top \Re(\tilde{\bm{Z}}) \ul{y} \big)^2 \big] = \E\bigg[ \bigg( \sum\limits_{i=1}^d \sum\limits_{j=1}^n \Re(\tilde{\bm{Z}}_{i,j}) \big( x_i y_j - \ul{x}_i \ul{y}_j \big) \bigg)^2 \bigg]\\
	& \overset{\text{(\ref{Eq_SimilarDef_Z})}}{=} \sum\limits_{i=1}^d \sum\limits_{j=1}^n \underbrace{\E\big[ \Re(\tilde{\bm{Z}}_{i,j})^2 \big]}_{\leq \E[|\tilde{\bm{Z}}_{i,j}|^2]=1} \big( x_i y_j - \ul{x}_i \ul{y}_j \big)^2 \leq \sum\limits_{i=1}^d \sum\limits_{j=1}^n \big( x_i y_j - \ul{x}_i \ul{y}_j \big)^2\\
	& = ||x y^\top - \ul{x} \ul{y}^\top||_F^2 \leq ||x-\ul{x}||_2^2 + ||y-\ul{y}||_2^2
\end{align*}
and analogously
\begin{align*}
	& \E\big[ \big( x^\top \Im(\tilde{\bm{Z}}) y - \ul{x}^\top \Im(\tilde{\bm{Z}}) \ul{y} \big)^2 \big] \leq ||x-\ul{x}||_2^2 + ||y-\ul{y}||_2^2 \ .
\end{align*}
One may then repeat the remaining steps of the proof of Theorem 7.3.1 in \cite{VershyninHDP} verbatim to get
\begin{align*}
	& \E[||\Re(\tilde{\bm{Z}})||] \leq \sqrt{d} + \sqrt{n} \ \ \text{ and } \ \ \E[||\Im(\tilde{\bm{Z}})||] \leq \sqrt{d} + \sqrt{n} \ .
\end{align*}
The simple observation $\E[||\tilde{\bm{Z}}||] \leq \E[||\Re(\tilde{\bm{Z}})||] + \E[||\Im(\tilde{\bm{Z}})||]$ thus proves (\ref{Eq_ZSpectralExpectationBound}).
Combining this with (\ref{Eq_ZProtoTailBound}) yields
\begin{align*}
	& \forall t \geq 0 : \ \bP\big( ||\tilde{\bm{Z}}|| > 2\big( \sqrt{d} + \sqrt{n} \big) + t \big) \leq 2\exp\big( -C t^2 \big) \ ,
\end{align*}
thus proving (\ref{Eq_ZTailBound}). For (\ref{Eq_YTailBound}), one observes
\begin{align*}
	& ||\bm{Y}^{(\tilde{\bm{Z}})}|| \overset{\text{(\ref{Eq_DefY})}}{\leq} \sum\limits_{r=1}^R ||A_r \tilde{\bm{Z}} B_r|| \overset{\substack{\text{(\ref{Eq_SubmultiplicativitySpectralNorm})} \\ \text{(\ref{Eq_sigmaAssumption_NonAsymp})}}}{\leq} \sigma^2 ||\tilde{\bm{Z}}|| \ ,
\end{align*}
which finally yields
\begin{align*}
	\forall t \geq 0 : \ & \bP\big( ||\bm{Y}^{(\tilde{\bm{Z}})}|| > 2\sigma^2\big( \sqrt{d} + \sqrt{n} \big) + t \big)\\
	& \leq \bP\big( \sigma^2 ||\tilde{\bm{Z}}|| > 2\sigma^2\big( \sqrt{d} + \sqrt{n} \big) + t \big)\\
	& = \bP\Big( ||\tilde{\bm{Z}}|| > 2\big( \sqrt{d} + \sqrt{n} \big) + \frac{t}{\sigma^2} \Big) \overset{\text{(\ref{Eq_ZTailBound})}}{\leq} 2\exp\Big( -\frac{Ct^2}{\sigma^4} \Big) \ .
\end{align*}
\qed

\subsection{Proof of Lemma~\ref{Lemma_SMomentBound}}\label{Proof_Lemma_SMomentBound}
Using the previous lemma and the standard bound \begin{align}\label{Eq_PowerSumBound}
	& \forall p \in \N \, \forall a,b>0 : \ (a+b)^p \leq 2^{p-1} (a^p + b^p) \ ,
\end{align}
observe
\begin{align*}
	& \E\big[ ||\bm{Z}||^{2m} \big] = 2m \int_0^\infty x^{2m-1} \bP\big( ||\bm{Z}|| > x \big)\\
	& \leq \overbrace{2m \int_0^{2\sqrt{d}+2\sqrt{n}} x^{2m-1} \, dx}^{= (2\sqrt{d}+2\sqrt{n})^{2m}}\\
	& \hspace{0.5cm} + 2m \int_0^\infty \underbrace{(2\sqrt{d}+2\sqrt{n} + t)^{2m-1}}_{\overset{\text{(\ref{Eq_PowerSumBound})}}{\leq} 2^{4m-3} (\sqrt{n}+\sqrt{d})^{2m-1} + 2^{2m-2} t^{2m-1}} \underbrace{\bP\big( ||\bm{Z}|| > 2\sqrt{d}+2\sqrt{n} + t \big)}_{\leq 2 \exp(-Ct^2) \text{ by Lemma~\ref{Lemma_ZTailBound}}} \, dt\\
	& \leq (2\sqrt{d}+2\sqrt{n})^{2m} + 2^{4m-2}m (\sqrt{n}+\sqrt{d})^{2m-1} \int_0^\infty \exp(-Ct^2) \, dt\\
	& \hspace{0.5cm} + 2^{2m-1}m \int_0^\infty  t^{2m-1} \exp(-Ct^2) \, dt\\
	& = (2\sqrt{d}+2\sqrt{n})^{2m} + 2^{4m-3}m (\sqrt{n}+\sqrt{d})^{2m-1} \sqrt{\frac{\pi}{C}} + 2^{2m-1}m \frac{(m-1)!}{2C^m}\\
	& \leq 2^{4m} (\sqrt{d}+\sqrt{n})^{2m} \Big( 1 + m \sqrt{\frac{\pi}{C}} + \frac{m!}{C^m} \Big) \ ,
\end{align*}
which proves (\ref{Eq_Z_MomentBound}). To follow (\ref{Eq_SZ_MomentBound}), one calculates
\begin{align*}
	& \E\big[ ||\bm{S}^{(\bm{Z})}||^m \big] \overset{\text{(\ref{Eq_DefS})}}{=} \frac{1}{n^m} \E\big[ ||\bm{Y}^{(\bm{Z})}||^{2m} \big]\\
	& \overset{\substack{\text{(\ref{Eq_DefY})} \\ \text{(\ref{Eq_sigmaAssumption_NonAsymp})}}}{\leq} \Big(\frac{\sigma^4}{n}\Big)^m \E\big[ ||\bm{Z}||^{2m} \big] \overset{\text{(\ref{Eq_Z_MomentBound})}}{\leq} \Big(\frac{16 \sigma^4 (\sqrt{d}+\sqrt{n})^2}{n}\Big)^m K_m \ .
\end{align*}
\qed

\begin{lemma}[Complex valued McDiarmid's inequality]\label{Lemma_McDiarmid}\
	\\
	For any Polish space $E$, let $f : E^N \rightarrow \C$ be a measurable function satisfying the \textit{bounded differences property}:
	\begin{align}\label{Eq_McDiarmid_BoundedDifferences}
		& \forall i \leq N : \ \sup\limits_{x_1,\dots,x_N,x_i' \in E} \big| f\big( x_1,\dots,x_{i-1},x_i',x_{i+1},\dots,x_N \big) - f\big( x_1,\dots,x_N \big) \big| \leq C
	\end{align}
	for some $C>0$, then for any sequence of independent random variables $X_1,\dots,X_N$ with values in $E$ and any $t>0$, it holds that
	\begin{align}\label{Eq_McDiarmid_Bound}
		& \bP\Big( \big| f(X_1,\dots,X_N) - \E\big[ f(X_1,\dots,X_N) \big] \big| > t \Big) \leq 4\exp\Big( -\frac{t^2}{NC^2} \Big) \ .
	\end{align}
\end{lemma}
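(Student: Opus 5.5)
The plan is to reduce to the real-valued McDiarmid inequality by splitting $f$ into real and imaginary parts. Write $f = g + \bm{i} h$ with $g \coloneq \Re(f)$ and $h \coloneq \Im(f)$, both measurable maps $E^N \to \R$. Since $|\Re(w)|,|\Im(w)| \leq |w|$ for any $w \in \C$, the bounded differences property (\ref{Eq_McDiarmid_BoundedDifferences}) passes to $g$ and $h$ with the same constant $C$.

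Next apply the standard real McDiarmid inequality to $g$ and $h$ separately. For independent $X_1,\dots,X_N$ and a real function with coordinatewise differences bounded by $c_i = C$, it gives $\bP(|g - \E g| > s) \leq 2\exp(-2s^2/(NC^2))$. Boundedness is not an issue: $f$ may be unbounded, but the bounded differences property makes $g - g(x^0)$ bounded for any fixed base point $x^0$, so all expectations exist. This is also why no assumption on $E$ beyond measurability is needed.

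Finally combine the two bounds. On the event $|f - \E f| > t$ we have $|g - \E g|^2 + |h - \E h|^2 > t^2$, so at least one of the two real deviations exceeds $t/\sqrt{2}$. A union bound then gives
\begin{align*}
	& \bP\big( |f - \E f| > t \big) \leq 2 \cdot 2\exp\Big( -\frac{2 (t/\sqrt{2})^2}{NC^2} \Big) = 4\exp\Big( -\frac{t^2}{NC^2} \Big) \ ,
\end{align*}
which is exactly (\ref{Eq_McDiarmid_Bound}).

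There is no real obstacle. The only point needing care is the constant bookkeeping in this last step: splitting at $t/\sqrt{2}$ absorbs the factor $2$ in the exponent of the real inequality, so the constants match the claimed bound exactly.
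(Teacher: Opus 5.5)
Your proposal is correct and matches the paper's proof: apply the real McDiarmid inequality to $\Re f$ and $\Im f$ at threshold $t/\sqrt{2}$, take a union bound, and the constants combine to $4\exp(-t^2/(NC^2))$. Your extra remark is a small addition the paper leaves implicit: the bounded differences property forces $f$ to be bounded, so all expectations exist.
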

\begin{proof}\
	\\
	The functions $\Re \circ f$ and $\Im \circ f$ take values in $\R$ and satisfy the same bounded difference property (\ref{Eq_McDiarmid_BoundedDifferences}). Applying the standard McDiarmid inequality (see Theorem 6.2 in \cite{BoucheronConcentrationInequalities}) to these two functions yields
	\begin{align*}
		& \bP\Big( \Re\big(f(X_1,\dots,X_N)\big) - \E\big[ \Re\big(f(X_1,\dots,X_N)\big) \big] > \frac{\eta}{\sqrt{2}} \Big) \leq \exp\Big( -\frac{\eta^2}{NC^2} \Big)\\
		& \bP\Big( \Re\big(f(X_1,\dots,X_N)\big) - \E\big[ \Re\big(f(X_1,\dots,X_N)\big) \big] < -\frac{\eta}{\sqrt{2}} \Big) \leq \exp\Big( -\frac{\eta^2}{NC^2} \Big)
	\end{align*}
	and
	\begin{align*}
		& \bP\Big( \Im\big(f(X_1,\dots,X_N)\big) - \E\big[ \Im\big(f(X_1,\dots,X_N)\big) \big] > \frac{\eta}{\sqrt{2}} \Big) \leq \exp\Big( -\frac{\eta^2}{NC^2} \Big)\\
		& \bP\Big( \Im\big(f(X_1,\dots,X_N)\big) - \E\big[ \Im\big(f(X_1,\dots,X_N)\big) \big] < -\frac{\eta}{\sqrt{2}} \Big) \leq \exp\Big( -\frac{\eta^2}{NC^2} \Big) \ ,
	\end{align*}
	which may be combined to
	\begin{align*}
		& \bP\Big( \big| f(X_1,\dots,X_N) - \E\big[ f(X_1,\dots,X_N) \big] \big| > \eta \Big)\\
		& \leq \bP\Big( \big| \Re\big(f(X_1,\dots,X_N)\big) - \E\big[ \Re\big(f(X_1,\dots,X_N)\big) \big] \big| > \frac{\eta}{\sqrt{2}} \Big)\\
		& \hspace{0.5cm} + \bP\Big( \big| \Im\big(f(X_1,\dots,X_N)\big) - \E\big[ \Im\big(f(X_1,\dots,X_N)\big) \big] \big| > \frac{\eta}{\sqrt{2}} \Big)\\
		& \leq 2\exp\Big( -\frac{\eta^2}{NC^2} \Big) + 2\exp\Big( -\frac{\eta^2}{NC^2} \Big) = 4\exp\Big( -\frac{\eta^2}{NC^2} \Big) \ .
	\end{align*}
\end{proof}

\subsection{Proof of Lemma~\ref{Lemma_R_Concentration}}\label{Proof_Lemma_R_Concentration}
It is first shown that the map $f : X \mapsto \frac{1}{n} \tr(M\bm{R}^{(X)}(z))$ satisfies the bounded differences property
\begin{align}\label{Eq_R_BoundedDifferences}
	& \forall i \leq n : \ \sup\limits_{\substack{X \in \C^{d \times n} \\ \tilde{x} \in \C^d}} \Big| f(X) - f(\tilde{X}^{(i)}) \Big| \leq \frac{4 R ||M||}{n \Im(z)} \ ,
\end{align}
where $X$ is an arbitrary $(d \times n)$-matrix and $\tilde{X}^{(i)}$ is a copy of $X$, where the $i$-th column is exchanged with a vector $\tilde{x} \in \C^d$.
\\[0.5em]
As the rank of the difference
\begin{align*}
	& \bm{Y}^{(X)} - \bm{Y}^{(\tilde{X}^{(i)})} \overset{\text{(\ref{Eq_DefY})}}{=} \sum\limits_{r=1}^R A_r (X - \tilde{X}^{(i)}) B_r \ , 
\end{align*}
may not exceed $R$, one also has
\begin{align*}
	& \operatorname{rank}\big(\bm{S}^{(X)} - \bm{S}^{(\tilde{X}^{(i)})}\big) \overset{\text{(\ref{Eq_DefS})}}{=} \operatorname{rank} \Big( \frac{1}{n} \bm{Y}^{(X)} (\bm{Y}^{(X)})^* - \frac{1}{n} \bm{Y}^{(\tilde{X}^{(i)})} (\bm{Y}^{(\tilde{X}^{(i)})})^* \Big)\\
	& = \operatorname{rank} \Big( \frac{1}{n} \bm{Y}^{(X)} \big(\underbrace{\bm{Y}^{(X)} - \bm{Y}^{(\tilde{X}^{(i)})}}_{\operatorname{rank}(\cdot) \leq R}\big)^* + \frac{1}{n} \big(\underbrace{\bm{Y}^{(X)} - \bm{Y}^{(\tilde{X}^{(i)})}}_{\operatorname{rank}(\cdot) \leq R}\big) (\bm{Y}^{(\tilde{X}^{(i)})})^* \Big) \leq 2R \ .
\end{align*}
It follows that
\begin{align}\label{Eq_R_rankDiffBound}
	& \operatorname{rank}\big( \bm{R}^{(X)}(z) - \bm{R}^{(\tilde{X}^{(i)})}(z) \big) \overset{\text{(\ref{Eq_InverseDifferenceIdentity})}}{=} \operatorname{rank}\big( \bm{R}^{(X)}(z) \big( \bm{S}^{(\tilde{X}^{(i)})} - \bm{S}^{(X)} \big) \bm{R}^{(\tilde{X}^{(i)})}(z) \big) \leq 2R \ .
\end{align}
The bounded difference property then follows from the calculation
\begin{align*}
	& \big| f(X) - f(\tilde{X}^{(i)}) \big| = \frac{1}{n} \Big| \tr\Big(M \big(\bm{R}^{(X)}(z) - \bm{R}^{(\tilde{X}^{(i)})}(z)\big)\Big) \Big| \nonumber\\
	& \overset{\substack{\text{(\ref{Eq_R_rankDiffBound})} \\ \text{(\ref{Eq_TraceBound})}}}{\leq} \frac{2 R}{n} \, ||M|| \big( ||\bm{R}^{(X)}(z)|| + ||\bm{R}^{(\tilde{X}^{(i)})}(z)|| \big) \overset{\text{(\ref{Eq_R_SpectralBounds})}}{\leq} \frac{4 R ||M||}{\Im(z) n} \ ,
\end{align*}
which proves (\ref{Eq_R_BoundedDifferences}). Lemma~\ref{Lemma_McDiarmid} with $N=n$, $E=\C^d$ and $C=\frac{4 R ||M||}{n \Im(z)}$ then yields
\begin{align}\label{Eq_trMRz_Concentration}
	& \forall z \in \C^+ , \, \forall M \in \C^{d \times d} , \, \forall t>0 : \nonumber\\
	& \bP\Big( \Big| \frac{1}{n} \tr(M\bm{R}^{(\bm{X})}(z)) - \E\Big[ \frac{1}{n} \tr(M\bm{R}^{(\bm{X})}(z)) \Big] \Big| > t \Big) \leq 4\exp\Big( -\frac{t^2\Im(z)^2 n}{16 R^2 ||M||^2} \Big) \ .
\end{align}
In order to extend this result to (\ref{Eq_R_Concentration}), one for all $z,z' \in \bD(\eta,\kappa)$ observes the Lipschitz property
\begin{align}\label{Eq_trMRz_Lipschitz}
	& \Big| \frac{1}{n} \tr(M\bm{R}^{(X)}(z)) - \frac{1}{n} \tr(M\bm{R}^{(X)}(z')) \Big| = \frac{1}{n} \big| \tr\big(M\big(\bm{R}^{(X)}(z) - \bm{R}^{(X)}(z')\big)\big) \big| \nonumber\\
	& \overset{\text{(\ref{Eq_TraceBound})}}{\leq} \frac{d}{n} ||M|| \, \big|\big| \bm{R}^{(X)}(z) - \bm{R}^{(X)}(z') \big|\big| \overset{\text{(\ref{Eq_R_SpectralDecomp})}}{=} \frac{d}{n} ||M|| \, \max\limits_{j\leq d} \Big| \frac{1}{\lambda_j(\bm{S}^{(X)}) - z} - \frac{1}{\lambda_j(\bm{S}^{(X)}) - z'} \Big| \nonumber\\
	& \leq \frac{d}{n} ||M|| \frac{|z-z'|}{\Im(z) \Im(z')} \leq \frac{d}{n} ||M|| \frac{|z-z'|}{\eta^2} \overset{\text{\ref{ItemAssumption_cBound}}}{\leq} \frac{c_*}{\eta^2} ||M|| \, |z-z'| \ .
\end{align}
By construction of $\bD(\eta,\kappa)$, there for any $\delta \in (0,\kappa)$ exists a $\delta$-net $J_\delta \subset \bD(\eta,\kappa)$ with the properties
\begin{align}\label{Eq_delta_Net_Properties}
	& \bD(\eta,\kappa) \subset \bigcup\limits_{j \in J_\delta} B^{\C}_\delta(j) \ \ \text{ and } \ \ \#J_\delta \leq \frac{4\kappa^2}{\delta^2} \ ,
\end{align}
where $B^{\C}_\delta(j)$ denotes the (closed) $\delta$-neighborhood around $j$ in $\C$ and $\#J_\delta$ denotes the cardinality of $J_\delta$. Letting $\Pi_\delta : \bD(\eta,\kappa) \rightarrow J_\delta$ denote a (measurable) map with the property $|z-\Pi_\delta(z)| \leq \delta$, one by (\ref{Eq_trMRz_Lipschitz}) has
\begin{align}\label{Eq_trMRz_NetBound}
	& \forall z \in \bD(\eta,\kappa) : \ \Big| \frac{1}{n} \tr(M\bm{R}^{(X)}(z)) - \frac{1}{n} \tr(M\bm{R}^{(X)}(\Pi_\delta(z))) \Big| \leq \frac{c_*}{\eta^2} ||M|| \delta \ .
\end{align}
By combining (\ref{Eq_trMRz_Concentration}) and (\ref{Eq_trMRz_NetBound}), one by triangle inequality gets
\begin{align*}
	& \bP\Big( \sup\limits_{z \in \bD(\eta,\kappa)}\Big| \frac{1}{n} \tr(M\bm{R}^{(\bm{X})}(z)) - \E\Big[ \frac{1}{n} \tr(M\bm{R}^{(\bm{X})}(z)) \Big] \Big| > t + 2\frac{c_* \delta}{\eta^2} ||M|| \Big)\\
	& \overset{\text{(\ref{Eq_trMRz_NetBound})}}{\leq} \bP\Big( \sup\limits_{z \in \bD(\eta,\kappa)}\Big| \frac{1}{n} \tr(M\bm{R}^{(\bm{X})}(\Pi_\delta(z))) - \E\Big[ \frac{1}{n} \tr(M\bm{R}^{(\bm{X})}(\Pi_\delta(z))) \Big] \Big| > t \Big)\\
	& \leq \bP\Big( \max\limits_{j \in J_\delta}\Big| \frac{1}{n} \tr(M\bm{R}^{(\bm{X})}(j)) - \E\Big[ \frac{1}{n} \tr(M\bm{R}^{(\bm{X})}(j)) \Big] \Big| > t \Big)\\
	& \leq \sum\limits_{j \in J_\delta} \bP\Big( \Big| \frac{1}{n} \tr(M\bm{R}^{(\bm{X})}(j)) - \E\Big[ \frac{1}{n} \tr(M\bm{R}^{(\bm{X})}(j)) \Big] \Big| > t \Big)\\
	& \overset{\text{(\ref{Eq_trMRz_Concentration})}}{\leq} \sum\limits_{j \in J_\delta} 4\exp\Big( -\frac{t^2\Im(j)^2 n}{16 R^2 ||M||^2} \Big) \overset{\text{(\ref{Eq_delta_Net_Properties})}}{\leq} \frac{16\kappa^2}{\delta^2} \exp\Big( -\frac{t^2\eta^2 n}{16 R^2 ||M||^2} \Big) \ ,
\end{align*}
thus proving (\ref{Eq_R_Concentration}). The property (\ref{Eq_tR_Concentration}) may be shown with analogous arguments. \qed

\subsection{Proof of Lemma~\ref{Lemma_ConcentrationLargeExpectation}}\label{Proof_Lemma_ConcentrationLargeExpectation}
A proof of (\ref{Eq_QConcentrationA}) is given below. The result (\ref{Eq_QConcentrationB}) may be shown by following the proof verbatim, while replacing every occurrence of $(M,\bm{Q},\bm{R},\bm{S})$ with their $(\tilde{M},\tilde{\bm{Q}},\tilde{\bm{R}},\tilde{\bm{S}})$-counterparts and vice versa. The occurrences of $\frac{d}{n}$ must also be replaced with $\frac{n}{n}$, which does not impede the bound by $c_*$, since it was in~\ref{ItemAssumption_cBound} assumed that $c_* \geq 1$.
\\[0.5em]
Without loss of generality assume
\begin{align}\label{Eq_QConcentration_Wlog_kappa}
	& \eta < 1 \ \ \text{ and } \ \ \kappa > 1 \ .
\end{align}
Define
\begin{align}\label{Eq_QConcentration_Def_kappa}
	& \kappa_* \coloneq 2\big( \sqrt{d} + \sqrt{n} \big) + \sqrt{n}
\end{align}
and let $U_{\kappa_*} \subset \C^{d \times n}$ denote the set of matrices
\begin{align}\label{Eq_QConcentration_DefUkappa}
	& U_{\kappa_*} = \big\{ X \in \C^{d \times n} \ \big| \ ||X|| \leq \kappa_* \big\} \ .
\end{align}
Also, for any $X \in U_{\kappa_*}$, observe
\begin{align}\label{Eq_Q_Concentration_SX_Bound}
	& ||\bm{S}^{(X)}|| \overset{\text{(\ref{Eq_Def_S_Robust})}}{\leq} \frac{1}{n} ||\bm{Y}^{(X)}||^2 \overset{\substack{\text{(\ref{Eq_Def_Y_Robust})} \\ \text{\ref{ItemAssumption_sigmaBound}}}}{\leq} \frac{\sigma^4}{n} ||X||^2 \leq \frac{\kappa_*^2 \sigma^4}{n} \nonumber\\
	& = \sigma^4 \frac{(2\big( \sqrt{d} + \sqrt{n} \big) + \sqrt{n})^2}{n} \overset{\text{\ref{ItemAssumption_cBound}}}{\leq} \sigma^4 (2\sqrt{c_*} + 3 )^2 \eqcolon  K_* = K_*(c_*,\sigma^2) \ .
\end{align}
\begin{itemize}
	\item[i)] \textit{Bounding $\bP(\bm{Z} \notin U_{\kappa_*})$}:\\
	By Lemma~\ref{Lemma_ZTailBound} observe
	\begin{align}\label{Eq_Q_Concentration_PZBound}
		& \bP\big(\bm{Z} \notin U_{\kappa_*}\big) \overset{\text{(\ref{Eq_QConcentration_DefUkappa})}}{=} \bP\big( ||\bm{Z}|| > \kappa_* \big) \nonumber\\
		& \overset{\text{(\ref{Eq_ZTailBound})}}{=} \bP\big( ||\bm{Z}|| > 2\big( \sqrt{d} + \sqrt{n} \big) + \sqrt{n} \big) \overset{\text{(\ref{Eq_ZTailBound})}}{\leq} 2\exp\big( -C n \big) \ .
	\end{align}
	
	\item[ii)] \textit{Lipschitz-properties for $X \in U_{\kappa_*}$}:\\
	For any $z \in \bD(\eta,\kappa)$, $M \in \C^{d \times d}$ and $X,Y \in U_{\kappa_*}$ one has the bound
	\begin{align*}
		& \Big| \frac{1}{z n} \tr(M\bm{Q}^{(X)}(z)^{-1}) - \frac{1}{z n} \tr(M\bm{Q}^{(Y)}(z)^{-1}) \Big|\\
		& \overset{\text{(\ref{Eq_TraceBound})}}{\leq} \frac{1}{|z|} \frac{d}{n} ||M|| \, \big|\big| \bm{Q}^{(X)}(z)^{-1} - \bm{Q}^{(Y)}(z)^{-1} \big|\big|\\
		& \overset{\text{(\ref{Eq_InverseDifferenceIdentity})}}{=} \frac{1}{|z|} \frac{d}{n} ||M|| \, \big|\big| \bm{Q}^{(X)}(z)^{-1} \big( \bm{Q}^{(Y)}(z) - \bm{Q}^{(X)}(z) \big) \bm{Q}^{(Y)}(z)^{-1} \big|\big|\\
		& \overset{\text{(\ref{Eq_QBounds})}}{\leq}  \underbrace{\frac{1}{|z|} \frac{d}{n}}_{\leq \frac{c_*}{\eta}} ||M|| \, \big|\big| \bm{Q}^{(Y)}(z) - \bm{Q}^{(X)}(z) \big|\big| \, \frac{(\overbrace{|z|}^{\leq \kappa} + \overbrace{||\bm{S}^{(X)}||}^{\overset{\text{(\ref{Eq_Q_Concentration_SX_Bound})}}{\leq} K_*})^2 (\overbrace{|z|}^{\leq \kappa} + \overbrace{||\bm{S}^{(Y)}||}^{\overset{\text{(\ref{Eq_Q_Concentration_SX_Bound})}}{\leq} K_*})^2}{\tau^4 \underbrace{\Im(z)^2}_{\geq \eta^2}}\\
		& \overset{\text{(\ref{Eq_Def_Q})}}{\leq} \frac{c_* (\kappa+K_*)^4 ||M||}{\tau^4 \eta^3} \Big|\Big| \sum\limits_{r,s=1}^R \frac{1}{n} \tr\big( B_s^*B_r \big( \tilde{\bm{R}}^{(Y)}(z) - \tilde{\bm{R}}^{(X)}(z) \big) \big) A_rA_s^* \Big|\Big|\\
		& \overset{\substack{\text{(\ref{Eq_TraceBound})} \\ \text{\ref{ItemAssumption_sigmaBound}}}}{\leq} \frac{c_* \sigma^4 (\kappa+K_*)^4 ||M||}{\tau^4 \eta^3} \underbrace{\frac{d}{n}}_{\leq c_*} \big|\big| \tilde{\bm{R}}^{(Y)}(z) - \tilde{\bm{R}}^{(X)}(z) \big|\big|\\
		& \overset{\substack{\text{(\ref{Eq_InverseDifferenceIdentity})} \\ \text{(\ref{Eq_Def_R_Robust})}}}{=} \frac{c_*^2 \sigma^4 (\kappa+K_*)^4 ||M||}{\tau^4 \eta^3} \big|\big| \tilde{\bm{R}}^{(Y)}(z) \big( \tilde{\bm{S}}^{(X)} - \tilde{\bm{S}}^{(Y)} \big) \tilde{\bm{R}}^{(X)}(z) \big|\big|\\
		& \overset{\substack{\text{(\ref{Eq_R_SpectralBounds})} \\ \text{(\ref{Eq_Def_S_Robust})}}}{\leq} \frac{c_*^2 \sigma^4 (\kappa+K_*)^4 ||M||}{\tau^4 \eta^5 n} \big|\big| (\bm{Y}^{(X)})^*\bm{Y}^{(X)} - (\bm{Y}^{(Y)})^*\bm{Y}^{(Y)} \big|\big|\\
		& \leq \frac{c_*^2 \sigma^4 (\kappa+K_*)^4 ||M||}{\tau^4 \eta^5 n} \Big( \big|\big| \bm{Y}^{(X)} \big|\big| \, \big|\big| \bm{Y}^{(X)} - \bm{Y}^{(Y)} \big|\big| + \big|\big| \bm{Y}^{(X)} - \bm{Y}^{(Y)} \big|\big| \, \big|\big| \bm{Y}^{(Y)} \big|\big| \Big)\\
		& \leq \frac{c_*^2 \sigma^4 (\kappa+K_*)^4 ||M||}{\tau^4 \eta^5 \sqrt{n}} \big( \underbrace{\frac{1}{\sqrt{n}}\big|\big| \bm{Y}^{(X)} \big|\big|}_{\overset{\text{(\ref{Eq_Q_Concentration_SX_Bound})}}{\leq} \sqrt{K_*}} + \underbrace{\frac{1}{\sqrt{n}}\big|\big| \bm{Y}^{(Y)} \big|\big|}_{\overset{\text{(\ref{Eq_Q_Concentration_SX_Bound})}}{\leq} \sqrt{K_*}} \big) \big|\big| \bm{Y}^{(X)} - \bm{Y}^{(Y)} \big|\big|\\
		& \overset{\text{(\ref{Eq_Def_Y_Robust})}}{\leq} \frac{2 c_*^2 \sigma^4 \sqrt{K_*} (\kappa+K_*)^4 ||M||}{\tau^4 \eta^5 \sqrt{n}} \Big|\Big| \sum\limits_{r=1}^R A_r \big( X-Y \big) B_r \Big|\Big|\\
		& \overset{\text{\ref{ItemAssumption_sigmaBound}}}{\leq} \frac{2 c_*^2 \sigma^6 \sqrt{K_*} (\kappa+K_*)^4 ||M||}{\tau^4 \eta^5 \sqrt{n}} ||X-Y|| \ ,
	\end{align*}
	which for
	\begin{align}\label{Eq_Q_Concentration_DefL}
		& L = L(M,n) \coloneq \frac{2 c_*^2 \sigma^6 \sqrt{K_*} (\kappa+K_*)^4 ||M||}{\tau^4 \eta^5 \sqrt{n}}
	\end{align}
	proves the Lipschitz-property
	\begin{align}\label{Eq_Q_Concentration_L_Lipschitz_Q}
		& \forall z \in \bD(\eta,\kappa) , \, \forall M \in \C^{d \times d} , \, \forall X,Y \in U_{\kappa_*} : \nonumber\\
		& \Big| \frac{1}{z n} \tr(M\bm{Q}^{(X)}(z)^{-1}) - \frac{1}{z n} \tr(M\bm{Q}^{(Y)}(z)^{-1}) \Big| \leq L \, ||X-Y|| \ .
	\end{align}
	A similar Lipschitz-property in $z$ is now also shown. For any $M \in \C^{d \times d}$, $X \in U_{\kappa_*}$ and $z,z' \in \bD(\eta,\kappa)$ one has the bound
	\begin{align*}
		& \Big| \frac{1}{z n} \tr(M\bm{Q}^{(X)}(z)^{-1}) - \frac{1}{z' n} \tr(M\bm{Q}^{(X)}(z')^{-1}) \Big|\\
		& \leq \Big| \frac{1}{z n} \tr(M\bm{Q}^{(X)}(z)^{-1}) - \frac{1}{z n} \tr(M\bm{Q}^{(X)}(z')^{-1}) \Big| + \Big| \frac{1}{z} - \frac{1}{z'} \Big| \, \Big| \frac{1}{n} \tr(M\bm{Q}^{(X)}(z')^{-1}) \Big|\\
		& \overset{\text{(\ref{Eq_TraceBound})}}{\leq} \frac{1}{|z|} \frac{d}{n} ||M|| \, \big|\big| \bm{Q}^{(X)}(z)^{-1} - \bm{Q}^{(X)}(z')^{-1} \big|\big| + \frac{|z-z'|}{|zz'|} \frac{d}{n} ||M|| \, \big|\big|\bm{Q}^{(X)}(z')^{-1}\big|\big|\\
		& \overset{\text{(\ref{Eq_InverseDifferenceIdentity})}}{=} \frac{1}{|z|} \frac{d}{n} ||M|| \, \big|\big| \bm{Q}^{(X)}(z)^{-1} \big( \bm{Q}^{(X)}(z') - \bm{Q}^{(X)}(z) \big) \bm{Q}^{(X)}(z')^{-1} \big|\big|\\
		& \hspace{0.5cm} + \frac{|z-z'|}{|zz'|} \frac{d}{n} ||M|| \, \big|\big|\bm{Q}^{(X)}(z')^{-1}\big|\big|\\
		& \overset{\text{(\ref{Eq_QBounds})}}{\leq} \underbrace{\frac{1}{|z|} \frac{d}{n}}_{\leq \frac{c_*}{\eta}} ||M|| \, \big|\big| \bm{Q}^{(X)}(z') - \bm{Q}^{(X)}(z) \big|\big| \, \frac{(\overbrace{|z|}^{\leq \kappa} + \overbrace{||\bm{S}^{(X)}||}^{\overset{\text{(\ref{Eq_Q_Concentration_SX_Bound})}}{\leq} K_*})^4}{\tau^4 \underbrace{\Im(z)\Im(z')}_{\geq \eta^2}}\\
		& \hspace{0.5cm} + \underbrace{\frac{|z-z'|}{|zz'|} \frac{d}{n}}_{\leq \frac{c_*}{\eta^2} |z-z'|} ||M|| \, \frac{(\overbrace{|z|}^{\leq \kappa} + \overbrace{||\bm{S}^{(X)}||}^{\overset{\text{(\ref{Eq_Q_Concentration_SX_Bound})}}{\leq} K_*})^2}{\tau^2 \underbrace{\Im(z')}_{\geq \eta}}\\
		& \overset{\text{(\ref{Eq_Def_Q})}}{\leq} \frac{c_* (\kappa+K_*)^4 ||M||}{\tau^4 \eta^3} \Big|\Big| \sum\limits_{r,s=1}^R \frac{1}{n} \tr\big( B_s^*B_r \big( \tilde{\bm{R}}^{(X)}(z') - \tilde{\bm{R}}^{(X)}(z) \big) \big) A_rA_s^* \Big|\Big|\\
		& \hspace{0.5cm} + \frac{c_* (\kappa+K_*)^2 ||M||}{\tau^2 \eta^3} |z-z'|\\
		& \overset{\substack{\text{(\ref{Eq_TraceBound})} \\ \text{\ref{ItemAssumption_sigmaBound}}}}{\leq} \frac{c_* \sigma^4 (\kappa+K_*)^4 ||M||}{\tau^4 \eta^3} \frac{d}{n} \big|\big| \tilde{\bm{R}}^{(X)}(z') - \tilde{\bm{R}}^{(X)}(z) \big|\big| + \frac{c_* (\kappa+K_*)^2 ||M||}{\tau^2 \eta^3} |z-z'|\\
		& \overset{\text{(\ref{Eq_R_SpectralDecomp})}}{=} \frac{c_* \sigma^4 (\kappa+K_*)^4 ||M||}{\tau^4 \eta^3} \overbrace{\frac{d}{n}}^{\leq c_*} \max\limits_{j\leq n} \overbrace{\Big| \frac{1}{\lambda_j(\tilde{\bm{S}}^{(X)}) - z} - \frac{1}{\lambda_j(\tilde{\bm{S}}^{(X)}) - z'} \Big|}^{\leq \frac{|z-z'|}{\Im(z)\Im(z')} \leq \frac{|z-z'|}{\eta^2}}\\
		& \hspace{0.5cm} + \frac{c_* (\kappa+K_*)^2 ||M||}{\tau^2 \eta^3} |z-z'|\\
		& \leq \frac{c_*^2 \sigma^4 (\kappa+K_*)^4 ||M||}{\tau^4 \eta^5} |z-z'| + \frac{c_* (\kappa+K_*)^2 ||M||}{\tau^2 \eta^3} |z-z'|\\
		& \overset{\substack{c_*^2, \sigma^2, \kappa > 1 \\ \tau, \eta < 1}}{\leq} \frac{2 c_*^2 \sigma^4 (\kappa+K_*)^4 ||M||}{\tau^4 \eta^5} |z-z'| \ ,
	\end{align*}
	which for
	\begin{align}\label{Eq_Q_Concentration_DefLz}
		& L' = L'(M) \coloneq \frac{2 c_*^2 \sigma^4 (\kappa+K_*)^4 ||M||}{\tau^4 \eta^5}
	\end{align}
	proves the Lipschitz-property
	\begin{align}\label{Eq_Q_Concentration_Lz_Lipschitz_Q}
		& \forall M \in \C^{d \times d} , \, \forall X \in U_{\kappa_*} , \, \forall z,z' \in \bD(\eta,\kappa) : \nonumber\\
		& \Big| \frac{1}{z n} \tr(M\bm{Q}^{(X)}(z)^{-1}) - \frac{1}{z' n} \tr(M\bm{Q}^{(X)}(z')^{-1}) \Big| \leq L' \, |z-z'| \ .
	\end{align}
	
	\item[iii)] \textit{Concentration of the McShane extension for fixed $z \in \bD(\eta,\kappa)$}:\\
	Fix $z \in \bD(\eta,\kappa)$ and $M \in \C^{d \times d}$ and for ease on notation define
	\begin{align}\label{Eq_Q_Concentration_Def_f}
		& f : \C^{d \times n} \rightarrow \R \ \ ; \ \ f(X) \coloneq \Re\Big(\frac{1}{z n} \tr(M\bm{Q}^{(X)}(z)^{-1})\Big) \ ,
	\end{align}
	which by (\ref{Eq_Q_Concentration_L_Lipschitz_Q}) is Lipschitz continuous with Lipschitz constant $L$. Let $\tilde{f}$ denote the McShane extension of $f$ from $U_{\kappa_*}$ to $\C^{d \times n}$
	\begin{align}\label{Eq_Q_Concentration_DefMcShaneExtension}
		& \tilde{f}(X) = \sup\limits_{Y \in U_{\kappa_*}} \big( f(Y) - L ||X-Y|| \big) \ ,
	\end{align}
	which by \cite{McShaneExtension} satisfies the properties
	\begin{align}
		& \forall X \in U_{\kappa_*} : \  \tilde{f}(X) = f(X) \label{Eq_Q_Concentration_McShaneExtension1}
	\end{align}
	and
	\begin{align}
		& \forall X,Y \in \C^{d \times n} : \ \big| \tilde{f}(X) - \tilde{f}(Y) \big| \leq L \, ||X - Y|| \ . \label{Eq_Q_Concentration_McShaneExtension2}
	\end{align}
	The Gaussian concentration result given in Theorem 5.2.2 of \cite{VershyninHDP}, by a completely analogous argument to the proof of Lemma~\ref{Lemma_ZTailBound}, yields
	\begin{align}\label{Eq_Q_Concentration_McShaneConcnetration}
		& \forall t > 0 : \ \bP\Big( \big| \tilde{f}(\bm{Z}) - \E\big[ \tilde{f}(\bm{Z}) \big] \big| > t \Big) \leq 2\exp\Big( -\frac{Ct^2}{L^2} \Big)
	\end{align}
	for a universal constant $C>0$, which in this case coincides with the universal constant from Lemma~\ref{Lemma_ZTailBound}.
	
	\item[iv)] \textit{Bounding $\E[f(\bm{Z})-\tilde{f}(\bm{Z})]$}:\\
	First, observe
	\begin{align}\label{Eq_Q_Concentration_MeanDiffBound1}
		& \big| \E[f(\bm{Z})-\tilde{f}(\bm{Z})] \big| \leq \E\big[\big|f(\bm{Z})-\tilde{f}(\bm{Z})\big|\big] \nonumber\\
		& = \int_0^\infty \bP\big( \big|f(\bm{Z})-\tilde{f}(\bm{Z})\big| > t \big) \, dt \nonumber\\
		& \overset{\text{(\ref{Eq_Q_Concentration_DefMcShaneExtension})}}{\leq} \int_0^\infty \bP\Big( \bm{Z} \notin U_{\kappa_*} \ \text{ and } \ |f(\bm{Z})| + |\tilde{f}(\bm{Z})| > t \Big) \, dt \ .
	\end{align}
	Using Lemma~\ref{Lemma_QSpectralBound}, one observes
	\begin{align*}
		& |f(\bm{Z})| \overset{\text{(\ref{Eq_TraceBound})}}{\leq} \overbrace{\frac{1}{|z|} \frac{d}{n}}^{\leq \frac{c_*}{\eta}} ||M|| \, ||\bm{Q}^{(\bm{Z})}(z)^{-1}|| \nonumber\\
		& \overset{\text{(\ref{Eq_QBounds})}}{\leq} \frac{c_*}{\eta} ||M|| \frac{(\overbrace{|z|}^{\leq \kappa} + ||\bm{S}^{(\bm{Z})}||)^2}{\tau^2 \eta} \leq \frac{2c_* ||M||}{\tau^2 \eta^2} \big( \kappa^2 + ||\bm{S}^{(\bm{Z})}||^2 \big)\\
		& \overset{\substack{\text{(\ref{Eq_Def_S_Robust})}}}{\leq} \frac{2c_* ||M||}{\tau^2 \eta^2} \big( \kappa^2 + ||\bm{Y}^{(\bm{Z})}||^4/n^2 \big)\\
		& \overset{\substack{\text{(\ref{Eq_Def_Y_Robust})} \\ \text{\ref{ItemAssumption_sigmaBound}}}}{\leq} \frac{2c_* ||M||}{\tau^2 \eta^2} \Big( \kappa^2 + ||\bm{Z}||^4 \frac{\sigma^8}{n^2} \Big)
	\end{align*}
	and
	\begin{align*}
		& |\tilde{f}(\bm{Z})| \overset{\text{(\ref{Eq_Q_Concentration_DefMcShaneExtension})}}{\leq} \sup\limits_{Y \in U_{\kappa_*}} |f(Y)| + L ||\bm{Z}|| \nonumber\\
		& \overset{\text{(\ref{Eq_QBounds})}}{\leq} \frac{c_*}{\eta} ||M|| \frac{(\overbrace{|z|}^{\leq \kappa} + \sup\limits_{Y \in U_{\kappa_*}}||\bm{S}^{(Y)}||)^2}{\tau^2 \eta}  + L ||\bm{Z}|| \nonumber\\
		& \overset{\text{(\ref{Eq_Q_Concentration_SX_Bound})}}{\leq} \frac{2c_* ||M||}{\tau^2 \eta^2} \big( \kappa^2 + K_*^2 \big) + L ||\bm{Z}|| \ .
	\end{align*}
	Inserting these two bounds back into (\ref{Eq_Q_Concentration_MeanDiffBound1}) yields
	\begin{align}
		& \big| \E[f(\bm{Z})-\tilde{f}(\bm{Z})] \big| \nonumber\\
		& \leq \int_0^\infty \bP\Big( \bm{Z} \notin U_{\kappa_*} \ \text{ and } \ \frac{2c_* ||M||}{\tau^2 \eta^2} \big( 2\kappa^2 + K_*^2 + ||\bm{Z}||^4 \frac{\sigma^8}{n^2} \big) + L \underbrace{||\bm{Z}||}_{\overset{\kappa_*>1}{\leq} ||\bm{Z}||^4} > t \Big) \, dt \nonumber\\
		& \leq \int_0^\infty \bP\Big( \bm{Z} \notin U_{\kappa_*} \ \text{ and } \ \Big( \frac{2c_* ||M||}{\tau^2 \eta^2} \frac{\sigma^8}{n^2} + L \Big) ||\bm{Z}||^4 > t - \frac{2c_* ||M||}{\tau^2 \eta^2} (2\kappa^2 + K_*^2) \Big) \, dt \nonumber\\
		& \leq \bP\big( \bm{Z} \notin U_{\kappa_*} \big) \, \frac{2c_* ||M||}{\tau^2 \eta^2} (2\kappa^2 + K_*^2) \label{Eq_Q_Concentration_MeanDiffDecomp1}\\
		& \hspace{0.5cm} + \int_0^\infty \bP\Big( \bm{Z} \notin U_{\kappa_*} \ \text{ and } \ \Big( \frac{2c_* ||M||}{\tau^2 \eta^2} \frac{\sigma^8}{n^2} + L \Big) ||\bm{Z}||^4 > t \Big) \, dt \ . \label{Eq_Q_Concentration_MeanDiffDecomp2}
	\end{align}
	The expression (\ref{Eq_Q_Concentration_MeanDiffDecomp1}) is easily bounded by
	\begin{align}\label{Eq_Q_Concentration_MeanDiffDecomp1_Bound}
		& \text{(\ref{Eq_Q_Concentration_MeanDiffDecomp1})} \overset{\text{(\ref{Eq_Q_Concentration_PZBound})}}{\leq} 2\exp\big( -C n \big) \frac{2c_* ||M||}{\tau^2 \eta^2} (2\kappa^2 + K_*^2) \ .
	\end{align}
	For the expression (\ref{Eq_Q_Concentration_MeanDiffDecomp2}), one calculates
	\begin{align*}
		& \text{(\ref{Eq_Q_Concentration_MeanDiffDecomp2})} = \int_0^\infty \bP\Big( ||\bm{Z}|| > \Big(\frac{t}{\frac{2c_* ||M||}{\tau^2 \eta^2} \frac{\sigma^8}{n^2} + L}\Big)^{\frac{1}{4}} \lor \kappa_* \Big) \, dt\\
		& = \bP\big( ||\bm{Z}||>\kappa_* \big) \Big( \frac{2c_* ||M||}{\tau^2 \eta^2} \frac{\sigma^8}{n^2} + L \Big) \kappa_*^4\\
		& \hspace{0.5cm} + \int_{\big( \frac{2c_* ||M||}{\tau^2 \eta^2} \frac{\sigma^8}{n^2} + L \big) \kappa_*^4}^\infty \bP\Big( ||\bm{Z}|| > \Big(\frac{t}{\frac{2c_* ||M||}{\tau^2 \eta^2} \frac{\sigma^8}{n^2} + L}\Big)^{\frac{1}{4}} \Big) \, dt\\
		& = \overbrace{\bP\big( ||\bm{Z}||>\kappa_* \big)}^{\overset{\text{(\ref{Eq_Q_Concentration_PZBound})}}{\leq} 2\exp(-C n)} \Big( \frac{2c_* ||M||}{\tau^2 \eta^2} \frac{\sigma^8}{n^2} + L \Big) \kappa_*^4\\
		& \hspace{0.5cm} + \int_{0}^\infty \Big( \frac{2c_* ||M||}{\tau^2 \eta^2} \frac{\sigma^8}{n^2} + L \Big) 4(\kappa_*+s)^3 \underbrace{\bP\big( ||\bm{Z}|| > \kappa_* + s \big)}_{\substack{\overset{\substack{\text{(\ref{Eq_QConcentration_Def_kappa})} \\ \text{(\ref{Eq_ZTailBound})}}}{\leq} 2\exp(-C (\sqrt{n}+s)^2) \\ \leq 2\exp(-C (n+s^2))}} \, ds\\
		& \leq \exp(-C n) \Big( \frac{2c_* ||M||}{\tau^2 \eta^2} \frac{\sigma^8}{n^2} + L \Big) \Big( 2 \kappa_*^4 + 8 \int_0^\infty \underbrace{(\kappa_*+s)^3}_{\leq 4\kappa_*^3 + 4s^3} \exp(-C s^2) \, ds \Big)\\
		& \overset{\substack{\text{(\ref{Eq_Q_Concentration_DefL})} \\ \kappa_*>1}}{\leq} \exp(-C n) \Big( \frac{2c_* ||M||}{\tau^2 \eta^2} \frac{\sigma^8}{n^2} + \frac{2 c_*^2 \sigma^6 \sqrt{K_*} (\kappa+K_*)^4 ||M||}{\tau^4 \eta^5 \sqrt{n}} \Big)\\
		& \hspace{1cm} \times \kappa_*^4 \Big( 2 + 2^5 \int_0^\infty \exp(-C s^2) \, ds \Big) + 2^5 \int_0^\infty s^3 \exp(-C s^2) \, ds \Big)\\
		& \leq n^2 \exp(-C n) ||M|| \Big( \frac{2c_* \sigma^8}{\tau^2 \eta^2} + \frac{2 c_*^2 \sigma^6 \sqrt{K_*} (\kappa+K_*)^4}{\tau^4 \eta^5} \Big)\\
		& \hspace{1cm} \times \underbrace{\frac{\kappa_*^4}{n^2}}_{\overset{\substack{\text{(\ref{Eq_QConcentration_Def_kappa})} \\ \text{\ref{ItemAssumption_cBound}}}}{\leq} (2\sqrt{c_*}+3)^4} \Big( 2 + 2^5 \int_0^\infty \exp(-C s^2) \, ds \Big) + 2^5 \int_0^\infty s^3 \exp(-C s^2) \, ds \Big) \ .
	\end{align*}
	Since $K_* = K_*(c_*,\sigma^2)$ is a constant only depending on $c_*$ and $\sigma^2$, this proves the existence of a constant $K' = K'(c_*,\sigma^2,\tau,\eta,\kappa)>0$ such that
	\begin{align*}
		& \text{(\ref{Eq_Q_Concentration_MeanDiffDecomp2})} \leq K' n^2 \exp(-C n) ||M|| \ ,
	\end{align*}
	which together with (\ref{Eq_Q_Concentration_MeanDiffDecomp1_Bound}) may be inserted back into the decomposition (\ref{Eq_Q_Concentration_MeanDiffDecomp1})-(\ref{Eq_Q_Concentration_MeanDiffDecomp2}) to see
	\begin{align}\label{Eq_Q_Concentration_MeanDiffBound2}
		\big| \E[f(\bm{Z})-\tilde{f}(\bm{Z})] \big| & \leq 2\exp\big( -C n \big) \frac{2c_* ||M||}{\tau^2 \eta^2} (2\kappa^2 + K_*^2) + K' n^2 \exp(-C n) ||M|| \nonumber\\
		& \leq K'' n^2 \exp(-C n) ||M||
	\end{align}
	for $K'' \coloneq \frac{4 c_*}{\tau^2 \eta^2} (2\kappa^2 + K_*^2) + K'$.
	
	\item[v)] \textit{Concentration for fixed $z \in \bD(\eta,\kappa)$}:\\
	By combining the results of parts (i), (iii) and (iv) of this proof, one gets
	\begin{align*}
		& \forall M \in \C^{d \times d} , \, \forall z \in \bD(\eta,\kappa) , \, \forall t>0 : \nonumber\\
		& \bP\Big( \Big|\Re\Big( \frac{1}{z n} \tr\big( M \bm{Q}^{(\bm{Z})}(z)^{-1} \big)\Big) - \Re\Big(\E\Big[ \frac{1}{z n} \tr\big( M \bm{Q}^{(\bm{Z})}(z)^{-1} \big) \Big] \Big) \Big| \nonumber\\
		& \hspace{6cm} > t + K'' n^2 \exp(-C n) ||M|| \Big) \nonumber\\
		& \overset{\text{(\ref{Eq_Q_Concentration_McShaneExtension1})}}{\leq} \bP\big( \bm{Z} \notin U_{\kappa_*} \big) + \bP\Big( \bm{Z} \in U_{\kappa_*} \ \text{ and } \ \big| \tilde{f}(\bm{Z}) - \E\big[ f(\bm{Z}) \big] \big| > t + K'' n^2 \exp(-C n) ||M|| \Big) \nonumber\\
		& \overset{\text{(\ref{Eq_Q_Concentration_MeanDiffBound2})}}{\leq} \underbrace{\bP\big( \bm{Z} \notin U_{\kappa_*} \big)}_{\overset{\text{(\ref{Eq_Q_Concentration_PZBound})}}{\leq} 2\exp\big( -C n \big)} + \underbrace{\bP\Big( \bm{Z} \in U_{\kappa_*} \ \text{ and } \ \big| \tilde{f}(\bm{Z}) - \E\big[ \tilde{f}(\bm{Z}) \big] \big| > t \Big)}_{\overset{\text{(\ref{Eq_Q_Concentration_McShaneConcnetration})}}{\leq} 2\exp\big( -\frac{Ct^2}{L^2} \big)} \nonumber\\
		& \leq 2\exp\big( -C n \big) + 2\exp\Big( -\frac{Ct^2}{L^2} \Big) \nonumber\\
		& \overset{\text{(\ref{Eq_Q_Concentration_DefL})}}{=} 2\exp\big( -C n \big) + 2\exp\Big( -\frac{C \tau^8 \eta^{10} t^2 n}{4 c_*^4 \sigma^{12} K_* (\kappa+K_*)^8 ||M||^2} \Big) \ ,
	\end{align*}
	which proves
	\begin{align}\label{Eq_Q_ConcentrationA_singleZ_real}
		& \forall M \in \C^{d \times d} , \, \forall z \in \bD(\eta,\kappa) , \, \forall t>0 : \nonumber\\
		& \bP\Big( \Big| \Re\Big(\frac{1}{z n} \tr\big( M \bm{Q}^{(\bm{Z})}(z)^{-1} \big)\Big) - \Re\Big(\E\Big[ \frac{1}{z n} \tr\big( M \bm{Q}^{(\bm{Z})}(z)^{-1} \big) \Big]\Big) \Big| \nonumber\\
		& \hspace{6.5cm} > t + K n^2 \exp(-C n) ||M|| \Big) \nonumber\\
		& \leq 2\exp\big( -C n \big) + 2\exp\Big( -\frac{C t^2 n}{K ||M||^2} \Big)
	\end{align}
	for $K \coloneq K'' \lor \frac{4 c_*^4 \sigma^{12} K_* (\kappa+K_*)^8}{\tau^8 \eta^{10}}$.
	parts (iii), (iv) and the above calculation hold analogously for $f(X) = \Im\big( \frac{1}{z n} \tr(M \bm{Q}^{(X)}(z)^{-1}) \big)$. Combining the real and imaginary versions of (\ref{Eq_Q_ConcentrationA_singleZ_real}) gives
	\begin{align}\label{Eq_Q_ConcentrationA_singleZ}
		& \forall M \in \C^{d \times d} , \, \forall z \in \bD(\eta,\kappa) , \, \forall t>0 : \nonumber\\
		& \bP\Big( \Big| \frac{1}{z n} \tr\big( M \bm{Q}^{(\bm{Z})}(z)^{-1} \big) - \E\Big[ \frac{1}{z n} \tr\big( M \bm{Q}^{(\bm{Z})}(z)^{-1} \big) \Big] \Big| \nonumber\\
		& \hspace{4cm} > 2t + 2K n^2 \exp(-C n) ||M|| \Big) \nonumber\\
		& \leq 4\exp\big( -C n \big) + 4\exp\Big( -\frac{C t^2 n}{K ||M||^2} \Big) \ .
	\end{align}
	
	\item[vi)] \textit{Concentration uniformly over $z \in \bD(\eta,\kappa)$}:\\
	By construction of $\bD(\eta,\kappa)$, there for any $\delta \in (0,\kappa)$ exists a $\delta$-net $J_\delta \subset \bD(\eta,\kappa)$ with the properties
	\begin{align}\label{Eq_delta_Net_Properties_copy}
		& \bD(\eta,\kappa) \subset \bigcup\limits_{j \in J_\delta} B^{\C}_\delta(j) \ \ \text{ and } \ \ \#J_\delta \leq \frac{4\kappa^2}{\delta^2} \ ,
	\end{align}
	where $B^{\C}_\delta(j)$ denotes the (closed) $\delta$-neighborhood around $j$ in $\C$ and $\#J_\delta$ denotes the cardinality of $J_\delta$. Letting $\Pi_\delta : \bD(\eta,\kappa) \rightarrow J_\delta$ denote a (measurable) map with the property $|z-\Pi_\delta(z)| \leq \delta$, one by (\ref{Eq_Q_Concentration_Lz_Lipschitz_Q}) and (\ref{Eq_Q_Concentration_DefLz}) has
	\begin{align}\label{Eq_trMRz_NetBound_copy}
		& \forall z \in \bD(\eta,\kappa) : \nonumber\\
		& \Big| \frac{1}{z n} \tr\big( M \bm{Q}^{(\bm{Z})}(z)^{-1} \big) - \frac{1}{\Pi_\delta(z) n} \tr\big( M \bm{Q}^{(\bm{Z})}(\Pi_\delta(z)))^{-1} \big) \Big| \nonumber\\
		& \leq \Big(\underbrace{\frac{2 c_*^2 \sigma^4 (\kappa+K_*)^4}{\tau^4 \eta^5}}_{\leq K' \leq K'' \leq K}\Big) ||M|| \delta \leq K ||M|| \delta \ .
	\end{align}
	By combining (\ref{Eq_Q_ConcentrationA_singleZ}) and (\ref{Eq_trMRz_NetBound_copy}), one by triangle inequality gets
	\begin{align*}
		& \bP\Big( \sup\limits_{z \in \bD(\eta,\kappa)}\Big| \frac{1}{z n} \tr(M\bm{Q}^{(\bm{Z})}(z)^{-1}) - \E\Big[ \frac{1}{z n} \tr(M\bm{Q}^{(\bm{Z})}(z)^{-1}) \Big] \Big| \nonumber\\
		& \hspace{4cm} > t + 2K n^2 \exp(-C n) ||M|| + 2K ||M|| \delta \Big)\\
		& \overset{\text{(\ref{Eq_trMRz_NetBound_copy})}}{\leq} \bP\Big( \sup\limits_{z \in \bD(\eta,\kappa)}\Big| \frac{1}{\Pi_\delta(z) n} \tr(M\bm{Q}^{(\bm{Z})}(\Pi_\delta(z))^{-1}) - \E\Big[ \frac{1}{\Pi_\delta(z) n} \tr(M\bm{Q}^{(\bm{Z})}(\Pi_\delta(z))^{-1}) \Big] \Big| \nonumber\\
		& \hspace{8cm} > t + 2K n^2 \exp(-C n) ||M|| \Big)\\
		& \leq \bP\Big( \max\limits_{j \in J_\delta}\Big| \frac{1}{j n} \tr(M\bm{Q}^{(\bm{Z})}(j)^{-1}) - \E\Big[ \frac{1}{j n} \tr(M\bm{Q}^{(\bm{Z})}(j)^{-1}) \Big] \Big| \nonumber\\
		& \hspace{5cm} > t + 2K n^2 \exp(-C n) ||M|| \Big)\\
		& \leq \sum\limits_{j \in J_\delta} \bP\Big( \Big| \frac{1}{j n} \tr(M\bm{Q}^{(\bm{Z})}(j)^{-1}) - \E\Big[ \frac{1}{j n} \tr(M\bm{Q}^{(\bm{Z})}(j)^{-1}) \Big] \Big| \nonumber\\
		& \hspace{5cm} > t + 2K n^2 \exp(-C n) ||M|| \Big)\\
		& \overset{\text{(\ref{Eq_Q_ConcentrationA_singleZ})}}{\leq} \sum\limits_{j \in J_\delta} \Big( 4\exp\big( -C n \big) + 4\exp\Big( -\frac{C t^2 n}{4 K ||M||^2} \Big) \Big)\\
		& \overset{\text{(\ref{Eq_delta_Net_Properties_copy})}}{\leq} \frac{16\kappa^2}{\delta^2} \exp\big( -C n \big) + \frac{16\kappa^2}{\delta^2} \exp\Big( -\frac{C t^2 n}{4 K ||M||^2} \Big) \ ,
	\end{align*}
	thus proving (\ref{Eq_QConcentrationA}). \qed
\end{itemize}

\subsection{Proof of Lemma~\ref{Lemma_HolomorphicLindeberg}}\label{Proof_Lemma_HolomorphicLindeberg}
By telescope sum, one has
\begin{align}\label{Eq_Lindeberg_TelescopeSum}
	& \E\big[ f_N\big(X_1,\dots,X_N;\ol{X_1},\dots,\ol{X_N}\big) - f_N\big(Y_1,\dots,Y_N;\ol{Y_1},\dots,\ol{Y_N}\big) \big] \nonumber\\
	& = \sum\limits_{j=1}^N \E\big[ f_N\big(\overbrace{X_1,\dots,X_j,Y_{j+1},\dots,Y_N}^{\eqcolon  Z^{(j)}};\overbrace{\ol{X_1},\dots,\ol{X_j},\ol{Y_{j+1}},\dots,\ol{Y_N}}^{\eqcolon  \ol{Z^{(j)}}}\big) \nonumber\\
	& \hspace{2cm} - f_N\big(\underbrace{X_1,\dots,X_{j-1},Y_{j},\dots,Y_N}_{\eqcolon Z^{(j-1)}};\underbrace{\ol{X_1},\dots,\ol{X_{j-1}},\ol{Y_{j}},\dots,\ol{Y_N}}_{\eqcolon  \ol{Z^{(j-1)}}} \big] \ .
\end{align}
With the notation
\begin{align*}
	& Z^{(j,0)} \coloneq \big( X_1,\dots,X_{j-1},0,Y_{j+1},\dots,Y_N \big)
\end{align*}
and
\begin{align*}
	& u_{j} = \big( \underbrace{0,\dots,0}_{\times (j-1)},1,\underbrace{0,\dots,0}_{\times (N-j)} \big) \ ,
\end{align*}
one by third order Taylor series with integral remainder has
\begin{align}\label{Eq_Lindeberg_XTaylor}
	& f_N(Z^{(j)};\ol{Z^{(j)}}) - f_N(Z^{(j,0)};\ol{Z^{(j,0)}}) \nonumber\\
	& = \partial_\tau f_N\big(Z^{(j,0)}+\tau u_j X_j;\ol{Z^{(j,0)}+\tau u_j X_j}\big) \big|_{\tau=0} \nonumber\\
	& \hspace{0.5cm} + \frac{1}{2} \partial_\tau^2 f_N\big(Z^{(j,0)}+\tau u_j X_j;\ol{Z^{(j,0)}+\tau u_j X_j}\big) \big|_{\tau=0} \nonumber\\
	& \hspace{0.5cm} + \frac{1}{2} \int_0^1 (1-\tau)^2 \partial_\tau^3 f_N\big(Z^{(j,0)}+\tau u_j X_j;\ol{Z^{(j,0)}+\tau u_j X_j}\big) \, d\tau
\end{align}
and analogously
\begin{align}\label{Eq_Lindeberg_YTaylor}
	& f_N(Z^{(j-1)};\ol{Z^{(j-1)}}) - f_N(Z^{(j,0)};\ol{Z^{(j,0)}}) \nonumber\\
	& = \partial_\tau f_N\big(Z^{(j,0)}+\tau u_j Y_j;\ol{Z^{(j,0)}+\tau u_j Y_j}\big) \big|_{\tau=0} \nonumber\\
	& \hspace{0.5cm} + \frac{1}{2} \partial_\tau^2 f_N\big(Z^{(j,0)}+\tau u_j Y_j;\ol{Z^{(j,0)}+\tau u_j Y_j}\big) \big|_{\tau=0} \nonumber\\
	& \hspace{0.5cm} + \frac{1}{2} \int_0^1 (1-\tau)^2 \partial_\tau^3 f_N\big(Z^{(j,0)}+\tau u_j Y_j;\ol{Z^{(j,0)}+\tau u_j Y_j}\big) \, d\tau \ .
\end{align}
subtracting (\ref{Eq_Lindeberg_YTaylor}) from (\ref{Eq_Lindeberg_XTaylor}) and taking expectations yields
\begin{align}
	& \big| \E\big[ f_N(X;\ol{X}) - f_N(Y;\ol{Y}) \big] \big|\\
	& \overset{\text{(\ref{Eq_Lindeberg_TelescopeSum})}}{=} \bigg| \sum\limits_{j=1}^N \E\big[ f_N(Z^{(j)};\ol{Z^{(j)}}) - f_N(Z^{(j-1)};\ol{Z^{(j-1)}}) \big] \bigg| \nonumber\\
	& \leq \bigg| \sum\limits_{j=1}^N \E\Big[ \partial_\tau f_N\big(Z^{(j,0)}+\tau u_j X_j;\ol{Z^{(j,0)}+\tau u_j X_j}\big) \big|_{\tau=0} \nonumber\\
	& \hspace{1.5cm} - \partial_\tau f_N\big(Z^{(j,0)}+\tau u_j Y_j;\ol{Z^{(j,0)}+\tau u_j Y_j}\big) \big|_{\tau=0} \Big] \bigg| \label{Eq_Lindeberg_MeanSummand1}\\
	& \hspace{0.5cm} + \frac{1}{2} \bigg| \sum\limits_{j=1}^N \E\Big[ \partial_\tau^2 f_N\big(Z^{(j,0)}+\tau u_j X_j;\ol{Z^{(j,0)}+\tau u_j X_j}\big) \big|_{\tau=0} \nonumber\\
	& \hspace{2.5cm} - \partial_\tau^2 f_N\big(Z^{(j,0)}+\tau u_j Y_j;\ol{Z^{(j,0)}+\tau u_j Y_j}\big) \big|_{\tau=0} \Big] \bigg| \label{Eq_Lindeberg_MeanSummand2}\\
	& \hspace{0.5cm} + \frac{1}{2} \sum\limits_{j=1}^N \max\limits_{\tau \in [0,1]} \Big| \E\Big[ \partial_\tau^3 f_N\big(Z^{(j,0)}+\tau u_j X_j;\ol{Z^{(j,0)}+\tau u_j X_j}\big) \nonumber\\
	& \hspace{3.5cm} - \partial_\tau^3 f_N\big(Z^{(j,0)}+\tau u_j Y_j;\ol{Z^{(j,0)}+\tau u_j Y_j}\big) \Big] \Big| \ . \label{Eq_Lindeberg_MeanSummand3}
\end{align}
The above summands are calculated as follows.
\begin{itemize}
	\item \textit{Calculation of (\ref{Eq_Lindeberg_MeanSummand1})}:\\
	Let $\partial_j f_N$ denote the holomorphic derivative of $f_N$ with regard to the $j$-th component. One by chain rule observes
	\begin{align}\label{Eq_Lindeberg_RealDeriv1}
		& \partial_\tau f_N\big(z_1,\dots,\tau z_j,\dots,z_N;w_1,\dots,\tau w_j,\dots,w_N\big) \big|_{\tau=0} \nonumber\\
		& = \partial_\tau f_N\big(z_1,\dots,\tau z_j,\dots,z_N;w_1,\dots,0,\dots,w_N\big) \big|_{\tau=0} \nonumber\\
		& \hspace{0.5cm} + \partial_\tau f_N\big(z_1,\dots,0,\dots,z_N;w_1,\dots,\tau w_j,\dots,w_N\big) \big|_{\tau=0} \nonumber\\
		& = z_j \partial_{j} f_N\big(z_1,\dots,0,\dots,z_N;w_1,\dots,0,\dots,w_N\big) \nonumber\\
		& \hspace{0.5cm} + w_j \partial_{N+j} f_N\big(z_1,\dots,0,\dots,z_N;w_1,\dots,0,\dots,w_N\big) \ ,
	\end{align}
	which for the first derivatives yields
	\begin{align}\label{Eq_Lindeberg_DirstDerivX}
		& \partial_\tau f_N\big(Z^{(j,0)}+\tau u_j X_j;\ol{Z^{(j,0)}+\tau u_j X_j}\big) \big|_{\tau=0} \nonumber\\
		& \overset{\text{(\ref{Eq_Lindeberg_RealDeriv1})}}{=} X_j \partial_{j} f_N\big(Z^{(j,0)};\ol{Z^{(j,0)}}\big) + \ol{X_j} \partial_{N+j} f_N\big(Z^{(j,0)};\ol{Z^{(j,0)}}\big)
	\end{align}
	and analogously
	\begin{align}\label{Eq_Lindeberg_DirstDerivY}
		& \partial_\tau f_N\big(Z^{(j,0)}+\tau u_j Y_j;\ol{Z^{(j,0)}+\tau u_j Y_j}\big) \big|_{\tau=0} \nonumber\\
		& = Y_j \partial_{j} f_N\big(Z^{(j,0)};\ol{Z^{(j,0)}}\big) + \ol{Y_j} \partial_{N+j} f_N\big(Z^{(j,0)};\ol{Z^{(j,0)}}\big) \ .
	\end{align}
	By independence of $X_1,\dots,X_N,Y_1,\dots,Y_N$, one has
	\begin{align}
		& \E\Big[ \partial_\tau f_N\big(Z^{(j,0)}+\tau u_j X_j;\ol{Z^{(j,0)}+\tau u_j X_j}\big) \big|_{\tau=0} \nonumber\\
		& \hspace{1cm} - \partial_\tau f_N\big(Z^{(j,0)}+\tau u_j Y_j;\ol{Z^{(j,0)}+\tau u_j Y_j}\big) \big|_{\tau=0} \Big] \nonumber\\
		& \hspace{-0.5cm} \overset{\text{(\ref{Eq_Lindeberg_DirstDerivX})\&(\ref{Eq_Lindeberg_DirstDerivY})}}{=} \E\big[ (X_j-Y_j) \big] \, \E\big[ \partial_{j} f_N\big(Z^{(j,0)};\ol{Z^{(j,0)}}\big) \big] \nonumber\\
		& \hspace{2cm} + \ol{\E\big[ (X_j-Y_j) \big]} \, \E\big[ \partial_{N+j} f_N\big(Z^{(j,0)};\ol{Z^{(j,0)}}\big) \big] \nonumber\\
		& \overset{\text{(\ref{Eq_Lindeberg_EqualMeans})}}{=} 0 \ ,
	\end{align}
	which proves $\text{(\ref{Eq_Lindeberg_MeanSummand1})} = 0$.
	
	\item \textit{Calculation of (\ref{Eq_Lindeberg_MeanSummand2})}:\\
	As in (\ref{Eq_Lindeberg_RealDeriv1}), one calculates
	\begin{align*}
		& \partial_\tau^2 f_N\big(z_1,\dots,\tau z_j,\dots,z_N;w_1,\dots,\tau w_j,\dots,w_N\big) \big|_{\tau=0} \nonumber\\
		& = \partial_\tau^2 f_N\big(z_1,\dots,\tau z_j,\dots,z_N;w_1,\dots,0,\dots,w_N\big) \big|_{\tau=0}\\
		& \hspace{0.5cm} + 2\partial_\tau \partial_{\tau'} f_N\big(z_1,\dots,\tau z_j,\dots,z_N;w_1,\dots,\tau'w_j,\dots,w_N\big) \big|_{\tau=0=\tau'}\\
		& \hspace{0.5cm} + \partial_\tau^2 f_N\big(z_1,\dots,0,\dots,z_N;w_1,\dots,\tau w_j,\dots,w_N\big) \big|_{\tau=0}\\
		& = z_j^2 \partial_j^2 f_N\big(z_1,\dots,0,\dots,z_N;w_1,\dots,0,\dots,w_N\big)\\
		& \hspace{0.5cm} + 2 z_jw_j \partial_j \partial_{N+j} f_N\big(z_1,\dots,0,\dots,z_N;w_1,\dots,0,\dots,w_N\big)\\
		& \hspace{0.5cm} + w_j^2 \partial_{N+j}^2 f_N\big(z_1,\dots,0,\dots,z_N;w_1,\dots,0,\dots,w_N\big) \ ,
	\end{align*}
	which for the second derivatives yields
	\begin{align*}
		& \frac{1}{2} \partial_\tau^2 f_N\big(Z^{(j,0)}+\tau u_j X_j;\ol{Z^{(j,0)}+\tau u_j X_j}\big) \big|_{\tau=0}\\
		& = \frac{1}{2} X_j^2 \partial_j^2 f_N\big(Z^{(j,0)};\ol{Z^{(j,0)}}\big)\\
		& \hspace{0.5cm} + |X_j|^2 \partial_j \partial_{N+j} f_N\big(Z^{(j,0)};\ol{Z^{(j,0)}}\big)\\
		& \hspace{0.5cm} + \frac{1}{2} \ol{X_j}^2 \partial_{N+j}^2 f_N\big(Z^{(j,0)};\ol{Z^{(j,0)}}\big) \ .
	\end{align*}
	Analogously, one has
	\begin{align*}
		& \frac{1}{2} \partial_\tau^2 f_N\big(Z^{(j,0)}+\tau u_j Y_j;\ol{Z^{(j,0)}+\tau u_j Y_j}\big) \big|_{\tau=0}\\
		& = \frac{1}{2} Y_j^2 \partial_j^2 f_N\big(Z^{(j,0)};\ol{Z^{(j,0)}}\big)\\
		& \hspace{0.5cm} + |Y_j|^2 \partial_j \partial_{N+j} f_N\big(Z^{(j,0)};\ol{Z^{(j,0)}}\big)\\
		& \hspace{0.5cm} + \frac{1}{2} \ol{Y_j}^2 \partial_{N+j}^2 f_N\big(Z^{(j,0)};\ol{Z^{(j,0)}}\big) \ .
	\end{align*}
	The summand (\ref{Eq_Lindeberg_MeanSummand2}) may thus be bounded by
	\begin{align}\label{Eq_Lineberg_M0ProtoBound}
		& \frac{1}{2} \bigg| \sum\limits_{j=1}^N \E\Big[ \partial_\tau^2 f_N\big(Z^{(j,0)}+\tau u_j X_j;\ol{Z^{(j,0)}+\tau u_j X_j}\big) \big|_{\tau=0} \nonumber\\
		& \hspace{1.5cm} - \partial_\tau^2 f_N\big(Z^{(j,0)}+\tau u_j Y_j;\ol{Z^{(j,0)}+\tau u_j Y_j}\big) \big|_{\tau=0} \Big] \bigg| \nonumber\\
		& \leq \frac{1}{2} \bigg| \sum\limits_{j=1}^N \E\big[ X_j^2 - Y_j^2 \big] \E\big[ \partial_j^2 f_N\big(Z^{(j,0)};\ol{Z^{(j,0)}}\big) \big] \bigg| \nonumber\\
		& \hspace{0.5cm} + \bigg| \sum\limits_{j=1}^N \E\big[ |X_j|^2 - |Y_j|^2 \big] \E\big[ \partial_j \partial_{N+j} f_N\big(Z^{(j,0)};\ol{Z^{(j,0)}}\big) \big] \bigg| \nonumber\\
		& \hspace{0.5cm} + \frac{1}{2} \bigg| \sum\limits_{j=1}^N \E\big[ \ol{X_j}^2 - \ol{Y_j}^2 \big] \E\big[ \partial_{N+j}^2 f_N\big(Z^{(j,0)};\ol{Z^{(j,0)}}\big) \big] \bigg| \nonumber\\
		& \overset{\text{(\ref{Eq_Lindeberg_EqualMeans})}}{=} 0 \ .
	\end{align}

	\item \textit{Calculation of (\ref{Eq_Lindeberg_MeanSummand3})}:\\
	For the third derivative, one calculates
	\begin{align*}
		& \partial_\tau^3 f_N\big(z_1,\dots,\tau z_j,\dots,z_N;w_1,\dots,\tau w_j,\dots,w_N\big) \big|_{\tau=0} \nonumber\\
		& = \partial_\tau^3 f_N\big(z_1,\dots,\tau z_j,\dots,z_N;w_1,\dots,0,\dots,w_N\big) \big|_{\tau=0}\\
		& \hspace{0.5cm} + 3\partial_\tau^2 \partial_{\tau'} f_N\big(z_1,\dots,\tau z_j,\dots,z_N;w_1,\dots,\tau'w_j,\dots,w_N\big) \big|_{\tau=0=\tau'}\\
		& \hspace{0.5cm} + 3\partial_\tau \partial_{\tau'}^2 f_N\big(z_1,\dots,\tau z_j,\dots,z_N;w_1,\dots,\tau'w_j,\dots,w_N\big) \big|_{\tau=0=\tau'}\\
		& \hspace{0.5cm} + \partial_\tau^3 f_N\big(z_1,\dots,0,\dots,z_N;w_1,\dots,\tau w_j,\dots,w_N\big) \big|_{\tau=0}\\
		& = z_j^3 \partial_j^3 f_N\big(z_1,\dots,0,\dots,z_N;w_1,\dots,0,\dots,w_N\big)\\
		& \hspace{0.5cm} + 3z_j^2w_j \partial_j^2 \partial_{N+j} f_N\big(z_1,\dots,0,\dots,z_N;w_1,\dots,0,\dots,w_N\big)\\
		& \hspace{0.5cm} + 3z_jw_j^2 \partial_j \partial_{N+j}^2 f_N\big(z_1,\dots,0,\dots,z_N;w_1,\dots,0,\dots,w_N\big)\\
		& \hspace{0.5cm} + w_j^3 \partial_{N+j}^3 f_N\big(z_1,\dots,0,\dots,z_N;w_1,\dots,0,\dots,w_N\big) \ .
	\end{align*}
	Thus, the terms in the integrals in (\ref{Eq_Lindeberg_XTaylor}) and (\ref{Eq_Lindeberg_YTaylor}) become
	\begin{align*}
		& \partial_\tau^3 f_N\big(Z^{(j,0)}+\tau u_j X_j;\ol{Z^{(j,0)}+\tau u_j X_j}\big)\\
		& = X_j^3 \partial_j^3 f_N\big(Z^{(j,0)}+\tau u_j X_j;\ol{Z^{(j,0)}+\tau u_j X_j}\big)\\
		& \hspace{0.5cm} + 3 X_j |X_j|^2 \partial_j^2 \partial_{N+j} f_N\big(Z^{(j,0)}+\tau u_j X_j;\ol{Z^{(j,0)}+\tau u_j X_j}\big)\\
		& \hspace{0.5cm} + 3 \ol{X_j} |X_j|^2 \partial_j \partial_{N+j}^2 f_N\big(Z^{(j,0)}+\tau u_j X_j;\ol{Z^{(j,0)}+\tau u_j X_j}\big)\\
		& \hspace{0.5cm} + \ol{X_j}^3 \partial_{N+j}^3 f_N\big(Z^{(j,0)}+\tau u_j X_j;\ol{Z^{(j,0)}+\tau u_j X_j}\big)
	\end{align*}
	and
	\begin{align*}
		& \partial_\tau^3 f_N\big(Z^{(j,0)}+\tau u_j Y_j;\ol{Z^{(j,0)}+\tau u_j Y_j}\big)\\
		& = Y_j^3 \partial_j^3 f_N\big(Z^{(j,0)}+\tau u_j Y_j;\ol{Z^{(j,0)}+\tau u_j Y_j}\big)\\
		& \hspace{0.5cm} + 3 Y_j |Y_j|^2 \partial_j^2 \partial_{N+j} f_N\big(Z^{(j,0)}+\tau u_j Y_j;\ol{Z^{(j,0)}+\tau u_j Y_j}\big)\\
		& \hspace{0.5cm} + 3 \ol{Y_j} |Y_j|^2 \partial_j \partial_{N+j}^2 f_N\big(Z^{(j,0)}+\tau u_j Y_j;\ol{Z^{(j,0)}+\tau u_j Y_j}\big)\\
		& \hspace{0.5cm} + \ol{Y_j}^3 \partial_{N+j}^3 f_N\big(Z^{(j,0)}+\tau u_j Y_j;\ol{Z^{(j,0)}+\tau u_j Y_j}\big) \ ,
	\end{align*}
	so for each $j \leq N$ one may bound
	\begin{align*}
		& \frac{1}{2} \max\limits_{\tau \in [0,1]} \Big| \E\Big[ \partial_\tau^3 f_N\big(Z^{(j,0)}+\tau u_j X_j;\ol{Z^{(j,0)}+\tau u_j X_j}\big) \nonumber\\
		& \hspace{2cm} - \partial_\tau^3 f_N\big(Z^{(j,0)}+\tau u_j Y_j;\ol{Z^{(j,0)}+\tau u_j Y_j}\big) \Big] \Big|\\
		& \leq \frac{3}{2} \sum\limits_{\substack{a,b \geq 0 \\ a+b=3}} \max\limits_{\tau \in [0,1]} \E\big[ \big( |X_j|^3 + |Y_j|^3 \big) \big|\partial_j^a \partial_{N+j}^b f_N\big(Z^{(j,0)}+\tau u_j X_j;\ol{Z^{(j,0)}+\tau u_j X_j}\big)\big| \big]\\
		& \hspace{0.5cm} + \frac{3}{2} \sum\limits_{\substack{a,b \geq 0 \\ a+b=3}} \max\limits_{\tau \in [0,1]} \E\big[ \big( |X_j|^3 + |Y_j|^3 \big) \big|\partial_j^a \partial_{N+j}^b f_N\big(Z^{(j,0)}+\tau u_j Y_j;\ol{Z^{(j,0)}+\tau u_j Y_j}\big)\big| \big]\\
		& \leq \overbrace{\frac{3}{2} \E\big[ 2|X_j|^6 + 2|Y_j|^6 \big]^{\frac{1}{2}}}^{\overset{\text{(\ref{Eq_Lindeberg_SixthMomentBound})}}{\leq} 3\sqrt{K_6}} \sum\limits_{\substack{a,b \geq 0 \\ a+b=3}} \max\limits_{\tau \in [0,1]} \E\big[ \big|\partial_j^a \partial_{N+j}^b f_N\big(Z^{(j,0)}+\tau u_j X_j;\ol{Z^{(j,0)}+\tau u_j X_j}\big)\big|^2 \big]^{\frac{1}{2}}\\
		& \hspace{0.2cm} + \underbrace{\frac{3}{2} \E\big[ 2|X_j|^6 + 2|Y_j|^6 \big]^{\frac{1}{2}}}_{\overset{\text{(\ref{Eq_Lindeberg_SixthMomentBound})}}{\leq} 3\sqrt{K_6}} \sum\limits_{\substack{a,b \geq 0 \\ a+b=3}} \max\limits_{\tau \in [0,1]} \E\big[ \big|\partial_j^a \partial_{N+j}^b f_N\big(Z^{(j,0)}+\tau u_j Y_j;\ol{Z^{(j,0)}+\tau u_j Y_j}\big)\big|^2 \big]^{\frac{1}{2}} \ ,
	\end{align*}
	which proves that the summand (\ref{Eq_Lindeberg_MeanSummand3}) may be bounded by $3\sqrt{K_6} \sum\limits_{j=1}^N (b^{(1)}_j + b^{(2)}_j)$.
\end{itemize}
Inserting these bounds back into the expansion (\ref{Eq_Lindeberg_MeanSummand1})-(\ref{Eq_Lindeberg_MeanSummand3}) yields
\begin{align*}
	& \big| \E\big[ f_N(X;\ol{X}) - f_N(Y;\ol{Y}) \big] \big|\leq 3\sqrt{K_6} \sum\limits_{j=1}^N \big( b^{(1)}_j + b^{(2)}_j \big) \ .
\end{align*}
which proves (\ref{Eq_LindebergEquality}). \qed

\subsection{Proof of Lemma~\ref{Lemma_DerivativeBounds_Lindeberg}}\label{Proof_Lemma_DerivativeBounds_Lindeberg}
For ease of notation, write $\bm{Y}$, $\bm{S}$ and $\bm{R}(z)$ for $\bm{Y}^{(X)}$, $\bm{S}^{(X)}$ and $\bm{R}^{(X)}(z)$ respectively.
\begin{itemize}
	\item First derivatives:\\
	For the holomorphic derivative of $f_{d \cdot n}$ with respect to the component $(i,j)$, one thus has
	\begin{align}\label{Eq_fN_R_FirstDeriv}
		\frac{\partial}{\partial X_{i,j}} f_{d \cdot n}(X,\ol{X}) & = \frac{\partial}{\partial X_{i,j}} \frac{1}{n} \tr\big( M \bm{R}(z) \big) = \frac{1}{n} \tr\Big( M \frac{\partial}{\partial X_{i,j}} \bm{R}(z) \Big) \nonumber\\
		& \hspace{-0.1cm} \overset{\text{(\ref{Eq_DerivCalc_XDeriv_R_Result})}}{=} -\frac{1}{n^2} \tr\Big( M \sum\limits_{r=1}^R \bm{R}(z) A_r e_{i,d} e_{j,n}^\top B_r \bm{Y}^* \bm{R}(z) \Big) \nonumber\\
		& = -\frac{1}{n^2} \sum\limits_{r=1}^R e_{j,n}^\top B_r \bm{Y}^* \bm{R}(z) M \bm{R}(z) A_r e_{i,d}
	\end{align}
	and analogously
	\begin{align}\label{Eq_fN_R_FirstDerivOl}
		\frac{\partial}{\partial \ol{X_{i,j}}} f_{d \cdot n}(X,\ol{X}) & = \frac{\partial}{\partial \ol{X_{i,j}}} \frac{1}{n} \tr\big( M \bm{R}(z) \big) = \frac{1}{n} \tr\Big( M \frac{\partial}{\partial \ol{X_{i,j}}} \bm{R}(z) \Big) \nonumber\\
		& \hspace{-0.15cm} \overset{\text{(\ref{Eq_DerivCalc_olXDeriv_R_Result})}}{=} -\frac{1}{n^2} \tr\Big( M \sum\limits_{s=1}^R \bm{R}(z) \bm{Y} B_s^* e_{j,n} e_{i,d}^\top A_s^* \bm{R}(z) \Big) \nonumber\\
		& = -\frac{1}{n^2} \sum\limits_{s=1}^R e_{i,d}^\top A_s^* \bm{R}(z) M \bm{R}(z) \bm{Y} B_s^* e_{j,n} \ .
	\end{align}
	
	\item Second derivatives:\\
	The relevant second partial derivative for Lemma~\ref{Lemma_HolomorphicLindeberg} may be calculated directly by
	\begin{align}\label{Eq_XXDerivf1Result}
		& \frac{\partial^2}{\partial X_{i,j}^2} f_{d \cdot n}(X,\ol{X}) \overset{\text{(\ref{Eq_fN_R_FirstDeriv})}}{=} -\frac{1}{n^2} \sum\limits_{r=1}^R \frac{\partial}{\partial X_{i,j}} \big[e_{j,n}^\top B_r \bm{Y}^* \bm{R}(z) M \bm{R}(z) A_r e_{i,d}\big] \nonumber\\
		& \overset{\text{(\ref{Eq_DerivCalc_BaseX})}}{=} -\frac{1}{n^2} \sum\limits_{r=1}^R e_{j,n}^\top B_r \bm{Y}^* \frac{\partial}{\partial X_{i,j}} \big[\bm{R}(z)\big] M \bm{R}(z) A_r e_{i,d} \nonumber\\
		& \hspace{0.5cm} - \frac{1}{n^2} \sum\limits_{r=1}^R e_{j,n}^\top B_r \bm{Y}^* \bm{R}(z) M \frac{\partial}{\partial X_{i,j}} \big[\bm{R}(z)\big] A_r e_{i,d} \nonumber\\
		& \overset{\text{(\ref{Eq_DerivCalc_XDeriv_R_Result})}}{=} \frac{1}{n^3} \sum\limits_{r,r'=1}^R e_{j,n}^\top B_r \bm{Y}^* \bm{R}(z) A_{r'} e_{i,d} \, e_{j,n}^\top B_{r'} \bm{Y}^* \bm{R}(z) M \bm{R}(z) A_r e_{i,d} \nonumber\\
		& \hspace{0.5cm} + \frac{1}{n^3} \sum\limits_{r,r'=1}^R e_{j,n}^\top B_r \bm{Y}^* \bm{R}(z) M \bm{R}(z) A_{r'} e_{i,d} \, e_{j,n}^\top B_{r'} \bm{Y}^* \bm{R}(z) A_r e_{i,d} \nonumber\\
		& = \frac{2}{n^3} \sum\limits_{r,r'=1}^R e_{j,n}^\top B_r \bm{Y}^* \bm{R}(z) A_{r'} e_{i,d} \, e_{j,n}^\top B_{r'} \bm{Y}^* \bm{R}(z) M \bm{R}(z) A_r e_{i,d} \ .
	\end{align}
	One analogously proves
	\begin{align}\label{Eq_olXolXDerivf1Result}
		& \frac{\partial^2}{\partial \ol{X_{i,j}}^2} f_{d \cdot n}(X,\ol{X}) \nonumber\\
		& = \frac{1}{n^3} \sum\limits_{s,s'=1}^R e_{i,d}^\top A_s^* \bm{R}(z) \bm{Y} B_{s'}^* e_{j,n} \, e_{i,d}^\top A_{s'}^* \bm{R}(z) M \bm{R}(z) \bm{Y} B_s^* e_{j,n} \nonumber\\
		& \hspace{0.5cm} + \frac{1}{n^3} \sum\limits_{s,s'=1}^R e_{i,d}^\top A_s^* \bm{R}(z) M \bm{R}(z) \bm{Y} B_{s'}^* e_{j,n} \, e_{i,d}^\top A_{s'}^* \bm{R}(z) \bm{Y} B_s^* e_{j,n} \ .
	\end{align}
	
	\item Third derivatives:\\
	Again, the third derivatives may be directly calculated by product rule. One sees
	{\small
		\begin{align}\label{Eq_MixedThirdDerivCalc1}
			& \frac{\partial^3}{\partial X_{i,j}^3} f_{d \cdot n}(X,\ol{X}) \nonumber\\
			& \overset{\text{(\ref{Eq_XXDerivf1Result})}}{=} \frac{2}{n^3} \sum\limits_{r,r'=1}^R \frac{\partial}{\partial X_{i,j}} \big[ e_{j,n}^\top B_r \bm{Y}^* \bm{R}(z) A_{r'} e_{i,d} \, e_{j,n}^\top B_{r'} \bm{Y}^* \bm{R}(z) M \bm{R}(z) A_r e_{i,d} \big] \nonumber\\
			& \overset{\text{(\ref{Eq_DerivCalc_BaseX})}}{=} \frac{2}{n^3} \sum\limits_{r,r'=1}^R e_{j,n}^\top B_r \bm{Y}^* \frac{\partial}{\partial X_{i,j}} \big[ \bm{R}(z) \big] A_{r'} e_{i,d} \, e_{j,n}^\top B_{r'} \bm{Y}^* \bm{R}(z) M \bm{R}(z) A_r e_{i,d} \nonumber\\
			& \hspace{0.5cm} + \frac{2}{n^3} \sum\limits_{r,r'=1}^R e_{j,n}^\top B_r \bm{Y}^* \bm{R}(z) A_{r'} e_{i,d} \, e_{j,n}^\top B_{r'} \bm{Y}^* \frac{\partial}{\partial X_{i,j}} \big[ \bm{R}(z) \big] M \bm{R}(z) A_r e_{i,d} \nonumber\\
			& \hspace{0.5cm} + \frac{2}{n^3} \sum\limits_{r,r'=1}^R e_{j,n}^\top B_r \bm{Y}^* \bm{R}(z) A_{r'} e_{i,d} \, e_{j,n}^\top B_{r'} \bm{Y}^* \bm{R}(z) M \frac{\partial}{\partial X_{i,j}} \big[ \bm{R}(z) \big] A_r e_{i,d} \nonumber\\
			& \overset{\text{(\ref{Eq_DerivCalc_XDeriv_R_Result})}}{=} - \frac{2}{n^4} \sum\limits_{r,r',r''=1}^R e_{j,n}^\top B_r \bm{Y}^* \bm{R}(z) A_{r''} e_{i,d} e_{j,n}^\top B_{r''} \bm{Y}^* \bm{R}(z) A_{r'} e_{i,d} \, e_{j,n}^\top B_{r'} \bm{Y}^* \bm{R}(z) M \bm{R}(z) A_r e_{i,d} \nonumber\\
			& \hspace{0.5cm} - \frac{2}{n^4} \sum\limits_{r,r',r''=1}^R e_{j,n}^\top B_r \bm{Y}^* \bm{R}(z) A_{r'} e_{i,d} \, e_{j,n}^\top B_{r'} \bm{Y}^* \bm{R}(z) A_{r''} e_{i,d} e_{j,n}^\top B_{r''} \bm{Y}^* \bm{R}(z) M \bm{R}(z) A_r e_{i,d} \nonumber\\
			& \hspace{0.5cm} - \frac{2}{n^4} \sum\limits_{r,r',r''=1}^R e_{j,n}^\top B_r \bm{Y}^* \bm{R}(z) A_{r'} e_{i,d} \, e_{j,n}^\top B_{r'} \bm{Y}^* \bm{R}(z) M \bm{R}(z) A_{r''} e_{i,d} e_{j,n}^\top B_{r''} \bm{Y}^* \bm{R}(z) A_r e_{i,d} \nonumber\\
			& = - \frac{6}{n^4} \sum\limits_{r,r',r''=1}^R e_{j,n}^\top B_r \bm{Y}^* \bm{R}(z) A_{r''} e_{i,d} e_{j,n}^\top B_{r''} \bm{Y}^* \bm{R}(z) A_{r'} e_{i,d} \, e_{j,n}^\top B_{r'} \bm{Y}^* \bm{R}(z) M \bm{R}(z) A_r e_{i,d}
		\end{align}
	}
	and may then bound
	\begin{align}\label{Eq_ThridDerivBound1}
		& \Big| \frac{\partial^3}{\partial X_{i,j}^3} f_{d \cdot n}(X,\ol{X}) \Big| \nonumber\\
		& \overset{\text{(\ref{Eq_MixedThirdDerivCalc1})}}{=} \frac{6}{n^4} \Big| \sum\limits_{r,r',r''=1}^R e_{j,n}^\top B_r \bm{Y}^* \bm{R}(z) A_{r''} e_{i,d} e_{j,n}^\top B_{r''} \bm{Y}^* \bm{R}(z) A_{r'} e_{i,d} \nonumber\\
		& \hspace{6cm} \times e_{j,n}^\top B_{r'} \bm{Y}^* \bm{R}(z) M \bm{R}(z) A_r e_{i,d} \Big| \nonumber\\
		& \leq \frac{6}{n^4}  \sum\limits_{r,r',r''=1}^R \big|\big|B_r \bm{Y}^* \bm{R}(z) A_{r''}\big|\big| \, \big|\big|B_{r''} \bm{Y}^* \bm{R}(z) A_{r'} \big|\big| \, \big|\big| B_{r'} \bm{Y}^* \bm{R}(z) M \bm{R}(z) A_r\big|\big|  \nonumber\\
		& \overset{\text{(\ref{Eq_sigmaAssumption_NonAsymp})}}{\leq} \frac{6 \sigma^6 ||M||}{n^4} ||\bm{Y}||^{3} \, ||\bm{R}(z)||^4 \overset{\substack{\text{(\ref{Eq_R_SpectralBounds})} \\ \text{(\ref{Eq_DefS})}}}{\leq} \frac{6 \sigma^6 ||M||}{n^{5/2} \Im(z)^4} ||\bm{S}^{(X)}||^{\frac{3}{2}} \ ,
	\end{align}
	thus proving (\ref{Eq_XXXDerivf1Result}) for $a=3$ and $b=0$.
	Similarly, one calculates
	\begin{align}\label{Eq_MixedThirdDerivCalc2}
		& \frac{\partial}{\partial \ol{X_{i,j}}} \frac{\partial^2}{\partial X_{i,j}^2} f_{d \cdot n}(X,\ol{X}) \nonumber\\
		& \overset{\text{(\ref{Eq_XXDerivf1Result})}}{=} \frac{2}{n^3} \sum\limits_{r,r'=1}^R \frac{\partial}{\partial \ol{X_{i,j}}} \big[ e_{j,n}^\top B_r \bm{Y}^* \bm{R}(z) A_{r'} e_{i,d} \, e_{j,n}^\top B_{r'} \bm{Y}^* \bm{R}(z) M \bm{R}(z) A_r e_{i,d} \big] \nonumber\\
		& = \frac{2}{n^3} \sum\limits_{r,r'=1}^R e_{j,n}^\top B_r \frac{\partial}{\partial \ol{X_{i,j}}} \big[ \bm{Y}^* \bm{R}(z) \big] A_{r'} e_{i,d} \, e_{j,n}^\top B_{r'} \bm{Y}^* \bm{R}(z) M \bm{R}(z) A_r e_{i,d} \nonumber\\
		& \hspace{0.5cm} + \frac{2}{n^3} \sum\limits_{r,r'=1}^R e_{j,n}^\top B_r \bm{Y}^* \bm{R}(z) A_{r'} e_{i,d} \, e_{j,n}^\top B_{r'} \frac{\partial}{\partial \ol{X_{i,j}}} \big[ \bm{Y}^* \bm{R}(z) \big] M \bm{R}(z) A_r e_{i,d} \nonumber\\
		& \hspace{0.5cm} + \frac{2}{n^3} \sum\limits_{r,r'=1}^R e_{j,n}^\top B_r \bm{Y}^* \bm{R}(z) A_{r'} e_{i,d} \, e_{j,n}^\top B_{r'} \bm{Y}^* \bm{R}(z) M \frac{\partial}{\partial \ol{X_{i,j}}} \big[ \bm{R}(z) \big] A_r e_{i,d} \nonumber\\
		& \overset{\substack{\text{(\ref{Eq_DerivCalc_olXDeriv_YR_Result})} \\ \text{(\ref{Eq_DerivCalc_olXDeriv_R_Result})}}}{=} - \frac{2z}{n^3} \sum\limits_{r,r',s=1}^R e_{j,n}^\top B_r \tilde{\bm{R}}(z) B_s^* e_{j,n} e_{i,d}^\top A_s^* \bm{R}(z) A_{r'} e_{i,d} \, e_{j,n}^\top B_{r'} \bm{Y}^* \bm{R}(z) M \bm{R}(z) A_r e_{i,d} \nonumber\\
		& \hspace{0.5cm} - \frac{2z}{n^3} \sum\limits_{r,r',s=1}^R e_{j,n}^\top B_r \bm{Y}^* \bm{R}(z) A_{r'} e_{i,d} \, e_{j,n}^\top B_{r'} \tilde{\bm{R}}(z) B_s^* e_{j,n} e_{i,d}^\top A_s^* \bm{R}(z) M \bm{R}(z) A_r e_{i,d} \nonumber\\
		& \hspace{0.5cm} - \frac{2}{n^4} \sum\limits_{r,r',s=1}^R e_{j,n}^\top B_r \bm{Y}^* \bm{R}(z) A_{r'} e_{i,d} \, e_{j,n}^\top B_{r'} \bm{Y}^* \bm{R}(z) M \bm{R}(z) \bm{Y} B_s^* e_{j,n} e_{i,d}^\top A_s^* \bm{R}(z) A_r e_{i,d} \ .
	\end{align}
	The final summand on the right-hand side of (\ref{Eq_MixedThirdDerivCalc2}) may be bounded analogously to (\ref{Eq_ThridDerivBound1}), while the first summand may in turn be bounded by
	\begin{align}\label{Eq_ThridDerivBound2}
		& \frac{2|z|}{n^3} \Big| \sum\limits_{r,r',s=1}^R e_{j,n}^\top B_r \tilde{\bm{R}}(z) B_s^* e_{j,n} e_{i,d}^\top A_s^* \bm{R}(z) A_{r'} e_{i,d} \, e_{j,n}^\top B_{r'} \bm{Y}^* \bm{R}(z) M \bm{R}(z) A_r e_{i,d} \Big| \nonumber\\
		& \leq \frac{2|z|}{n^3} \sum\limits_{r,r',s=1}^R \big|\big| B_r \tilde{\bm{R}}(z) B_s^* \big|\big| \, \big|\big| A_s^* \bm{R}(z) A_{r'} \big|\big| \, \big|\big| B_{r'} \bm{Y}^* \bm{R}(z) M \bm{R}(z) A_r \big|\big| \nonumber\\
		& \overset{\text{(\ref{Eq_sigmaAssumption_NonAsymp})}}{\leq} \frac{2 |z| \sigma^6 ||M||}{n^3} ||\bm{Y}|| \, ||\bm{R}(z)||^3 \, ||\tilde{\bm{R}}(z)|| \overset{\substack{\text{(\ref{Eq_R_SpectralBounds})} \\ \text{(\ref{Eq_DefS})}}}{\leq} \frac{2 |z| \sigma^6 ||M||}{n^{5/2} \Im(z)^4} ||\bm{S}^{(X)}||^{\frac{1}{2}} \ .
	\end{align}
	As the second summand on the right-hand side of (\ref{Eq_MixedThirdDerivCalc2}) may be bounded using the same arguments, this proves (\ref{Eq_XXXDerivf1Result}) for $a=2$ and $b=1$.
	By analogous methods one calculates
	\begin{align}\label{Eq_MixedThirdDerivCalc3}
		& \frac{\partial}{\partial \ol{X_{i,j}}} \frac{\partial^2}{\partial \ol{X_{i,j}}^2} f_{d \cdot n}(X,\ol{X}) \nonumber\\
		& = - \frac{2}{n^4} \sum\limits_{s,s',s''=1}^R e_{i,d}^\top A_s^* \bm{R}(z) \bm{Y} B_{s''}^* e_{j,n} e_{i,d}^\top A_{s''}^* \bm{R}(z) \bm{Y} B_{s'}^* e_{j,n} \, e_{i,d}^\top A_{s'}^* \bm{R}(z) M \bm{R}(z) \bm{Y} B_s^* e_{j,n} \nonumber\\
		& \hspace{0.5cm} - \frac{2}{n^4} \sum\limits_{s,s',s''=1}^R e_{i,d}^\top A_s^* \bm{R}(z) \bm{Y} B_{s'}^* e_{j,n} \, e_{i,d}^\top A_{s'}^* \bm{R}(z) \bm{Y} B_{s''}^* e_{j,n} e_{i,d}^\top A_{s''}^* \bm{R}(z) M \bm{R}(z) \bm{Y} B_s^* e_{j,n} \nonumber\\
		& \hspace{0.5cm} - \frac{2}{n^4} \sum\limits_{s,s',s''=1}^R e_{i,d}^\top A_s^* \bm{R}(z) \bm{Y} B_{s'}^* e_{j,n} \, e_{i,d}^\top A_{s'}^* \bm{R}(z) M \bm{R}(z) \bm{Y} B_{s''}^* e_{j,n} e_{i,d}^\top A_{s''}^* \bm{R}(z) \bm{Y} B_s^* e_{j,n}
	\end{align}
	as well as
	\begin{align}\label{Eq_MixedThirdDerivCalc4}
		& \frac{\partial}{\partial X_{i,j}} \frac{\partial^2}{\partial \ol{X_{i,j}}^2} f_{d \cdot n}(X,\ol{X}) \nonumber\\
		& = - \frac{2z}{n^3} \sum\limits_{r,s,s'=1}^R e_{i,d}^\top A_s^* \bm{R}(z) A_r e_{i,d} e_{j,n}^\top B_r \tilde{\bm{R}}(z) B_{s'}^* e_{j,n} \, e_{i,d}^\top A_{s'}^* \bm{R}(z) M \bm{R}(z) \bm{Y} B_s^* e_{j,n} \nonumber\\
		& \hspace{0.5cm} - \frac{2}{n^4} \sum\limits_{r,s,s'=1}^R e_{i,d}^\top A_s^* \bm{R}(z) \bm{Y} B_{s'}^* e_{j,n} \, e_{i,d}^\top A_{s'}^* \bm{R}(z) A_r e_{i,d} e_{j,n}^\top B_r \bm{Y}^* \bm{R}(z) M \bm{R}(z) \bm{Y} B_s^* e_{j,n} \nonumber\\
		& \hspace{0.5cm} - \frac{2z}{n^3} \sum\limits_{r,s,s'=1}^R e_{i,d}^\top A_s^* \bm{R}(z) \bm{Y} B_{s'}^* e_{j,n} \, e_{i,d}^\top A_{s'}^* \bm{R}(z) M \bm{R}(z) A_r e_{i,d} e_{j,n}^\top B_r \tilde{\bm{R}}(z) B_s^* e_{j,n} \ .
	\end{align}
	The calculations (\ref{Eq_MixedThirdDerivCalc2})-(\ref{Eq_MixedThirdDerivCalc4}) by the same method as in (\ref{Eq_ThridDerivBound1}) and (\ref{Eq_ThridDerivBound2}) yield (\ref{Eq_XXXDerivf1Result}) for $(a,b) \in \{(0,3), (1,2)\}$. \qed
\end{itemize}

In the belief that bounds of the type given in Lemma \ref{Lemma_SixthMoment} must have been established in the literature, the author asked ChatGPT to search for such references. While no direct references were found, ChatGPT supplied the basic idea behind the following proof of Lemma \ref{Lemma_SixthMoment} which, while requiring significant correction in part (iii) of the proof, was otherwise correct.

\subsection{Proof of Lemma~\ref{Lemma_SixthMoment}}\label{Proof_Lemma_SixthMoment}
\begin{itemize}
	\item[i)] \textit{Decomposition}:\\
	Decompose $\bm{X}$ into
	\begin{align}\label{Eq_SixMoment_X_Decomp1}
		& \bm{X} = \ul{\bm{X}} + \ul{\bm{Y}}
	\end{align}
	for random $(d \times n)$-matrices $\ul{\bm{X}}$ and $\ul{\bm{Y}}$ defined by
	\begin{align}\label{Eq_SixMoment_Def_ulX_ulY}
		& \ul{\bm{X}}_{i,j} \coloneq \mathbbm{1}_{|\bm{X}_{i,j}| \leq \sqrt{n}} \, \bm{X}_{i,j} \ \ \text{ and } \ \ \ul{\bm{Y}}_{i,j} \coloneq \mathbbm{1}_{|\bm{X}_{i,j}| > \sqrt{n}} \, \bm{X}_{i,j} \ .
	\end{align}
	
	\item[ii)] \textit{Bounding $\E\big[ ||\ul{\bm{Y}}||^6 \big]$}:\\
	Using Minkowski's inequality, the mean $\E\big[ ||\ul{\bm{Y}}||^6 \big]^{\frac{1}{3}}$ may be bounded by
	\begin{align}\label{Eq_SixMoment_ulY_Decomp}
		& \E\big[ ||\ul{\bm{Y}}||^6 \big]^{\frac{1}{3}} \leq \E\big[ ||\ul{\bm{Y}}||_F^6 \big]^{\frac{1}{3}} = \E\Big[ \Big( \sum\limits_{i=1}^d \sum\limits_{j=1}^n |\ul{\bm{Y}}_{i,j}|^2 \Big)^3 \Big]^{\frac{1}{3}} \nonumber\\
		& = \E\Big[ \Big( \sum\limits_{i=1}^d \sum\limits_{j=1}^n \big(|\ul{\bm{Y}}_{i,j}|^2 - \E[|\ul{\bm{Y}}_{i,j}|^2] + \E[|\ul{\bm{Y}}_{i,j}|^2] \big) \Big)^3 \Big]^{\frac{1}{3}} \nonumber\\
		& \leq \E\Big[ \Big| \sum\limits_{i=1}^d \sum\limits_{j=1}^n \big(|\ul{\bm{Y}}_{i,j}|^2 - \E[|\ul{\bm{Y}}_{i,j}|^2] \big) \Big|^3 \Big]^{\frac{1}{3}} + \underbrace{\E\Big[ \Big( \sum\limits_{i=1}^d \sum\limits_{j=1}^n \E[|\ul{\bm{Y}}_{i,j}|^2] \Big)^3 \Big]^{\frac{1}{3}}}_{= \sum\limits_{i=1}^d \sum\limits_{j=1}^n \E[|\ul{\bm{Y}}_{i,j}|^2]} \ .
	\end{align}
	For the first summand, Rosenthal's inequality (see Theorem 9.1 in \cite{Gut_Probability}) yields
	\begin{align*}
		& \E\Big[ \Big| \sum\limits_{i=1}^d \sum\limits_{j=1}^n \big(|\ul{\bm{Y}}_{i,j}|^2 - \E[|\ul{\bm{Y}}_{i,j}|^2] \big) \Big|^3 \Big]\\
		& \leq D_3 \max\bigg( \sum\limits_{i=1}^d \sum\limits_{j=1}^n \E\big[ \big| |\ul{\bm{Y}}_{i,j}|^2 - \E[|\ul{\bm{Y}}_{i,j}|^2] \big|^3 \big] , \Big( \sum\limits_{i=1}^d \sum\limits_{j=1}^n \E\big[ \big| |\ul{\bm{Y}}_{i,j}|^2 - \E[|\ul{\bm{Y}}_{i,j}|^2] \big|^2 \big] \Big)^{\frac{3}{2}} \bigg)
	\end{align*}
	for some universal constant $D_3>0$. The bounds
	\begin{align*}
		& \sum\limits_{i=1}^d \sum\limits_{j=1}^n \E\big[ \big| |\ul{\bm{Y}}_{i,j}|^2 - \E[|\ul{\bm{Y}}_{i,j}|^2] \big|^3 \big] \leq \sum\limits_{i=1}^d \sum\limits_{j=1}^n \big( 4\E\big[ |\ul{\bm{Y}}_{i,j}|^6 \big] + 4 \overbrace{\E[|\ul{\bm{Y}}_{i,j}|^2]^3}^{\overset{\substack{\text{Jensen'} \\ \text{ineq.}}}{\leq} \E[|\ul{\bm{Y}}_{i,j}|^6]} \big)\\
		& \overset{\text{(\ref{Eq_SixMoment_Def_ulX_ulY})}}{\leq} 8 \sum\limits_{i=1}^d \sum\limits_{j=1}^n \E[|\bm{X}_{i,j}|^6] \overset{\text{(\ref{Eq_SixthMoment_K6Condition})}}{\leq} 8 dn K_6
	\end{align*}
	and
	\begin{align*}
		& \sum\limits_{i=1}^d \sum\limits_{j=1}^n \E\big[ \big| |\ul{\bm{Y}}_{i,j}|^2 - \E[|\ul{\bm{Y}}_{i,j}|^2] \big|^2 \big] \leq \sum\limits_{i=1}^d \sum\limits_{j=1}^n \big( 2\E\big[ |\ul{\bm{Y}}_{i,j}|^4 \big] + 2 \overbrace{\E[|\ul{\bm{Y}}_{i,j}|^2]^2}^{\overset{\substack{\text{Jensen's} \\ \text{ineq.}}}{\leq} \E[|\ul{\bm{Y}}_{i,j}|^4]} \big)\\
		& \overset{\text{(\ref{Eq_SixMoment_Def_ulX_ulY})}}{\leq} 4 \sum\limits_{i=1}^d \sum\limits_{j=1}^n \E[|\bm{X}_{i,j}|^4] \overset{\substack{\text{Jensen's} \\ \text{ineq.}}}{\leq} 4 \sum\limits_{i=1}^d \sum\limits_{j=1}^n \E[|\bm{X}_{i,j}|^6]^{\frac{4}{6}} \overset{\text{(\ref{Eq_SixthMoment_K6Condition})}}{\leq} 4 dn K_6^{\frac{2}{3}}
	\end{align*}
	thus give
	\begin{align}\label{Eq_SixthMoment_Rosenthal2}
		& \E\Big[ \Big| \sum\limits_{i=1}^d \sum\limits_{j=1}^n \big(|\ul{\bm{Y}}_{i,j}|^2 - \E[|\ul{\bm{Y}}_{i,j}|^2] \big) \Big|^3 \Big] \leq D_3 \max\Big( 8 dn K_6 , \big( 4 dn K_6^{\frac{2}{3}} \big)^{\frac{3}{2}} \Big) \nonumber\\
		& \leq 8K_6 D_3 \max\big( dn, (dn)^{\frac{3}{2}} \big) = 8K_6 D_3 (dn)^{\frac{3}{2}} \overset{\text{\ref{ItemAssumption_cBound}}}{\leq} 8K_6 D_3 c_*^{\frac{3}{2}} n^3 \ .
	\end{align}
	For the second summand in (\ref{Eq_SixMoment_ulY_Decomp}), observe
	\begin{align}\label{Eq_SixthMoment_Calc1}
		& \sum\limits_{i=1}^d \sum\limits_{j=1}^n \E[|\ul{\bm{Y}}_{i,j}|^2] \overset{\text{(\ref{Eq_SixMoment_Def_ulX_ulY})}}{=} \sum\limits_{i=1}^d \sum\limits_{j=1}^n \E[\mathbbm{1}_{|\bm{X}_{i,j}| > \sqrt{n}} \, |\bm{X}_{i,j}|^2] \nonumber\\
		& \overset{\substack{\text{Hölder} \\ \text{ineq.}}}{\leq} \sum\limits_{i=1}^d \sum\limits_{j=1}^n \bP\big( |\bm{X}_{i,j}| > \sqrt{n} \big)^{\frac{2}{3}} \E[|\bm{X}_{i,j}|^6]^{\frac{1}{3}} \nonumber\\
		& \overset{\substack{\text{Markov} \\ \text{ineq.}}}{\leq} \sum\limits_{i=1}^d \sum\limits_{j=1}^n \Big( \frac{K_6}{n^3} \Big)^{\frac{2}{3}} K_6^{\frac{1}{3}} = K_6 \frac{dn}{n^2} = K_6 \frac{d}{n} \overset{\text{\ref{ItemAssumption_cBound}}}{\leq} K_6 c_* \ ,
	\end{align}
	which with (\ref{Eq_SixthMoment_Rosenthal2}) may be inserted into (\ref{Eq_SixMoment_ulY_Decomp}) for
	\begin{align*}
		& \E\big[ ||\ul{\bm{Y}}||^6 \big]^{\frac{1}{3}} \leq \big(8K_6 D_3 c_*^{\frac{3}{2}} n^3\big)^{\frac{1}{3}} + K_6 c_* \ ,
	\end{align*}
	which by the standard bound $(a+b)^3 \leq 4a^3 + 4b^3$ gives
	\begin{align}\label{Eq_SixMoment_ulY_SpectralBound}
		& \E\big[ ||\ul{\bm{Y}}||^6 \big] \leq 2^5 K_6 D_3 c_*^{\frac{3}{2}} n^3 + 4K_6^3 c_*^3 \ .
	\end{align}
	
	\item[iii)] \textit{Bounding $\E\big[ ||\ul{\bm{X}}|| \big]$}:\\
	Latala's bound from \cite{Latala_SpectralBound} yields
	\begin{align}\label{Eq_SixMoment_Latala0}
		& \E\big[ ||\ul{\bm{X}}-\E[\ul{\bm{X}}]|| \big] \nonumber\\
		& \leq C \bigg( \max\limits_{i \leq d} \sqrt{\sum\limits_{j=1}^n \E\big[ |\ul{\bm{X}}_{i,j}-\E[\ul{\bm{X}}_{i,j}]|^2 \big]} + \max\limits_{j \leq n} \sqrt{\sum\limits_{i=1}^d \E\big[ |\ul{\bm{X}}_{i,j}-\E[\ul{\bm{X}}_{i,j}]|^2 \big]} \nonumber\\
		& \hspace{1cm} + \Big(\sum\limits_{i=1}^d \sum\limits_{j=1}^n \E\big[ |\ul{\bm{X}}_{i,j}-\E[\ul{\bm{X}}_{i,j}]|^4 \big]\Big)^{\frac{1}{4}} \bigg) \nonumber\\
		& \overset{\text{(\ref{Eq_SixthMoment_K6Condition})}}{\leq} C \bigg( \max\limits_{i \leq d} \sqrt{\sum\limits_{j=1}^n 2 K_6^{\frac{1}{3}}} + \max\limits_{j \leq n} \sqrt{\sum\limits_{i=1}^d 2 K_6^{\frac{1}{3}}} + \Big(\sum\limits_{i=1}^d \sum\limits_{j=1}^n 8 K_6^{\frac{2}{3}}\Big)^{\frac{1}{4}} \bigg) \nonumber\\
		& \leq 2 C K_6^{\frac{1}{6}} \underbrace{\big( \sqrt{n} + \sqrt{d} + (dn)^{\frac{1}{4}} \big)}_{\leq (n^{\frac{1}{4}} + d^{\frac{1}{4}})^2 \leq 2\sqrt{n}+2\sqrt{d}} \overset{\text{\ref{ItemAssumption_cBound}}}{\leq} 4 C K_6^{\frac{1}{6}} (1+\sqrt{c_*}) \sqrt{n}
	\end{align}
	for some universal constant $C>0$. Additionally, one has
	\begin{align*}
		& ||\E[\ul{\bm{X}}]||^6 \overset{\text{(\ref{Eq_SixMoment_X_Decomp1})}}{=} ||\overbrace{\E[\bm{X}]}^{=0} - \E[\ul{\bm{Y}}]||^6 = ||\E[\ul{\bm{Y}}]||^6 \nonumber\\
		& \overset{\substack{\text{Jensen's} \\ \text{ineq.}}}{\leq} \E\big[ ||\ul{\bm{Y}}||^6 \big] \overset{\text{(\ref{Eq_SixMoment_ulY_SpectralBound})}}{\leq} 2^5 K_6 D_3 c_*^{\frac{3}{2}} n^3 + 4K_6^3 c_*^3 \ ,
	\end{align*}
	which may be combined with (\ref{Eq_SixMoment_Latala0}) for
	\begin{align}\label{Eq_SixMoment_Latala}
		& \E\big[ ||\ul{\bm{X}}|| \big] \leq 4 C K_6^{\frac{1}{6}} (1+\sqrt{c_*}) \sqrt{n} + \big( 2^5 K_6 D_3 c_*^{\frac{3}{2}} n^3 + 4K_6^3 c_*^3 \big)^{\frac{1}{6}} \nonumber\\
		& \leq \big( \underbrace{4 C K_6^{\frac{1}{6}} (1+\sqrt{c_*}) + \big( 2^5 K_6 D_3 c_*^{\frac{3}{2}} + 4K_6^3 c_*^3 \big)^{\frac{1}{6}}}_{\eqcolon  K} \big) \sqrt{n} \ .
	\end{align}
	
	\item[iv)] \textit{Bounding $\E\big[ ||\ul{\bm{X}}||^6 \big]$}:\\
	Coming to the bound of $\E\big[ ||\ul{\bm{X}}||^6 \big]$, Theorem 1 of \cite{Meckes_SpectralBound} with $p=2=q$ and $D=2\sqrt{n}$ yields
	\begin{align}\label{Eq_SixMoment_Meckes1}
		& \forall t>0 : \ \bP\Big( \big| \, ||\ul{\bm{X}}|| - m \big| \geq t \Big) \leq 4\exp\Big( -\frac{t^2}{16 n} \Big) \ ,
	\end{align}
	where $m$ denotes the median of $||\ul{\bm{X}}||$. It is a trivial consequence of Markov's inequality that $m \leq 2\E[||\ul{\bm{X}}||] \overset{\text{(\ref{Eq_SixMoment_Latala})}}{\leq} 2K \sqrt{n}$, so the above bound yields
	\begin{align}\label{Eq_SixMoment_Meckes2}
		& \forall t>0 : \ \bP\Big( ||\ul{\bm{X}}|| \geq 2K \sqrt{n} + t \Big) \leq 4\exp\Big( -\frac{t^2}{16 n} \Big)
	\end{align}
	and thus also
	\begin{align}\label{Eq_SixMoment_Meckes3}
		& \E\big[ ||\ul{\bm{X}}||^6 \big] = 6 \int_0^\infty t^5 \bP\big( ||\ul{\bm{X}}|| > t \big) \, dt \nonumber\\
		& = 6 \int_0^{2K\sqrt{n}} t^5 \bP\big( ||\ul{\bm{X}}|| > t \big) \, dt + 6 \int_0^\infty (2K\sqrt{n}+t)^5 \bP\big( ||\ul{\bm{X}}|| > 2K\sqrt{n}+t \big) \, dt \nonumber\\
		& \overset{\text{(\ref{Eq_SixMoment_Meckes2})}}{\leq} \underbrace{6 \int_0^{2K\sqrt{n}} t^5 \, dt}_{= (2K\sqrt{n})^6} +  24 \underbrace{\int_0^\infty (2K\sqrt{n}+t)^5 \exp\Big( -\frac{t^2}{16 n} \Big) \, dt}_{\substack{= 64n^3 (64+80K^2 + 10K^4 \\ \hspace{1.5cm} +\sqrt{\pi}(60K+20K^3+4K^5) ) \hspace{-1cm}}} = K' n^3
	\end{align}
	for some constant $K'=K'(c_3)>0$.
	
	\item[v)] \textit{Bringing the bounds together for (\ref{Eq_SixthMoment_Result})}:\\
	Combining the results of (i), (ii) and (iv) yields
	\begin{align*}
		& \E\big[ ||\bm{X}||^6 \big] \overset{\text{(\ref{Eq_SixMoment_X_Decomp1})}}{=} \E\big[ ||\ul{\bm{X}} + \ul{\bm{Y}}||^6 \big] \leq 2^5 \E\big[ ||\ul{\bm{X}}||^6 \big] + 2^5 \E\big[ ||\ul{\bm{Y}}||^6 \big]\\
		& \overset{\substack{\text{(\ref{Eq_SixMoment_Meckes3})} \\ \text{(\ref{Eq_SixMoment_ulY_SpectralBound})}}}{\leq} 2^5 \cdot K' n^3 + 2^5 \cdot \big( 2^5 K_6 D_3 c_*^{\frac{3}{2}} n^3 + 4K_6^3 c_*^3 \big) \leq K n^3 \ ,
	\end{align*}
	which proves (\ref{Eq_SixthMoment_Result}) for $\mathcal{C} = 2^5 K' + 2^{10} K_6 D_3 c_*^{\frac{3}{2}} + 2^7 K_6^3 c_*^3$.
	\qed
\end{itemize}

\subsection{Proof of Lemma~\ref{Lemma_Prokhorov}}\label{Proof_Lemma_Prokhorov}
Observe the finite set of vectors
\begin{align*}
	& V \coloneq \{e_{r,R} + e_{s,R} \mid r,s \leq R\} \cup \{e_{r,R} - e_{s,R} \mid r,s \leq R\}\\
	& \hspace{1cm} \cup \{e_{r,R} + \bm{i} e_{s,R} \mid r,s \leq R\} \cup \{e_{r,R} - \bm{i} e_{s,R} \mid r,s \leq R\} \ ,
\end{align*}
where $e_{r,R} \coloneq (\mathbbm{1}_{j=r})_{j \leq R} \in \C^R$.
The polarization identity
\begin{align}\label{Eq_Prokhorov_Polariaztion}
	& v^* M w = \frac{1}{4} (v+w)^* M (v+w) - \frac{1}{4} (v-w)^* M (v-w) \nonumber\\
	& \hspace{1.5cm} + \frac{\bm{i}}{4} (v-\bm{i}w)^* M (v-\bm{i}w) - \frac{\bm{i}}{4} (v+\bm{i}w)^* M (v+\bm{i}w)
\end{align}
shows that any positive semi-definite matrix $M \in \C^{R \times R}$ is uniquely determined by the values $(v^* M v)_{v \in V}$.
\\[0.5em]
For each $v \in V$, the sequence of Radon measures $v^* \mu_n v$ on $\R$ satisfies $v^* \mu_n(\R) v \overset{\text{(a)}}{\leq} c ||v||_2^2$ and $\supp(v^* \mu_n v) \subset [0,C]$ for all $n \geq N$. Prokhorov's theorem thus guarantees that $(v^* \mu_n v)_{n \in \N}$ is relatively compact with regards to the topology of weak convergence. There thus exists a sub-sequence $(n_k)_{k \in \N}$ such that
\begin{align}\label{Eq_Prokhorov_VConvergence}
	& \forall v \in V : \ v^* \mu_{n_k} v \xRightarrow{k \to \infty} \nu_v
\end{align}
for some family of Radon measures $(\nu_v)_{v \in V}$. Define the map
\begin{align*}
	& \mu : \cB(\R) \rightarrow \C^{R \times R}
\end{align*}
by
\begin{align}\label{Eq_Prokhorov_Def_mu}
	& \forall r,s \leq R : \ \mu_{r,s} = e_{r,R}^* \mu e_{s,R} \coloneq \frac{1}{4} \nu_{(e_{r,R}+e_{s,R})} - \frac{1}{4} \nu_{(e_{r,R}-e_{s,R})} \nonumber\\
	& \hspace{5cm} + \frac{\bm{i}}{4} \nu_{(e_{r,R}-\bm{i}e_{s,R})} - \frac{\bm{i}}{4} \nu_{(e_{r,R}+\bm{i}e_{s,R})} \ .
\end{align}
Each entry $\mu_{r,s}$, is thus by construction a complex measure, which is finite by (a) and (\ref{Eq_Prokhorov_VConvergence}).
It remains to show that $\mu$ is a matrix-valued measure and that
\begin{align}\label{Eq_Prokhorov_Conv}
	& \forall w \in \C^R : \ w^* \mu_{n_k} w \xRightarrow{k \to \infty} w^* \mu w \ .
\end{align}
holds.
\\[0.5em]
One may use (\ref{Eq_Prokhorov_Polariaztion}) and (\ref{Eq_Prokhorov_VConvergence}) to for every $w \in \C^R$ and $f \in \C_b(\R;\R)$ see
\begin{align}\label{Eq_Prokhorov_fConvergence}
	& \int_\R f \, d\big(w^* \mu_{n_k} w\big) = \sum\limits_{r,s=1}^R \ol{w_r} w_s \, e_{r,R}^* \Big(\int_\R f \, d\mu_{n_k}\Big) e_{s,R} \nonumber\\
	& \overset{\text{(\ref{Eq_Prokhorov_Polariaztion})}}{=} \frac{1}{4}\sum\limits_{r,s=1}^R \ol{w_r} w_s \bigg( (e_{r,R}+e_{s,R})^* \Big(\int_\R f \, d\mu_{n_k}\Big) (e_{r,R}+e_{s,R}) \nonumber\\
	& \hspace{3cm} - (e_{r,R}-e_{s,R})^* \Big(\int_\R f \, d\mu_{n_k}\Big) (e_{r,R}-e_{s,R}) \nonumber\\
	& \hspace{3cm} + \bm{i}(e_{r,R}-\bm{i}e_{s,R})^* \Big(\int_\R f \, d\mu_{n_k}\Big) (e_{r,R}-\bm{i}e_{s,R}) \nonumber\\
	& \hspace{3cm} - \bm{i}(e_{r,R}+\bm{i}e_{s,R})^* \Big(\int_\R f \, d\mu_{n_k}\Big) (e_{r,R}+\bm{i}e_{s,R}) \bigg) \nonumber\\
	& = \frac{1}{4}\sum\limits_{r,s=1}^R \ol{w_r} w_s \bigg( \int_\R f \, d\big((e_{r,R}+e_{s,R})^* \mu_{n_k} (e_{r,R}+e_{s,R})\big) \nonumber\\
	& \hspace{3cm} - \int_\R f \, d\big((e_{r,R}-e_{s,R})^* \mu_{n_k} (e_{r,R}-e_{s,R})\big) \nonumber\\
	& \hspace{3cm} + \bm{i}\int_\R f \, d\big((e_{r,R}-\bm{i}e_{s,R})^* \mu_{n_k} (e_{r,R}-\bm{i}e_{s,R})\big) \nonumber\\
	& \hspace{3cm} - \bm{i}\int_\R f \, d\big((e_{r,R}+\bm{i}e_{s,R})^* \mu_{n_k} (e_{r,R}+\bm{i}e_{s,R})\big) \bigg) \nonumber\\
	& \underset{\text{(\ref{Eq_Prokhorov_VConvergence})}}{\xrightarrow{k \to \infty}} \frac{1}{4}\sum\limits_{r,s=1}^R \ol{w_r} w_s \bigg( \int_\R f \, d\nu_{(e_{r,R}+e_{s,R})} - \int_\R f \, d\nu_{(e_{r,R}-e_{s,R})} \nonumber\\
	& \hspace{3cm} + \bm{i}\int_\R f \, d\nu_{(e_{r,R}-\bm{i}e_{s,R})} - \bm{i}\int_\R f \, d\nu_{(e_{r,R}+\bm{i}e_{s,R})} \bigg) \nonumber\\
	& \overset{\text{(\ref{Eq_Prokhorov_Def_mu})}}{=} \sum\limits_{r,s=1}^R \ol{w_r} w_s \int_\R f \, d\mu_{r,s} = \int_\R f \, d\big(w^* \mu w\big) \ .
\end{align}
Consequently, for every non-negative $f \in \C_b(\R;[0,\infty))$, one has
\begin{align}\label{Eq_Prokhorov_ReisMarkov_Condition}
	& \int_\R f \, d\big(w^* \mu w\big) \geq 0 \ ,
\end{align}
where until now $w^* \mu w$ was only known to be a (finite) complex-valued measure.
The Riesz-Markov-Kakutani representation theorem with (\ref{Eq_Prokhorov_ReisMarkov_Condition}) proves that $w^* \mu w$ is a Radon measure on $\R$.
The convergence (\ref{Eq_Prokhorov_fConvergence}) for $f=1$ by (a) also yields $w^* \mu(\R) w \leq c ||w||_2^2$ and the non-negativity of $w^* \mu(S) w$ for each $S \in \cB(\R)$ and $w \in \C^R$ thus proves that $\mu$ is a matrix-valued measure. The convergence (\ref{Eq_Prokhorov_Conv}) follows directly from (\ref{Eq_Prokhorov_fConvergence}) by definition of weak convergence of matrix-valued measures (see Lemma~\ref{Lemma_MatrWeakConv}). \qed

\subsection{Proof of Lemma~\ref{Lemma_DetEquivBound}}\label{Proof_Lemma_DetEquivBound}
Calculate
\begin{align*}
	& \lambda_{\min}\Big(\Im\Big( \Id_d + \sum\limits_{r',s'=1}^R \delta^{(B)}_{r',s'}(z) A_{r'} A_{s'}^* \Big)\Big) = \lambda_{\min}\Big(\sum\limits_{r',s'=1}^R \Im\big(\delta^{(B)}_{r',s'}(z)\big) A_{r'} A_{s'}^*\Big)\\
	& \overset{\text{(\ref{Eq_NonDegeneracyC_A})}}{\geq} \tau \lambda_{\min}\big( \Im\big(\delta^{(B)}(z)\big) \big) \overset{\text{(\ref{Eq_Uniqueness_deltaStil})}}{=} \tau \lambda_{\min}\Big( \int_\R \Im\Big(\frac{1}{\lambda-z}\Big) \, d\rho^{(B)}(\lambda) \Big)\\
	& = \tau \lambda_{\min}\Big( \int_\R \frac{\Im(z)}{|\lambda-z|^2} \, d\rho^{(B)}(\lambda) \Big) \overset{\text{(\ref{Eq_DetEquiv_RhoProp1})}}{\geq} \tau \lambda_{\min}\Big( \int_\R \frac{\Im(z)}{(8\sigma^4(1+\sqrt{c_*})^2+|z|)^2} \, d\rho^{(B)}(\lambda) \Big)\\
	& \overset{\text{(\ref{Eq_DetEquiv_RhoProp2})}}{=} \frac{\tau \Im(z)}{(8\sigma^4(1+\sqrt{c_*})^2+|z|)^2} \lambda_{\min}\Big( \frac{1}{n} \tr\big( B_s^*B_r \big)_{r,s \leq R} \Big) \overset{\text{\ref{ItemTempAssumption_NonDegeneracy}}}{\geq} \frac{\tau^2 \Im(z)}{(8\sigma^4(1+\sqrt{c_*})^2+|z|)^2} \ ,
\end{align*}
which by the simple bound $||A^{-1}|| \leq \frac{1}{\lambda_{\min}(\Im(A))}$ proves (\ref{Eq_DetEquivBoundA}). The bound (\ref{Eq_DetEquivBoundB}) may be shown in complete analogy. \qed

\newpage
\markright{List of symbols}
\markleft{List of symbols}
\section*{List of symbols}\label{ListOfSymbols_SepCov}
\begin{itemize}
	
	\item[] $A_r$ \tabto{1.7cm} deterministic $(d \times d)$-matrices which are part of the model\\
	\tabto{1.7cm} (see Subsection~\ref{Subsection_Model} or~\ref{ItemAssumption_sigmaBound})
	
	\item[] $B_r$ \tabto{1.7cm} deterministic $(n \times n)$-matrices which are part of the model\\
	\tabto{1.7cm} (see Subsection~\ref{Subsection_Model} or~\ref{ItemAssumption_sigmaBound})
	
	\item[] $C$ \tabto{1.7cm} placeholder for a locally defined constant (see e.g. Lemma~\ref{Lemma_ZTailBound})
	
	\item[] $\mathcal{C}$ \tabto{1.7cm} placeholder for a locally defined constant (see e.g. Theorems~\ref{Thm_SepCovMP_NonAsymp} or~\ref{Thm_DualApprox})
	
	\item[] $C_6$ \tabto{1.7cm} a constant (see~\ref{ItemAssumption_boundedSixthMoment})
	
	\item[] $\C^+$ \tabto{1.7cm} the set of complex numbers with positive imaginary part (see (\ref{Eq_hatDeltaIntro}))
	
	\item[] $c_*$ \tabto{1.7cm} a constant (see~\ref{ItemAssumption_cBound} or (c) in Theorem~\ref{Thm_DetEquiv})
	
	\item[] $\bD(\eta,\kappa)$ \tabto{1.7cm} a spectral domain (see Definition~\ref{Def_SpectralDomain})
	
	\item[] $d$ \tabto{1.7cm} number of rows in the data-matrix $\bm{Y}$\\
	\tabto{1.7cm} (see Subsection~\ref{Subsection_Model} or~\ref{ItemAssumption_cBound})
	
	\item[] $\diag$ \tabto{1.7cm} an operator for turning a sequence of numbers into the diagonal matrix\\
	\tabto{1.7cm} with said numbers on its diagonal
	
	\item[] $\dist$ \tabto{1.7cm} the distance between subsets of $\C$
	
	\item[] $\delta$ \tabto{1.7cm} symbol used to denote small constants in various settings
	
	\item[] $\delta^{(A/B)}(z)$ \tabto{1.7cm} deterministic equivalent (matrix-valued) Stieltjes transforms (see Theorem~\ref{Thm_DetEquiv})
	
	\item[] $\hat{\delta}^{(A/B)}(z)$ \tabto{1.7cm} empirical matrix-valued Stieltjes transforms (see (\ref{Eq_Def_hatDelta_A}) and (\ref{Eq_Def_hatDelta_B}))
	
	\item[] $e_{\bullet,d}$ \tabto{1.7cm} standard unit vectors in $\C^{d}$ (see (\ref{Eq_Def_UnitVec}))
	
	\item[] $\varepsilon$ \tabto{1.7cm} placeholder for a (small) constant (see e.g. Theorem~\ref{Thm_SepCovMP_NonAsymp})
	
	\item[] $\eta$ \tabto{1.7cm} a parameter for the spectral domain $\bD(\eta,\kappa)$ (see Definition~\ref{Def_SpectralDomain})
	
	\item[] $\Id_d$ \tabto{1.7cm} the $(d \times d)$ identity matrix
	
	\item[] $\Im(z)$ \tabto{1.7cm} the imaginary part of a complex number $z \in \C$
	
	\item[] $\Im(A)$ \tabto{1.7cm} the imaginary part of a matrix (see (\ref{Eq_DefImaginaryMatrix}))
	
	\item[] $\bm{i}$ \tabto{1.7cm} the imaginary unit
	
	\item[] $K$ \tabto{1.7cm} placeholder for a locally defined constant (see e.g. proof of Lemma~\ref{Lemma_ConcentrationLargeExpectation})
	
	\item[] $K_6$ \tabto{1.7cm} placeholder for a locally defined constant (see e.g. Lemma~\ref{Lemma_SixthMoment})
	
	\item[] $\kappa$ \tabto{1.7cm} a parameter for the spectral domain $\bD(\eta,\kappa)$ (see Definition~\ref{Def_SpectralDomain})
	
	\item[] $\lambda$ \tabto{1.7cm} an integration variable (see e.g. (\ref{Eq_Def_hatDelta_A}) or (\ref{Eq_DetEquiv_NuStilProp}))
	
	\item[] $\lambda_j(M)$ \tabto{1.7cm} an eigenvalue of a Hermitian matrix $M$ (see above (\ref{Eq_EigenvalueOrdering}))
	
	\item[] $M$ \tabto{1.7cm} placeholder for a deterministic $(d \times d)$ test-matrix\\
	\tabto{1.7cm} (see e.g. Theorems~\ref{Thm_SepCovMP_NonAsymp} or~\ref{Thm_EmpiricalDualSystem})
	
	\item[] $\tilde{M}$ \tabto{1.7cm} placeholder for a deterministic $(n \times n)$ test-matrix\\
	\tabto{1.7cm} (see e.g. Theorems~\ref{Thm_SepCovMP_NonAsymp} or~\ref{Thm_EmpiricalDualSystem})
	
	\item[] $n$ \tabto{1.7cm} number of columns in the data-matrix $\bm{Y}$\\
	\tabto{1.7cm} (see Subsection~\ref{Subsection_Model} or~\ref{ItemAssumption_cBound})
	
	\item[] $\ul{\nu}$ \tabto{1.7cm} deterministic equivalent probability measure (see Theorem~\ref{Thm_DetEquiv})
	
	\item[] $\bm{Q}(z)$ \tabto{1.7cm} a specially defined $(d \times d)$ random variable (see (\ref{Eq_Def_Q}))
	
	\item[] $\tilde{\bm{Q}}(z)$ \tabto{1.7cm} a specially defined $(n \times n)$ random variable (see (\ref{Eq_Def_tQ}))
	
	\item[] $R$ \tabto{1.7cm} number of summands to the separable covariance mixture model\\
	\tabto{1.7cm} (see Subsection~\ref{Subsection_Model})
	
	\item[] $\bm{R}(z)$ \tabto{1.7cm} a $(d \times d)$ resolvent matrix (see (\ref{Eq_Def_RResolvent}) and (\ref{Eq_Def_R_Robust}))
	
	\item[] $\tilde{\bm{R}}(z)$ \tabto{1.7cm} an $(n \times n)$ resolvent matrix (see (\ref{Eq_Def_RResolvent}) and (\ref{Eq_Def_R_Robust}))
	
	\item[] $\Re(z)$ \tabto{1.7cm} the real part of a complex number $z \in \C$
	
	\item[] $\Re(A)$ \tabto{1.7cm} the real part of a matrix (see (\ref{Eq_DefImaginaryMatrix}))
	
	\item[] $\rho^{(A/B)}$ \tabto{1.7cm} deterministic equivalent (matrix-valued) measures (see Theorem~\ref{Thm_DetEquiv})
	
	\item[] $\hat{\rho}^{(A/B)}$ \tabto{1.7cm} empirical matrix-valued measures (see (\ref{Eq_Def_hatRhoA}) and (\ref{Eq_Def_hatRhoB}))
	
	\item[] $\tilde{\bm{S}}$ \tabto{1.7cm} an $(n \times n)$ sample covariance matrix analogue (see (\ref{Eq_DefS}) and (\ref{Eq_Def_S_Robust}))
	
	\item[] $\bm{S}$ \tabto{1.7cm} a $(d \times d)$ sample covariance matrix analogue (see (\ref{Eq_DefS}) and (\ref{Eq_Def_S_Robust}))
	
	\item[] $\tilde{\bm{S}}$ \tabto{1.7cm} an $(n \times n)$ sample covariance matrix analogue (see (\ref{Eq_DefS}) and (\ref{Eq_Def_S_Robust}))
	
	\item[] $\cs_{\mu}(z)$ \tabto{1.7cm} the (scalar-valued) Stieltjes transform of a probability measure $\mu$\\
	\tabto{1.7cm} (see e.g. (d) in Theorem~\ref{Thm_DetEquiv})
	
	\item[] $\supp$ \tabto{1.7cm} the support of a measure
	
	\item[] $\sigma^2$ \tabto{1.7cm} a constant (see~\ref{ItemAssumption_sigmaBound} or (c) in Theorem~\ref{Thm_DetEquiv})
	
	\item[] $\tr$ \tabto{1.7cm} the trace of a square matrix
	
	\item[] $\tau$ \tabto{1.7cm} a constant (see~\ref{ItemTempAssumption_NonDegeneracy})
	
	\item[] $\bm{X}$ \tabto{1.7cm} a random $(d \times n)$-matrix with independent, centered, variance-one entries\\
	\tabto{1.7cm} (see Subsection~\ref{Subsection_Model} or~\ref{ItemAssumption_boundedSixthMoment})
	
	\item[] $\bm{Y}$ \tabto{1.7cm} a $(d \times n)$-data-matrix assumed to be of the separable covariance mixture form\\
	\tabto{1.7cm} (see Subsection~\ref{Subsection_Model})
	
	\item[] $\prec$ \tabto{1.7cm} a symbol denoting that a matrix dominates the other in the sense that the\\
	\tabto{1.7cm} difference in positive definite (see above (\ref{Eq_Def_UnitVec}))
	
	\item[] $\preceq$ \tabto{1.7cm} a symbol denoting that a matrix dominates the other in the sense that the\\
	\tabto{1.7cm} difference in positive semi-definite (see above (\ref{Eq_Def_UnitVec}))
	
	\item[] $\|\cdot\|$ \tabto{1.7cm} spectral norm of a matrix
	
	\item[] $\|\cdot\|_F$ \tabto{1.7cm} the Frobenius norm
	
	\item[] $\xRightarrow{n \to \infty}$ \tabto{1.7cm} weak convergence of measures
	
\end{itemize}

\section*{Funding}\label{Funding}
The author acknowledges the support of the Research Unit 5381 (DFG) RO 3766/8-1.

\newpage
\bibliographystyle{apalike}
\phantomsection
\addcontentsline{toc}{section}{References}
\bibliography{literature}
	
\end{document}